\numberwithin{equation}{subsection}
\renewcommand{\P }{\mathcal P}
\newcommand{\D}{\mathcal D}
\newcommand{\M}{\mathcal M}
\newcommand{\A}{\mathcal A}
\newcommand{\B}{\mathcal B}
\newcommand{\sU}{\mathscr U}
\newcommand{\G}{\Gamma}
\newcommand{\g}{\gamma}
\newcommand{\sg}{\sigma}
\newcommand{\mc}{\mathbb{C}}
\newcommand{\ca}{\curvearrowright}
\newcommand{\emm}{\mathcal{M}}
\newcommand{\emmt}{\tilde{\mathcal{M}}}
\newcommand{\enn}{\mathcal N}
\newcommand{\euu}{\mathcal{U}}
\newcommand{\la}{\langle}
\newcommand{\ra}{\rangle}
\newcommand{\El}{\mathcal{L}}
\newcommand{\Sg}{\Sigma}
\newcommand{\rar}{\rightarrow}
\newcommand{\La}{\Lambda}
\newcommand{\lam}{\lambda}
\newcommand{\bten}{\bar\otimes}
\newcommand{\cM}{\mathcal M}
\newcommand{\cB}{\mathcal B}
\newcommand{\De}{\Delta}
\newcommand{\tp}{\bar{\otimes}}
\newcommand{\Q}{\mathcal Q}
\begin{document}
	\newtheorem{Lemma}{Lemma}
	\theoremstyle{plain}
	\newtheorem{theorem}{Theorem~}[section]
	\newtheorem{main}{Main Theorem~}
	\newtheorem{lemma}[theorem]{Lemma~}
	\newtheorem{assumption}[theorem]{Assumption~}
	\newtheorem{proposition}[theorem]{Proposition~}
	\newtheorem{corollary}[theorem]{Corollary~}
	\newtheorem{definition}[theorem]{Definition~}
	\newtheorem{defi}[theorem]{Definition~}
	\newtheorem{notation}[theorem]{Notation~}
	\newtheorem{example}[theorem]{Example~}
	\newtheorem*{remark}{Remark~}
	\newtheorem*{cor}{Corollary~}
	\newtheorem*{question}{Open Problem}
	\newtheorem{claim}{Claim}
	\newtheorem*{conjecture}{Conjecture~}
	\newtheorem*{fact}{Fact~}
	\newtheorem*{thma}{Theorem A}
	\newtheorem*{corb}{Corollary B}
	\newtheorem*{thmc}{Theorem C}
	\renewcommand{\proofname}{\bf Proof}
	\newcommand{\email}{Email: }

\title{Some Applications of Group Theoretic Rips Constructions to the Classification of von Neumann Algebras}
\author{Ionut Chifan\thanks{I.C. has been supported in part by the NSF grants DMS-1600688 and FRG DMS-1854194}, Sayan Das and Krishnendu Khan\thanks{K.K. has been supported in part by the NSF grants FRG DMS-1853989 and FRG DMS-1854194 }}
\date{}
\maketitle
\begin{abstract}

In this paper we study various von Neumann algebraic rigidity aspects for the  property (T) groups that arise via the Rips construction developed by Belegradek and Osin in geometric group theory \cite{BO06}. Specifically, developing a new interplay between Popa's deformation/rigidity theory \cite{Po07} and geometric group theory methods we show that several algebraic features of these groups are completely recognizable from the von Neumann algebraic structure. In particular, we obtain new infinite families of pairwise non-isomorphic property (T) group factors thereby providing positive evidence towards Connes' Rigidity Conjecture.

In addition, we use the Rips construction to build examples of property (T) II$_1$ factors which possess maximal von Neumann subalgebras without property (T) which  answers a question raised in an earlier version of \cite{JS19} by Y. Jiang and A. Skalski. 
\end{abstract}	
\thispagestyle{empty}

	\section{Introduction }
	The von Neumann algebra $\mathcal L(G)$ associated to a countable discrete group $G$ is called \emph{the group von Neumann algebra} and it is defined as the bicommutant of the left regular representation of $G$ computed inside the algebra of all bounded linear operators on the Hilbert space of the square summable functions on $G$. $\El(G)$ is a II$_1$ factor (has trivial center) precisely when all nontrivial conjugacy classes of $G$ are infinite (icc), this being the most interesting for study \cite{MvN43}. The classification of group factors is a central research theme revolving around the following fundamental question: \emph{What aspects of the group $G$ are remembered by $\El(G)$?} This is a difficult topic as algebraic group properties usually do not survive after passage to the von Neumann algebra regime. Perhaps the best illustration of this phenomenon is Connes' celebrated result asserting that all amenable icc groups give isomorphic factors, \cite{Co76}. Hence genuinely different groups such as the group of all finite permutations of the positive integers, the lamplighter group, or the wreath product of the integers with itself give rise to isomorphic factors. Ergo, basic algebraic group constructions such as direct products, semidirect products, extensions, inductive limits or classical algebraic invariants such as torsion, rank, or generators and relations in general cannot be recognized from the von Neumann algebraic structure. In this case the only information on $G$ retained by the von Neumann algebra is amenability.
	\vskip 0.05in
	When $G$ is non-amenable the situation is far more complex and an unprecedented progress has been achieved through the emergence of Popa's deformation/rigidity theory \cite{Po07,Va10icm,Io12,Io18icm}. Using this completely new conceptual framework it was shown that various algebraic/analytic properties of groups and their representations can be completely recovered from their von Neumann algebras, \cite{OP03,OP07,IPV10,BV12,CdSS15,DHI16,CI17,CU18}.  In this direction an impressive milestone was Ioana, Popa and Vaes's discovery of the first examples of groups $G$ that can be completely reconstructed from $\El(G)$, i.e.\ \emph{$W^*$-superrigid groups}\footnote[1]{If $H$ is any group such that $\El(G)\cong \El(H)$ then $H\cong G$.} \cite{IPV10}. Additional examples were found subsequently in \cite{BV12,B13,CI17}.  It is worth noting that the general strategies used in establishing these results share a common essential ingredient---the ability to first reconstruct from $\El(G)$ specific given algebraic features of $G$. For instance, in the examples covered in \cite{IPV10,BV12,B13}, the first step was to show that whenever $\El(G)\cong \El(H)$ then the mystery group $H$ admits a generalized wreath product decomposition exactly as $G$ does; also in the case of \cite[Theorem A]{CI17} again the main step was to show that $H$ admits an amalgamated free product splitting exactly as $G$. These aspects motivate a fairly broad and independent study on this topic---the quest of identifying a comprehensive list of algebraic features of groups which completely pass to the von Neumann algebraic structure. While a couple of works have already appeared in this direction \cite{CdSS15,CI17,CU18} we are still far away from  having a satisfactory overview of these properties and a great deal of work remains to be done. 
	
	%The study of rigidity aspects of von Neumann algebras is mainly motivated by the question: how much information about a countable, discrete, icc group $\G$ is remembered by the group von Neumann algebra $\El(\G)$. Connes' seminal result \cite{Co76} implies that, in general, the group von Neumann algebra exhibits a striking lack of "memory" of the group $\G$: 
	%	$\El(\G) \cong \El (\La)$ whenever $\G$ and $\La$ are icc amenable groups. 
	%	Note that this lack of memory also carries over to the nonamenable regime. For instance, $\El(\mathbb F_2 \times \mathbb S_{\infty}) \cong \El(\mathbb F_2 \times \mathbb S_{\infty} \times \mathbb S_{\infty})$, where $\mathbb F_2$ denotes the free group on $2$ generators, and $\mathbb S_{\infty}$ denotes the group of all finite permutations of $\mathbb N$.	
	\vskip 0.05in A striking conjecture of Connes predicts that all icc property (T) groups are $W^*$-superrigid. Despite the fact that this conjecture motivated to great effect a significant portion of the main developments in Popa's deformation/rigidity theory \cite{Po03,Po04,Io11,IPV10}, no example of a property (T)  $W^*$-superrigid group is currently known. The first hard evidence towards Connes' conjecture was found by Cowling and Haagerup in \cite{CH89}, where it was shown that uniform lattices in $Sp(n,1)$ give rise to non-isomorphic factors for different values of $n \geq 2$. Moreover, for any collection $\{G_k\}_{k}$ of uniform lattices in $Sp(n_k,1), n_k \geq 2$, the group algebras $\{\mathcal L(\times^n_{i=1}G_i)\}_n$ are pairwise non-isomorphic.
	 Later on, using a completely different approach,  Ozawa and Popa \cite{OP03} obtained a far-reaching generalization of this result by showing  that for any collection $\{G_n\}_{n}$ of hyperbolic, property (T) groups (e.g.\ uniform lattices in $Sp(n,1), n \geq 2$, \cite{CH89}) the group algebras $\{\mathcal L(\times^n_{i=1}G_i)\}_n$ are pairwise non-isomorphic. However, little is known beyond these two classes of examples. %A common feature of the proof of the aforementioned superrigidity results was to first "reconstruct" the algebraic features. More specifically, in the case of \cite{IPV10}'s example, the first step was to show that the "mystery group" $\La$ also arises as a generalized wreath product. In the case of \cite{CI17}'s result, again the first step was to show that the "mystery group" $\La$ arises as an amalgamated free product. Reconstruction of algebraic features of $\G$ from the group von Neumann algebra $\El(\G)$ is in itself a challenging problem. The first examples of such reconstruction arose from the seminal work of Ozawa and Popa \cite{OP03}, 
	%where it was shown that, if $\G_1, \G_2, \cdots, \G_n$ and $\La_1, \cdots, \La_m$ are icc hyperbolic groups such that $\El(\G_1 \times \cdots \times \G_n) = \El(\La_1 \times \cdots \times \La_m)$, then $m=n$. In the breakthrough work \cite{CdSS15} it was shown that the product feature for icc hyperbolic groups is reconstructible from the group von Neumann algebra. Subsequently many other examples of product rigidity (and unique prime factorization) were obtained in \cite{DHI16,CdSS17}. In \cite{CU18} it was shown that the infinite direct product feature of icc, hyperbolic,  property (T) groups can be reconstructed (upto icc amenable subgroups) from the group von Neumann algebra as well. 
	Moreover, the current literature offers an extremely limited account on what algebraic features that occur in a property (T) group are completely recognizable at the von Neumann algebraic level. For instance, besides the preservation of the Cowling-Haagerup constant \cite{CH89}, the amenability of normalizers of infinite amenable subgroups in hyperbolic property (T) groups from \cite[Theorem 1]{OP07}, and the direct product rigidity for hyperbolic property (T) groups from \cite[Theorem A]{CdSS15}, \cite[Theorem A]{CU18} very little is known. Therefore in order to successfully construct property (T) $W^*$-superrigid groups via a strategy similar to the ones used in \cite{IPV10,BV12,B13,CI17} we believe it is imperative to identify new algebraic features of property (T) groups that survive the passage to the von Neumann algebraic regime. Any success in this direction will potentially hint at what group theoretic methods to pursue in order to address Connes' conjecture.   
	\vskip 0.05in
	In this paper we make new progress on this study by showing that many algebraic aspects of the Rips constructions developed in geometric group theory by Belegradek and Osin \cite{BO06} are entirely recoverable from the von Neumann algebraic structure. To properly introduce the result we briefly describe their construction. Using the prior Dehn filling results from \cite{Os06}, Belegradek and Osin showed in \cite[Theorem]{BO06} that for every finitely generated group $Q$ one can find a property (T) group $N$ such that $Q$ can be realized as a finite index subgroup of $Out(N)$. This canonically gives rise to an action $Q \curvearrowright^\sigma N$ by automorphisms such that the corresponding semidirect product group $N\rtimes_\sigma Q$ is hyperbolic relative to $\{Q\}$. Throughout the document the semidirect products $N\rtimes_\sigma Q$ will be termed Belegradek-Osin's group Rips constructions. When $Q$ is torsion free then one can pick $N$ to be torsion free as well and hence both $N$ and $N\rtimes_\sigma Q$ are icc groups. Also when $Q$ has property (T) then $N\rtimes_\sigma Q$ has property (T). Under all these assumptions we will denote by $\mathcal Rip _{ T} (Q)$ the class of these Rips construction groups $N\rtimes_\sigma Q$. 
	\vskip 0.05in
	
	The first main result of our paper concerns a fairly large class of canonical fiber products of groups in $\mathcal Rip_T(Q)$. Specifically, consider any two groups $N_1 \rtimes_{\sigma_1} Q, N_2\rtimes_{\sigma_2} Q \in \mathcal Rip_T (Q)$ and form the canonical fiber product $G=(N_1\times N_2) \rtimes_\sigma Q$ where $\sigma=(\sigma_1,\sigma_2)$ is the diagonal action. Notice that since property (T) is closed under extensions \cite[Section 1.7]{BdlHV00} it follows that $G$ has property (T). Developing new interplay between geometric group theoretic methods  \cite{Rip82, DGO11, Os06, BO06} and deformation/rigidity methods \cite{Io11,IPV10,CdSS15,CdSS17,CI17,CU18}, for a fairly large family of groups $Q$, we show that the semidirect product feature of $G$ is an algebraic property completely recoverable from the von Neumann algebraic regime. In addition, we also have a complete reconstruction of the acting group $Q$. The precise statement is the following

	%In this paper we provide some positive evidence towards Connes' conjecture. We show that certain property (T) groups, arising from Rips construction in geometric group theory, can be "partially" reconstructed from the group von Neumann algebra. More specifically, the main theorem of the paper is:
	
	\begin{thma}[Theorem \ref{semidirectprodreconstruction}]\label{thma}
		Let $Q=Q_1\times Q_2$, where $Q_i$ are icc, biexact, weakly amenable, property (T),  torsion free, residually finite groups. For $i=1,2$ let $N_i\rtimes_{\sigma_i} Q\in \mathcal Rip_T(Q)$ and denote by $\G=(N_1\times N_2)\rtimes_{\sigma} Q$ the semidirect product associated with the diagonal action $\sigma=\sigma_1 \times \sigma_2 :Q\ca N_1\times N_2 $. Denote by $\emm=\El(\G)$ be the corresponding II$_1$ factor. Assume that $\Lambda$ is any arbitrary group and $\Theta: \El(\Gamma)\rar \El(\Lambda)$ is any $\ast$-isomorphism. Then there exist group actions by automorphisms $H\ca^{\tau_i} K_i$  such that $\La= (K_1\times K_2) \rtimes_{\tau} H$ where $\tau=\tau_1\times \tau_2: H\ca K_1\times K_2$ is the diagonal action. Moreover one can find a multiplicative character $\eta:Q\rar \mathbb T$, a group isomorphism $\delta: Q\rar H$ and unitary $w\in \El(\Lambda)$ and $\ast$-isomorphisms $\Theta_i: \El(N_i)\rar \El(K_i)$ such that for all $x_i \in L(N_i)$ and $g\in Q$ we have 
		\begin{equation}
		\Theta((x_1\otimes x_2) u_g)= \eta(g) w ((\Theta_1(x_1)\otimes \Theta(x_2)) v_{\delta(g)})w^*.
		\end{equation}      
		Here $\{u_g \,:\,g\in Q\}$ and $\{v_h,\,:\, h\in H\}$ are the canonical unitaries implementing the actions of $Q \ca \El(N_1)\bar\otimes \El(N_2)$ and $H\ca \El(K_1)\bar\otimes \El(K_2)$, respectively.
	\end{thma}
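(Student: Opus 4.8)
The plan is to reconstruct, inside $\mathcal{L}(\Lambda)$, a twin of the crossed product presentation of $\mathcal{L}(\Gamma)$. Identify $\mathcal{M}=\mathcal{L}(\Gamma)$ with $\mathcal{L}(\Lambda)$ via $\Theta$ and write $\mathcal{A}=\mathcal{L}(N_1\times N_2)=\mathcal{L}(N_1)\bar\otimes\mathcal{L}(N_2)$, a regular subalgebra with $\mathcal{M}=\mathcal{A}\rtimes Q$. Two structural features of $\Gamma$ will do the work. First, since each $N_i$ has property (T), the inclusions $\mathcal{L}(N_i)\subseteq\mathcal{M}$ and $\mathcal{A}\subseteq\mathcal{M}$ are rigid in Popa's sense \cite{Po07}. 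Second, for each $i$ the quotient $\Gamma\twoheadrightarrow\Gamma/N_{3-i}\cong N_i\rtimes_{\sigma_i}Q$ is hyperbolic relative to $Q$ by the Belegradek--Osin construction \cite{BO06} (built on Osin's Dehn filling theorem \cite{Os06}; see also \cite{DGO11}), hence biexact relative to $\{Q\}$, while $Q=Q_1\times Q_2$ is a product of biexact, weakly amenable groups. Together these are exactly the ingredients needed to run a deformation/rigidity dichotomy, reinforced by Ozawa-type biexactness estimates, on an arbitrary subalgebra of $\mathcal{M}$ presented in $\Lambda$-coordinates.

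\textbf{Step 1 (locating the fiber and its two factors).} The core of the argument is to show that $\Theta(\mathcal{L}(N_1\times N_2))$ is unitarily conjugate, say via $w$, to $\mathcal{L}(\Sigma)$ for some $\Sigma\trianglelefteq\Lambda$, and that the same $w$ carries the tensor factors $\Theta(\mathcal{L}(N_i))$ onto group algebras $\mathcal{L}(K_i)$ of normal subgroups $K_i\trianglelefteq\Lambda$ with $\Sigma=K_1\times K_2$. I would proceed as follows: pull back, through the quotient $\Gamma\to N_i\rtimes_{\sigma_i}Q$, the quasi-cocycle (or array) witnessing the relative hyperbolicity of that quotient, obtaining a deformation of $\mathcal{M}$ that is stationary on $\mathcal{L}(N_i)$; because $N_i$ has property (T), $\mathcal{L}(N_i)$ cannot split off any diffuse piece that the deformation moves, so on the $\Lambda$-side no element of $\Lambda$ outside the relevant normal subgroup can be absorbed into $\Theta(\mathcal{L}(N_i))$, while the biexactness and weak amenability of $Q_1,Q_2$ kill the ``amenable relative to $\mathcal{L}(Q)$'' branch of the dichotomy --- this is the point where the analysis of products of biexact (class $\mathcal S$) groups, in the style of Ozawa--Popa \cite{OP03} and Chifan--de Santiago--Sinclair \cite{CdSS15}, enters. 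Popa's intertwining-by-bimodules then yields two-sided intertwinings of $\Theta(\mathcal{L}(N_i))$ with a group algebra $\mathcal{L}(K_i)$, which the rigidity of the inclusion upgrades to a genuine unitary conjugacy; the mutual relative-commutant relation $\mathcal{L}(N_1)'\cap\mathcal{M}=\mathcal{L}(N_2)$ (valid since $N_1\trianglelefteq\Gamma$ is ICC with centralizer $N_2$) then forces $K_2=C_\Lambda(K_1)$ and $\Sigma:=K_1K_2=K_1\times K_2$. I expect this step to be the main obstacle: it is exactly here that the geometric input --- that the Rips fibers are property (T) while their semidirect products with $Q$ are relatively hyperbolic --- has to be converted into analytic control, and making the deformation/rigidity dichotomy mesh cleanly with the biexactness of $Q$ and with the rigidity furnished by property (T) is the delicate part.

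\textbf{Step 2 (decoding the semidirect product).} Replace $\Theta$ by $\operatorname{Ad}(w^{*})\circ\Theta$, so that now $\Theta(\mathcal{L}(N_i))=\mathcal{L}(K_i)$ and $\Theta(\mathcal{A})=\mathcal{L}(\Sigma)$ with $\Sigma=K_1\times K_2\trianglelefteq\Lambda$. Since $\mathcal{A}$ is regular in $\mathcal{M}$, the quasi-normalizer calculus for group von Neumann algebras of Ioana--Popa--Vaes \cite{IPV10} shows that the coset structure of $\mathcal{L}(\Sigma)$ inside $\mathcal{L}(\Lambda)$ mirrors that of $\mathcal{A}$ inside $\mathcal{M}$. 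For $g\in Q$, $\Theta(u_g)$ normalizes $\mathcal{L}(\Sigma)$ and implements the automorphism $\sigma_g$ on it; comparing its $\Lambda$-Fourier support with the action of $\Lambda$-elements on $\mathcal{L}(\Sigma)$ and invoking the rigidity coming from property (T) of $\Sigma$ shows the support sits in a single $\Sigma$-coset, and a cocycle-vanishing argument --- again powered by property (T) of $N_1\times N_2$ --- removes the residual $\mathcal{L}(\Sigma)$-valued factor up to a multiplicative character $\eta\colon Q\to\mathbb{T}$ (the ambiguity recorded in the statement), giving $\Theta(u_g)=\eta(g)\,v_{\delta(g)}$ for a map $\delta\colon Q\to\Lambda$. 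Multiplicativity of $\Theta$ and of $\eta$ make $\delta$ a group homomorphism; it is injective because $\Theta(u_g)$ is a scalar only if $g=e$, and $\delta(g)\notin\Sigma$ for $g\neq e$ because $u_g\notin\mathcal{A}$; finally, surjectivity of $\Theta$ forces $\{\delta(g):g\in Q\}$ to meet every $\Sigma$-coset of $\Lambda$. Hence $\Lambda=\Sigma\rtimes H$ with $H:=\delta(Q)\cong Q$, and reading off the induced action on $\mathcal{L}(K_1)\bar\otimes\mathcal{L}(K_2)$ gives the diagonal $\tau=\tau_1\times\tau_2$.

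\textbf{Step 3 (assembling).} Restricting the conjugated $\Theta$ to the tensor factors yields $\ast$-isomorphisms $\Theta_i\colon\mathcal{L}(N_i)\to\mathcal{L}(K_i)$ with $\Theta(x_1\otimes x_2)=\Theta_1(x_1)\otimes\Theta_2(x_2)$; combining this with $\Theta(u_g)=\eta(g)v_{\delta(g)}$ and undoing the conjugation by $w^{*}$ produces the displayed identity $\Theta((x_1\otimes x_2)u_g)=\eta(g)\,w\big((\Theta_1(x_1)\otimes\Theta_2(x_2))v_{\delta(g)}\big)w^{*}$. The only genuinely new ingredient, beyond the now-standard deformation/rigidity toolkit, is the systematic exploitation of the Belegradek--Osin construction to guarantee the combination ``property (T) fiber $+$ relatively hyperbolic semidirect product'' on which Step 1 hinges.
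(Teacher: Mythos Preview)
Your outline omits the one tool that makes the whole argument possible: the comultiplication $\Delta:\mathcal M\to\mathcal M\bar\otimes\mathcal M$ along $\Lambda$, $\Delta(v_\lambda)=v_\lambda\otimes v_\lambda$, introduced in \cite{Io11,IPV10}. Everything in Step~1 is phrased as if you already had candidate subgroups $K_i\leqslant\Lambda$ to intertwine $\Theta(\mathcal L(N_i))$ with, but a priori you know \emph{nothing} about $\Lambda$. Deformations pulled back through $\Gamma\twoheadrightarrow N_i\rtimes Q$ and biexactness of $Q_1,Q_2$ give you intertwinings only into subalgebras indexed by subgroups of $\Gamma$ (such as $\mathcal L(N_j)$, $\mathcal L(Q)$, $\mathcal L(N_j\rtimes Q)$); they cannot by themselves manufacture a subgroup of the mystery group $\Lambda$. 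The paper's route is to run your relative-hyperbolicity/biexactness dichotomies not on $\Theta(\mathcal L(N_i))$ but on $\Delta(\mathcal L(N_i))$ and $\Delta(\mathcal L(Q_i))$ inside $\mathcal M\bar\otimes\mathcal M$, obtaining $\Delta(\mathcal L(N_1\times N_2))\prec^s\mathcal L(N_1\times N_2)\bar\otimes\mathcal L(N_1\times N_2)$ and $u\Delta(\mathcal L(Q))u^*\subseteq\mathcal L(Q)\bar\otimes\mathcal L(Q)$. Only then can one invoke \cite[Theorem~4.1]{DHI16} to produce a first subgroup $\Sigma<\Lambda$ with nonamenable centralizer and $\mathcal L(Q_1)\prec\mathcal L(\Sigma)$, and only the comultiplication containment $\Delta(\mathcal A)\subseteq\mathcal A\bar\otimes\mathcal A$ (together with Lemma~\ref{comultsubgp}) forces $\mathcal A$ to be a genuine group subalgebra $\mathcal L(\Sigma)$ rather than merely a subalgebra intertwining into one.

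Step~2 has a related gap. Even granting $\Theta(\mathcal A)=\mathcal L(\Sigma)$, you still need to show that $\Theta(u_g)$ is a scalar multiple of a single $v_{\delta(g)}$, not merely supported on one $\Sigma$-coset. A ``cocycle-vanishing argument powered by property~(T) of $N_1\times N_2$'' does not do this: the relevant cocycle is a nonabelian $1$-cocycle of $Q$ with values in $\mathcal U(\mathcal L(\Sigma))$, and property~(T) of the \emph{target} group is irrelevant to its triviality. The paper instead proves a height estimate $h_{Q\times Q}(u\Delta(Q)u^*)>0$---which crucially uses that the action $Q\curvearrowright N_1\times N_2$ has trivial stabilizers, a feature of the Rips construction you never invoke---and then applies \cite[Theorem~3.1]{IPV10} to conclude $\mathbb T\mu_0\Phi\mu_0^*=\mathbb T Q$ for some $\Phi<\Lambda$. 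Both steps hinge on the comultiplication; without it the proof does not get off the ground.
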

There are countably infinitely many groups that are residually finite, torsion free, hyperbolic, and have property (T). A concrete such family is $\{\Lambda_k \,|\, k\geq 2\}$ where $\Lambda_k<Sp(k,1)$ is a uniform lattice. It is well known the $\Lambda_k$'s are residually finite \cite{Mal40}, (virtually) torsion free \cite{Sel60}, hyperbolic \cite[Example B]{Gr87}, have property (T) (see for instance,\cite[Theorem 1.5.3]{BdlHV00})  and are pairwise non-isomorphic \cite{CH89}. However there are infinitely many pairwise non-isomorphic such lattices even in the same Lie group. To see this fix $k\geq 2$ together with a torsion free, uniform lattice $\Gamma<Sp(k,1)$.  Since $\Gamma $ is residually finite there is a sequence of normal, finite index, proper subgroups $\cdots \lhd \Gamma_{n+1} \lhd \Gamma _n \lhd \cdots \lhd \Gamma_1\lhd \Gamma$ such that $\cap_n \Gamma_n=1$. Being subgroups, $\Gamma_n$ are clearly residually finite and torsion free. Moreover, the finite index condition implies that all $\Gamma_n$'s are  hyperbolic and have property (T). As the $\Gamma_n$'s are co-hopfian \cite{Pr76} and for every $n< m$ we have $\Gamma_n< \Gamma_m$ then $\Gamma_n\ncong \Gamma_m$. Therefore the class $\{\Gamma_n\,|\, n\in \mathbb N \}$ satisfies our conditions. 
Finally we note that, since every hyperbolic group is finitely presented and there are only countably many such groups, one cannot built examples of larger cardinality than the ones presented above.    
	
In conclusion, Theorem A provides explicit examples of infinitely many pairwise non-isomorphic group $\rm II_1$ factors with property (T). Moreover these groups are quite from the previously classes \cite{CH89,OP03} as they give rise to factors that are non-solid ($\El(\G)$ contains two commuting nonamenable subfactors $\El(N_1)$ and $\El(N_2)$), are tensor indecomposable (\cite[Lemma 2.3]{D20}) and do not admit Cartan subalgebras (Corollary~\ref{cor:nocartan}). Moreover, using Margulis normal subgroup theorem the factors covered by Theorem A are non-isomorphic to any factor arising from any irreducible lattices in a higher-rank semisimple Lie group (see remarks after the proof of Theorem ~\ref{semidirectprodreconstruction}). We also mention that Theorem A or its strong rigidity version Theorem \ref{nonisomsemidirect} (see also Corollary \ref{inffindext}) provides examples of infinite families of finite index subgroups $\Gamma_n \leqslant \G$ in a given icc property (T) group $\Gamma$ such that the corresponding group factors $\El(\G_n)$ and $\El(\G_m)$ are nonisomorphic for $n \neq m$. As $\Gamma_n$'s are measure equivalent this provides new counterexamples to D. Shlyakhtenko's question, asking whether measure equivalence of icc groups implies isomorphism of the corresponding group factors (see \cite[Page 18]{Po09}), which are very different in nature from the ones obtained in \cite{CI, CdSS15}. We summarize this discussion in the next corollary.
\color{black}
	\begin{corb} [Corollary~\ref{inffindext} below] Assume the same notations as in Theorem A.
		\begin{enumerate}
			\item[1)] Let $Q_1,Q_2$ be uniform lattices in $Sp(n,1)$ with $n\geq 2$ and let $Q:=Q_1 \times Q_2$. Also let $\cdots \leqslant Q_1^s\leqslant \cdots \leqslant Q_1^2\leqslant Q^1_1\leqslant Q_1$ be an infinite family of finite index subgroups and denote by $Q_s:= Q_1^s\times Q_2\leqslant Q$. Then consider $N_1\rtimes_{\sigma_1} Q,N_2\rtimes_{\sigma_2} Q\in {\mathcal Rip}_T(Q)$ and let $\G= (N_1\times N_2)\rtimes _{\sigma_1\times \sigma_2}  Q$. Inside $\G$ consider the finite index subgroups $\G_s:=(N_1\times N_2)\rtimes _{\sigma_1\times \sigma_2}  Q_s$. Then the family $\{ \mathcal L(\G_s) \,|\, s\in I \}$ consists of pairwise non-isomorphic finite index subfactors of $\mathcal L(\G)$. 
			\item[2)] Let $\G,\G_n$ be as above. Then $\G_n$ is measure equivalent to $\G$ for all $n \in \mathbb N$, but $\El(\G_n)$ is not isomorphic to $\El(\G_m)$ for $n \neq m$. 
		\end{enumerate}
		
	\end{corb}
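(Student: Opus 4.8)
The plan is to reduce both items to the single assertion
\[
s\neq t\ \Longrightarrow\ \El(\G_s)\not\cong\El(\G_t),
\]
and to deduce this from Theorem~A together with the co-Hopfian property of lattices in $Sp(n,1)$. Two of the remaining clauses are immediate. For each finite index subgroup $\G_s\leqslant\G$ the inclusion $\El(\G_s)\subseteq\El(\G)$ is a finite index inclusion of $\mathrm{II}_1$ factors with Jones index $[\G:\G_s]=[Q_1:Q_1^s]$, which takes care of the ``finite index subfactors'' part of (1). For (2), a finite index subgroup is always measure equivalent to the ambient group: $\G$ equipped with counting measure, acted on by $\G$ on the left and by $\G_n$ on the right, is a common coupling with finite fundamental domains (of measures $1$ and $[\G:\G_n]$), so $\G_n$ is measure equivalent to $\G$, and hence, by transitivity of measure equivalence, all the $\G_n$ are measure equivalent to one another; combined with the displayed non-isomorphism this produces the promised counterexamples to Shlyakhtenko's question. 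Thus everything comes down to the non-isomorphism.

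First I would check that Theorem~A applies to every $\G_r=(N_1\times N_2)\rtimes_\sigma Q_r$, where $Q_r=Q_1^r\times Q_2$. Each of $Q_1^r$ and $Q_2$ is a finite index subgroup of a torsion free uniform lattice in $Sp(n,1)$, hence torsion free, residually finite (finitely generated linear, \cite{Mal40}), non-elementary hyperbolic --- and therefore icc and biexact --- weakly amenable (with Cowling--Haagerup constant $1$, which passes to subgroups), and property (T) (finite index in a property (T) group). Moreover, restricting $\sigma_i$ from $Q$ to $Q_r$ still realizes $Q_r$ as a finite index subgroup of $\mathrm{Out}(N_i)$, so $N_i\rtimes_{\sigma_i}Q_r\in\mathcal{Rip}_T(Q_r)$ and $\G_r$ is exactly of the form treated by Theorem~A.

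Next, assume $\Theta\colon\El(\G_s)\to\El(\G_t)$ is a $\ast$-isomorphism and apply Theorem~A with ``$Q$''$=Q_s$ and ``$\Lambda$''$=\G_t$. This yields a semidirect product decomposition $\G_t=(K_1\times K_2)\rtimes_\tau H$, a group isomorphism $\delta\colon Q_s\to H$, a character $\eta$, a unitary $w$, and $\ast$-isomorphisms $\Theta_i\colon\El(N_i)\to\El(K_i)$ satisfying the displayed cocycle identity; setting $g=e$ there shows that $\Theta$ carries the Rips core $\El(N_1)\tp\El(N_2)\subseteq\El(\G_s)$ onto $w\bigl(\El(K_1)\tp\El(K_2)\bigr)w^*\subseteq\El(\G_t)$ and $\El(Q_s)$ onto $w\El(H)w^*$. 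The main obstacle is to upgrade this to the group-level statement $H\cong Q_t$: one must show that the normal subgroup $K_1\times K_2$ produced by Theorem~A coincides --- up to an automorphism of $\G_t$ --- with the given normal subgroup $N_1\times N_2$, so that the two semidirect product splittings of $\G_t$ agree and hence $H\cong\G_t/(N_1\times N_2)=Q_t$. This uniqueness of the Rips core --- equivalently, the statement that any $\ast$-isomorphism $\El(\G_s)\cong\El(\G_t)$ already forces $Q_s\cong Q_t$ --- is precisely where the geometric group theory of the Belegradek--Osin construction (relative hyperbolicity of the $N_i\rtimes Q$, Dehn fillings) enters; this is the content of the strong rigidity strengthening Theorem~\ref{nonisomsemidirect} of Theorem~A, and I expect the bulk of the difficulty to lie here.

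Finally, granting $Q_s\cong Q_t$, i.e.\ $Q_1^s\times Q_2\cong Q_1^t\times Q_2$, I would conclude as follows. Arrange the chain to be strictly decreasing --- possible by residual finiteness of $Q_1$, exactly as in the construction of the $\G_n$ recalled in the introduction --- and take WLOG $s>t$, so that $Q_s=Q_1^s\times Q_2$ is a proper finite index subgroup of $Q_t=Q_1^t\times Q_2$. A direct product of co-Hopfian torsion free non-elementary hyperbolic groups is again co-Hopfian (a short argument analyzing the two coordinate projections on embedded copies of each factor, using that such groups contain no copy of $\mathbb Z^2$ and have trivial center); since finite index subgroups of uniform lattices in $Sp(n,1)$ are co-Hopfian by Prasad \cite{Pr76}, the group $Q_t$ is co-Hopfian. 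But then the composition $Q_t\xrightarrow{\ \cong\ }Q_s\hookrightarrow Q_t$ is an injective, non-surjective endomorphism of $Q_t$ --- a contradiction. Hence $s=t$, which proves (1); and (2) follows as explained in the first paragraph.
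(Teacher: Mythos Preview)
Your proposal is correct and follows essentially the same route as the paper: invoke the strong rigidity Theorem~\ref{nonisomsemidirect} to deduce $Q_s\cong Q_t$ from $\El(\G_s)\cong\El(\G_t)$, and then finish with the co-Hopfian property of lattices in $Sp(n,1)$ \cite{Pr76}. Two cosmetic differences are worth noting. First, the paper goes directly to Theorem~\ref{nonisomsemidirect} rather than passing through Theorem~A and then identifying the obstacle; your detour is harmless but unnecessary, since Theorem~\ref{nonisomsemidirect} already compares two Rips-type groups and outputs the isomorphism $Q_s\cong Q_t$ without any need to match the abstract decomposition $(K_1\times K_2)\rtimes H$ against the given one. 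Second, in the endgame the paper first cancels the common factor (using that direct product decompositions of products of icc hyperbolic groups are unique) to get $Q_1^s\cong Q_1^t$ and then applies co-Hopfianness to a single factor, whereas you apply co-Hopfianness to the product $Q_t=Q_1^t\times Q_2$; both arguments are valid and of comparable length.
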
     
	From a different perspective our theorem can be also seen as a von Neumann algebraic superrigidity result regarding conjugacy of actions on noncommutative von Neumann algebras. Notice that very little is known in this direction as most of the known superrigidity results concern algebras arising from actions of groups on probability spaces.
	%\color{blue} Should we write something about Rips construction here?
	%\color{black}
	
	\vskip 0.05in
	
	In certain ways one can view Theorem A as a first step towards providing an example of a property (T) superrigid group. While the acting group $Q$ can be completely recovered, as well as certain aspects of the action $Q\ca N_1\times N_2$ (e.g.\ trivial stabilizers) only the product feature of the "core" $\El (N_1 \times N_2)$ can be reconstructed at this point. While the reconstruction of $N_1$ and $N_2$ seems to be out of reach momentarily, we believe that a deeper understanding of the Rips construction, along with new von Neumann algebraic techniques are necessary to tackle this problem. We also remark that in a subsequent article it was shown that the group factors as in Theorem A have trivial fundamental group (see \cite[Theorem B]{CDHK20}).
	%The proof of Theorem A relies on a synthesis of geometric group theory methods on one hand, and von Neumann algebraic techniques, most notably, Popa's deformation/rigidity theory on the other hand. The interplay between these methods seems very natural nad has excellent potential for further developments. 
	
	\vskip 0.05in
	Besides the aforementioned rigidity results we also investigate applications of group Rips constructions to the study of maximal von Neumann algebras. If $\mathcal M$ is a von Neumann algebra then a von Neumann subalgebra $\mathcal N\subset \mathcal M$ is called \emph{maximal} if there is no intermediate von Neumann subalgebra $\mathcal P$ so that $\mathcal N\subsetneq \mathcal P \subsetneq \mathcal M$. Understanding the structure of maximal subalgebras of a given von Neumann algebra is a rather difficult problem that is intimately related with the very classification of these objects. Despite a series of remarkable earlier successes on the study of maximal \emph{ amenable } subalgebras initiated by Popa \cite{Po83} and continued more recently \cite{Sh06,CFRW08,Ho14,BC14,BC15,Su18,CD19,JS19}, significantly less is known for the arbitrary maximal ones. For instance Ge's question \cite[Section 3, Question 2]{Ge03} on the existence of non-amenable factors that posses maximal factors that are hyperfinite was settled in the affirmative only very recently by Y. Jiang and A. Skalski in \cite{JS19}. In fact in their work Jiang-Skalski proposed a more systematic approach towards the study of maximal von Neumann subalgebras within various categories such as the von Neumann algebras with Haagerup's property or with property (T) of Kazhdan. Their investigation also naturally led to several interesting open problems, \cite[Section 5]{JS19}.   
	\vskip 0.05in
	
	In this paper we explain how in a setting similar with \cite{JS19} the Belegradek-Osin's group Rips construction techniques and Ol'shanski's type monster groups can be used in conjunction with Galois correspondence results for II$_1$ factors $\grave{a}$ $la$ Choda \cite{Ch78} to produce many maximal von Neumann subalgebras arising from group/subgroup situation. In particular, through this mix of results we are able to construct many examples of II${_1}$ factors with property (T) that have maximal von Neumann subalgebras without property (T), thereby answering Problem 5.5 in the first version of the paper \cite{JS19} (see Theorem \ref{maximalvN}).  More specifically, using the Ol'shanskii's small cancellation techniques in the setting of lacunary hyperbolic groups \cite{OOS07} we explain how one can construct a property (T) monster group $Q$ whose maximal subgroups are all isomorphic to a given rank one group $Q_m$\footnote{any group that is isomorphic to a subgroup of $(\mathbb Q,+)$ is called rank one} (see Section \ref{smallcancellation}). Then if one considers the Belegradek-Osin Rips construction $N\rtimes Q$ corresponding to $Q$ then using a Galois correspondence (Theorem \ref{trivrel}) one can show the following

	\begin{thmc} (Theorem \ref{maximalvN}) For every maximal rank one subgroup $Q_m< Q$ consider the subgroup $N\rtimes Q_m <N\rtimes Q$. Then $\El(N\rtimes Q_m)\subset \El(N\rtimes Q)$ is a maximal von Neumann subalgebra.
	\end{thmc}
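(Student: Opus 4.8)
The plan is to transfer the maximality of the subgroup $Q_m<Q$ to the von Neumann algebra level through the Galois correspondence of Theorem \ref{trivrel}. First I would record the crossed product picture. Writing $\sigma:Q\ca N$ for the Belegradek--Osin action, one has the canonical identification $\El(N\rtimes Q)=\El(N)\rtimes_\sigma Q$, and for every subgroup $R\leqslant Q$ the von Neumann subalgebra of $\El(N\rtimes Q)$ generated by $\El(N)$ together with the canonical unitaries $\{u_r:r\in R\}$ is exactly $\El(N\rtimes R)=\El(N)\rtimes_\sigma R$. In particular $\El(N)\subseteq \El(N\rtimes Q_m)\subseteq \El(N\rtimes Q)$, so any von Neumann algebra $\Pl$ with $\El(N\rtimes Q_m)\subseteq \Pl\subseteq \El(N\rtimes Q)$ automatically contains $\El(N)$.

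Next I would verify that the inclusion $\El(N)\subseteq \El(N\rtimes Q)$ meets the hypotheses of Theorem \ref{trivrel}. The group $N$ is icc by the Belegradek--Osin construction, so $\El(N)$ is a II$_1$ factor; and the faithful realization $Q\hookrightarrow \mathrm{Out}(N)$ shows that $\sigma_q$ is a non-inner automorphism of $N$ for every $q\neq 1$, whence the induced automorphism of $\El(N)$ is non-inner as well (using that $N$ is icc). Thus $Q\ca\El(N)$ is an outer action, $\El(N)'\cap\El(N\rtimes Q)=\mc$, and $\El(N\rtimes Q)$ is again a II$_1$ factor, putting us in the setting where the Galois correspondence of Theorem \ref{trivrel} applies: the map $R\mapsto \El(N)\rtimes_\sigma R=\El(N\rtimes R)$ is an inclusion-preserving bijection from the subgroups $R\leqslant Q$ onto the von Neumann subalgebras of $\El(N\rtimes Q)$ containing $\El(N)$.

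The conclusion is then immediate. Given an intermediate von Neumann algebra $\El(N\rtimes Q_m)\subseteq \Pl\subseteq \El(N\rtimes Q)$, the first step shows $\Pl\supseteq \El(N)$, so Theorem \ref{trivrel} furnishes a subgroup $R\leqslant Q$ with $\Pl=\El(N\rtimes R)$; by injectivity of the correspondence together with the inclusion $\El(N\rtimes Q_m)\subseteq \El(N\rtimes R)$ we obtain $Q_m\leqslant R\leqslant Q$. Since $Q_m$ is a maximal subgroup of $Q$ by the small-cancellation construction of $Q$ in Section \ref{smallcancellation}, it follows that $R=Q_m$ or $R=Q$, hence $\Pl=\El(N\rtimes Q_m)$ or $\Pl=\El(N\rtimes Q)$, which is precisely the maximality of $\El(N\rtimes Q_m)$ inside $\El(N\rtimes Q)$.

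The substance of the argument lies entirely in Theorem \ref{trivrel}: one needs a correspondence that accounts for \emph{all} intermediate von Neumann algebras, so that no exotic subalgebra strictly between $\El(N\rtimes Q_m)$ and $\El(N\rtimes Q)$ can escape the classification, and one needs to have checked that the Rips construction inclusion lies within its scope --- which is where the outerness of $Q\ca\El(N)$, coming from $Q\hookrightarrow\mathrm{Out}(N)$ and the icc property of $N$, is used. Granting that, Theorem C reduces to the purely group-theoretic fact that $Q_m$ is maximal in $Q$, and I would expect no further difficulty; the genuine work is concentrated in establishing Theorem \ref{trivrel} and in the construction of the monster group $Q$.
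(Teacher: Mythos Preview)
Your proposal is correct and follows essentially the same route as the paper: reduce to the crossed-product picture, establish irreducibility $\El(N)'\cap\El(N\rtimes Q)=\mathbb C$ via outerness of $Q\ca N$, invoke a Galois correspondence to get $\Pl=\El(N\rtimes R)$ for some $Q_m\leqslant R\leqslant Q$, and conclude from the maximality of $Q_m$. The only slip is bibliographic: in the paper the label \texttt{trivrel} refers to the lemma giving $\El(N)'\cap\El(N\rtimes Q)=\mathbb C$ (exactly the preliminary you verify), while the Galois correspondence itself is Choda's theorem \cite{Ch78} (see also \cite[Corollary~3.8]{CD19}), so you should cite the latter when classifying the intermediate subalgebras.
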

	Note that since $N$ and $Q$ have property (T) then so does $N\rtimes Q$ and therefore the corresponding II$_1$ factor $\mathcal L(N\rtimes Q)$ has property (T) by \cite{CJ}. However since $N\rtimes Q_m$ surjects onto the infinite abelian group $Q_m$ then it does not have property (T) and hence $\mathcal L(N\rtimes Q_m)$ does not have property (T) either.
	Another solution to the problem of finding maximal subalgebras without property (T) inside factors with property (T) was also obtained independently by Y. Jiang and A. Skalski in the most recent version of their paper. Their beautiful solution has a different flavor from ours; even though the Galois correspondence theorem $\grave a$ $la$ Choda is a common ingredient in both of the proofs. Hence we refer the reader to \cite[Theorem 4.8]{JS19b} for another solution to the aforementioned problem.
	\color{black}
	
	%%%%%%%%%%%%%%%%%%%%%%%%%%%%%%%%%%%%%%%%%%%%%%%%%%%%%%%%%%%%%%%%%%%%%%%%%%%%%%%%%%%%%%%%%%%%%%%%%%%%%%%%%%%%%%%%%%%%%%%%5
	
	\section{Preliminaries}
	
	\subsection{Notations and Terminology}

	We denote by $\mathbb N$ and $\mathbb Z$ the set of natural numbers and the integers, respectively. For any $k\in \mathbb N$ we denote by $\overline{1,k}$ the integers $\{1,2,...,k\}$. 
\vskip 0.05in	
	All von Neumann algebras in this document will be denoted by calligraphic letters e.g.\ $\mathcal A$, $\mathcal B$, $\mathcal M$, $\mathcal N$, etc. Given a von Neumann algebra $\mathcal M$ we will denote by $\mathscr U(\mathcal M)$ its unitary group, by $\mathscr P(\mathcal M)$ the set of all its nonzero projections, and by $\mathscr Z(\mathcal M)$ its center. We also denote by $(\mathcal M)_1$ its unit ball. All algebras inclusions $\mathcal N\subseteq \mathcal M$ are assumed unital unless otherwise specified. Given an inclusion $\mathcal N\subseteq \mathcal M$ of von Neumann algebras we denote by $\mathcal N'\cap \mathcal M$ the relative commutant of $\mathcal N$ in $\mathcal M$, i.e.\ the subalgebra of all $x\in \mathcal M$ such that $xy=yx$ for all $y\in \mathcal N$. We also consider the one-sided quasinormalizer $\mathscr {QN}^{(1)}_{\mathcal M}(\mathcal N)$ (the semigroup of all $x\in\mathcal M$ for which there exist $x_1,x_2,...,x_n \in \mathcal M$ such that $\mathcal N x\subseteq \sum_i x_i \mathcal N$) and the quasinormalizer $\mathscr {QN}_{\mathcal M}(\mathcal N)$ (the set of all $x\in\mathcal M$ for which there exist $x_1,x_2,...,x_n \in \mathcal M$ such that $\mathcal N x\subseteq \sum_i x_i \mathcal N$ and $x\mathcal N \subseteq \sum_i  \mathcal N x_i$) and we notice that $\mathcal N\subseteq \mathscr{N}_{\mathcal M}(\mathcal N)\subseteq \mathscr {QN}^{}_{\mathcal M}(\mathcal N)\subseteq \mathscr {QN}^{(1)}_{\mathcal M}(\mathcal N)$.
\vskip 0.05in
All von Neumann algebras $\emm$ considered in this article will be tracial, i.e.\ endowed with a unital, faithful, normal linear functional $\tau:M\rightarrow \mathbb C$  satisfying $\tau(xy)=\tau(yx)$ for all $x,y\in \emm$. This induces a norm on $\emm$ by the formula $\|x\|_2=\tau(x^*x)^{1/2}$ for all $x\in \emm$. The $\|\cdot\|_2$-completion of $\emm$ will be denoted by $L^2(\emm)$.  For any von Neumann subalgebra $\mathcal N\subseteq \mathcal M$ we denote by $E_{\mathcal N}:\mathcal M\rightarrow \mathcal N$ the $\tau$-preserving conditional expectation onto $\mathcal N$. 
\vskip 0.05in
For a countable group $G$ we denote by $\{ u_g | g\in G \} \in \mathscr U(\ell^2G)$ its left regular representation given by $u_g(\delta_h ) = \delta_{gh}$, where $\delta_h:G\rightarrow \mathbb C$ is the Dirac function at $\{h\}$. The weak operatorial closure of the linear span of $\{ u_g | g\in G \}$ in $\mathscr B(\ell^2 G)$ is the so called group von Neumann algebra and will be denoted by $\El(G)$. $\El(G)$ is a II$_1$ factor precisely when $G$ has infinite non-trivial conjugacy classes (icc). If $\mathcal M$ is a tracial von Neumann algebra and $G \ca^\sigma \mathcal M$ is a trace preserving action we denote by $\mathcal M \rtimes_\sigma G$ the corresponding cross product von Neumann algebra \cite{MvN37}. For any subset $K\subseteq G$ we denote by $P_{\mathcal M K}$  the orthogonal projection from the Hilbert space $L^2(\mathcal M \rtimes G)$ onto the closed linear span of $\{x u_g \,|\, x\in \mathcal M, g\in K\}$. When $\mathcal M$ is trivial we will denote this simply by $P_K$.  
\vskip 0.05in
Given a subgroup  $H \leqslant G$ we denote by $C_G(H)$ the centralizer of $H$ in $G$ and by $N_G(H)$ the normalizer of $H$ in $G$. Also we will denote by $QN^{(1)}_G(H)$ the  one-sided quasinormalizer of $H$ in $G$; this is the semigroup of all $g\in G$ for which there exist a finite set $F \subseteq G$ such that $Hg\subseteq F H$. Similarly we denote by $QN_G(H)$ the  quasinormalizer (or commensurator) of $H$ in $G$, i.e.\ the subgroup of all $g\in G$ for which there is a finite set $F \subseteq G$ such that $Hg\subseteq FH$ and $gH\subseteq HF$. We canonically have $H C_G(H)\leqslant N_G(H)\leqslant QN_G(H)\subseteq QN^{(1)}_G(H)$. We often consider the virtual centralizer of $H$ in $G$, i.e. $vC_G(H)=\{g\in G \,|\, |g^{H}|<\infty\} $. Notice $vC_G(H)$ is a subgroup of $G$ that is normalized by $H$. When $H=G$, the virtual centralizer is nothing else but the FC-radical of $G$. Also one can easily see from definitions that $HvC_G(H)\leqslant QN_G(H)$. For a subgroup $H\leqslant G$ we denote by $\llangle H\rrangle$ the normal closure of $H$ in $G$.

Finally, for any groups $G$ and $N$ and an action $G\curvearrowright^{\sg} N$ we denote by $N\rtimes_\sigma G$ the corresponding semidirect product group.

	\subsection{Popa's Intertwining Techniques} Over more than fifteen years ago, Sorin Popa has introduced  in \cite [Theorem 2.1 and Corollary 2.3]{Po03} a powerful analytic criterion for identifying intertwiners between arbitrary subalgebras of tracial von Neumann algebras. Now this is known in the literature  as \emph{Popa's intertwining-by-bimodules technique} and has played a key role in the classification of von Neumann algebras program via Popa's deformation/rigidity theory.

\begin {theorem}\cite{Po03} \label{corner} Let $(\mathcal M,\tau)$ be a separable tracial von Neumann algebra and let $\mathcal P, \mathcal Q\subseteq \mathcal M$ be (not necessarily unital) von Neumann subalgebras. 
Then the following are equivalent:
\begin{enumerate}
\item There exist $ p\in  \mathscr P(\mathcal P), q\in  \mathscr P(\mathcal Q)$, a $\ast$-homomorphism $\theta:p \mathcal P p\rightarrow q\mathcal Q q$  and a partial isometry $0\neq v\in q \mathcal M p$ such that $\theta(x)v=vx$, for all $x\in p \mathcal P p$.
\item For any group $\mathcal G\subset \mathscr U(\mathcal P)$ such that $\mathcal G''= \mathcal P$ there is no sequence $(u_n)_n\subset \mathcal G$ satisfying $\|E_{ \mathcal Q}(xu_ny)\|_2\rightarrow 0$, for all $x,y\in \mathcal  M$.
\item There exist finitely many $x_i, y_i \in \mathcal M$ and $C>0$ such that  $\sum_i\|E_{ \mathcal Q}(x_i u y_i)\|^2_2\geq C$ for all $u\in \mathcal U(\mathcal P)$.
\end{enumerate}
\end{theorem}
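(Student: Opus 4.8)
\medskip
\noindent\emph{Proof strategy.} I will work in the Jones basic construction $\langle\mathcal M,e_{\mathcal Q}\rangle$ of the inclusion $\mathcal Q\subseteq\mathcal M$, carrying its canonical semifinite trace $\operatorname{Tr}$ normalized by $\operatorname{Tr}(x\,e_{\mathcal Q}\,y)=\tau(xy)$ for $x,y\in\mathcal M$, and use throughout the identities $e_{\mathcal Q}\,m\,e_{\mathcal Q}=E_{\mathcal Q}(m)\,e_{\mathcal Q}$ ($m\in\mathcal M$) and $\operatorname{Tr}(e_{\mathcal Q})=\tau(1_{\mathcal Q})<\infty$. The plan is to route all four implications through the auxiliary statement
\[
(\ast)\qquad \mathcal P'\cap\langle\mathcal M,e_{\mathcal Q}\rangle\ \text{ contains a nonzero projection of finite }\operatorname{Tr}.
\]
Here $(1)\Leftrightarrow(\ast)$ is the standard correspondence between finite-trace projections of $\langle\mathcal M,e_{\mathcal Q}\rangle$ commuting with $\mathcal P$ and nonzero $\mathcal P$--$\mathcal Q$ sub-bimodules of $L^2(\mathcal M)$ that are finitely generated as right $\mathcal Q$-modules: from such a projection $e$ one uses that $e_{\mathcal Q}$ has central support $1$ in $\langle\mathcal M,e_{\mathcal Q}\rangle$ to find a nonzero partial isometry $w\le e$ with $w^*w\le e_{\mathcal Q}$, and then reads off the projections $p\in\mathscr P(\mathcal P)$, $q\in\mathscr P(\mathcal Q)$, the $\ast$-homomorphism $\theta$ and the partial isometry $v\in q\mathcal M p$ of $(1)$; conversely the orthogonal projection onto the sub-bimodule generated by $v$ supplies the projection in $(\ast)$. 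It then remains to prove $(3)\Rightarrow(\ast)$, $(\ast)\Rightarrow(3)$ and $(2)\Leftrightarrow(3)$.

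The substance is $(3)\Rightarrow(\ast)$. Put $f_i:=x_i^*e_{\mathcal Q}x_i$ and $g_i:=y_ie_{\mathcal Q}y_i^*$, positive vectors in $L^2(\langle\mathcal M,e_{\mathcal Q}\rangle,\operatorname{Tr})$, and note that the identities above turn $(3)$ into the inequality $\sum_i\operatorname{Tr}\!\big((u g_i u^*)f_i\big)\ge C$ valid for all $u\in\mathscr U(\mathcal P)$. In the finite Hilbert-space direct sum $\bigoplus_i L^2(\langle\mathcal M,e_{\mathcal Q}\rangle,\operatorname{Tr})$ form the bounded, closed, convex set $\mathcal K:=\overline{\operatorname{conv}}\{(u g_i u^*)_i: u\in\mathscr U(\mathcal P)\}$ and let $\xi=(\xi_i)_i$ be its unique element of minimal norm. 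Because $\mathscr U(\mathcal P)$ acts on $\mathcal K$ by the isometries $\eta\mapsto u\eta u^*$, uniqueness forces $u\xi_iu^*=\xi_i$ for every $u$, so each $\xi_i$ commutes with $\mathcal P$; it is positive (the positive cone is $\|\cdot\|_{2,\operatorname{Tr}}$-closed), it is bounded because it lies in the $\|\cdot\|_\infty$-ball of radius $\|g_i\|_\infty$ (which is $\|\cdot\|_{2,\operatorname{Tr}}$-closed), and $\operatorname{Tr}(\xi_i)\le\operatorname{Tr}(g_i)<\infty$; hence $\xi_i$ is a bounded positive element of $\mathcal P'\cap\langle\mathcal M,e_{\mathcal Q}\rangle$ of finite trace. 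The key point is that $\xi\neq0$: pairing a convex combination $\sum_k\lambda_k(u_kg_iu_k^*)_i$ against $(f_i)_i$ and invoking the rewritten $(3)$ termwise gives $\langle\xi,(f_i)_i\rangle_{\operatorname{Tr}}\ge C>0$. Thus some component $\xi_{i_0}$ is nonzero, and the spectral projection $p_0:=\mathbf{1}_{[\varepsilon,\infty)}(\xi_{i_0})$ for a small $\varepsilon>0$ is a nonzero finite-trace projection in $\mathcal P'\cap\langle\mathcal M,e_{\mathcal Q}\rangle$, i.e.\ $(\ast)$ holds.

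The remaining implications are soft. For $(\ast)\Rightarrow(3)$, take a nonzero finite-trace projection $e\in\mathcal P'\cap\langle\mathcal M,e_{\mathcal Q}\rangle$; since $\operatorname{span}\{x\,e_{\mathcal Q}\,y: x,y\in\mathcal M\}$ is $\|\cdot\|_{2,\operatorname{Tr}}$-dense in $L^2(\langle\mathcal M,e_{\mathcal Q}\rangle,\operatorname{Tr})$, approximate $e$ by $e_0=\sum_{k=1}^n x_k e_{\mathcal Q}y_k$; then $ueu^*=e$ keeps $\langle u e_0 u^*,e_0\rangle_{\operatorname{Tr}}$ close to $\operatorname{Tr}(e)$ uniformly in $u\in\mathscr U(\mathcal P)$, while expanding this inner product and using $e_{\mathcal Q}m e_{\mathcal Q}=E_{\mathcal Q}(m)e_{\mathcal Q}$ rewrites it as $\sum_{k,l}\tau\!\big(E_{\mathcal Q}(x_l^*ux_k)E_{\mathcal Q}(y_ku^*y_l^*)\big)$, which by Cauchy--Schwarz is at most $\tfrac12\sum_{k,l}\|E_{\mathcal Q}(x_l^*ux_k)\|_2^2+\tfrac12\sum_{k,l}\|E_{\mathcal Q}(y_luy_k^*)\|_2^2$; this yields $(3)$ with the finite family $\{(x_l^*,x_k)\}_{k,l}\cup\{(y_l,y_k^*)\}_{k,l}$ and a suitable $C>0$. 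Finally $(3)\Rightarrow(2)$ is immediate, since a sequence in $\mathcal G$ witnessing the negation of $(2)$ would send the finite sum in $(3)$ to $0$; and $(2)\Rightarrow(3)$ follows by contraposition using separability of $L^2(\mathcal M)$: fix a $\|\cdot\|_2$-dense sequence in the unit ball of $\mathcal M$, apply the negation of $(3)$ to its first $n$ members (in all pairings) with constant $1/n$ to extract $u_n\in\mathscr U(\mathcal P)$, take $\mathcal G=\mathscr U(\mathcal P)$, and check by $\|\cdot\|_2$-approximation that $\|E_{\mathcal Q}(xu_ny)\|_2\to0$ for all $x,y\in\mathcal M$.

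The one genuinely hard step is the \textbf{nonvanishing of the minimal-norm vector $\xi$} in $(3)\Rightarrow(\ast)$ — this is exactly where hypothesis $(3)$ is consumed, and the rest of the argument is routine once $\xi\ne0$ is known. The secondary technical burden, because $\mathcal P$ and $\mathcal Q$ are permitted to be non-unital in $\mathcal M$, is the careful bookkeeping of cut-downs by $1_{\mathcal P}$ and $1_{\mathcal Q}$ — most visibly in the $(1)\Leftrightarrow(\ast)$ dictionary and when asserting that $\xi_i$ is supported under $1_{\mathcal P}$ so that $u\xi_iu^*=\xi_i$ really does give $u\xi_i=\xi_iu$, hence $\xi_i\in\mathcal P'$.
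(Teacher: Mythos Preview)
The paper does not supply its own proof of this statement: Theorem~\ref{corner} is quoted from \cite{Po03} as background and is used as a black box throughout, so there is nothing in the paper to compare your argument against line by line. That said, your proposal is correct and is essentially the standard proof due to Popa: the route through the basic construction $\langle\mathcal M,e_{\mathcal Q}\rangle$, the reformulation of $(3)$ as a uniform lower bound on $\sum_i\operatorname{Tr}\big((ug_iu^*)f_i\big)$, and the extraction of a nonzero finite-$\operatorname{Tr}$ projection in $\mathcal P'\cap\langle\mathcal M,e_{\mathcal Q}\rangle$ via the minimal-norm element of the closed convex hull of the $\mathscr U(\mathcal P)$-orbit are exactly the ingredients of the original argument. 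Your identification of the nonvanishing of $\xi$ as the crux is accurate, and your handling of the soft implications $(\ast)\Rightarrow(3)$ and $(2)\Leftrightarrow(3)$ is fine.
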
 
\vskip 0.02in
\noindent If one of the three equivalent conditions from Theorem \ref{corner} holds then we say that \emph{ a corner of $\mathcal P$ embeds into $\mathcal Q$ inside $\mathcal M$}, and write $\mathcal P\prec_{\mathcal M}\mathcal Q$. If we moreover have that $\mathcal P p'\prec_{\mathcal M}\mathcal Q$, for any projection  $0\neq p'\in \mathcal P'\cap 1_{\mathcal P} \mathcal M 1_{\mathcal P}$ (equivalently, for any projection $0\neq p'\in\mathscr Z(\mathcal P'\cap 1_{\mathcal P}  \mathcal M 1_{P})$), then we write $\mathcal P\prec_{\mathcal M}^{s}\mathcal Q$.
\vskip 0.02in
	
For further use we record the following result which controls the intertwiners in algebras arising form malnormal subgroups. Its proof is essentially contained in \cite[Theorem 3.1]{Po03} so it will be left to the reader. 	  
\begin{lemma}[Popa \cite{Po03}]\label{malnormalcontrol}Assume that $H\leqslant G$ is an almost malnormal subgroup and let $G \ca \mathcal N$ be a trace preserving action on a tracial von Neumann algebra $\mathcal N $. Let $\mathcal P \subseteq \mathcal N \rtimes H$ be a von Neumann algebra such that $\mathcal P\nprec_{\mathcal N\rtimes H}  \mathcal N$. Then for all elements $x,x_1,x_2,...,x_l \in \mathcal N\rtimes G$ satisfying $\mathcal P x\subseteq \sum^l_{i=1} x_i \mathcal P$ we must have that $x\in \mathcal N\rtimes H$. 
\end{lemma}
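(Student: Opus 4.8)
The plan is to reduce the statement to a standard dichotomy for crossed products by almost malnormal subgroups combined with a one-sided quasinormalizer argument. First I would recall the basic structure: writing $\mathcal{M}=\mathcal{N}\rtimes G$ and working inside $\mathcal{N}\rtimes H$, the hypothesis $\mathcal{P}\nprec_{\mathcal{N}\rtimes H}\mathcal{N}$ should be leveraged through Popa's intertwining criterion (Theorem \ref{corner}, condition (2)): there exists a sequence of unitaries $(u_n)_n$ in a generating unitary group of $\mathcal{P}$ with $\|E_{\mathcal{N}}(a u_n b)\|_2\to 0$ for all $a,b\in\mathcal{N}\rtimes H$. The goal is to show that each $x$ with $\mathcal{P}x\subseteq\sum_{i=1}^l x_i\mathcal{P}$ must satisfy $E_{\mathcal{N}\rtimes H}^\perp(x)=0$, i.e.\ $x\in\mathcal{N}\rtimes H$.

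The key computation I would carry out: decompose $x=\sum_{g\in G}E_{\mathcal{N}}(x u_g^*)u_g$ in $L^2(\mathcal{N}\rtimes G)$ and suppose for contradiction that $x$ has nonzero Fourier support outside $H$, so there is a coset $Hg_0H$ with $g_0\notin H$ on which the Fourier coefficients of $x$ (and of the $x_i$) are non-negligible. Using almost malnormality of $H$ — meaning $gHg^{-1}\cap H$ is finite for $g\notin H$, or more precisely $|H\cap g H g^{-1}|<\infty$ — the double cosets $Hg_0H$ are ``thin'': conjugation by $u_{g_0}$ moves the generating unitaries of $\mathcal{P}$ almost orthogonally away from $\mathcal{N}\rtimes H$. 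Concretely, from $\mathcal{P}x\subseteq\sum_i x_i\mathcal{P}$ one gets $u_n x = \sum_i x_i y_{i,n}$ with $y_{i,n}\in\mathcal{P}$ of uniformly bounded norm, and taking $\|\cdot\|_2$-inner products against the off-$H$ part of $x$ one would estimate $\|E_{\mathcal{N}\rtimes H}^\perp(x)\|_2^2$ from below by a quantity that is controlled by terms of the form $\|E_{\mathcal{N}}(\text{(piece of }x_i)\, u_n\, (\text{piece of }x))\|_2$, which tends to $0$ by the choice of $(u_n)_n$. The almost malnormality is exactly what makes these ``mixing'' estimates work: it guarantees that the relevant group-theoretic convolution sums stay finite so that the bounded sequence $(u_n)$ can be pushed through.

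I expect the main obstacle to be the bookkeeping in the Fourier/coset estimate — precisely organizing the finitely many $x_i$, truncating them to finitely supported elements up to $\varepsilon$ in $\|\cdot\|_2$, and showing that the error from the ``off-$H$'' part of $x$ cannot be absorbed. One has to be careful that the maps $y\mapsto x_i y$ are $L^2$-bounded (they are, by boundedness of $x_i$) and that the conditional expectation $E_{\mathcal{N}\rtimes H}$ commutes appropriately with the truncations; the almost malnormality then enters to bound sums $\sum_{h\in H}\|E_{\mathcal{N}}(\cdots)\|_2^2$ by a finite multiple of a single such term along the sparse double coset. Since, as the excerpt notes, this argument is essentially \cite[Theorem 3.1]{Po03} — where the analogous statement is proved for the quasinormalizer of a subalgebra under a mixing/malnormality hypothesis — I would simply adapt that proof verbatim, replacing the mixing hypothesis there with the almost malnormality of $H\leqslant G$, and leave the routine verification to the reader as the authors do.
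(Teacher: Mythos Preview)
Your proposal is correct and matches the paper's treatment exactly: the authors do not give a proof but simply note that it is essentially contained in \cite[Theorem 3.1]{Po03} and leave the details to the reader, and your outlined argument (using the non-intertwining criterion to produce a sequence $(u_n)$ and exploiting almost malnormality to control the Fourier estimates on off-$H$ double cosets) is precisely the standard adaptation of that result.
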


We continue with the following intertwining result for group algebras which is a generalization of some previous results obtained under normality assumptions \cite{DHI16}. For reader's convenience we also include a brief proof.
\begin{lemma}\label{intertwiningintersection} Assume that $H_1,H_2 \leqslant G$ are groups, let $G \ca \mathcal N$ be a trace preserving action on a tracial von Neumann algebra $\mathcal N$ and denote by $\mathcal M =\mathcal N \rtimes G$ the corresponding crossed product. Also assume that $\mathcal A \prec^s \mathcal N \rtimes H_1$ is a von Neumann algebra such that $\mathcal A\prec_{\mathcal M} \mathcal N \rtimes H_2$. Then one can find $h\in G$ such that $\mathcal A\prec_{\mathcal M} \mathcal N \rtimes (H_1 \cap h H_2 h^{-1})$. 
\end{lemma}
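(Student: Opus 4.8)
The plan is to combine Popa's intertwining criterion (Theorem \ref{corner}) with a careful analysis of Fourier supports, exploiting the $s$-intertwining hypothesis $\mathcal A\prec^s\mathcal N\rtimes H_1$ to upgrade a ``one-sided'' embedding into one into the crossed product over the intersection $H_1\cap hH_2h^{-1}$. First I would fix $\mathcal Q_i=\mathcal N\rtimes H_i$ and observe that because $\mathcal A\prec_{\mathcal M}\mathcal Q_2$ there are finitely many $x_j,y_j\in\mathcal M$ and $C>0$ with $\sum_j\|E_{\mathcal Q_2}(u^*x_j u)\|_2^2\geq C$ for all $u\in\mathscr U(\mathcal A)$; after approximating the $x_j,y_j$ in $\|\cdot\|_2$ by finitely supported elements of $\mathcal N\rtimes G$, we may assume all of them are supported on a fixed finite set $F\subseteq G$. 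The idea is then to push this inequality into $\mathcal Q_1$ using the hypothesis $\mathcal A\subseteq\mathcal Q_1$, so that the only elements $g\in G$ that can contribute to $E_{\mathcal Q_2}(u^* x_j u)$ with $u\in\mathscr U(\mathcal A)\subseteq\mathscr U(\mathcal Q_1)$ are those in $H_1\cap F H_2 F^{-1}$, which is a finite union of double cosets $H_1 g_k (H_1\cap g_k^{-1}... )$ — more precisely a finite union of sets of the form $h_1(H_1\cap hH_2h^{-1})$ translated appropriately.

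Concretely, the second step is a combinatorial decomposition: writing $S=H_1\cap FH_2F^{-1}$, one checks that $S$ is a finite union $\bigcup_{k=1}^m a_k(H_1\cap h_kH_2h_k^{-1})b_k$ for suitable $a_k,b_k\in H_1$ and $h_k\in F$, since for $g\in H_1$ with $g=s h t$ ($s,t\in F$, $h\in H_2$) the coset $g(H_1\cap h_kH_2h_k^{-1})$ depends only on finitely many parameters. Then for $u\in\mathscr U(\mathcal A)$ we have
\begin{equation}
\sum_j\|E_{\mathcal Q_2}(u^*x_ju)\|_2^2 \;\leq\; \sum_{k=1}^m \sum_j \|E_{\mathcal N\rtimes a_k(H_1\cap h_kH_2h_k^{-1})b_k}(u^*x_ju)\|_2^2,
\end{equation}
using that the Fourier support of $u^*x_j u$ (with $u\in\mathcal Q_1$) lies in $H_1$ and intersecting with $FH_2F^{-1}$. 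Since $\sum_j\|E_{\mathcal Q_2}(u^*x_ju)\|_2^2\geq C$ for all $u$, by pigeonhole there is a single index $k$ for which $\sum_j\|E_{\mathcal N\rtimes a_k(H_1\cap h_kH_2h_k^{-1})b_k}(u_n^*x_ju_n)\|_2^2$ stays bounded below along a subsequence for any given sequence $(u_n)$ in a group of unitaries generating $\mathcal A$; absorbing $a_k,b_k$ into the $x_j$'s (they are unitaries in $\mathcal M$) this says precisely that $\mathcal A\prec_{\mathcal M}\mathcal N\rtimes(H_1\cap h_kH_2h_k^{-1})$ by the criterion of Theorem \ref{corner}(3), with $h=h_k$.

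The point where the hypothesis $\mathcal A\prec^s\mathcal N\rtimes H_1$ rather than merely $\mathcal A\subseteq\mathcal N\rtimes H_1$ is needed — and what I expect to be the main obstacle — is that the argument above as stated only uses $\mathcal A\subseteq\mathcal Q_1$; the subtlety is that a priori one only knows $\mathcal A\prec^s\mathcal Q_1$, so one must first replace $\mathcal A$ by a unitary conjugate lying inside $\mathcal Q_1$. This is exactly the standard maneuver: from $\mathcal A\prec^s\mathcal Q_1$ one obtains a nonzero partial isometry $v$, projections $p\in\mathcal A$, $q\in\mathcal Q_1$, and a $\ast$-homomorphism $\theta:p\mathcal A p\to q\mathcal Q_1 q$ with $\theta(x)v=vx$, and using the $s$-version together with a maximality/exhaustion argument one can arrange $vv^*\in(q\mathcal Q_1 q)'\cap q\mathcal M q$ and work with $\mathcal B=\theta(p\mathcal A p)\subseteq\mathcal Q_1$; then $\mathcal A\prec_{\mathcal M}\mathcal N\rtimes H_2$ transfers to $\mathcal B\prec_{\mathcal M}\mathcal N\rtimes H_2$ (intertwining is conjugation-invariant and passes to corners), the above Fourier-support computation applies to $\mathcal B$, yielding $\mathcal B\prec_{\mathcal M}\mathcal N\rtimes(H_1\cap hH_2h^{-1})$, and finally one transfers back to conclude $\mathcal A\prec_{\mathcal M}\mathcal N\rtimes(H_1\cap hH_2h^{-1})$. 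Keeping track of the corners and of the ``$s$'' in these transfers — in particular verifying that the relevant relative commutant condition survives so that one does not lose the embedding on a nonzero central piece — is the delicate bookkeeping; the group-theoretic content (the double-coset decomposition of $H_1\cap FH_2F^{-1}$) is elementary, and the analytic content is a direct application of Theorem \ref{corner}.
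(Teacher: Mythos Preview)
Your Fourier--support/double--coset argument in the first two paragraphs is the right combinatorics, and in fact is essentially what the paper does once one has the correct analytic input. The gap is in your third paragraph: the reduction to the case $\mathcal A\subseteq\mathcal Q_1$ via an intertwining $\theta:p\mathcal A p\to\mathcal B\subseteq\mathcal Q_1$ does not work as written. The hypothesis $\mathcal A\prec_{\mathcal M}\mathcal Q_2$ does \emph{not} automatically pass to $\mathcal B\prec_{\mathcal M}\mathcal Q_2$: composing intertwinings requires the relevant partial isometries to have nonzero product, and there is no reason the $v$ coming from $\mathcal A\prec^s\mathcal Q_1$ should be compatible with the one from $\mathcal A\prec\mathcal Q_2$. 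The same problem reappears when you try to ``transfer back'' from $\mathcal B\prec\mathcal N\rtimes(H_1\cap hH_2h^{-1})$ to the same statement for $\mathcal A$. You flag this as ``delicate bookkeeping'', but it is not bookkeeping --- it is a genuine obstruction, and the $s$ in $\prec^s$ does not by itself force these products to be nonzero.

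The paper sidesteps the whole issue by never moving $\mathcal A$. The point is that $\mathcal A\prec^s_{\mathcal M}\mathcal N\rtimes H_1$ has a direct Fourier--support characterization (this is \cite[Lemma~2.6]{Va10}): for every $\varepsilon>0$ there is a finite $S\subset G$ with $\|P_{SH_1S}(x)-x\|_2\leq\varepsilon$ for all $x\in(\mathcal A)_1$. Combining this with the $\prec\mathcal Q_2$ condition, which gives a finite $T\subset G$ and $\delta>0$ with $\|P_{TH_2T}(u)\|_2\geq\delta$ for all $u\in\mathscr U(\mathcal A)$, one gets $\|P_{SH_1S\cap TH_2T}(u)\|_2\geq\delta-\varepsilon$ directly. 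Now your double--coset observation applies: $SH_1S\cap TH_2T$ is contained in a finite union $U\bigl(\bigcup_{r\in R}H_1\cap rH_2r^{-1}\bigr)U$, and a pigeonhole/diagonalization finishes. So the missing idea is precisely this Vaes characterization of $\prec^s$ in terms of approximate Fourier support; once you have it, no corner--chasing or intertwiner--composition is needed.
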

\begin{proof} Since $\mathcal A \prec^s \mathcal N \rtimes H_1$ then by \cite[Lemma 2.6]{Va10} for every $\varepsilon>0$ there exists a finite subset $S\subset G$ such that $\|P_{S H_1 S}(x)-x\|_2\leqslant \varepsilon$ for all $x\in (\mathcal A)_1$. Here for every $K\subseteq G$ we denote by $P_K$ the orthogonal projection from $L^2(\mathcal M)$ onto the closure of the linear span of $\mathcal N u_g$ with $g\in K$. Also since $\mathcal A\prec_{\mathcal M} \mathcal N \rtimes H_2$ then by Popa's intertwining techniques there exist a scalar $0<\delta<1$ and a finite subset $T\subset G$ so that $\|P_{T H_2 T}(x)\|_2\geqslant \delta$, for all $x\in (\mathcal A)_1$. Thus, using this in combination with the previous inequality, for every $x\in \mathscr U(\mathcal A)$ and every $\varepsilon>0$, there are finite subsets $S, T \subset G$ so that $\|P_{T H_2 T}\circ P_{S H_1 S} (x)\|_2\geqslant \delta-\varepsilon$. Since there exist finite subsets $R, U\subset G$ such that $TH_2 T\cap SH_1 S\subseteq U (\cup_{r\in R} H_2\cap r H_1 r^{-1})) U$ we further get that $\| P_{U (\cup_{r\in R} H_2\cap r H_1 r^{-1})) U } (x)\|_2\geqslant \delta-\varepsilon$. Then choosing $\varepsilon>0$ sufficiently small and using Popa's intertwining techniques together with a diagonalization argument (see proof of \cite[Theorem 4.3]{IPP05}) one can find $r \in R$ so that $\mathcal A \prec \mathcal N \rtimes (H_2\cap rH_1r^{-1})$, as desired.\end{proof}

In the sequel we need the following three intertwining lemmas, which establish that under certain conditions, intertwining in a larger algebra implies that the intertwining happens in a "smaller subalgebra".

\begin{lemma}\label{intertwininglower1} Let $\mathcal A, \mathcal B\subseteq \mathcal N\subseteq \mathcal M$ be von Neumann algebras so that $\mathscr N_{\mathcal M}(\mathcal A)''=\mathcal M$. If $\mathcal B \prec_{\mathcal M} \mathcal A$ then $\mathcal B \prec_{\mathcal N} \mathcal A$.   
\end{lemma}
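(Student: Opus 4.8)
The plan is to use the characterization of intertwining from Theorem \ref{corner}(3) together with an averaging argument over the normalizer $\mathscr N_{\mathcal M}(\mathcal A)$, exploiting the fact that elements normalizing $\mathcal A$ interact well with the conditional expectation $E_{\mathcal A}$. First I would assume $\mathcal B \prec_{\mathcal M} \mathcal A$ and pick, via Theorem \ref{corner}(3), finitely many $x_1,\dots,x_n, y_1,\dots,y_n \in \mathcal M$ and a constant $C > 0$ such that $\sum_i \|E_{\mathcal A}(x_i u y_i)\|_2^2 \geq C$ for all $u \in \mathscr U(\mathcal B)$. The goal is to replace the $x_i, y_i$ by elements of $\mathcal N$ while keeping a uniform lower bound, since then the same condition read inside $\mathcal N$ yields $\mathcal B \prec_{\mathcal N} \mathcal A$ (note $\mathcal B, \mathcal A \subseteq \mathcal N$, and $E_{\mathcal A}$ restricted to $\mathcal N$ is the conditional expectation computed inside $\mathcal N$, so the expression is literally the one appearing in Theorem \ref{corner}(3) for the inclusion $\mathcal B, \mathcal A \subseteq \mathcal N$).

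The key step is the reduction from $\mathcal M$ to $\mathcal N$. For this I would use that $\mathscr N_{\mathcal M}(\mathcal A)'' = \mathcal M$: any $x \in \mathcal M$ can be approximated in $\|\cdot\|_2$ by finite sums $\sum_j a_j w_j$ with $a_j \in \mathcal A \subseteq \mathcal N$ and $w_j \in \mathscr N_{\mathcal M}(\mathcal A)$. The crucial property is that for a normalizing unitary $w$ one has $E_{\mathcal A}(a w \cdot u \cdot b) = a\, E_{\mathcal A}(w u b)$ and, more importantly, $\|E_{\mathcal A}(w z)\|_2 = \|E_{\mathcal A}(z w)\|_2$-type identities: since $w \mathcal A w^* = \mathcal A$, conjugation by $w$ commutes with $E_{\mathcal A}$, giving $E_{\mathcal A}(w z) = w\, E_{\mathcal A}(w^* (w z) ) $ — more usefully, $\|E_{\mathcal A}(x_i u y_i)\|_2$ is unchanged if we multiply $u$ on the left/right by normalizing unitaries and absorb them. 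Concretely, writing $x_i \approx \sum_j a_{ij} w_{ij}$, one gets $E_{\mathcal A}(x_i u y_i) \approx \sum_j a_{ij} E_{\mathcal A}(w_{ij} u y_i)$, and $E_{\mathcal A}(w_{ij} u y_i) = E_{\mathcal A}(w_{ij} u w_{ij}^* \cdot w_{ij} y_i)$ where $w_{ij} u w_{ij}^*$ is again a unitary normalizing... this is not quite in $\mathcal B$, so instead I would run the standard trick: the set of $x$ for which $\sup_{u \in \mathscr U(\mathcal B)} \big(\text{analogous quantity with } x,y \in \mathcal N\big)$ controls $\sum_i \|E_{\mathcal A}(x_i u y_i)\|_2^2$ is $\|\cdot\|_2$-closed and contains $\mathcal A \cdot \mathscr N_{\mathcal M}(\mathcal A)$, hence all of $\mathcal M$; by symmetry the same applies to the $y_i$'s.

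The main obstacle — and the step requiring care — is precisely making the normalizer-absorption rigorous: one must show that replacing $x_i$ by a $\|\cdot\|_2$-approximant $\sum_j a_{ij} w_{ij}$ only perturbs $\sum_i \|E_{\mathcal A}(x_i u y_i)\|_2^2$ by a small amount \emph{uniformly in $u \in \mathscr U(\mathcal B)$}, and then that each term $\|E_{\mathcal A}(a_{ij} w_{ij} u y_i)\|_2$ can be rewritten, using $w_{ij}^* \mathcal A w_{ij} = \mathcal A$, as $\|E_{\mathcal A}(a_{ij} w_{ij} u y_i w_{ij}^*)\|_2 \cdot$ (something), so that the "test elements" on either side of $u$ lie in $\mathcal N$ — the point being that $w_{ij} y_i w_{ij}^* $ need not be in $\mathcal N$ unless we also expand $y_i$, so the argument must be done symmetrically and in one pass. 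An alternative, cleaner route is to invoke Theorem \ref{corner}(2): if $\mathcal B \not\prec_{\mathcal N} \mathcal A$ then there is a sequence $u_k \in \mathscr U(\mathcal B)$ with $\|E_{\mathcal A}(a u_k b)\|_2 \to 0$ for all $a, b \in \mathcal N$; one then upgrades this to all $a,b \in \mathcal M$ by approximating with elements of $\mathcal A \cdot \mathscr N_{\mathcal M}(\mathcal A)$ and using that for $w \in \mathscr N_{\mathcal M}(\mathcal A)$, $\|E_{\mathcal A}(a w u_k b)\|_2 = \|w^* E_{\mathcal A}( a w u_k b)\|_2 = \|E_{\mathcal A}(w^* a w \cdot u_k \cdot b)\|_2 \to 0$ since $w^* a w \in \mathcal N$ — and symmetrically on the right — contradicting $\mathcal B \prec_{\mathcal M} \mathcal A$. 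I would present this second version, as the normalizer identity $E_{\mathcal A}(w^* z w) = w^* E_{\mathcal A}(z) w$ makes the approximation transparent and the blank-line-free display below records it:
\begin{equation}
\|E_{\mathcal A}(a w u_k b)\|_2 = \|E_{\mathcal A}(w^* a w\, u_k\, b)\|_2,\qquad w \in \mathscr N_{\mathcal M}(\mathcal A),\ w^* a w \in \mathcal N .
\end{equation}
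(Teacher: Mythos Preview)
Your overall strategy matches the paper's --- approximate the test elements by normalizing unitaries and use the conjugation identity $w^* E_{\mathcal A}(\cdot) w = E_{\mathcal A}(w^* \cdot w)$ --- but the execution has a genuine gap. The displayed identity is incorrect: for $w \in \mathscr N_{\mathcal M}(\mathcal A)$ one has
\[
\|E_{\mathcal A}(a w u_k b)\|_2 = \|w^* E_{\mathcal A}(a w u_k b)\,w\|_2 = \|E_{\mathcal A}(w^* a w \cdot u_k \cdot b\,w)\|_2,
\]
with an unavoidable trailing $w$ on the right; $w^* E_{\mathcal A}(\xi)$ is \emph{not} equal to $E_{\mathcal A}(w^*\xi)$ since $w^* E_{\mathcal A}(\xi)\notin \mathcal A$ in general. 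Because of this, your ``one side at a time'' reduction does not close: even with $b\in\mathcal N$ you end up with $b w\in\mathcal M\setminus\mathcal N$ on the right, and conjugation by normalizers of $\mathcal A$ need not preserve $\mathcal N$, so the claim ``$w^* a w\in\mathcal N$'' (for $a\in\mathcal N$) is also unjustified.

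The missing idea --- and this is exactly what the paper does --- is to approximate \emph{both} $x_i$ and $y_i$ by normalizing unitaries $g_i,h_i$ simultaneously, use the (correct) cyclic identity $\|E_{\mathcal A}(g_i b h_i)\|_2 = \|E_{\mathcal A}(b\, h_i g_i)\|_2$, and then invoke the tower relation $E_{\mathcal A} = E_{\mathcal A}\circ E_{\mathcal N}$ together with $b\in\mathcal B\subseteq\mathcal N$ to get
\[
\|E_{\mathcal A}(b\, h_i g_i)\|_2 = \|E_{\mathcal A}\big(b\, E_{\mathcal N}(h_i g_i)\big)\|_2.
\]
Now the left test element is $1\in\mathcal N$ and the right one is $E_{\mathcal N}(h_i g_i)\in\mathcal N$, so Theorem~\ref{corner}(3) applies inside $\mathcal N$. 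Your contrapositive route can be salvaged in exactly the same way, but not as you wrote it.
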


\begin{proof} Since $\mathcal B \prec_{\mathcal M} \mathcal A$ then by Theorem \ref{corner} one can find $x_1,x_2...x_n ,y_1,y_2,...,y_n\in \mathcal M$ and $c>0$ such that $\sum_{i=1}^n\|E_{\mathcal A}(x_i by_i)\|_2^2\geq c, \text{ for all } b\in \mathcal U(\mathcal B)$. Since $\mathscr N_{\mathcal M}(\mathcal A)''=\mathcal M$ then using basic $\|\cdot \|_2$-approximation for $x_i$ and $y_i$ and shrinking $c>0$ if necessary one can find $g_1,g_2,... ,g_l, h_1,h_2,..., h_l\in \mathcal N_{\mathcal M}(\mathcal A)$ and $c'>0$ such that for all $b\in \mathcal U(\mathcal B)$ we have \begin{equation}\label{intineq}
	\sum_{i=1}^n\|E_{\mathcal A}(g_i bh_i)\|_2^2\geq c'>0.
	\end{equation} 
	Using normalization we see that $E_{\mathcal A}(g_i b h_i)=E_{g_i\mathcal Ag_i^*}(g_i bh_i)= g_iE_{\mathcal A}( bh_ig_i)g_i^*$. This combined with \eqref{intineq} and $\mathcal A \subseteq \mathcal N$ give $0<c'\leq \sum_{i=1}^l\|E_{\mathcal A}( bh_i g_i)\|_2^2= \sum_{i=1}^l\|E_{\mathcal A}\circ E_{\mathcal N}( b h_ig_i)\|_2^2=\sum_{i=1}^l\|E_{\mathcal A}( b E_{\mathcal N}(h_ig_i))\|_2^2$ for all $b\in \mathcal U(\mathcal B)$. Since $E_{\mathcal N}(h_ig_i)\in \mathcal N$ then using Theorem \ref{corner} this clearly shows that $\mathcal B \prec_{\mathcal N} \mathcal A$.  \end{proof}

\begin{lemma} \label{intertwininglower2}Let $Q$ be a group and denote by ${\rm d}(Q)=\{(q,q)\,|\, q\in Q\}$. Let $\mathcal A$ be a tracial von Neumann algebra and assume   $(Q \times Q) \ca^{\sg} \mathcal A$ is a trace preserving action. Let $\mathcal B \subseteq \mathcal A$ be a regular von Neumann subalgebra which is invariant under the action $\sg$. Let $\mathcal D \subseteq \mathcal A \rtimes_{\sg} {\rm d}(Q)$ be a subalgebra such that $\mathcal D \prec_{\mathcal A \rtimes_{\sg} (Q \times Q)} \mathcal B \rtimes_{\sg} {\rm d}(Q)$. Then  $\mathcal D \prec_{\mathcal A \rtimes_{\sg} {\rm d}(Q)} \mathcal B \rtimes_{\sg} {\rm d}(Q)$.	
	
	%Let $Q$ be a group and denote by ${\rm d}(Q)=\{(q,q)\,|\, q\in Q\}$ the diagonal subgroup of $Q\times Q$. Let $\mathcal A$ be a tracial von Neumann algebra and assume that  $(Q \times Q) \ca^{\sg} \mathcal A$ is a trace preserving action. Let $\mathcal B \subseteq \mathcal A$ be a regular von Neumann subalgebra which is invariant under the action $\sg$. Let $\mathcal D \subseteq \mathcal A \rtimes_{\sg} {\rm d}(Q)$ be a subalgebra such that $\mathcal D \prec_{\mathcal A \rtimes_{\sg} (Q \times Q)} \mathcal B \rtimes_{\sg} {\rm d}(Q)$ . Then  $\mathcal D \prec_{\mathcal A \rtimes_{\sg} {\rm d}(Q)} \mathcal B \rtimes_{\sg} {\rm d}(Q)$.
\end{lemma}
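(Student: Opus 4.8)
The plan is to apply Lemma \ref{intertwininglower1} with the roles $\mathcal A \rightsquigarrow \mathcal B\rtimes_\sg {\rm d}(Q)$, $\mathcal B \rightsquigarrow \mathcal D$, $\mathcal N \rightsquigarrow \mathcal A\rtimes_\sg {\rm d}(Q)$ and $\mathcal M\rightsquigarrow \mathcal A\rtimes_\sg(Q\times Q)$. To do this the only thing I need to verify is the normalization hypothesis of Lemma \ref{intertwininglower1}, namely that
\[
\mathscr N_{\mathcal A\rtimes_\sg(Q\times Q)}\bigl(\mathcal B\rtimes_\sg {\rm d}(Q)\bigr)'' = \mathcal A\rtimes_\sg(Q\times Q).
\]
Once this is in hand, Lemma \ref{intertwininglower1} applied to $\mathcal D\subseteq \mathcal B\rtimes_\sg {\rm d}(Q)\subseteq \mathcal A\rtimes_\sg {\rm d}(Q)$ (note $\mathcal D\prec_{\mathcal M}\mathcal B\rtimes_\sg{\rm d}(Q)$ is exactly the hypothesis) immediately yields $\mathcal D\prec_{\mathcal A\rtimes_\sg {\rm d}(Q)}\mathcal B\rtimes_\sg {\rm d}(Q)$, which is the conclusion.

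So the work is entirely in producing enough normalizing unitaries. First, since $\mathcal B$ is regular in $\mathcal A$, the group $\mathscr N_{\mathcal A}(\mathcal B)$ generates $\mathcal A$; moreover each such normalizing unitary sits inside $\mathcal A\subseteq \mathcal A\rtimes_\sg {\rm d}(Q)$ and, since $\mathcal B$ is $\sg$-invariant, it normalizes $\mathcal B\rtimes_\sg {\rm d}(Q)$ as well (conjugation by $u\in\mathscr N_{\mathcal A}(\mathcal B)$ fixes ${\rm d}(Q)$ pointwise up to elements of $\mathcal A$, and carries $\mathcal B$ to $\mathcal B$). Hence $\mathscr N_{\mathcal A\rtimes_\sg(Q\times Q)}(\mathcal B\rtimes_\sg {\rm d}(Q))''\supseteq \mathcal A$. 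Second, the canonical unitaries $\{v_{(q,q)} : q\in Q\}$ implementing the diagonal action certainly normalize $\mathcal B\rtimes_\sg {\rm d}(Q)$ — they lie in it. The remaining generators of $\mathcal A\rtimes_\sg(Q\times Q)$ are the unitaries $v_{(q,1)}$ (equivalently $v_{(1,q)}$). Here is the key point: for $q\in Q$ we have $v_{(q,1)} = v_{(q,q)}\,v_{(1,q^{-1})}$ and $v_{(q,q)}\in \mathcal B\rtimes_\sg {\rm d}(Q)$, so it suffices to show each $v_{(1,q)}$ normalizes $\mathcal B\rtimes_\sg {\rm d}(Q)$. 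Conjugating: $v_{(1,q)}\,a\,v_{(1,q)}^* = \sg_{(1,q)}(a)\in\mathcal B$ for $a\in\mathcal B$ (using $\sg$-invariance of $\mathcal B$, and that $\sg_{(1,q)}$ restricted to $\mathcal B$ agrees with the action since $\mathcal B$ is invariant under all of $Q\times Q$), and $v_{(1,q)}\,v_{(r,r)}\,v_{(1,q)}^* = \sg_{(1,q)}\text{-twist}\cdot v_{(r,r)}$; more precisely $v_{(1,q)}v_{(r,r)}v_{(1,q)}^* = v_{(r, qrq^{-1})}$ which is generally \emph{not} in $\mathcal B\rtimes_\sg {\rm d}(Q)$.

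This is the main obstacle, and it shows the naive approach needs a correction: the full group $Q\times Q$ does \emph{not} normalize ${\rm d}(Q)$. The fix is to not ask for $v_{(1,q)}$ to normalize, but instead to observe that Lemma \ref{intertwininglower1} only needs the normalizer of $\mathcal B\rtimes_\sg {\rm d}(Q)$ to generate $\mathcal M$ as a von Neumann algebra — and in fact one should run the argument in two stages. Stage one: apply Lemma \ref{intertwininglower1} with $\mathcal N = \mathcal A\rtimes_\sg(Q\times Q)$ itself and intermediate algebra $\mathcal A\rtimes_\sg {\rm d}(Q)$ — wait, that is circular. The cleaner route, which I would actually carry out, is to use that $\mathcal A\subseteq \mathcal A\rtimes_\sg {\rm d}(Q)$ is regular (the unitaries $v_{(q,q)}$ normalize $\mathcal A$ because $\mathcal A$ is $(Q\times Q)$-invariant), so by Lemma \ref{intertwininglower1} applied \emph{first} with the pair $\mathcal B\subseteq\mathcal A$: if $\mathcal D\prec_{\mathcal A\rtimes_\sg(Q\times Q)}\mathcal B\rtimes_\sg {\rm d}(Q)$, I combine this with the fact that $\mathcal B\rtimes_\sg {\rm d}(Q)\subseteq\mathcal A\rtimes_\sg {\rm d}(Q)$ and that $\mathscr N_{\mathcal A\rtimes_\sg(Q\times Q)}(\mathcal A)''=\mathcal A\rtimes_\sg(Q\times Q)$ (since $\mathcal A$ is $(Q\times Q)$-invariant, \emph{all} of $\{v_{(g,h)}\}$ normalize $\mathcal A$). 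Thus $\mathcal D\prec_{\mathcal A\rtimes_\sg{\rm d}(Q)}\mathcal B\rtimes_\sg{\rm d}(Q)$ provided the hypothesis of Lemma \ref{intertwininglower1} — with $\mathcal A\rightsquigarrow\mathcal A$, $\mathcal M\rightsquigarrow\mathcal A\rtimes_\sg(Q\times Q)$, $\mathcal N\rightsquigarrow\mathcal A\rtimes_\sg{\rm d}(Q)$ — is met, and it is, because $\mathcal A$ is normalized by all of $Q\times Q$. This gives $\mathcal D\prec_{\mathcal A\rtimes_\sg{\rm d}(Q)}\mathcal A$? No — Lemma \ref{intertwininglower1} needs $\mathcal B\rtimes_\sg{\rm d}(Q)$ and $\mathcal D$ both inside $\mathcal N=\mathcal A\rtimes_\sg{\rm d}(Q)$, which they are, and needs the \emph{normalizer of the target} $\mathcal B\rtimes_\sg{\rm d}(Q)$ to be large — so I am back to needing normalizers of $\mathcal B\rtimes_\sg{\rm d}(Q)$, not of $\mathcal A$. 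Hence the honest plan is: check directly that $\mathscr N_{\mathcal A\rtimes_\sg(Q\times Q)}(\mathcal B\rtimes_\sg {\rm d}(Q))$ contains $\mathscr N_{\mathcal A}(\mathcal B)$ and $\{v_{(q,q)}\}$, and that these two families together with the \emph{groupoid normalizers} coming from the anti-diagonal partial isometries suffice; concretely, conjugation by $v_{(q,q)}$ and by elements of $\mathcal A$ already generates a von Neumann algebra containing $\mathcal A$ and ${\rm d}(Q)$, i.e. all of $\mathcal A\rtimes_\sg{\rm d}(Q)$ — and then one invokes Lemma \ref{intertwininglower1} in the weaker form where $\mathcal N=\mathcal A\rtimes_\sg{\rm d}(Q)$ is allowed to be strictly smaller than $\mathcal M$, with the hypothesis being only $\mathscr N_{\mathcal M}(\mathcal B\rtimes_\sg{\rm d}(Q))''\supseteq\mathcal N$ rather than $=\mathcal M$. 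Re-reading Lemma \ref{intertwininglower1}: it requires $\mathscr N_{\mathcal M}(\mathcal A)''=\mathcal M$ with $\mathcal A,\mathcal B\subseteq\mathcal N\subseteq\mathcal M$, concluding $\mathcal B\prec_{\mathcal N}\mathcal A$. Its proof only uses normalizers to push $x_i,y_i$ into $\mathcal N$, so in fact it suffices that $\mathscr N_{\mathcal M}(\mathcal A)''\supseteq\mathcal M$, which is automatic, OR — looking again — one genuinely needs enough normalizers of $\mathcal A$ lying in $\mathcal M$ whose span is $\|\cdot\|_2$-dense in $\mathcal M$; for us $\mathcal A\rightsquigarrow\mathcal B\rtimes_\sg{\rm d}(Q)$ and we need its normalizers in $\mathcal M=\mathcal A\rtimes_\sg(Q\times Q)$ to be $\|\cdot\|_2$-dense in $\mathcal M$. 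That fails, as computed above ($v_{(1,q)}$ does not normalize). Therefore the correct final plan: observe $\mathscr N_{\mathcal A\rtimes_\sg(Q\times Q)}(\mathcal B\rtimes_\sg{\rm d}(Q))''\supseteq\mathcal A\rtimes_\sg{\rm d}(Q)=:\mathcal N$, and prove (the trivial strengthening of) Lemma \ref{intertwininglower1}: if $\mathcal A,\mathcal B\subseteq\mathcal N\subseteq\mathcal M$ with $\mathscr N_{\mathcal M}(\mathcal A)''\supseteq\mathcal N$ and $\mathcal B\prec_{\mathcal M}\mathcal A$, then $\mathcal B\prec_{\mathcal N}\mathcal A$ — whose proof is \emph{verbatim} that of Lemma \ref{intertwininglower1}, since the $\|\cdot\|_2$-approximation of $x_i,y_i\in\mathcal M$ by elements of $\mathrm{span}\,\mathscr N_{\mathcal M}(\mathcal A)$ is not needed; rather one approximates $E_{\mathcal N}(x_i),E_{\mathcal N}(y_i)$, uses $E_{\mathcal A}\circ E_{\mathcal N}=E_{\mathcal A}$, and then approximates \emph{those} by normalizers of $\mathcal A$ lying in $\mathcal N$. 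Thus the main obstacle is purely bookkeeping: identifying that the normalizers of $\mathcal B\rtimes_\sg{\rm d}(Q)$ inside $\mathcal A\rtimes_\sg(Q\times Q)$ generate exactly the intermediate algebra $\mathcal A\rtimes_\sg{\rm d}(Q)$ (using regularity of $\mathcal B\subseteq\mathcal A$ for the "$\mathcal A$-direction" and the unitaries $v_{(q,q)}$ for the "${\rm d}(Q)$-direction"), and then running Popa's standard conditional-expectation-composition argument — exactly as in Lemma \ref{intertwininglower1} — relative to $\mathcal N=\mathcal A\rtimes_\sg{\rm d}(Q)$.
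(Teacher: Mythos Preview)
Your final plan has a genuine gap. You want a variant of Lemma \ref{intertwininglower1} in which the hypothesis $\mathscr N_{\mathcal M}(\mathcal P)''=\mathcal M$ is weakened to $\mathscr N_{\mathcal M}(\mathcal P)''\supseteq\mathcal N$, and you claim its proof is ``verbatim'' because one can first replace $x_i,y_i\in\mathcal M$ by $E_{\mathcal N}(x_i),E_{\mathcal N}(y_i)$ using $E_{\mathcal P}=E_{\mathcal P}\circ E_{\mathcal N}$. But that identity only gives $E_{\mathcal P}(x_i d y_i)=E_{\mathcal P}\bigl(E_{\mathcal N}(x_i d y_i)\bigr)$, and $E_{\mathcal N}(x_i d y_i)\neq E_{\mathcal N}(x_i)\,d\,E_{\mathcal N}(y_i)$ in general: the conditional expectation is $\mathcal N$-bimodular, not multiplicative, and $d$ is sandwiched between elements of $\mathcal M\setminus\mathcal N$. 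So the reduction to testers in $\mathcal N$ is not justified, and the normalizers you have produced (which span only $\mathcal N$, not $\mathcal M$) are not enough to run the approximation argument of Lemma \ref{intertwininglower1}.

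The paper does not try to reduce to Lemma \ref{intertwininglower1}. Instead it argues by contradiction: if $\mathcal D\nprec_{\mathcal N}\mathcal P$, pick $(v_n)\subset\mathscr U(\mathcal D)$ with $\|E_{\mathcal P}(a v_n b)\|_2\to 0$ for all $a,b\in\mathcal N$, and show directly that then $\|E_{\mathcal P}(x v_n y)\|_2\to 0$ for all $x,y\in\mathcal M$. Using the decomposition $Q\times Q=(Q\times 1)\cdot{\rm d}(Q)$ this reduces to estimating $E_{\mathcal P}\bigl((u_g\otimes 1)\,c\,v_n\,d\,(u_h\otimes 1)\bigr)$ with $c,d\in\mathcal A$. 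The key computation is the coset intersection ${\rm d}(Q)\cap (g,1){\rm d}(Q)(h,1)\subseteq {\rm d}(C_Q(g))\cdot{\rm d}(s)$, which together with $\sigma$-invariance of $\mathcal B$ lets one bound the above by $\|E_{\mathcal P}(c v_n d')\|_2$ for suitable $d'\in\mathcal N$. This is exactly the ``interaction of $(u_g\otimes 1)$ with ${\rm d}(Q)$'' that you correctly identified as the obstacle, and it is handled by an explicit calculation rather than by an abstract normalizer argument.
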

\begin{proof}
Denote by $\M:= \mathcal A \rtimes_\sg (Q\times Q)$, $\mathcal N:=\mathcal A \rtimes_{\sg} {\rm d}(Q)$, and $\P:=\mathcal B \rtimes_{\sg} {\rm d}(Q)$. Thus $\P\subset \mathcal N\subset \mathcal M$ and with these notations we establish the following 

\begin{claim}\label{convdescend} Let  $(v_n)_n \subset \mathscr U(\mathcal N)$ be a sequence such that 
	$\lim_{n\rightarrow\infty}\|E_{\P}(av_n b)\|_2 = 0$  for all $a,b\in \mathcal N$. Then  \begin{equation}\label{conv1}
	\lim_{n\rightarrow\infty}\|E_{\P}(xv_n y)\|_2= 0\text{ for all }x,y\in \mathcal M. 
	\end{equation}
	
\end{claim}

\emph{Proof of Claim \ref{convdescend}}. Notice that $(Q \times Q) = (Q \times 1) \rtimes_{\rho} {\rm d}(Q)$, where  ${\rm d}(Q)\ca^\rho(Q \times 1)$ is the action by conjugation. Therefore using basic $\|\cdot\|_2$-approximations and the $\P$-bimodularity of the conditional expectation $E_\P$ it suffices to show \eqref{conv1} only for $x=(u_g\otimes 1) c$ and $y=d(u_h\otimes 1)$ for all $g,h\in Q$ and $c,d\in \A$. Under these assumptions we see that \begin{equation}\label{expectestimate}\begin{split}E_{\P}((u_g\otimes 1) c v_n d(u_h\otimes 1)) &=E_{\P}\circ P_{(u_g\otimes 1) \M (u_h\otimes 1) }((u_g\otimes 1) c v_n d(u_h\otimes 1))\\
&=P_{\B({\rm d}(Q)\cap (g,1){\rm d}(Q)(h,1))}((u_g\otimes 1) c v_n d(u_h\otimes 1)).\end{split}
\end{equation}
Here, and throughout the proof for every set $S\subseteq Q\times Q$ we denoted by $P_{\mathcal B S}$ the orthogonal projection onto the closed subspace $\overline{span}\{ \mathcal Bu_g\,|\, g \in S \}$.

To this end observe there exists an element $s\in Q$ such that  ${\rm d}(Q)\cap (g,1){\rm d}(Q)(h,1) \subseteq  [{\rm d}(Q)\cap (g,1){\rm d}(Q)(g^{-1},1)]{\rm d}(s)$. Moreover, a basic computation shows that ${\rm d}(Q)\cap (g,1){\rm d}(Q)(g^{-1},1)= {\rm d}(C_Q(g))$, where $C_Q(g)$ is the centralizer of $g$ in $Q$. Hence altogether we have that ${\rm d}(Q)\cap (g,1){\rm d}(Q)(h,1) \subseteq  {\rm d}(C_Q(g)) {\rm d}(s)$. Combining this with \eqref{expectestimate} and using the fact that $u_g\otimes 1$ normalizes $\B\rtimes {\rm d}(C_Q(g))$ we see that 

\begin{equation}\label{subordinationineq}
\begin{split}\|E_{\P}((u_g\otimes 1) c v_n d(u_h\otimes 1))\|_2&\leq \|P_{\B({\rm d}(C_Q(g)) {\rm d}(s))}((u_g\otimes 1) c v_n d(u_h\otimes 1))\|_2\\
&=\|E_{\B\rtimes {\rm d}(C_Q(g))}((u_g\otimes 1) c v_n d(u_{hs^{-1}}\otimes u_{s^{-1}}))\|_2\\
&=\|E_{\B\rtimes {\rm d}(C_Q(g))}(c v_n d(u_{hs^{-1}g}\otimes u_{s^{-1}}))\|_2\\
&=\|E_{\B\rtimes {\rm d}(C_Q(g))}(c v_n d E_{\mathcal N}(u_{hs^{-1}g}\otimes u_{s^{-1}}))\|_2\\
&= \delta_{hs^{-1}g, s^{-1}} \|E_{\B\rtimes {\rm d}(C_Q(g))}(c v_n d )\|_2\\
&\leq \|E_{\P}(c v_n d )\|_2.\end{split}
\end{equation}
Letting $n\rightarrow \infty$ in \eqref{subordinationineq} and using the hypothesis assumption, the claim obtains. $\hfill\blacksquare$

To show our lemma assume by contradiction that $\mathcal D \nprec_{\mathcal N} \mathcal P $. By Theorem \ref{corner} there is a sequence $(v_n)_n\subset \D\subset \mathcal N$ so that $\lim_{n\rightarrow\infty}\|E_{\P}(av_n b)\|_2 = 0$  for all $a,b\in \mathcal N$. Using Claim \ref{convdescend} we get $\lim_{n\rightarrow\infty}\|E_{\P}(xv_n y)\|_2= 0$ for all $x,y\in \mathcal M$ which by Theorem \ref{corner} again implies that $\mathcal D \nprec_{\mathcal M } \mathcal P$, a contradiction.
\end{proof}
\begin{lemma}\label{intlowertensor} Let $\mathcal C\subseteq \mathcal B$ and $\mathcal N\subseteq \mathcal M$ be inclusions of tracial von Neumann algebras. %such that $\mathcal N\subseteq \mathcal M$  is homogeneous \cite[]{IPP05}. 
If $\mathcal A \subseteq \mathcal N \bten \mathcal B$  is a von Neumann subalgebra such that $\mathcal A\prec_{\mathcal M\bten \mathcal B} \mathcal M \bten \mathcal C$ then $\mathcal A\prec_{\mathcal N\bten \mathcal B} \mathcal N \bten \mathcal C$.
\end{lemma}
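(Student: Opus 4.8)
The plan is to use Popa's criterion in its sequential form (Theorem~\ref{corner}) and argue by contraposition. Concretely, it suffices to prove the implication: if $(u_n)_n\subset\mathscr U(\mathcal A)$ is a sequence with $\|E_{\mathcal N\bten\mathcal C}(au_nb)\|_2\to 0$ for all $a,b\in\mathcal N\bten\mathcal B$, then $\|E_{\mathcal M\bten\mathcal C}(xu_ny)\|_2\to 0$ for all $x,y\in\mathcal M\bten\mathcal B$; taking contrapositives and invoking Theorem~\ref{corner} for both inclusions then gives $\mathcal A\prec_{\mathcal M\bten\mathcal B}\mathcal M\bten\mathcal C\ \Rightarrow\ \mathcal A\prec_{\mathcal N\bten\mathcal B}\mathcal N\bten\mathcal C$.

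The engine of the argument is a ``non-interaction'' identity between the two expectations. Writing $E_{\mathcal M\bten\mathcal C}=\mathrm{id}_{\mathcal M}\bten E_{\mathcal C}$ and $E_{\mathcal N\bten\mathcal C}=E_{\mathcal N}\bten E_{\mathcal C}$, I would first record two elementary facts: (i) $\mathrm{id}_{\mathcal M}\bten E_{\mathcal C}$ preserves the first tensor leg, is trace preserving, and hence restricts on $\mathcal N\bten\mathcal B$ to the conditional expectation $E_{\mathcal N\bten\mathcal C}$; and (ii) $\mathrm{id}_{\mathcal M}\bten E_{\mathcal C}$ is bimodular over $\mathcal M\otimes 1$. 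Combining (i) and (ii) yields, for all $m,m'\in\mathcal M$ and all $z\in\mathcal N\bten\mathcal B$,
\[
E_{\mathcal M\bten\mathcal C}\big((m\otimes 1)\,z\,(m'\otimes 1)\big)=(m\otimes 1)\,E_{\mathcal N\bten\mathcal C}(z)\,(m'\otimes 1),
\]
which one verifies on the $\|\cdot\|_2$-dense algebraic tensor products and extends by $\|\cdot\|_2$-continuity of both sides. (Equivalently, $\mathcal N\bten\mathcal B$ and $\mathcal M\bten\mathcal C$ form a commuting square inside $\mathcal M\bten\mathcal B$ with intersection $\mathcal N\bten\mathcal C$.)

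To finish, given a sequence $(u_n)$ as above I would use the unitarity of the $u_n$ and a routine bounded $\|\cdot\|_2$-approximation to reduce the required vanishing to $x,y$ in the dense $\ast$-subalgebra $\mathcal M\odot\mathcal B$, and then by linearity to $x=(m\otimes 1)(1\otimes b)$ and $y=(1\otimes b')(m'\otimes 1)$ with $m,m'\in\mathcal M$ and $b,b'\in\mathcal B$. For such $x,y$ we have $xu_ny=(m\otimes 1)\,c_n\,(m'\otimes 1)$ where $c_n:=(1\otimes b)\,u_n\,(1\otimes b')\in\mathcal N\bten\mathcal B$ (using $u_n\in\mathcal A\subseteq\mathcal N\bten\mathcal B$ and $1\otimes b,\,1\otimes b'\in\mathcal N\bten\mathcal B$), so the non-interaction identity gives
\[
\|E_{\mathcal M\bten\mathcal C}(xu_ny)\|_2=\big\|(m\otimes 1)\,E_{\mathcal N\bten\mathcal C}(c_n)\,(m'\otimes 1)\big\|_2\le\|m\|\,\|m'\|\,\|E_{\mathcal N\bten\mathcal C}(c_n)\|_2\longrightarrow 0
\]
by the hypothesis on $(u_n)$, as desired.

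I expect the only genuinely substantive step to be the non-interaction identity, i.e.\ getting the bimodularity and the bookkeeping of which tensor leg is affected by which conditional expectation exactly right; the rest is the standard contrapositive use of Theorem~\ref{corner} together with an $L^2$-density and boundedness argument. The one minor technical care is that, in the reduction, the bounded approximants of $x$ and $y$ should be chosen in the right order (the approximant of $y$ small relative to the operator norm of the approximant of $x$) so that the perturbation estimate is uniform in $n$.
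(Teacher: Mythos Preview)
Your proposal is correct and follows essentially the same route as the paper. The paper uses the uniform-inequality form of Theorem~\ref{corner} rather than the sequential/contrapositive form, but the core is identical: approximate the test elements in $\mathcal M\odot\mathcal B$, use the $\mathcal M\otimes 1$-bimodularity of $E_{\mathcal M\bten\mathcal C}$ to reduce to $x_i,y_i\in 1\otimes\mathcal B$, and observe that on $\mathcal N\bten\mathcal B$ one has $E_{\mathcal M\bten\mathcal C}=E_{\mathcal N\bten\mathcal C}$---which is exactly your ``non-interaction identity.''
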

\begin{proof} By Theorem \ref{corner} one can find $x_i,y_i\in \mathcal M\bten \mathcal B$, $i=\overline{1,k}$ and a scalar $c>0$ such that \begin{equation}\label{egn1'}\sum\limits_{i=1}^n \| E_{\mathcal M \bten \mathcal C}(x_i a y_i)\|^2 \geq c \text { for all } d \in \euu(\mathcal A).\end{equation} Using $\|\cdot\|_2$-approximations of $x_i$ and $y_i$ by finite linear combinations of elements in $\mathcal M\bten_{\rm alg} \mathcal B$ together with the $\mathcal M\otimes 1$-bimodularity of $E_{\mathcal M\bten \mathcal C}$, after increasing $k$ and shrinking $c>0$ if necessary, in \eqref{egn1'} we can assume wlog that $x_i,y_i\in 1\otimes \mathcal B$. However, since $\mathcal A\subseteq \mathcal N\bten \mathcal B$ then in this situation we have $E_{\mathcal M \bten \mathcal C}(x_i a y_i)= E_{\mathcal M \bten \mathcal C}\circ E_{\mathcal N \bten \mathcal B}(x_i a y_i)= E_{\mathcal N \bten \mathcal C}(x_i a y_i)$. Thus \eqref{egn1'} combined with Theorem \ref{corner} give $\mathcal A\prec_{\mathcal N\bten \mathcal B} \mathcal N \bten \mathcal C$, as desired.   \end{proof}

In the sequel we need the following (minimal) technical variation of \cite[Lemma 2.6]{CI17}. The proof is essentially the same with the one presented in \cite{CI17} and we leave the details to the reader. 

\begin{lemma}[Lemma 2.6 in \cite{CI17}]\label{maxcorner2} Let $\mathcal P,\mathcal Q\subseteq \mathcal M$ be inclusions of tracial von Neumann algebras. Assume that $\mathscr {QN}^{(1)}_{\mathcal M}(\mathcal P)=P$ and $\mathcal Q$ is a II$_1$ factor. Suppose there is a projection $z\in \mathscr Z(\mathcal P)$ such that $\mathcal P z\prec^s \mathcal Q$ and a projection $p\in \mathcal P z$ such that $p\mathcal Pp=p\mathcal Qp$. Then one can find a unitary $u\in \mathcal M$ such that $u\mathcal Pz u^*=r\mathcal Qr$ where $r=uzu^*\in \mathscr P(\mathcal Q)$. 
\end{lemma}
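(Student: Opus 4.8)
The plan is to follow the standard "spot lemma" argument for upgrading a partial intertwiner to a unitary conjugacy, as in \cite[Lemma 2.6]{CI17}, while using the one-sided quasinormalizer hypothesis $\mathscr{QN}^{(1)}_{\mathcal M}(\mathcal P)=\mathcal P$ to guarantee that the intertwining partial isometry has full support projections. First I would use the hypothesis $\mathcal P z\prec^s_{\mathcal M}\mathcal Q$ together with Theorem \ref{corner} to produce a nonzero partial isometry $v\in\mathcal M$, a projection $q\in\mathcal Q$, and a $\ast$-homomorphism $\theta:\mathcal P z\to q\mathcal Q q$ with $\theta(x)v=vx$ for all $x\in\mathcal P z$; moreover $v^*v\in(\mathcal P z)'\cap z\mathcal M z$ and $vv^*\in\theta(\mathcal P z)'\cap q\mathcal Q q$. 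The key point is to show $v^*v=z$. For this I would argue that since $\mathcal P z\prec^s\mathcal Q$, one may choose $v$ so that $v^*v$ is as large as possible in $\mathscr Z(\mathcal P z)'\cap z\mathcal M z$; the standard maximality/exhaustion argument shows that after replacing $z$ by the central support of $v^*v$ (which, using $\mathcal P z\prec^s\mathcal Q$ and that $\mathcal Q$ is a factor, one checks is all of $z$) we may assume $v^*v$ has central support $z$ in $\mathcal M$. Then the relation $\mathcal P z\, v^*\subseteq v^*(\theta(\mathcal P z)v v^*)\subseteq v^*\mathcal Q vv^*$ shows $v^*\in\mathscr{QN}^{(1)}_{\mathcal M}(\mathcal P)\,$ after suitable cutting, and the hypothesis $\mathscr{QN}^{(1)}_{\mathcal M}(\mathcal P)=\mathcal P$ forces $v^*\in\mathcal P$, hence $v^*v, vv^*\in\mathcal P$; combined with the central support computation this pins down $v^*v=z$, so $v$ is a partial isometry with $v^*v=z$ and $\theta:\mathcal P z\to p'\mathcal Q p'$ a $\ast$-isomorphism onto $\theta(\mathcal P z)$ with $p'=vv^*$.

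Next I would bring in the projection $p\in\mathcal P z$ with $p\mathcal P p=p\mathcal Q p$. Set $q_0=\theta(p)\leq p'$. Because $\theta$ is an isomorphism onto its image and intertwines via $v$, we get $v p v^*=q_0$ and $q_0\theta(\mathcal P z)q_0=q_0\mathcal Q q_0$ after identifying $p\mathcal P p\cong q_0\theta(\mathcal P z)q_0$ through $\theta$; using $p\mathcal P p=p\mathcal Q p$ and that $\mathcal Q$ is a II$_1$ factor, $q_0$ and $q_0$-cut corners have the same dimension data, and a comparison of projections inside the factor $\mathcal Q$ together with the fact that $\theta(\mathcal P z)q_0 = q_0\mathcal Q q_0$ lets me enlarge $\theta(\mathcal P z)$ to all of $p'\mathcal Q p'$, i.e. conclude $\theta(\mathcal P z)=p'\mathcal Q p'$. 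Concretely: $\theta(\mathcal P z)'\cap p'\mathcal Q p'$ is a subalgebra of $\mathcal Q$ which, restricted below $q_0$, is $q_0(\theta(\mathcal P z)'\cap p'\mathcal Q p')q_0 = \mathscr Z(q_0\mathcal Q q_0)=\mathbb C q_0$; since $q_0$ has central support $p'$ in $p'\mathcal Q p'$ (both being corners of the factor $\mathcal Q$), this relative commutant is trivial, and $\theta(\mathcal P z)\subseteq p'\mathcal Q p'$ with trivial relative commutant and containing a corner $q_0\mathcal Q q_0$ of the factor must be everything.

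Finally, having $v^*v=z$, $vv^*=p'\in\mathscr P(\mathcal Q)$, and $v(\mathcal P z)v^*=p'\mathcal Q p'$, I would extend $v$ to a unitary $u\in\mathcal M$ with $u z u^*=p'$ (possible since $z$ and $p'$ are equivalent projections in $\mathcal M$ via $v$, and $1-z\sim 1-p'$ because $\mathcal M$ is a factor — or, if $z\ne 1$, after checking the complementary projections are equivalent using factoriality of $\mathcal M$, which follows since $\mathcal Q\subseteq\mathcal M$ is a II$_1$ factor and $\mathscr{QN}^{(1)}_{\mathcal M}(\mathcal P)=\mathcal P$ forces $\mathcal M$ to be a factor in the relevant situations) and with $u x = v x$ for $x\in\mathcal P z$. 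Then $u\mathcal P z u^*=v(\mathcal P z)v^*=p'\mathcal Q p'=r\mathcal Q r$ with $r=uzu^*=p'$, which is the desired conclusion. The main obstacle I anticipate is the central-support bookkeeping in the first paragraph: carefully justifying that the partial isometry $v$ can be taken with $v^*v$ of full central support $z$ and then deducing $v^*v=z$ from $\mathscr{QN}^{(1)}_{\mathcal M}(\mathcal P)=\mathcal P$ — this is exactly where the one-sided quasinormalizer hypothesis does its work, and where the proof in \cite{CI17} should be followed closely.
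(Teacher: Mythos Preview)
Your proposal has genuine gaps at the two key steps.

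\textbf{First paragraph.} The quasinormalizer argument does not work as written. From $\theta(x)v=vx$ you get $\mathcal P z\,v^*\subseteq v^*\mathcal Q$, but the definition of $\mathscr{QN}^{(1)}_{\mathcal M}(\mathcal P)$ requires $\mathcal P\,v^*\subseteq\sum_i x_i\,\mathcal P$, i.e.\ a finite \emph{right $\mathcal P$-module}, not a $\mathcal Q$-module. Since $\mathcal Q$ is not assumed to sit inside $\mathcal P$ (indeed it typically does not), you cannot conclude $v^*\in\mathscr{QN}^{(1)}_{\mathcal M}(\mathcal P)$, and the deduction $v^*v=z$ collapses. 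This is precisely the step where the hypothesis $p\mathcal P p=p\mathcal Q p$ must enter, and you have not used it yet.

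\textbf{Second paragraph.} The argument that $\theta(\mathcal P z)=p'\mathcal Q p'$ is invalid on two counts. First, $\theta$ carries $p\mathcal P p$ to $q_0\,\theta(\mathcal P z)\,q_0$, \emph{not} to $q_0\mathcal Q q_0$; the hypothesis $p\mathcal P p=p\mathcal Q p$ is a statement about the corner of $\mathcal M$ at the projection $p$ itself, and $\theta$ moves $p$ to a different projection $q_0$, so you cannot transport the equality. Second, even granting trivial relative commutant $\theta(\mathcal P z)'\cap p'\mathcal Q p'=\mathbb C p'$, an irreducible inclusion of II$_1$ factors need not be an equality --- that is the entire content of subfactor theory. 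Finally, the last paragraph assumes $\mathcal M$ is a factor, which is not among the hypotheses.

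\textbf{What the paper actually does.} The paper leaves the proof to \cite{CI17}, but the argument is carried out explicitly in the last part of the proof of Theorem \ref{toproductgroupcorners} (after equation \eqref{equality5}). The structure is: from $\mathcal P z\prec^s\mathcal Q$ and factoriality of $\mathcal Q$ one first obtains a unitary $\xi$ with $\xi(\mathcal P z)\xi^*\subseteq r_2\mathcal Q r_2$, $r_2=\xi z\xi^*$. Then for an arbitrary small projection $e_o\leq z$ in $\mathcal P z$ one picks $f_o\leq p$ with $\tau(f_o)\geq\tau(e_o)$ and a unitary $\xi_o\in\mathcal Q$ aligning the images; using the \emph{corner equality} $f_o\mathcal P f_o=f_o\mathcal Q f_o$ (inherited from $p\mathcal P p=p\mathcal Q p$) one shows, by decomposing $z=e_o+p_1+\cdots+p_s$ with each $p_i$ subequivalent to $e_o$ in $\mathcal P z$, that $\mu^*\xi_o^*\xi e_o\,\mathcal P\subseteq\sum_i \mathcal P\,\mu^*\xi_o^*\xi u_i$ for finitely many $u_i\in\mathcal P$. \emph{This} is where the one-sided quasinormalizer hypothesis is invoked: it forces $\mu^*\xi_o^*\xi e_o\in\mathcal P$, from which one computes $\xi e_o\mathcal P e_o\xi^*=r_o\mathcal Q r_o$ for every such $e_o$. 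Lemma \ref{equalcorners} then upgrades these small-corner equalities to $\xi(\mathcal P z)\xi^*=r_2\mathcal Q r_2$. The crucial point you are missing is that the quasinormalizer is fed with elements manufactured from the corner equality $p\mathcal P p=p\mathcal Q p$, not from the bare intertwining relation.
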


The next lemma is a mild generalization of \cite[Proposition 7.1]{IPV10}, using the same techniques (see also the proof of \cite[Lemma 2.3]{KV15}).

\begin{lemma} \label{comultsubgp}
	Let $\La$ be an icc group, and let $\emm=\El(\La)$. Consider the comultiplication map $\Delta: \emm \rightarrow \emm \tp \emm$ given by $\Delta(v_{\lam})= v_{\lam} \otimes v_{\lam}$ for all $\lam \in \La$.
	Let $\mathcal A, \mathcal B \subseteq \emm$ be a (unital) $\ast$-subalgebras such that $\Delta(\mathcal A) \subseteq \emm \tp \mathcal B$. Then there exists a subgroup $\Sg < \La$ such that $\mathcal A \subseteq \El(\Sg) \subseteq \mathcal B$. In particular, if $\mathcal A= \mathcal B$, then $\mathcal A= \El(\Sg)$.
\end{lemma}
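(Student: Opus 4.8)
The plan is to exploit the special structure of the comultiplication $\Delta$, namely that it sends each canonical unitary $v_\lambda$ to the elementary tensor $v_\lambda\otimes v_\lambda$, and to read off the desired subgroup $\Sigma$ from those $\lambda$ whose associated unitary ``lands in $\mathcal B$'' under the second leg. Concretely, set $\Sigma=\{\lambda\in\La : v_\lambda\in\mathcal B\}$; since $\mathcal B$ is a unital $*$-subalgebra and $v_{\lambda}v_{\mu}=v_{\lambda\mu}$, $v_\lambda^*=v_{\lambda^{-1}}$, this is immediately a subgroup of $\La$, and $\El(\Sigma)\subseteq\mathcal B$ by definition (the weak closure of the span of the $v_\lambda$, $\lambda\in\Sigma$, sits inside $\mathcal B$ because $\mathcal B$ is weakly closed). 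So the whole content is the inclusion $\mathcal A\subseteq\El(\Sigma)$.

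First I would expand a general element $a\in\mathcal A$ in its Fourier series $a=\sum_{\lambda\in\La}a_\lambda v_\lambda$ with $a_\lambda\in\mathbb C$ and $\sum|a_\lambda|^2=\|a\|_2^2<\infty$. Applying $\Delta$ gives $\Delta(a)=\sum_\lambda a_\lambda\, v_\lambda\otimes v_\lambda$, which is precisely the Fourier decomposition of $\Delta(a)$ with respect to the orthonormal system $\{v_\lambda\otimes v_\mu\}_{(\lambda,\mu)\in\La\times\La}$ in $L^2(\emm\tp\emm)=L^2(\El(\La\times\La))$; it is supported on the diagonal $\{(\lambda,\lambda)\}$. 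On the other hand, the hypothesis $\Delta(\mathcal A)\subseteq\emm\tp\mathcal B$ forces $\Delta(a)\in L^2(\emm\tp\mathcal B)$, and the Fourier support of any element of $\emm\tp\mathcal B$ (again in the $\{v_\lambda\otimes v_\mu\}$ basis) is contained in $\La\times\Sigma$, because $\emm\tp\mathcal B$ is the $\|\cdot\|_2$-closure of $\emm\otimes_{\rm alg}\mathcal B$ and every element of $\mathcal B$ has Fourier support in $\Sigma$. Intersecting the two support conditions: the Fourier coefficient $a_\lambda$ of $\Delta(a)$ at $(\lambda,\lambda)$ can be nonzero only if $(\lambda,\lambda)\in\La\times\Sigma$, i.e.\ only if $\lambda\in\Sigma$. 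Hence $a=\sum_{\lambda\in\Sigma}a_\lambda v_\lambda\in\El(\Sigma)$, giving $\mathcal A\subseteq\El(\Sigma)$.

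The one point needing genuine care — and the step I would treat most carefully — is the claim that membership of an operator $T\in\emm\tp\mathcal B$ constrains its Fourier support to $\La\times\Sigma$; this is where the icc hypothesis on $\La$ (guaranteeing $\emm$ is a II$_1$ factor with a faithful trace, so that the Fourier/$L^2$ machinery and the identification $L^2(\El(\La\times\La))\cong L^2(\emm)\bar\otimes L^2(\emm)$ are legitimate) is used. I would justify it by noting that for $x\in\emm$ and $b\in\mathcal B$ one has $x\otimes b=\sum_{\mu}b_\mu\,(x\otimes v_\mu)=\sum_{\mu\in\Sigma}b_\mu\,(x\otimes v_\mu)$ with $\|\cdot\|_2$-convergence, so $x\otimes b$ has support in $\La\times\Sigma$; finite sums of such are dense in norm-$\|\cdot\|_2$ in $\emm\tp\mathcal B$, and the support condition passes to $\|\cdot\|_2$-limits since ``having Fourier support in a fixed set $S$'' is the same as lying in the range of the orthogonal projection $P_S$ onto $\overline{\mathrm{span}}\{v_\lambda\otimes v_\mu:(\lambda,\mu)\in S\}$, a closed subspace. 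Finally, for the last sentence: if $\mathcal A=\mathcal B$ then the above yields $\mathcal A\subseteq\El(\Sigma)\subseteq\mathcal B=\mathcal A$, so $\mathcal A=\El(\Sigma)$. I expect no serious obstacle beyond bookkeeping with Fourier supports; the argument is a direct adaptation of \cite[Proposition 7.1]{IPV10}.
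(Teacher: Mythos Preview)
Your definition of $\Sigma=\{\lambda\in\La:v_\lambda\in\mathcal B\}$ and the verification that $\El(\Sigma)\subseteq\mathcal B$ are fine and match the paper. The gap is in your key step: you assert that ``every element of $\mathcal B$ has Fourier support in $\Sigma$'', i.e.\ that $\mathcal B\subseteq\El(\Sigma)$. This is circular and in general false. For a concrete counterexample, take any $g\in\La$ with $g\neq g^{-1}$ and let $\mathcal B=\{v_g+v_{g^{-1}}\}''$; this is a diffuse abelian subalgebra, yet $v_g\notin\mathcal B$ (in $\El(\langle g\rangle)\cong L^\infty(\mathbb T)$, the element $v_g+v_{g^{-1}}$ corresponds to $2\cos\theta$, which does not generate $e^{i\theta}$). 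Here $\Sigma=\{e\}$, so $\El(\Sigma)=\mathbb C\subsetneq\mathcal B$, and elements of $\mathcal B$ certainly do not have Fourier support in $\Sigma$. Consequently your claim that $\emm\tp\mathcal B$ has Fourier support in $\La\times\Sigma$ collapses.

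The paper avoids this by a slice-map argument rather than a support argument: for $a\in\mathcal A$ with Fourier expansion $a=\sum_\lambda a_\lambda v_\lambda$ and any $s$ with $a_s\neq 0$, apply the normal functional $\omega(x)=\bar a_s\,\tau(xv_s^*)$ on the first tensor leg. Since $\Delta(a)\in\emm\tp\mathcal B$ and $\omega$ is normal, $(\omega\otimes 1)\Delta(a)\in\mathcal B$; but $(\omega\otimes 1)\Delta(a)=\sum_\lambda a_\lambda\,\omega(v_\lambda)\,v_\lambda=|a_s|^2 v_s$, hence $v_s\in\mathcal B$ and $s\in\Sigma$. This uses only that $\emm\tp\mathcal B$ is closed under normal slices, not any a priori control on Fourier supports of $\mathcal B$.
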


\begin{proof} Let $\Sg=\{s \in \La: v_s \in \mathcal B\}$. Since $\mathcal B$ is a unital $\ast$-subalgebra, $\Sg$ is a subgroup, and clearly $\El(\Sg) \subseteq \mathcal B$. We argue that $\mathcal A \subseteq \El(\Sg)$.
	
Fix $a \in \mathcal A$, and let $a = \sum_{\lam}a_{\lam}v_{\lam}$ be its Fourier decomposition. Let $I= \{ s\in \La: a_s \neq 0\}$. Fix $s \in I$, and consider the normal linear functional $\omega$ on $\emm$ given by $\omega(x)= \bar{a_s} \tau(xv_s^{\ast})$. Note that $(\omega \otimes 1)(a)= |a_s|^2 \otimes v_s$
Since $\Delta(\mathcal A) \subseteq \emm \tp \mathcal B$, we have that $(\omega \otimes 1)\Delta(\mathcal A) \subseteq \mathbb C \tp \mathcal B$. Thus, $v_s \in \mathcal B \Rightarrow s \in \Sg$. Since this holds for all $s \in I$, we get that $a \in \El(\Sg)$, and hence we are done.
\end{proof}

We end this section with the following elementary result. We are grateful to the  referee for suggesting a (much) shorter proof than the one we originally had, which used \cite[Proposition 2.3]{CD18}. 

\begin{lemma}\label{equalcorners} Let $\mathcal M$ be a tracial von Neumann algebra and let $\mathcal N$ be a type II$_1$ factor, with $\mathcal N \subseteq \mathcal M$ a unital inclusion. If there is $p\in \mathscr P(\mathcal N)$ so that $p\mathcal Np=p\mathcal Mp$ then $\mathcal N=\mathcal M$.
	\end{lemma}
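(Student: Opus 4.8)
The plan is to use the standard trick of building an increasing family of projections via the II$_1$ factor structure of $\mathcal N$ and comparing traces. First I would fix $p \in \mathscr P(\mathcal N)$ with $p\mathcal Np = p\mathcal Mp$; we want to show $\mathcal N = \mathcal M$. The key observation is that the hypothesis already tells us $p\mathcal Np = p\mathcal Mp$, so in particular $p$ is a projection in $\mathcal M$ whose corner is contained in $\mathcal N$. Since $\mathcal N$ is a II$_1$ factor, I can choose a partition of the identity $1 = \sum_{i=1}^{n} p_i$ into projections $p_i \in \mathscr P(\mathcal N)$ each with trace $\tau(p_i) \le \tau(p)$, together with partial isometries $v_i \in \mathcal N$ with $v_i^*v_i = p_i$ and $v_iv_i^* \le p$ (this uses only that $\mathcal N$ is a II$_1$ factor, hence has projections of every trace and comparability of projections).

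Next I would leverage these partial isometries to transport an arbitrary $x \in \mathcal M$ into the corner $p\mathcal Mp$. For any $x \in \mathcal M$ and indices $i,j$, the element $v_i x v_j^*$ lies in $p\mathcal Mp = p\mathcal Np \subseteq \mathcal N$. Therefore $x = \sum_{i,j} v_i^* (v_i x v_j^*) v_j \in \mathcal N$, since each $v_i, v_j \in \mathcal N$ and each $v_i x v_j^* \in \mathcal N$. This shows $\mathcal M \subseteq \mathcal N$, and combined with the standing assumption $\mathcal N \subseteq \mathcal M$ we conclude $\mathcal N = \mathcal M$.

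The only point requiring a little care — and the step I'd expect to be the (mild) main obstacle — is justifying the existence of the partition $1 = \sum_i p_i$ with all $\tau(p_i) \le \tau(p)$ and the matching partial isometries into $p$: this is where the II$_1$ factor hypothesis on $\mathcal N$ is essential, since one needs both that projections of arbitrarily small trace exist in $\mathcal N$ and that any such projection is (Murray–von Neumann) subequivalent in $\mathcal N$ to $p$. Concretely, pick $n$ large enough that $1/n \le \tau(p)$, split $1$ into $n$ equivalent projections $p_i$ of trace $1/n$ in $\mathcal N$, and use factoriality to get $v_i \in \mathcal N$ with $v_i^*v_i = p_i$, $v_iv_i^* \le p$. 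Everything else is the routine reassembly computation above; no deformation/rigidity input is needed here, which is consistent with the remark that the referee supplied a short proof.
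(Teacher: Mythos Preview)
Your proposal is correct and is essentially identical to the paper's proof: the paper shrinks $p$ to a subprojection of trace $1/n$ and takes partial isometries $v_i\in\mathcal N$ with $v_iv_i^*=p$ and $\sum_i v_i^*v_i=1$, then uses the same reassembly $x=\sum_{i,j}v_i^*(v_ixv_j^*)v_j\in\mathcal N$. The only cosmetic difference is that you keep the original $p$ and allow $v_iv_i^*\le p$, whereas the paper first replaces $p$ by a smaller projection so that $v_iv_i^*=p$ exactly; neither variant requires any extra work.
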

	
\begin{proof}
	Shrinking $p$ if necessary we can assume $\tau(p)=1/n$. Let $v_1,...,v_n \in \mathcal N$ be partial isometries such that $v_iv_i^* = p$, for all $i$, and $\sum^n_{i=1} v_i^*v_i = 1$. Fix  $x \in \M$. Since for every $1\leq i,j\leq n$ we have $v_ixv_j^* \in p\M p = p\mathcal N p$ we get $x = \sum_{i,j=1}^n v_i^*(v_ixv_j^*)v_j \in \mathcal N$, as desired. \end{proof}

%%%%%%%%%%%%%%%%%%%%%%%%%%%%%%%%%%%%%%%%%%%%%%%%%%%%%%%%%%%%%%%%%%%%%%%%%%%%%%%%%%%%%%%%%%%%%%%%%%%%%%%%%%%%%%%%%%%%%%%%%%%%%%%%%%%%%%%%%%%%%%%%55
%%%%%%%%%%%%%%%%%%%%%%%%%%%%%%%%%%           SMALL CANCELLATION TECHNIQUES      %%%%%%%%%%%%%%%%%%%%%%%%%%%%%%%%%%%%%%%%%%%%%%%%%%%%%%%%%%%%%%%%%
%%%%%%%%%%%%%%%%%%%%%%%%%%%%%%%%%%%%%%%%%%%%%%%%%%%%%%%%%%%%%%%%%%%%%%%%%%%%%%%%%%%%%%%%%%%%%%%%%%%%%%%%%%%%%%%%%%%%%%%%%%%%%%%%%%%%%%%%%%%%%%%%55

\subsection{Small Cancellation Techniques}\label{smallcancellation}
In this section, we recollect some geometric group theoretic preliminaries that will be used throughout this paper. We refer the reader to the book \cite{Ol91} and the papers \cite{OL93, OOS07} for more details related to the small cancellation techniques. We also refer the reader to the book \cite{LS77} for details concerning van Kampen diagrams.
\subsubsection{van Kampen Diagrams} \label{secvankampen}
Given a word $W$ over the alphabet set $\mathcal{S}$, we denote its length by $\|W\|$. We also write $W\equiv V$ to express the letter-for-letter equality for words $W,V$. \par 
Let $G$ be a group generated by a set of alphabets $\mathcal{S}$. A van Kampen diagram $\triangle$ over a presentation 
\begin{align} \label{vankampen}
G=\langle S|\mathcal{R}\rangle
\end{align} 
is a finite, oriented, connected, planar 2-complex endowed with a labeling function $Lab:E(\triangle)\rightarrow \mathcal{S}^{\pm 1}$, where $E(\triangle)$ denotes the set of oriented edges of $\triangle$, such that $Lab(e^{-1})\equiv (Lab(e))^{-1}$. Given a cell $\Pi$ of $\triangle$, $\partial \Pi$ denotes its boundary. Similarly $\partial \triangle$ denotes the boundary of $\triangle$. The labels of $\partial \triangle$ and $\partial \Pi$ are defined up to cyclic permutations. We also stipulate that the label for any cell $\Pi$ of $\triangle$ is equal to (up to a cyclic permutation) $R^{\pm 1}$, where $R\in\ \mathcal{R}$.\par 
Using the van Kampen lemma (\cite[Chapter 5, Theorem 1.1]{LS77}), a word $W$ over the alphabet set $\mathcal{S}$ represents the identity element in the group given by the presentation \eqref{vankampen} if and only if there exists a connected,  simply-connected planar diagram $\triangle$ over \eqref{vankampen} satisfying $Lab(\partial\triangle)\equiv W$.
\subsubsection{Small Cancellation over Hyperbolic Groups}
Let $G=\langle X\rangle$ be a finitely generated group and $X$ be a finite generating set for G. Recall that the Cayley graph $\G(G,X)$ of a group G with respect to the set of generators X is an oriented labeled 1–complex with the vertex set $V(\G(G,X))=G$ and the edge set $E(\G(G,X))=G\times X^{\pm 1}$.  An edge $e=(g,a)$ goes from the vertex g to the vertex ga and has label a. Given a combinatorial path $p$ in the Cayley graph $\G(G,X)$, the length $|p|$ is the number of edges in p. The word length $|g|$ of an element $g\in G$ with respect to the generating set $X$ is defined to be the length of a shortest word in $X$ representing $g$ in the group $G$ ie, $|g|:=\min_{h=_Gg}\|h\|$ . The formula $d(f,g)=|g^{-1}f|$ defines a metric on the group $G$. The metric on the Cayley graph $\Gamma(G,X)$ is the natural extension of this metric. A word $W$ is called a $(\lam,c)$-quasi geodesic in $\Gamma(G,X)$ for some $\lam>0,c\geq 0$ if $\lam\|W\|-c\leq |W|\leq \lam\|W\|+c$. A word $W$ is called a geodesic if it is a $(1,0)$-quasi geodesic. A word $W$ in the alphabet $X^{\pm 1}$ is called $(\lambda,c)$-quasi geodesic (respectively geodesic) in $G$ if any path in the Cayley graph $\Gamma(G,X)$ labeled by $W$ is $(\lambda,c)$-quasi geodesic (respectively geodesic). %\textcolor{red}{definition of geodesic}. 
Throughout this section, $\mathcal{R}$ denotes a symmetric set of words (i.e. it is closed under taking cyclic shifts and inverses of words; and all the words are cyclically reduced) from $X^*$%\textcolor{red}{define this}
, the set of words on the alphabet $X$. A common initial sub-word of any two distinct words in $\mathcal{R}$ is called a piece. We say that $\mathcal R$ satisfies the $C'(\mu)$ condition if any piece contained (as a sub-word) in a word $R\in\mathcal{R}$ has length smaller than $\mu\|R\|$.  

\begin{definition}\cite[Section 4]{OL93} \label{eppiece}
	A subword $U$ of a word $R\in \mathcal{R}$ is called an $\epsilon$-piece of the word $R$, for $\epsilon\geq 0$, if there exists a word $R'\in \mathcal{R}$ satisfying the following conditions: 
	\begin{enumerate} 
	\item[(1)] $R\equiv UV$ and $R^{\prime}\equiv U^{\prime}V^{\prime}$ for some $U^{\prime},V^{\prime}\in \mathcal{R}$;
	\item[(2)] $U^{\prime}=_G YUZ$ for some $Y,Z\in X^*$ where $\| Y\|,\| Z\| \leq \epsilon$;
	\item[(3)] $YRY^{-1}{\neq}_G R^{\prime}$.
\end{enumerate}
	We say that the system $\mathcal R$ satisfies the $C(\lambda,c,\epsilon,\mu,\rho)$-condition for some $\lambda\geq 1,c\geq 0,\epsilon\geq0,\mu>0,\rho>0$ if:
	\begin{enumerate} 
	\item[(a)] $\| R\| \geq \rho$ for any $R\in \mathcal{R}$;
	\item[(b)] Any word $R\in \mathcal{R}$ is a $(\lambda,c)$-quasi geodesic;
   \item[(c)] For any $\epsilon$-piece  $U$ of any word $R\in \mathcal{R}$, the inequalities $\| U\|,\| U^{\prime } \| <\mu \| R\|$ hold.
   \end{enumerate}
\end{definition}
In practice, we will need some slight modifications of the above definition \cite[Section 4]{OL93}. 
\begin{definition} \label{epprpc}
	A subword $U$ of a word $R\in \mathcal{R}$ is called an $\epsilon'$-piece of the word $R$, for $\epsilon\geq 0$, if:
	\begin{enumerate} 
	\item[(1)]  $R\equiv UVU^{\prime} V^{\prime}$ for some $V,U^{\prime} ,V^{\prime}\in X^*$,
	\item[(2)] $U^{\prime}=_GYU^{\pm}Z$ for some words $Y,Z\in X^*$ where $\| Y\| ,\| Z\| \leq \epsilon$.
\end{enumerate}
	We say that the system $\mathcal R$ satisfies the $C'(\lambda,c,\epsilon,\mu,\rho)$-condition  for some $\lambda\geq 1,c\geq 0,\epsilon \geq0,\mu>0,\rho>0$ if :
	\begin{enumerate} 
	\item[(d)] $\mathcal R$ satisfies the $C(\lambda,c,\epsilon,\mu,\rho)$ condition, and 
	\item[(e)] Every $\epsilon '$-piece $U$ of $R$ satisfies $||U'|| < \mu ||R||$, where $U'$ is as above.
\end{enumerate}
\end{definition}

Let $G$ be a group defined by 
\begin{align}\label{hyppresent}
G=\langle X|\mathcal{O}\rangle,
\end{align}
where $\mathcal{O}$ is the set of \textit{all} relators (not just the defining relations) of $G$. Given a symmetrized set of words $\mathcal{R}$ in the alphabet set $X$, we consider the quotient group ,
\begin{align}\label{representation}
H=\langle G|\mathcal{R}\rangle =\langle G|\mathcal{O}\cup \mathcal{R}\rangle.
\end{align} 
A cell over a van Kampen diagram over \eqref{representation} is called an $\mathcal{R}$-cell (respectively, an $\mathcal{O}$-cell) if its boundary label is a word from $\mathcal{R}$ (respectively, $\mathcal{O}$). We always consider a van Kampen diagram over \eqref{representation} up to some elementary transformations. For example we do not distinguish diagrams if one can be obtained from other by joining two distinct $\mathcal{O}$-cells having a common edge or by inverse transformations (\cite[Section 5]{OL93}).

%%%%%%%%%%%%%%%%%%%%%%%%%%%%%%%%%%%%%%%%%%%%%%%%%%%%%%%%%%%%%%%%%%%%%%%%%%%%%%%%%%%%%%%%%%%%%%%%%%%%%%%%%%%%%%%%%%%%%%%%%%%%%%%%%%%%%%%%%%%%%%%%55
\section{Some Examples of Ol'shanskii's Monster Groups in the Context of Lacunary Hyperbolic Groups}
%Change name of section?
In this section, we collect some group theoretic results needed for our main theorems in Sections \ref{sec4} and \ref{superrigidRIPS}. Readers who are mainly interested in the results in Section \ref{superrigidRIPS} may skip ahead to Subsection \ref{Rip}. The results in Subsections \ref{subsec3.1} and \ref{subsec3.2} shall be required for our main results in Section \ref{sec4}. \\
In order to derive our main results on the study of maximal von Neumann algebras (i.e.\ Theorem \ref{maximalvN}) we need to construct a new monster-like group in the same spirit with Ol'shanskii's famous examples from \cite{OL80}. Specifically, generalizing the geometric methods from \cite{OL93} to the context of lacunary hyperbolic groups \cite{OOS07} and using techniques developed by the third author from \cite{K19} we construct a group $G$ such that every maximal subgroup of $G$ is isomorphic to a subgroup of $\mathbb Q$, the group of rational numbers. While in our approach we explain in detail how these results are used, the main emphasis will be on the new aspects of these techniques. Therefore we recommend the interested reader to consult beforehand the aforementioned results \cite{OL93,K19}.  
\subsection{Elementary subgroups} \label{subsec3.1}

In this section, using methods developed in \cite{OL93}, we construct a group $Q$ whose maximal (proper) subgroups are rank $1$ abelian groups, see Theorem ~\ref{monster}. More specifically, we study "special limits" of hyperbolic groups, called lacunary hyperbolic groups, as introduced in \cite{OOS07}. 
\begin{definition}
	Let $\alpha: G\rightarrow H$ be a group homomorphism and $G=\langle A\rangle ,H=\langle B \rangle$. The injectivity radius $r_A(\alpha)$ is the radius of largest ball centered at identity of $G$ in the Cayley graph of $G$ with respect to $A$ on which the restriction of $\alpha$ is injective.
\end{definition} 
\begin{definition} \cite[Theorem 1.2]{OOS07}
	A finitely generated group $G$ is called lacunary hyperbolic group if $G$ is the direct limit of a sequence of  hyperbolic groups and epimorphisms;
	\begin{align}\label{lacunarylim} 
	G_1\overset{\eta_1}{\longrightarrow} G_{2}\overset{\eta_{2}}{\longrightarrow}\cdots G_i\overset{\eta_i}{\longrightarrow} G_{i+1}\overset{\eta_{i+1}}{\longrightarrow}G_{i+2}\overset{\eta_{i+2}}{\longrightarrow}\cdots 
	\end{align}
	where $G_i$ is generated by a finite set $ S_i $ and $\eta_i(S_i)=S_{i+1}$. Also $G_i$'s are $\delta_i$-hyperbolic where $\delta_i$=$o(r_{S_i}(\eta_i))$ , where $r_{S_i}(\eta_i)$=injective radius of $\eta_i$ w.r.t. $S_i$ .
\end{definition}
Fix $\omega$ a nonprincipal ultrafilter.  An asymptotic cone Cone$^{\omega}(X,e,d)$ of a metric space $(X,dist)$ where $e=\{e_i\}_i$, $e_i\in X$ for all $i$ and $d=\{d_i\}_i$ is an unbounded sequence of nondecreasing positive real numbers, is the $\omega$-limit of the spaces $(X,\frac{dist}{d_i})$. The sequence $d=\{d_i\}$ is called a scaling sequence. Following \cite[Theorem~3.3]{OOS07}, $G$ being lacunary hyperbolic group, is equivalent to the existence of a scaling sequence $d=\{d_i\}$ such that the asymptotic cone Cone$^{\omega}(\G(G,X),e,d)$ associated with the Cayley graph $\G(G,X)$ for a finite generating set $X$ of $G$ with $e=\{identity\}$ is an $\mathbb R$-tree for any nonprincipal ultrafilter $\omega$. For more details on asymptotic cones and their connection with lacunary hyperbolic groups we refer the reader to \cite[Section~2.3, Section~3.1]{OOS07}.

Our construction relies heavily on the notion of \textit{elementary} subgroups. For the readers' convenience, we collect below some preliminaries regarding elementary subgroups.
\begin{definition} \label{elsubhyp}
	A group $E$ is called elementary if it is virtually cyclic.	Let $G$ be a hyperbolic group and $g\in G$ be an infinite order element. Then the elementary subgroup containing $g$ is defined as
	\begin{center}
		$E(g):=\{x\in G| \ x^{-1}g^nx=g^{\pm n} \ for\ some \ n=n(x)\in \mathbb{N} \}$.
	\end{center}
\end{definition}

For further use we need the following result describing in depth the structure of elementary subgroups.
\begin{lemma}\label{elstruc}
	
	$1)$ \cite{Ol91}
	If $E$ is a torsion free elementary group then $E$ is cyclic. \\		
	$2)$ \cite[Lemma 1.16]{OL93} \label{elstruc2}
	Let $E$ be an infinite elementary group. Then $E$ contains normal subgroups $T\lhd E^{+}\lhd E$ such that $[E:E^{+}]\leq 2$ , $T$ is finite and $E^{+}/T\simeq \mathbb{Z}$. If $E\neq E^+$ then $E/T\simeq D_{\infty}$(infinite dihedral group). For a hyperbolic group $G$, $E(g)$ is unique maximal elementary subgroup of $G$ containing the infinite order element $g\in G$.
	
\end{lemma}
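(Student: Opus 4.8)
The statement to prove is Lemma~\ref{elstruc}, which collects structural facts about elementary (virtually cyclic) groups. Part~$1)$ asserts that a torsion-free elementary group is infinite cyclic; part~$2)$ describes the subgroup structure $T \lhd E^+ \lhd E$ of an arbitrary infinite elementary group, together with the uniqueness of the maximal elementary subgroup $E(g)$ containing an infinite-order element $g$ in a hyperbolic group. Since these are classical results attributed to Ol'shanskii \cite{Ol91, OL93}, the plan is to assemble the standard arguments rather than invent anything new; below I sketch how I would organize them.

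For part~$1)$: a torsion-free elementary group $E$ is virtually $\mathbb{Z}$, so it contains a finite-index infinite cyclic subgroup $C \cong \mathbb{Z}$. The plan is to pass to the normal core of $C$ in $E$, which is still infinite cyclic (a nontrivial subgroup of $\mathbb{Z}$) and is now normal of finite index; call it $C_0 = \langle c \rangle$. Then $E$ acts on $C_0$ by conjugation, giving a homomorphism $E \to \operatorname{Aut}(C_0) = \operatorname{Aut}(\mathbb{Z}) = \{\pm 1\}$. Let $E^+$ be the kernel. If $E^+ = E$, then $C_0$ is central, and $E/C_0$ is a finite group acting trivially... more directly, one shows $E$ embeds into $\mathbb{Q}$: every element of $E$ has a power lying in $C_0$, and since $E$ is torsion-free and $C_0$ is central of finite index, $E$ is a torsion-free abelian group of rank $1$, hence a subgroup of $\mathbb{Q}$; being finitely generated (indeed virtually cyclic), it is cyclic, so $E \cong \mathbb{Z}$. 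If instead $[E:E^+]=2$, one derives a contradiction with torsion-freeness: pick $x \in E \setminus E^+$, so $x c x^{-1} = c^{-1}$; then $x^2$ centralizes $c$, and working inside the (torsion-free, rank-one abelian by the previous case) centralizer of $c$ one shows $x$ has finite order modulo that centralizer in a way that forces a torsion element — the cleanest route is that $\langle c, x\rangle$ would be an infinite dihedral-type group, which always contains $2$-torsion, contradicting torsion-freeness. Hence $E = E^+ \cong \mathbb{Z}$.

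For part~$2)$: given an infinite elementary group $E$, take a finite-index $C \cong \mathbb{Z}$ and replace it by its normal core, obtaining a normal infinite cyclic $C_0 \lhd E$ of finite index. Set $E^+ = \ker\big(E \to \operatorname{Aut}(C_0) = \{\pm 1\}\big)$, so $[E:E^+] \le 2$, and $E^+$ centralizes $C_0$. Inside $E^+$, let $T$ be the centralizer-torsion part: concretely $T = \{t \in E^+ : t \text{ has finite order}\}$; one checks $T$ is a subgroup (in $E^+$, which is a central extension of a finite group by $\mathbb{Z}$ via $C_0$, the torsion elements form a finite normal subgroup), $T \lhd E$ (torsion is characteristic in $E^+$, which is normal in $E$), $T$ is finite, and $E^+/T \cong \mathbb{Z}$ since $E^+/T$ is a finitely generated torsion-free virtually-$\mathbb{Z}$ group, hence $\mathbb{Z}$ by part~$1)$. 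When $E \ne E^+$, pick $x \in E \setminus E^+$; then $x$ inverts the image of $C_0$ in $E/T$, and $E/T = \langle \text{image of } E^+/T, x\rangle$ is generated by an infinite cyclic normal subgroup and an element inverting it, i.e.\ $E/T \cong D_\infty$ (one must check $x^2 \in T$, which follows because $x^2 \in E^+$ and the class of $x^2$ in $E^+/T \cong \mathbb{Z}$ is inverted by $x$ hence trivial). Finally, for uniqueness of $E(g)$ in a hyperbolic group $G$: one uses the standard fact that in a hyperbolic group the centralizer (and normalizer of $\langle g \rangle$) of an infinite-order element $g$ is virtually cyclic, and that two infinite elementary subgroups with a common infinite-order element (equivalently, with infinite intersection) must generate an elementary subgroup — this follows from the fact that quasi-axes of commensurable loxodromic elements in the Cayley graph fellow-travel, so $E(g)$ is exactly the commensurator of $\langle g \rangle$ and is the unique maximal elementary subgroup containing $g$.

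The main obstacle is not any single deep step but rather the bookkeeping in part~$2)$: correctly identifying $T$ as a characteristic finite subgroup and verifying the extension data ($[E:E^+]\le 2$, $E^+/T \cong \mathbb{Z}$, $E/T \cong D_\infty$) consistently, together with the auxiliary point that $x^2$ lands in $T$. Since everything here is classical and referenced to \cite{Ol91} and \cite[Lemma~1.16]{OL93}, in the paper I would state the result with citations and only indicate that part~$1)$ follows from the rank-one-abelian argument above, deferring the detailed verifications to the cited sources.
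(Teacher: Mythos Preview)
Your sketch is correct and follows the standard arguments, but note that the paper itself does not supply a proof for this lemma: it is stated purely as a citation to \cite{Ol91} and \cite[Lemma~1.16]{OL93}, with no accompanying proof environment. So your proposal already goes well beyond what the paper does, and there is nothing to compare against; what you have written would serve perfectly well as an expanded justification if one were desired.
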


In the context of lacunary hyperbolic groups we need to introduce the following definition which generalizes Definition \ref{elsubhyp}.

\begin{definition}\label{lelementary}
	Let $G$ be a lacunary hyperbolic group and let $g\in G$ be an infinite order element. We define $E^{\mathcal{L}}(g):=\{x\in G |xg^nx^{-1}=g^{\pm n}$, for some $n=n(x)\in \mathbb{N} \}$.
\end{definition}

For future reference we now recall the following structural result regrading torsion elements in a $\delta$-hyperbolic group.
\begin{theorem}\cite[2.2.B]{Gr87}\label{torsionbound}
	Let $g\in G$ be a torsion element in a $\delta$-hyperbolic group $G$. Then $g$ is conjugate to an element $h$ in $G$ such that $|h|_G\leq 4\delta +1$.
\end{theorem}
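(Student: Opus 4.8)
The statement to prove is Theorem \cite[2.2.B]{Gr87}\label{torsionbound}: if $g\in G$ is a torsion element in a $\delta$-hyperbolic group $G$, then $g$ is conjugate to an element $h$ with $|h|_G\leq 4\delta+1$.

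\textbf{Plan of proof.} The plan is to use a standard geometric argument in the Cayley graph $\Gamma(G,X)$ exploiting the fact that a torsion element generates a finite, hence bounded, orbit together with the thin-triangle (or, more conveniently, quasi-geodesic stability / fellow-traveling) property of $\delta$-hyperbolic spaces. First I would choose, among all conjugates of $g$, an element $h$ of minimal word length; set $\ell = |h|_X$ and suppose for contradiction that $\ell > 4\delta + 1$. Since $h$ has finite order $n$, the product $h^n = e$, so the closed edge-path in $\Gamma(G,X)$ obtained by concatenating $n$ geodesic segments each labeled by a geodesic word representing $h$ (translated successively by $e, h, h^2, \dots, h^{n-1}$) is a cycle. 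I would look at the midpoint-type vertex of the first copy, or rather compare the vertex $h$ (the endpoint of the first segment) with a suitable point on the concatenation of the other segments.

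The key step is the following: by minimality of $\ell$, every vertex $v$ on a geodesic $[e,h]$ satisfies $d(e,v) + d(v,h) = \ell$ and, because $v^{-1}hv$... — more precisely, one uses that conjugating $h$ by an initial subword cannot shorten it. Concretely, write $h = h_1 h_2$ as a product of subwords with $|h_1|_X, |h_2|_X$ roughly $\ell/2$; the conjugate $h_1^{-1} h h_1 = h_2 h_1$ again has length $\ell$ by minimality, which forces $|h_2 h_1|_X = \ell$, i.e. the word $h_2 h_1$ is already geodesic. Now consider the bi-infinite (here finite, periodic) path given by powers of $h$: the path $P = [e, h, h^2, \dots, h^n = e]$ is a loop of length $n\ell$. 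Pick the vertex $x = h$ and the vertex $y$ on $P$ "antipodal" along this loop, at distance (along $P$) roughly $n\ell/2$ from $x$ in both directions; the loop splits into two paths from $x$ to $y$ each of $P$-length $\approx n\ell/2$. Applying $\delta$-hyperbolicity (thinness of the geodesic bigon/triangle spanned by $x$, $y$ and a geodesic $[x,y]$), the point on $P$ at $P$-distance $\ell$ from...

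Actually the cleanest route: since $h$ has minimal length in its conjugacy class and $\langle h\rangle$ is finite, the orbit $\{e, h, h^2, \dots\}$ has diameter at most some bound; I claim $d(e, h^k) \geq 2(d(e,h) - 2\delta) - \text{const}$ grows, contradicting finiteness unless $\ell$ is small. So the real plan: assume $\ell > 4\delta + 1$. Consider geodesics $[e,h]$ and $[h, h^2] = h\cdot[e,h]$. The concatenation is a path of length $2\ell$ from $e$ to $h^2$. If $d(e,h^2) \geq 2\ell - 4\delta$... and inductively $d(e, h^k) \geq k\ell - (k-1)\cdot 4\delta \geq k(\ell - 4\delta) = k(\ell-4\delta)$ which tends to infinity when $\ell > 4\delta$ — this contradicts $h^n = e$, i.e. $d(e,h^n) = 0$. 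The inductive step $d(e,h^{k+1}) \geq d(e,h^k) + \ell - 4\delta$ is exactly where hyperbolicity and minimality of $\ell$ enter: in a $\delta$-hyperbolic space, the concatenation of two geodesics that are each "locally minimizing from the far end" (which follows from $h$ having minimal conjugacy length, applied to all powers, since $h^k$ is also of minimal length in its conjugacy class as a power of a minimal-length element — this requires a small lemma or a direct fellow-traveling argument) stays within $2\delta$ of a geodesic, giving $d(e,h^{k+1}) \geq (d(e,h^k) + \ell) - 4\delta$.

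\textbf{Main obstacle.} The technical heart — and the step I expect to require the most care — is justifying the local-geodesic / fellow-traveling estimate: namely that when we concatenate the geodesics $[e,h]$ and $[h,h^2]$, there is no "backtracking" of more than $2\delta$ at the junction vertex $h$, so that the concatenated path is a $(1, 4\delta)$-quasigeodesic (or more simply that its endpoints are at distance at least $2\ell - 4\delta$). This uses the minimality of $|h|_X$ in its conjugacy class in an essential way: if the path backtracked a lot near $h$, one could read off a shorter conjugate $w^{-1}hw$ via an initial segment $w$ of length $< \delta$ of the backtracking, contradicting minimality. Getting the constants to land exactly at $4\delta + 1$ rather than a larger multiple of $\delta$ requires optimizing this argument (and using that $|h|_X$ is an integer, whence the "$+1$"). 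I would reduce everything to this one geometric lemma, prove it by a thin-triangle chase, and then close the argument with the divergence-of-powers contradiction sketched above.
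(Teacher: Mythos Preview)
The paper does not prove this theorem; it is merely quoted as a known result from Gromov's foundational essay \cite{Gr87} and used as a black box in the proof of Lemma~\ref{torsionfreelacunary}. There is therefore no ``paper's own proof'' to compare your proposal against.

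As for the proposal itself: the overall strategy---choose a minimal-length conjugate $h$, show the broken geodesic $e\to h\to h^2\to\cdots$ cannot close up if $|h|$ is large compared to $\delta$---is the standard one and is correct in outline. One step, however, is wrong as written: you assert that ``$h^k$ is also of minimal length in its conjugacy class as a power of a minimal-length element.'' This is false in general (already for $k=2$ there is no reason the conjugacy class of $h^2$ should have minimal length $|h^2|$), so your inductive step cannot be justified that way. The fix is exactly the observation you already made but did not fully exploit: writing a geodesic word $w$ for $h$ as $w_1w_2$, minimality in the conjugacy class forces $|w_2w_1|=\ell$, so \emph{every} subpath of length $\le \ell$ of the periodic path labeled $\cdots www\cdots$ is geodesic. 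That is, the path is an $\ell$-local geodesic, and one then invokes the local-to-global lemma for $\delta$-hyperbolic spaces (an $L$-local geodesic with $L$ large relative to $\delta$ is a global quasi-geodesic, hence cannot be a nontrivial loop). Most textbook versions of that lemma yield a bound like $8\delta$; extracting the sharp $4\delta+1$ requires the more careful form of the argument, and your remark that the ``$+1$'' comes from integrality of word length is on target.
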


The following elementary lemma will be used in the proof of Theorem~\ref{elementarygroup}. For convenience we include a short proof. 
\begin{lemma}\label{torsionfreelacunary}
	If $G$ is a torsion free lacunary hyperbolic group, then one can choose $G_i$ to be torsion free such that $G=\underset{\rightarrow}{\lim}G_i$. 
\end{lemma}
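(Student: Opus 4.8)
The plan is to use the lacunary hyperbolic structure together with the torsion bound of Theorem~\ref{torsionbound}, and peel off the torsion part of each $G_i$ step by step. Write $G=\varinjlim G_i$ with $G_i$ a $\delta_i$-hyperbolic group generated by a finite set $S_i$, with $\eta_i(S_i)=S_{i+1}$ and $\delta_i=o(r_{S_i}(\eta_i))$. Since $G$ is torsion free, the claim is that in this directed system every torsion element of $G_i$ is eventually killed, so replacing $G_i$ by a quotient that is still $\delta_i$-hyperbolic and still has the same injectivity-radius behaviour makes it torsion free.

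First I would fix $i$ and list, using Theorem~\ref{torsionbound}, all torsion elements of $G_i$ up to conjugacy: every such element is conjugate to one lying in the ball $B_{S_i}(1,4\delta_i+1)$, which is a finite set, so there are only finitely many conjugacy classes of finite-order elements in $G_i$; call representatives $t_1,\dots,t_{m_i}$. Each nontrivial $t_j$ maps, under the composed homomorphism $G_i\to G$, to an element whose order divides $\mathrm{ord}(t_j)$; since $G$ is torsion free, the image is trivial, hence $t_j$ lies in the kernel of $G_i\to G_j$ for some $j=j(i)$, i.e. there is an index $k_i>i$ with $\eta_{k_i-1}\circ\cdots\circ\eta_i(t_j)=1$ for every $j$. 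Passing to a subsequence of the directed system (which does not change the direct limit and only helps the condition $\delta_i=o(r_{S_i}(\eta_i))$, since the injectivity radii along a cofinal subsequence are at least as large), I may assume $k_i=i+1$, i.e. each $\eta_i$ already kills all torsion of $G_i$.

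Next I would define $\bar G_i:=G_i/N_i$ where $N_i$ is the normal subgroup generated by all finite-order elements of $G_i$ (equivalently by $t_1,\dots,t_{m_i}$), with the quotient map $\pi_i\colon G_i\to\bar G_i$. Since $N_i\subseteq\ker\eta_i$, the map $\eta_i$ factors as $\eta_i=\bar\eta_i\circ\pi_i$ for a (necessarily surjective) homomorphism $\bar\eta_i\colon\bar G_i\to G_{i+1}$; composing with $\pi_{i+1}$ gives $\tilde\eta_i:=\pi_{i+1}\circ\bar\eta_i\colon\bar G_i\to\bar G_{i+1}$. One checks $\bar G_i$ is again hyperbolic (a quotient of a hyperbolic group by the normal closure of finitely many elements is hyperbolic — or, more simply, $\bar G_i$ is a quotient of $G_i$ and we only need \emph{some} hyperbolicity constant; and in the lacunary-hyperbolic framework the relevant point is that the direct limit of the $\bar G_i$ is still $G$). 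The direct limit of the new system $\bar G_1\xrightarrow{\tilde\eta_1}\bar G_2\xrightarrow{\tilde\eta_2}\cdots$ is canonically isomorphic to $\varinjlim G_i=G$: indeed $\pi_i$ induces a map $\varinjlim G_i\to\varinjlim\bar G_i$ and the inclusions-of-systems give an inverse, because every element killed in some $\bar G_i$ was already killed in $G_{i+1}$ by construction. Finally $\bar G_i$ is torsion free: any torsion element of $\bar G_i$ lifts (after conjugation, again by Theorem~\ref{torsionbound} applied in $G_i$ with a hyperbolicity constant for $G_i$) to a bounded-length element $g\in G_i$ with $g^n\in N_i$; a short argument — using that $N_i$ is generated by torsion elements all of which die in $G_{i+1}$, so $\bar\eta_i$ restricted to $\langle g\rangle$ has image of order dividing $n$, forcing the image of $g$ in $G$ to be torsion hence trivial, hence $g\in\ker\pi_{i+1}\bar\eta_i$... — shows $g\in N_i$, so the element is trivial in $\bar G_i$.

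The main obstacle I anticipate is the bookkeeping needed to verify that the new system still witnesses lacunary hyperbolicity in the strong sense one wants — namely that the $\delta$-hyperbolicity constants $\bar\delta_i$ of $\bar G_i$ can still be taken to be $o(r_{\bar S_i}(\tilde\eta_i))$, where $\bar S_i=\pi_i(S_i)$. Passing to a subsequence only increases injectivity radii, and killing torsion does not increase the hyperbolicity constant in any essential way relevant here; but making this precise (and ensuring $\tilde\eta_i(\bar S_i)=\bar S_{i+1}$, which is automatic) is where the care goes. Everything else — the finiteness of torsion conjugacy classes from Theorem~\ref{torsionbound}, the factorization of the $\eta_i$ through the torsion quotients, and the identification of the direct limits — is routine. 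I would organize the write-up as: (1) reduce to $\ker\eta_i\supseteq N_i$ by subsequence; (2) form $\bar G_i$ and the induced maps; (3) identify $\varinjlim\bar G_i\cong G$; (4) check each $\bar G_i$ is torsion free and hyperbolic with the right constants.
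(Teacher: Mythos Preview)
Your approach has a genuine gap at the step ``$\bar G_i$ is torsion free.'' Quotienting a group by the normal closure of its torsion elements can create \emph{new} torsion: an element $g\in G_i$ of infinite order may satisfy $g^n\in N_i$ without $g\in N_i$, so its image in $\bar G_i$ is a nontrivial torsion element. Your sketched fix only shows that such a $g$ dies in $G$ (equivalently, in some later $G_k$), which gives $\pi_{i+1}\bar\eta_i(g)=1$ in $\bar G_{i+1}$, not $\pi_i(g)=1$ in $\bar G_i$. Also, the parenthetical ``a quotient of a hyperbolic group by the normal closure of finitely many elements is hyperbolic'' is false in general, so you have not established that $\bar G_i$ is hyperbolic, let alone with a controlled constant; without that, the new system need not witness lacunary hyperbolicity.

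The paper's proof avoids all of this by not modifying the $G_i$'s at all. It uses the specific approximating sequence from \cite[Theorem~3.3]{OOS07}, where $G_i=\langle S\mid \mathcal R_{c(i)}\rangle$ with $\mathcal R_{c(i)}$ the labels of cycles in a large ball of $\Gamma(G,S)$, and then argues that for $i$ large enough (so that $r_i>9\delta_i$) the group $G_i$ is \emph{already} torsion free. The point is that any torsion element of $G_i$ has, by Theorem~\ref{torsionbound}, a conjugate of length $\le 4\delta_i+1<r_i$; by the choice of the sequence this short nontrivial element survives in every $G_k$, hence gives nontrivial torsion in $G$, contradicting the hypothesis. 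So the torsion bound is used not to \emph{list} torsion to be killed, but to force any hypothetical torsion element to lie inside the injectivity ball and therefore persist to the limit. This is both shorter and sidesteps the hyperbolicity and torsion-creation issues your construction would have to confront.
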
 
\begin{proof}
	Fix a presentation $G=\langle S\ |\mathcal R \rangle$. By \cite[Theorem~3.3]{OOS07}, one can choose $G_i:=\langle  S\ |\mathcal R_{c(i)} \rangle $, where $\{c(n)\}_n$ is a strictly increasing sequence such that $\mathcal R_{c(i)}$ consists of labels of all cycles in the ball of radius $d_i$ (corresponding to the scaling sequence $\{d_i\}_i$ of the lacunary hyperbolic group) around the identity in $\G(G,S)$. Let $r_i$ be the injectivity radius of the quotient map $\phi_i:G_i\rightarrow G_{i+1}$. The lacunary hyperbolic condition implies that $\lim_{i\rightarrow\infty} \frac{\delta_i}{r_i}=0$, where $\delta_i$ is the hyperbolic constant for the group $G_i$ for all $i$. Choose $i_0$ such that for all $j\geq i_0$ we have $r_j>9\delta_j$. We will show the $G_j$'s are torsion free for all $j\geq i_0$, which proves the lemma. 
\vskip 0.1in	
	\noindent Fix any $j\geq i_0$. Assume by contradiction that $g\in G_j\setminus \{1\}$ is a torsion element. By Theorem~\ref{torsionbound} there is an element $h\in G_j\setminus \{1\}$ such that $h$ is conjugate to $g$ and $|h|_{G_j}\leq 4\delta_j+1$. Thus $h$ is a torsion element of $G_j$. Since  $|h|_{G_j}\leq 4\delta_j+1<r_i$ then $h$ is a non trivial element of $G_k$ for all $k\geq j$. Thus $h$ is a non trivial torsion element in the limit group $G$,  which is a contradiction!   
\end{proof}
The next result generalizes Lemma \ref{elstruc}, and provides a complete description of the structure of elementary subgroups of a torsion free lacunary hyperbolic group. This result can be deduced from the main theorem of \cite{K19}. For readers' convenience, we include a short proof.

\begin{theorem}\label{elementarygroup}
	Let $G$ be a torsion free lacunary hyperbolic group and let $g\in G$ be an infinite order element. Then $E^{\mathcal{L}}(g)$ is an abelian group of rank $1$ (i.e.\ $E^{\mathcal{L}}(g) \ embeds \ in \ (\mathbb{Q},+)$).
\end{theorem}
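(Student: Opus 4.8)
The plan is to realize $G$ as a direct limit of torsion free hyperbolic groups and transport the known structure theory of elementary subgroups (Lemma \ref{elstruc}) through the limit. First I would invoke Lemma \ref{torsionfreelacunary} to write $G = \varinjlim G_i$ with each $G_i$ a torsion free hyperbolic group, and with the generating sets mapped onto one another, so that the hyperbolicity constants $\delta_i$ are $o(r_i)$ where $r_i$ is the injectivity radius of $\eta_i \colon G_i \to G_{i+1}$. Fix an infinite order element $g \in G$; lifting $g$ (after discarding finitely many initial terms) we may assume $g$ is the image of an infinite order element $g_i \in G_i$ for all $i$ large, and that the word length $|g_i|$ stabilizes and is small compared to $r_i$.

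Next I would analyze $E^{\mathcal L}(g)$ by comparing it with the groups $E(g_i) \leqslant G_i$. Since $G_i$ is torsion free hyperbolic, part $1)$ of Lemma \ref{elstruc} (combined with part $2)$: $T$ finite hence trivial, and $E^+ = E$ since otherwise $E/T \cong D_\infty$ would have torsion) shows each $E(g_i)$ is infinite cyclic, say $E(g_i) = \langle a_i \rangle$ with $g_i = a_i^{k_i}$ for some $k_i \geq 1$. The homomorphism $\eta_i$ maps $E(g_i)$ into a subgroup of $G_{i+1}$ commuting with a power of $g_{i+1}$, hence (up to the sign ambiguity, which disappears by torsion freeness) into $E(g_{i+1}) = \langle a_{i+1} \rangle$; thus $\eta_i(a_i) = a_{i+1}^{m_i}$ for some integer $m_i$, and $\eta_i(a_i)$ is still non-trivial for $i$ large because $a_i$ has controlled length (here I use the injectivity radius bound and Theorem \ref{torsionbound}-type length control on the would-be kernel elements). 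So the cyclic groups $\langle a_i \rangle$ form a directed system with injective (for large $i$) transition maps of the form $x \mapsto x^{m_i}$, whose direct limit embeds in $(\mathbb Q,+)$: concretely, sending $a_i$ to $1/(m_{i_0} m_{i_0+1} \cdots m_{i-1})$ gives the embedding.

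Then I would show that $E^{\mathcal L}(g) = \varinjlim E(g_i)$ as subgroups of $G = \varinjlim G_i$, i.e. that every $x \in E^{\mathcal L}(g)$ already lifts to an element of some $E(g_i)$. Given $x \in G$ with $x g^n x^{-1} = g^{\pm n}$, choose $i$ large enough that $x$, this relation, and $g_i^n$ all live in $G_i$ up to the injectivity radius; then $x_i g_i^n x_i^{-1} = g_i^{\pm n}$ holds in $G_i$ once $i$ is large (using that the relation, being a word of bounded length, cannot be killed before the injectivity radius is exceeded), so $x_i \in E(g_i)$. Conversely each $E(g_i)$ clearly maps into $E^{\mathcal L}(g)$. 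Combining with the previous paragraph, $E^{\mathcal L}(g)$ is a direct limit of infinite cyclic groups with transition maps given by powering, hence is an abelian group of rank $1$.

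The main obstacle I expect is making the "bounded length implies survives past the injectivity radius" bookkeeping fully rigorous and uniform: one must control the word lengths of the generators $a_i$ of $E(g_i)$ and of the torsion/conjugating witnesses across the whole tower simultaneously, using the quantitative estimates of \cite{OL93} and \cite{K19} (and Theorem \ref{torsionbound} for the torsion analysis inside each $G_i$) rather than just for a single fixed $i$. This is exactly where the lacunary condition $\delta_i = o(r_i)$ is essential, and where I would lean on the cited results; the rest is a formal limit argument.
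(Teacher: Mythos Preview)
Your approach is essentially the paper's: write $G=\varinjlim G_i$ with torsion-free hyperbolic $G_i$ via Lemma~\ref{torsionfreelacunary}, note each $E(g_i)$ is infinite cyclic by Lemma~\ref{elstruc}, and identify $E^{\mathcal L}(g)$ with $\varinjlim E(g_i)$. The obstacle you flag is illusory: the equality $E^{\mathcal L}(g)=\varinjlim E(g_i)$ is a purely formal consequence of the direct limit (any relation $xg^nx^{-1}=g^{\pm n}$ holding in $G$ already holds in some $G_i$, and conversely $\eta_i$ carries elementary subgroups to elementary subgroups), and a direct limit of cyclic groups is automatically locally cyclic---hence, being torsion-free and containing $\langle g\rangle\cong\mathbb Z$, embeds in $\mathbb Q$---without any need to control the lengths of the generators $a_i$ or to verify injectivity of the transition maps.
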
	
\begin{proof}
	From the definition (\ref{lacunarylim}) of lacunary hyperbolic group it follows that $E^{\El}(g)=\underset{\rightarrow}{\lim}E_i(g)$ for every $e\neq g\in G$, where $E_i(g)$ is the elementary subgroup containing the element $g$ in the hyperbolic group $G_i$ when viewing $g\in G_i$. Since $G$ is torsion free one can choose $G_i$ to be torsion free by Lemma~\ref{torsionfreelacunary}. By Lemma~\ref{elstruc} part 1) we get that $E_i(g)$ is cyclic for all $i$. Observe that every surjective homomorphism between hyperbolic groups takes elementary subgroups into elementary subgroups, in particular $E_i(g)$ maps into $E_{i+1}(g)$. We now get the group $E^{\El}(g)=\underset{\rightarrow}{\lim}E_i(g)$ as an inductive limit of cyclic groups, which proves the theorem.
\end{proof}
\begin{remark}
	Let $G$ be a torsion free lacunary hyperbolic group and let $e\neq g\in G$. Note that $C_G(g)\leqslant E^{\El}(g)$, where $C_G(g)$ is the centralizer of $g$ in $G$.
\end{remark}

\subsection{Maximal Subgroups} \label{subsec3.2}
Let $G_0=\la X\ra$ be a torsion free $\delta$-hyperbolic group with respect to $X$, where $X=\{x_1,x_2,\ldots,x_n \}$ is a finite generating set. Without loss of generality we assume that $E(x_i)\cap E(x_j)=\{e\}$ for $i\neq j$. We define a linear order on $X$ by $x_i^{-1}<x_j^{-1}<x_i<x_j$, whenever $i<j$. Let $F'(X)$ denote the set of all non empty reduced words on $X$. Note that the order on $X$ induces the lexicographic order on $F'(X)$.
Let $F'(X)=\{w_1,w_2,\ldots \}$ be an enumeration with $w_i<w_j$ for $i<j$. Observe that $w_1=x_1$ and $w_2=x_2$. We now consider the set $\mathcal S:= F'(X)\times F'(X)\setminus \{ (w,w) |w\in F'(X) \}$ and enumerate the elements of $\mathcal S$ as
$\mathcal S=\{(u_1,v_1),(u_2,v_2),\ldots  \}$. \par 
Our next goal is to construct the following chain;
\begin{align}
G_0\overset{\beta_0}{\hookrightarrow} K_1\overset{\alpha_1}{\twoheadrightarrow}G'_1\overset{\gamma_1}{\twoheadrightarrow} G_1\overset{\beta_1}{\hookrightarrow} K_2 \overset{\alpha_2}{\twoheadrightarrow}G'_2\overset{\gamma_1}{\twoheadrightarrow} G_2  \cdots
\end{align}
where $K_i,G_i,G'_i$ are hyperbolic for all $i$ and $\eta_i:=\gamma_i\circ \alpha_{i}\circ \beta_{i-1}$ , $i\geq 1$, satisfies the conditions in \eqref{lacunarylim}.  \par 
Let $L$ be a rank $1$ abelian group. Then $L$ can be written as $L=\cup_{i=0}^{\infty}L_i$, where $L_i=\langle g_i\rangle_{\infty}$ and $g_i=g_{i+1}^{m_{i+1}}$ for some $m_{i+1}\in \mathbb{N}$. Here $\langle g_i\rangle_{\infty}$ denotes the infinite cyclic group generated by the infinite order element $g_i.$ \par 
Since $G_0$ is non-elementary, there exists a smallest index $j_i\geq i$ such that $v_{j_i}\notin E(u_{j_i})$. For $m\in \mathbb N$, define 
\begin{align}\label{amalgum} 
H^{k}_{i+1}:=H^{k-1}_{i+1}\underset{u_k=g_{(k,i+1)}^{m_{i+1}}}{*}\la g_{(k,i+1)} \ra_{\infty} \text{ where } H^0_{i+1}=G_i \text{ and } g_{(k,i+1)}=g_{i+1} \text{ for }k=1,2,\ldots ,j_i.
\end{align}
For $i \geq 0$ let $K_{i+1}$ be $H^{j_i}_{i+1}$. Note that $K_{i+1}$ is hyperbolic as $H^{k}_{i+1}$ is hyperbolic for all $k$ by \cite[Theorem 3]{MO98}. Choose $c_i,c_i'\in G_i$ such that $c_i,c_i'\notin E(u_k)$ for all $1 \leq k\leq j_i$ and $c_i,c_i'\notin E(v_{j_i}) $. One can find such $c_i$ and $c_i'$ since there are infinitely many elements in a non elementary hyperbolic group which are pairwise non commensurable, \cite[Lemma 3.8]{OL93}. Let
$Y_i:=\{g_{(k,i+1)}|1\leq k\leq j_i \}$.
Denote by 
\begin{align}\label{Word}
R_k:=g_{(k,i+1)}c_i^{n_{1,k}}c_i'c_i^{n_{2,k}}c'_i\cdots c_i^{n_{s_k,k}}c_i',
\end{align}
where $n_{s,k}$, for $ 1\leq k\leq j_i$ are defined as follows:
$$n_{1,k}=2^{k-1}n_{1,1}, \ s_k=n_{1,k-1} \ and \ n_{s,k}=n_{1,k}+(s-1).$$
We also denote by $\mathcal{R}_i$ be the set of all cyclic shifts of  $\{R_k^{\pm 1} : 1 \leq k \leq j_i\}$.

\begin{lemma}\cite[Lemma 5.1]{DAR17}\label{arman5.1}
	There exists a constant $K$ such that the set of words $\mathcal{R}$ defined above by \eqref{Word} are $(\lambda,c)$ quasi geodesic in $\Gamma(G,X)$, provided $n_{1,1}\geq K$, $c\notin E(g_{(k,i+1)})$, and $c'\notin E(g_{(k,i+1)}) $.
\end{lemma}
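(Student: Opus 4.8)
The statement to prove is Lemma \ref{arman5.1}, which asserts that the words $R_k$ defined in \eqref{Word} are $(\lambda,c)$-quasi-geodesic in $\Gamma(G,X)$ under suitable hypotheses on $n_{1,1}$ and the elements $c_i, c_i'$. Since this is explicitly cited as \cite[Lemma 5.1]{DAR17}, the plan is to reduce to the structure of the words $R_k = g_{(k,i+1)} c_i^{n_{1,k}} c_i' c_i^{n_{2,k}} c_i' \cdots c_i^{n_{s_k,k}} c_i'$ and exploit the fact that they are built out of a bounded alphabet of elements none of which are commensurable to one another.

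\textbf{Outline of the approach.} First I would observe that each $R_k$ is a concatenation of syllables of the form $c_i^{n_{s,k}}$ and $c_i'$, prefixed by the single generator $g_{(k,i+1)}$. The key structural input is that $c_i, c_i'$ are chosen so that $c_i \notin E(c_i')$, $c_i' \notin E(c_i)$, and neither lies in $E(g_{(k,i+1)})$ — i.e., they are pairwise non-commensurable infinite-order elements of the hyperbolic group $G_i$ (using \cite[Lemma 3.8]{OL93} and the hypotheses of the lemma). In a $\delta$-hyperbolic group, a word that alternates between powers of non-commensurable elements, provided each power is taken sufficiently large (uniformly bounded below by a constant $K = K(\delta, |c_i|, |c_i'|)$), traces a path that is a local quasi-geodesic, and by the standard local-to-global principle for quasi-geodesics in hyperbolic spaces (Gromov; see also the stability of quasi-geodesics), such a local quasi-geodesic is globally a $(\lambda, c)$-quasi-geodesic with constants depending only on $\delta$ and the geometry of the syllables. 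Concretely I would:
\begin{enumerate}
\item[(1)] Fix $\delta$ and bound $|c_i|_X, |c_i'|_X$; set $K$ large enough (depending on these and $\delta$) that any subword $c_i^m$ or $c_i^m c_i' c_i^{m'}$ with $m, m' \geq K$ is a quasi-geodesic with uniform constants — this uses that powers of a fixed infinite-order element in a hyperbolic group are quasi-geodesic and that concatenating at a non-commensurable "corner" $c_i'$ keeps the BGS/ping-pong estimate alive.
\item[(2)] Check that the exponents appearing in $R_k$, namely $n_{1,k} = 2^{k-1} n_{1,1}$ and $n_{s,k} = n_{1,k} + (s-1)$, are all $\geq n_{1,1} \geq K$, so every syllable is long enough.
\item[(3)] Verify the no-backtracking / local quasi-geodesic condition at each junction between consecutive syllables: at a $c_i^{n}\,c_i'\,c_i^{n'}$ junction use $c_i' \notin E(c_i)$; at the initial $g_{(k,i+1)}\, c_i^{n_{1,k}}$ junction use $c_i \notin E(g_{(k,i+1)})$.
\item[(4)] Invoke the local-global quasi-geodesic stability theorem in $\delta$-hyperbolic spaces to conclude that the whole word $R_k$ (and all its cyclic shifts, which have the same syllable structure up to rotation) is $(\lambda, c)$-quasi-geodesic in $\Gamma(G,X)$ with $\lambda, c$ depending only on $\delta$ and the fixed word lengths of $c_i, c_i', g_{(k,i+1)}$.
\end{enumerate}

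\textbf{Main obstacle.} The delicate point is making the quasi-geodesicity \emph{uniform in $k$} — i.e., showing that a \emph{single} pair $(\lambda, c)$ works for every $R_k$, $1 \le k \le j_i$, even though the exponents $n_{s,k}$ grow (doubling in $k$ and incrementing in $s$) and the number of syllables $s_k = n_{1,k-1}$ also grows. This is exactly where one needs the local-to-global principle: longer syllables only \emph{improve} the local quasi-geodesic constant, and the global constants produced by the stability theorem depend only on $\delta$ and the \emph{lower} bound on syllable length, not on how long the word is or how many syllables it has. The other subtlety is handling cyclic shifts of $R_k^{\pm1}$: a cyclic shift may split a syllable $c_i^{n_{s,k}}$ into two pieces or move $g_{(k,i+1)}$ into the interior, so one must check that the resulting word still has all its "full" syllables long enough and that the possibly-shortened end syllables do not destroy the quasi-geodesic estimate — this is absorbed by allowing a slightly larger additive constant $c$. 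I would remark that all of these points are treated in \cite[Lemma 5.1]{DAR17} (building on the techniques of \cite{OL93, MO98}), so the proof here would be a citation with the indicated adaptation of constants, and leave the routine verification of (2)–(3) to the reader.
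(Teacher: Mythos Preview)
The paper provides no proof of this lemma; it simply cites \cite[Lemma 5.1]{DAR17} and uses the result as a black box. Your outline --- reducing to alternating products of long powers of pairwise non-commensurable elements and invoking the local-to-global quasi-geodesic stability in $\delta$-hyperbolic spaces --- is the standard approach (going back to \cite{OL93}) and is consistent with what the cited reference does, so there is nothing to compare beyond noting that you have supplied a sketch where the paper supplies only a citation.
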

We now denote by $\tilde{\mathcal{R}}_{i+1}$ to be the set of words $\mathcal R_i$, defined as above, with $n_{1,k} \geq K$.

\begin{lemma}\cite[Lemma 5.2]{DAR17}\label{arman5.2}
	For any given constant $\epsilon_i\geq 0,\mu_i >0,\rho_i>0$, the system of words $\tilde{\mathcal{R}}_{i+1}$ (defined above) satisfies the $C'(\lambda_i,c_i,\epsilon_i,\mu_i,\rho_i)$ condition over $K_{i+1}$.
\end{lemma}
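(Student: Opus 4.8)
The approach is to verify, one at a time, the five conditions folded into Definition~\ref{epprpc}: parts (a), (b), (c) of Definition~\ref{eppiece} (which together make up the $C(\lambda_i,c_i,\epsilon_i,\mu_i,\rho_i)$-condition demanded by clause (d)) together with clause (e) on $\epsilon'$-pieces. Here $\lambda_i,c_i$ are the quasi geodesic constants supplied by Lemma~\ref{arman5.1}, so they are already fixed and do not deteriorate when $n_{1,1}$ is enlarged; the whole proof amounts to choosing $n_{1,1}$ large enough in terms of $\epsilon_i,\mu_i,\rho_i$ (and of $\lambda_i,c_i$, $|c_i|$, $|c_i'|$). Conditions (a) and (b) are essentially immediate. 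For (a): every relator in $\tilde{\mathcal R}_{i+1}$ is a cyclic shift of some $R_k^{\pm1}$, and by construction $\|R_k\|\ge n_{1,k}|c_i|\ge n_{1,1}$ (indeed $\|R_k\|\ge s_k\, n_{1,k}|c_i|$), so taking $n_{1,1}\ge\rho_i$ forces $\|R\|\ge\rho_i$ for all $R\in\tilde{\mathcal R}_{i+1}$. For (b): the claim that each $R\in\tilde{\mathcal R}_{i+1}$ is $(\lambda_i,c_i)$-quasi geodesic over $K_{i+1}$ is exactly Lemma~\ref{arman5.1}, which applies because $\tilde{\mathcal R}_{i+1}$ is by definition the set of relators with $n_{1,1}\ge K$, and because $c_i,c_i'$ were chosen outside $E(u_k)$ for $1\le k\le j_i$ and outside $E(v_{j_i})$, hence (by normal forms in the amalgamated free product defining $K_{i+1}$) outside $E(g_{(k,i+1)})$.

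The real content is the small-cancellation bound: every $\epsilon_i$-piece, and every $\epsilon_i'$-piece, $U$ of a relator $R\in\tilde{\mathcal R}_{i+1}$ satisfies $\|U\|,\|U'\|<\mu_i\|R\|$. The mechanism is the ``ruler'' shape of $R_k=g_{(k,i+1)}c_i^{n_{1,k}}c_i'c_i^{n_{2,k}}c_i'\cdots c_i^{n_{s_k,k}}c_i'$, resting on two elementary observations about the exponent arrays $n_{s,k}$. First, inside a fixed $R_k$ the exponents are pairwise distinct, since $n_{s,k}=n_{1,k}+(s-1)$; so each block $c_i^{n}c_i'$ occurs at most once in $R_k$. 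Second, since $n_{1,k}=2^{k-1}n_{1,1}$ while the largest exponent of $R_k$ is $n_{s_k,k}=n_{1,k}+n_{1,k-1}-1=\tfrac{3}{2}n_{1,k}-1<2n_{1,k}=n_{1,k+1}$, the exponent intervals used by distinct $R_k$ are pairwise disjoint; hence a block $c_i^{n}c_i'$ occurs in at most one relator of the symmetrized family $\tilde{\mathcal R}_{i+1}$, and its occurrence there is unique up to cyclic rotation and inversion (the single marker $g_{(k,i+1)}$ pins the cyclic starting point and also rules out periodicity of the relators). Granting this, suppose for contradiction that $U$ is an $\epsilon_i$-piece of $R=R_k^{\pm1}$ with $\|U\|\ge\mu_i\|R\|$ (the case $\|U'\|\ge\mu_i\|R\|$ reduces to this, since $\|U\|$ and $\|U'\|$ differ by at most a constant by the quasi geodesic property and $\|Y\|,\|Z\|\le\epsilon_i$). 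A subword of $R_k$ that contains no complete block $c_i^{n}c_i'$ spans at most two consecutive powers of $c_i$, so has length at most $\approx 2n_{s_k,k}|c_i|$, a vanishing fraction of $\|R_k\|\ge s_k\, n_{1,k}|c_i|$ once $n_{1,1}$ is large; thus $\|U\|\ge\mu_i\|R\|$ forces $U$ to contain a complete block $c_i^{n}c_i'$. Now $U'=_G YUZ$ with $Y,Z$ bounded and $U'$ a subword of $R'=R_{k'}^{\pm1}$; invoking the $(\lambda_i,c_i)$-quasi geodesic property of the relators to absorb the distortion by $Y,Z$ and to rule out shortcutting of the long powers $c_i^{n}$, one deduces that $R'$ contains a block with the same exponent $n$. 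By the uniqueness above this forces $k'=k$ and forces the occurrence of $U$ in $R'$ to be a bounded cyclic rotation of its occurrence in $R$; from this $YRY^{-1}=_G R'$ follows, contradicting clause (3) of Definition~\ref{eppiece} (resp. the analogous clause for $\epsilon'$-pieces). Hence $\|U\|<\mu_i\|R\|$, and likewise $\|U'\|<\mu_i\|R\|$. Clause (e) runs identically: an $\epsilon_i'$-piece exhibits two disjoint occurrences of one long subword inside a single $R_k$, impossible once that subword contains a complete block, by within-word uniqueness.

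The one genuinely technical point is the bookkeeping in the previous paragraph --- passing rigorously between the word metric of $K_{i+1}$ and combinatorial word length through the quasi geodesic constants $\lambda_i,c_i$, absorbing the bounded words $Y,Z$ and the fixed lengths $|c_i|,|c_i'|$, and thereby turning ``$\|U\|$ is a fixed fraction of $\|R\|$'' into ``$R'$ contains a complete block with the same exponent as one in $R$''. This is precisely the estimate carried out in \cite[Lemma~5.2]{DAR17}, patterned on \cite[Section~5]{OL93}, and it transfers to our setting verbatim because each $K_{i+1}$ is an ordinary hyperbolic group --- the lacunary-hyperbolic limit plays no role at this stage. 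So the plan is: cite that template for the routine inequalities, isolate the two structural facts about the exponent arrays above as the source of the contradiction, and finally fix $n_{1,1}$ --- equivalently, strengthen the defining inequality of $\tilde{\mathcal R}_{i+1}$ --- to be simultaneously $\ge K$, $\ge\rho_i$, and large enough for all the ratio estimates, so that (a)--(e) hold at once.
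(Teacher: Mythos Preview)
The paper gives no proof of this lemma at all: it is stated with the citation \cite[Lemma 5.2]{DAR17} and used as a black box. Your sketch goes well beyond what the paper does, supplying the standard Ol'shanskii--style argument (verification of (a) and (b) directly, then the uniqueness-of-blocks contradiction for (c) and (e) driven by the arithmetic of the exponents $n_{s,k}$), which is indeed the approach of the cited reference; so there is nothing to compare against in the paper itself, and your outline is consistent with the intended proof.
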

By construction there is a natural embedding $\beta_i:G_i\hookrightarrow K_{i+1}$. Let $G'_{i+1}:=\la K_{i+1}|\tilde{\mathcal{R}}_{i+1}\ra$ (where we are using the notations as in Subsection \ref{secvankampen} ). The factor group $G'_{i+1}$ is hyperbolic by \cite[Lemma 7.2]{OL93}. Now consider the natural quotient map $\alpha_{i+1}:K_{i+1} \twoheadrightarrow G'_{i+1}$.  Since $\alpha_{i+1}\circ \beta_{i}$ takes generators of $G_i$ to generators of $G_{i+1}'$, the map $\alpha_{i+1} \circ \beta_i$ is surjective.\par  
Consider the following set $$Z_i:=\{x\in X|x\notin E(u_{j_i})  \}.$$ 
Let $G_{i+1}:= G'_{i+1}/\llangle\mathcal R (Z_i,u_{j_i},v_{j_i},\lambda_i,c_i,\epsilon_i,\mu_i,\rho_i)\rrangle$ and let $\gamma_{i+1}:G'_{i+1}\twoheadrightarrow G_{i+1}$ be the quotient map. Here $\mathcal R (Z_i,u_{j_i},v_{j_i},\lambda_i,c_i,\epsilon_i,\mu_i,\rho_i)$ is the set of all conjugates and the cyclic shifts of some relations where we identify the elements of $Z_i$ with words of the form \eqref{Word} generated by $u_{j_i}$ and $v_{j_i}$. Since the relators $\mathcal R_i$ are generic we have added all the parameters to indicate these relations satisfy the small cancellation conditions with the parameters and their dependency to the specific set of words. One can choose the powers of $u_{j_i}$ and $v_{j_i}$ such that the small cancellation condition is satisfied by Lemma~\ref{arman5.1} and Lemma~\ref{arman5.2}. For more details on how to choose these words we refer the reader to \cite[Section~5]{OL93}, \cite[Section~5.4]{DAR17}. Thus it follows that the group $G_{i+1}$ is hyperbolic by \cite[Lemma 7.2]{OL93} as one can choose parameters $\lambda_i,c_i,\epsilon_i,\mu_i,\rho_i$ such that $\mathcal R (Z_i,u_{j_i},v_{j_i},\lambda_i,c_i,\epsilon_i,\mu_i,\rho_i)$ satisfies the $C'(\lambda_i,c_i,\epsilon_i,\mu_i,\rho_i)$ small cancellation condition \ref{epprpc} and the map $\gamma_{i+1}$ takes generating set to generating set. In particular, $\eta_{i+1}:=\gamma_{i+1}\circ \alpha_{i+1}\circ \beta_{i}$ is a surjective homomorphism which takes the generating set of $G_i$ to the generating set of $G_{i+1}$. Let $G^L:=\underset{\rightarrow}{\lim}G_i$. From its definition, it follows that  $G_{i+1}$ is the group generated by $u_{j_i}$ and $v_{j_i}$. \\

We summarize the above discussion in the following statement.
\begin{lemma}\label{limitmax}
	The above construction satisfies the following properties:
	\begin{enumerate}
		\item $G_i$ is non elementary hyperbolic group for all $i$;
		\item Either $u_i\in E(v_i)$ or the group generated by $\{u_i,v_i \}$ in $G_{i+1}$ is equal to all of $G_{i+1}$;
		\item For each element $x\in X$, $E(x)=\la y\ra$ in $G_i$, where $x=y^{m_1m_2\cdots m_i}$. The exponent $m_i$'s are described as follows: Being a rank 1 abelian group $L$ can be written as $L=\cup_{i=0}^{\infty}L_i$, where $L_i=\langle g_i\rangle_{\infty}$ and $g_i=g_{i+1}^{m_{i+1}}$ for some $m_{i+1}\in \mathbb{N}$;
	 \item $G^L:=\underset{\rightarrow}{\lim}G_i$ may be chosen to have property (T).
	\end{enumerate}
\end{lemma}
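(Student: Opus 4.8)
The plan is to treat parts (1)–(3) as essentially a transcription of the construction — they are forced, step by step, by the geometric results already cited — and to concentrate on part (4), which is the only place the actual input (the choice of the seed group $G_0$) enters. For (1): $K_{i+1}$ is hyperbolic by iterating \cite[Theorem~3]{MO98} along the amalgamations \eqref{amalgum}, each of which glues a hyperbolic group to an infinite cyclic group along a maximal (hence malnormal) elementary subgroup; then $G'_{i+1}$ and $G_{i+1}$ are hyperbolic by \cite[Lemma~7.2]{OL93}, whose hypotheses are exactly the $C'$-small-cancellation conditions verified in Lemmas~\ref{arman5.1} and~\ref{arman5.2} (this is why the parameters $\lambda_i,c_i,\epsilon_i,\mu_i,\rho_i$ are tracked throughout). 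Non-elementarity of each $G_i$ I would deduce \emph{after} establishing (4): $G^L$ has property~(T) and is infinite — infiniteness because the lacunary condition $\delta_i=o(r_{S_i}(\eta_i))$ forces balls of arbitrarily large radius in the $G_i$'s to inject into $G^L$ — hence $G^L$ is non-amenable, therefore non-elementary, and since each $G_i$ surjects onto $G^L$ (see below) while a quotient of a virtually cyclic group is virtually cyclic, every $G_i$ is non-elementary. Part (2) is the assertion already recorded just before the statement: if $u_i\in E(v_i)$ the first alternative holds, and otherwise the defining relations of $G_{i+1}$ (which express the auxiliary generators $g_{(k,i+1)}$ of \eqref{Word} as words in $c_i,c_i'\in G_i$, and then identify each $x\in Z_i=\{x\in X:x\notin E(u_{j_i})\}$ with a word in $u_{j_i},v_{j_i}$) force $G_{i+1}=\langle u_{j_i},v_{j_i}\rangle$. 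Part (3) follows by induction on $i$: step \eqref{amalgum} adjoins $m_{i+1}$-st roots of the relevant elements, torsion-freeness is preserved by amalgamation over subgroups and, for the chosen parameters, by the $C'$-quotients of \cite{OL93}, so Lemma~\ref{elstruc}(1) keeps the elementary subgroups cyclic, and the injectivity-radius estimates underlying \cite[Lemma~7.2]{OL93} guarantee that $\alpha_{i+1}$ and $\gamma_{i+1}$ do not alter the maximal elementary subgroup of a short element such as a generator.

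For (4) the key observation is that the connecting homomorphism $\eta_{i+1}=\gamma_{i+1}\circ\alpha_{i+1}\circ\beta_i$ is \emph{surjective}. Indeed $\beta_i\colon G_i\hookrightarrow K_{i+1}$ and $\alpha_{i+1}\colon K_{i+1}\twoheadrightarrow G'_{i+1}$; but, as used in (2), the relators $R_k$ of \eqref{Word} express each new generator $g_{(k,i+1)}$ of $K_{i+1}$ as a word in $c_i,c_i'\in G_i$, so the image of $G_i$ already contains a generating set of $G'_{i+1}$, i.e.\ $\alpha_{i+1}\circ\beta_i$ is onto (this is precisely the remark ``$\alpha_{i+1}\circ\beta_i$ is surjective'' made during the construction); composing with the quotient map $\gamma_{i+1}$ gives the claim, and the same argument handles $\eta_1\colon G_0\twoheadrightarrow G_1$. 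Thus the directed system $G_0\twoheadrightarrow G_1\twoheadrightarrow G_2\twoheadrightarrow\cdots$ consists entirely of epimorphisms, so the canonical map $G_0\to G^L$ is itself surjective; that is, $G^L$ is a quotient of $G_0$. Now \emph{choose} $G_0$ to be a torsion-free uniform lattice in $Sp(n,1)$ for some $n\ge 2$: it is hyperbolic by \cite{Gr87}, torsion-free after passing to a finite-index subgroup by \cite{Sel60}, non-elementary, and has property~(T) by \cite[Theorem~1.5.3]{BdlHV00}; replacing its generators by suitable pairwise non-commensurable elements, using the infinitude of pairwise non-commensurable elements in a non-elementary hyperbolic group \cite[Lemma~3.8]{OL93}, one also arranges $E(x_i)\cap E(x_j)=\{e\}$ for $i\ne j$, so that the construction applies verbatim. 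Since Kazhdan's property~(T) passes to quotients \cite[Section~1.7]{BdlHV00}, $G^L$ has property~(T).

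The main obstacle I anticipate is exactly this last point, and it is conceptual rather than computational: the intermediate groups $K_{i+1}$ are amalgamated free products over infinite-index subgroups, hence act on their Bass–Serre trees without a global fixed point and so fail property~(T); consequently property~(T) of $G^L$ cannot be obtained by propagating it through the construction stage by stage. The resolution is the surjectivity of $\eta_{i+1}$ noted above — the amalgamation really is undone by the first batch of relators \eqref{Word} — which exhibits $G^L$ as a quotient of $G_0$ and reduces the whole question to starting from a property~(T) seed group. The remaining points (that such a $G_0$ satisfies the mild hypotheses of the construction, and that torsion-freeness together with all the small-cancellation conditions survive every quotient) are routine given the cited results of Ol'shanskii and Osin.
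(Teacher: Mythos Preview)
Your approach is essentially the same as the paper's, especially for the key part (4): both arguments choose $G_0$ to have property~(T), observe that each $\eta_{i+1}$ is surjective, and conclude that $G^L$ (as a quotient of $G_0$) inherits property~(T). The paper phrases this as an induction showing each $G_i$ has~(T), but the content is identical; your remark about the intermediate amalgamated products $K_{i+1}$ failing~(T) is a nice clarification of why one must track surjectivity of $\alpha_{i+1}\circ\beta_i$ rather than just push~(T) through each auxiliary group.

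One minor logical wrinkle: you propose to deduce non-elementarity of the $G_i$ \emph{after} establishing (4), via the chain ``$G^L$ infinite with~(T) $\Rightarrow$ non-elementary $\Rightarrow$ each $G_i$ non-elementary since it surjects onto $G^L$.'' But the inductive construction itself already requires non-elementarity of $G_i$ at step $i$ (to find $c_i,c_i'\notin E(u_k)$ using \cite[Lemma~3.8]{OL93}, and to ensure $j_i$ exists), so non-elementarity cannot be deferred. The paper simply cites \cite[Lemma~7.2]{OL93}, which yields non-elementary hyperbolic quotients directly under the small-cancellation hypotheses; you should do the same and prove non-elementarity inductively alongside hyperbolicity.
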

\begin{proof} Part 1. follows from \cite[Lemma 7.2]{OL93}. To see part 2. notice that by definition if $j_i>i$ then $v_i\in E(u_i)$ in $G_i$. Otherwise if $j_i=i$ then $v_i\notin E(u_i)$ in $G_i$ and $G_i$ is the group generated by $\{u_i,v_i\}$. Part 3. follows immediately from the fact that $x$ is not a proper power in $G_0$.  Finally, for part 4. notice that we may start the above construction with $G_0$ being a property (T) group. Then $G_1$ has property (T), as $G_0$ surjects onto $G_1$. By induction, each of the groups $G_i$ in the above construction have property (T). Hence $G^L$ has property (T).  \end{proof}

We are now ready to prove the main theorem of this section.
\begin{theorem}\label{monster}
	For any subgroup $Q_m$ of $(\mathbb Q,+)$ there exists a non elementary torsion free  lacunary hyperbolic group $G$ such that all maximal subgroups of $G$ are isomorphic to $Q_m$. Moreover, we may choose $G$ to have property (T).
\end{theorem}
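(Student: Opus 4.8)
The plan is to construct $G$ as the direct limit $G^L = \varinjlim G_i$ from the chain built in Subsection \ref{subsec3.2}, taking $L = Q_m$ as the prescribed rank one group and $G_0$ a torsion free $\delta$-hyperbolic group with property (T) (which exists, e.g.\ a suitable uniform lattice in $Sp(k,1)$ or one of its finite index subgroups, cf.\ the discussion after Theorem A). By Lemma \ref{limitmax}(1) each $G_i$ is non-elementary hyperbolic, the maps $\eta_i$ take generating set to generating set, and the injectivity radii grow fast enough relative to the hyperbolicity constants (this was built into the choice of small cancellation parameters via Lemmas \ref{arman5.1} and \ref{arman5.2}), so $G$ is lacunary hyperbolic; torsion freeness of $G$ follows since each $G_i$ is torsion free (each is obtained from $G_0$ by successive amalgamated free products with $\mathbb Z$ along \eqref{amalgum} and then small cancellation quotients, which preserve torsion freeness by the chosen parameters). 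Property (T) for $G$ is Lemma \ref{limitmax}(4). So the only substantive thing left to prove is the statement about maximal subgroups.

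First I would show every proper subgroup of $G$ is contained in a subgroup of the form $E^{\mathcal L}(g)$ for some infinite order $g \in G$. The key is part (2) of Lemma \ref{limitmax}: for every pair $(u,v) \in \mathcal S$ of reduced words, either $v \in E(u)$ in the corresponding $G_{i+1}$, or the subgroup generated by $\{u,v\}$ in $G_{i+1}$ is all of $G_{i+1}$, hence (passing to the limit, since generators map to generators) $\langle u, v\rangle = G$. Now take any proper subgroup $H < G$ and any nontrivial $h_0 \in H$. If $H \not\subseteq E^{\mathcal L}(h_0)$, pick $h_1 \in H \setminus E^{\mathcal L}(h_0)$; represent $h_0, h_1$ by reduced words $u, v$ over $X$. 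Since $h_1 \notin E^{\mathcal L}(h_0)$ we are in the second alternative of Lemma \ref{limitmax}(2), so $\langle h_0, h_1 \rangle = G$, contradicting $H$ proper. Hence $H \subseteq E^{\mathcal L}(h_0)$. (One should be slightly careful that the enumeration $\mathcal S$ indeed contains, up to the relevant equivalence, the pair realizing $h_0, h_1$, and that "$v \in E(u_{j_i})$ at stage $i$" corresponds to "$h_1 \in E^{\mathcal L}(h_0)$ in $G$" — this uses that the elementary subgroups are compatible with the limit maps, as in Theorem \ref{elementarygroup}.)

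Next, by Theorem \ref{elementarygroup}, $E^{\mathcal L}(g)$ is abelian of rank one for every infinite order $g$, i.e.\ embeds in $(\mathbb Q,+)$. Combining with the previous step: every proper subgroup of $G$ embeds into $(\mathbb Q, +)$, and in particular is abelian; moreover by a Zorn's lemma argument every proper subgroup is contained in a maximal one (the union of a chain of proper subgroups is proper, since it is still contained in some $E^{\mathcal L}(g)$ — indeed a chain of subgroups all containing a fixed $h_0 \neq 1$ stays inside $E^{\mathcal L}(h_0)$, which is proper). So maximal subgroups exist and each is a rank one abelian group, i.e.\ isomorphic to \emph{some} subgroup of $\mathbb Q$. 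The remaining point is to pin down the isomorphism type to be exactly $Q_m$. For this I would invoke part (3) of Lemma \ref{limitmax}: the construction was arranged so that $E^{\mathcal L}(x) \cong L = Q_m$ for each generator $x \in X$, because the amalgamations \eqref{amalgum} force $x = y^{m_1 m_2 \cdots m_i}$ in $G_i$ with the $m_j$'s realizing the chain $L = \cup L_i$. Since every maximal subgroup $M$ equals $E^{\mathcal L}(g)$ for some infinite order $g$, and the construction is engineered (via processing \emph{every} pair in $\mathcal S$, hence every conjugacy class of cyclic subgroups gets absorbed into a copy of $Q_m$) so that all such $E^{\mathcal L}(g)$ are isomorphic to $Q_m$, we conclude $M \cong Q_m$.

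The main obstacle is the last step — showing that \emph{every} maximal subgroup, not just the ones containing a generator, is isomorphic to $Q_m$ rather than to some other (a priori smaller or larger) subgroup of $\mathbb Q$. This requires checking that the relations $\mathcal R(Z_i, u_{j_i}, v_{j_i}, \dots)$ imposed via $\gamma_{i+1}$ genuinely identify the cyclic group generated by an arbitrary infinite order element, at the stage where it first appears, with the prescribed tower $L_i \le L_{i+1} \le \cdots$, uniformly over all elements, while simultaneously maintaining the $C'$ small cancellation condition (Lemmas \ref{arman5.1}, \ref{arman5.2}) so that each $G_{i+1}$ stays hyperbolic and the injectivity radii keep growing. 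This is exactly the delicate bookkeeping from \cite[Section 5]{OL93} and \cite[Section 5.4]{DAR17} adapted to the lacunary setting and to a general rank one target $L$; I would carry it out by induction on $i$, at each step first performing the amalgamations \eqref{amalgum} to install the correct divisibility, then the small cancellation quotient $\alpha_{i+1}$, then the "kill the pair" quotient $\gamma_{i+1}$, verifying the parameters can be chosen to satisfy Definition \ref{epprpc} throughout.
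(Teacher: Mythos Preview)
Your proposal is correct and follows essentially the same route as the paper: take $G=G^{L}$ with $L=Q_m$, use Lemma~\ref{limitmax}(2) to force any proper subgroup inside some $E^{\mathcal L}(g)$, invoke Theorem~\ref{elementarygroup} for the rank-one structure, and then appeal to the construction to pin down the isomorphism type as $Q_m$; property~(T) comes from Lemma~\ref{limitmax}(4). The paper's write-up is terser on exactly the point you flag as the ``main obstacle'': it simply asserts that the construction guarantees $E^{\mathcal L}(g)\cong Q_m$ for \emph{every} nontrivial $g$ (not just generators), relying on the fact that the amalgamations~\eqref{amalgum} are performed over \emph{all} $u_1,\dots,u_{j_i}$ and the enumeration $\mathcal S$ exhausts all words, so your more careful accounting of that step is appropriate but not a different strategy.
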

\begin{proof}
	In the above construction let $L=Q_m$, $G=G^{Q_m}$ and take $d=m_1m_2\cdots m_i$ in \ref{amalgum}, where $L_i=\langle g_i\rangle_{\infty}$ and $g_i=g_{i+1}^{m_{i+1}}$ for some $m_{i+1}\in \mathbb{N}$ and $Q_m=\cup_{i=1}^{\infty}L_i$. One can choose sparse enough parameters to satisfy the injectivity radius condition in Definition \ref{lacunarylim} which in turn will ensure that $G$ is lacunary hyperbolic. The above construction also guarantees that $E^{\El}(g)=Q_m$ for all $g\in G\setminus \{1\}$. Suppose $P\nleqslant G$ is a maximal subgroup of $G$. As $P$ is a proper subgroup, $P$ is abelian by part $2.$ of Lemma \ref{limitmax}. Now let $e\neq h\in G$. Note that As P is abelian, P is contained in the centralizer of h. Now from Definition \ref{lelementary} it follows that $g\in P\leqslant E^{\El}(g)(\cong Q_m)\nleqslant G$. By maximality of $P$ we get that $P\cong Q_m$. Thus, all maximal subgroups of $G$ are isomorphic to $Q_m$ and hence any proper subgroup of $G$ is isomorphic to a subgroup of $Q_m$. \\
	The moreover part follows from part 4) of Theorem ~\ref{limitmax}.
\end{proof}
We end this section with the following well known counterexamples to von Neumann's conjecture.
\begin{corollary}[\cite{OL93},\cite{OL80}]
	For every non-cyclic torsion free hyperbolic group $\G$, there exists a non abelian torsion free quotient $\overline{\G}$ such that all proper subgroups of $\overline{\G}$ are infinite cyclic.
\end{corollary}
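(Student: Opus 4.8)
The plan is to realize $\overline{\G}$ as the output of the construction of Subsection~\ref{subsec3.2}, specialized to the rank one group $Q_m=\mathbb Z$ and started from $G_0=\G$. First one checks that $\G$ is an admissible choice for $G_0$: a non-cyclic torsion free hyperbolic group is automatically non-elementary, since by Lemma~\ref{elstruc}(1) an elementary torsion free group is cyclic. Moreover, after replacing a given finite generating set of $\G$ by one consisting of pairwise non-commensurable elements (possible in a non-elementary hyperbolic group by \cite[Lemma 3.8]{OL93}), the normalization $E(x_i)\cap E(x_j)=\{e\}$ for $i\neq j$ required in Subsection~\ref{subsec3.2} holds. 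So we may run the whole tower $G_0\overset{\beta_0}{\hookrightarrow}K_1\overset{\alpha_1}{\twoheadrightarrow}G'_1\overset{\gamma_1}{\twoheadrightarrow}G_1\hookrightarrow\cdots$ with $L=\mathbb Z=\langle g\rangle_\infty$ (so every exponent $m_i$ in \eqref{amalgum} equals $1$) and with the small cancellation parameters $\lambda_i,c_i,\epsilon_i,\mu_i,\rho_i$ chosen sparse enough to force the injectivity radius condition in the definition of lacunary hyperbolicity. Set $\overline{\G}:=\varinjlim_i G_i$, exactly as in Lemma~\ref{limitmax} and Theorem~\ref{monster} with $Q_m=\mathbb Z$.

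Three features of $\overline{\G}$ then need to be extracted. First, $\overline{\G}$ is a quotient of $\G$: each connecting homomorphism $\eta_{i+1}=\gamma_{i+1}\circ\alpha_{i+1}\circ\beta_i\colon G_i\twoheadrightarrow G_{i+1}$ is surjective and $G_0=\G$, so the canonical map $\G\to\varinjlim_i G_i=\overline{\G}$ is onto. Second, $\overline{\G}$ is torsion free: each amalgam $H^k_{i+1}$ in \eqref{amalgum} is an amalgamated free product of torsion free groups over cyclic edge groups, hence torsion free, and the generic small cancellation quotients $G'_{i+1}=\langle K_{i+1}\mid\tilde{\mathcal R}_{i+1}\rangle$ and $G_{i+1}$ preserve torsion freeness by the results of \cite[Section~5]{OL93} (this is the same mechanism exploited in Lemma~\ref{torsionfreelacunary}); a direct limit of torsion free groups is torsion free. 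Third, by Theorem~\ref{monster} applied with $Q_m=\mathbb Z$, every proper subgroup $P\lneqq\overline{\G}$ is abelian (Lemma~\ref{limitmax}(2)), so picking any $1\neq h\in P$ gives $P\leqslant C_{\overline{\G}}(h)\leqslant E^{\mathcal L}(h)\cong\mathbb Z$; since a subgroup of $\mathbb Z$ is trivial or infinite cyclic, every proper subgroup of $\overline{\G}$ is infinite cyclic.

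It remains to see $\overline{\G}$ is non-abelian. The limit is non-elementary — this is guaranteed by the sparseness of the parameters, exactly as in the lacunary hyperbolic constructions of \cite{OL93,OOS07}, the groups $G_i$ themselves being non-elementary hyperbolic by Lemma~\ref{limitmax}(1). Being finitely generated (as a quotient of $\G$) and torsion free, $\overline{\G}$ could be abelian only if it were $\mathbb Z^n$ for some $n$; but $n\leq 1$ is excluded by non-elementarity, while $n\geq 2$ is excluded because $\mathbb Z^n$ with $n\geq 2$ contains non-cyclic proper subgroups, contradicting the previous paragraph. Hence $\overline{\G}$ is a non-abelian torsion free quotient of $\G$ all of whose proper subgroups are infinite cyclic, which is the assertion; taking $\G$ to be, for instance, a uniform lattice in $Sp(k,1)$ then recovers Ol'shanskii's counterexamples to von Neumann's conjecture in the property (T) regime via the moreover part of Theorem~\ref{monster}.

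I do not expect a serious obstacle here: the statement is essentially the case $Q_m=\mathbb Z$ of Theorem~\ref{monster}, together with the trivial observation that a torsion free subgroup of $\mathbb Z$ is infinite cyclic and the remark that the construction may be started from an arbitrary non-cyclic torsion free hyperbolic group rather than a fixed $G_0$. The only point that genuinely requires invoking the fine structure of the construction rather than its stated output is the verification that torsion freeness propagates through the amalgams of \eqref{amalgum} and the small cancellation quotients of \cite{OL93}.
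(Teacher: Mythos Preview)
Your proposal is correct and follows the paper's approach, which is the single line ``Take $Q_m=\mathbb{Z}$ in Theorem~\ref{monster}.'' You supply more detail than the paper---explicitly checking that the construction of Subsection~\ref{subsec3.2} may start from $G_0=\Gamma$ so that the limit is a quotient, and verifying non-abelianness---but torsion freeness and non-elementarity are already part of the conclusion of Theorem~\ref{monster}, so those portions of your argument are redundant.
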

\begin{proof}Take $Q_m=\mathbb{Z}$ in Theorem \ref{monster}. \end{proof}

%%%%%%%%%%%%%%%%%%%%%%%%%%%%%%%%%%%%%%%%%%%%%%%%%%%%%%%%%%%%%%%%%%%%%%%%%%%%%%%%%
%%%%%%%%%%%%%%%%%%%%%%%%%%%%%%%%%%%%%%%%%%%%%%%%%%%%%%%%%%%%%%%%%%%%%%%%%%%%%%%%%
%%%%%%%%%%%%%%%%%%%%%            GROUPS RIPS CONSTRUCTION        %%%%%%%%%%%%%%%%
%%%%%%%%%%%%%%%%%%%%%%%%%%%%%%%%%%%%%%%%%%%%%%%%%%%%%%%%%%%%%%%%%%%%%%%%%%%%%%%%%%%%%%%%%%%%%%%%%%%%%%%%%%%%%%%%%%%%%%%%%%%%%%%%%%%%%%%%%%%%%%%%%%%%%%%%%%%%%%%%%%

\subsection{Belegradek-Osin's Rips Construction in Group Theory} \label{Rip}

Rips constructions emerged in geometric group theory with the work of Rips from \cite{Rip82} and represent a rich source of examples for various pathological properties in group theory. This type of construction was used effectively to study automorphisms of property (T) groups. In this direction Ollivier-Wise \cite{OW} were able to construct property (T) groups whose automorphism group contain any given countable group.  This answered an important older question of P. de la Harpe and A. Valette about finiteness of outer automorphism groups of property (T) groups. Using the small cancellation methods developed in \cite{Os06} and \cite{AMO},  Belegradek and Osin discovered the following version of the Rips construction in the context of relatively hyperbolic groups:
\begin{theorem}\cite{BO06}\label{beleosinripsconstr}
	Let $H$ be a non-elementary hyperbolic group, $Q$ be a finitely generated group and $S$ a subgroup of $Q$. Suppose $Q$ is finitely presented with respect to $S$. Then there exists a short exact sequence
	$$1\rightarrow N\rightarrow G\overset{\epsilon}{\rightarrow} Q\rightarrow 1,$$
	and an embedding $\iota:Q\rightarrow G$ such that
	\begin{enumerate}
		\item $N$ is isomorphic to a quotient of $H$. 
		\item $G$ is hyperbolic relative to the proper subgroup $\iota(S)$.
		\item $\iota\circ \epsilon=Id$.
		\item If $H$ and $Q$ are torsion free then so is $G$.
		\item The canonical map $\phi :Q\hookrightarrow Out(N)$ is injective and $[Out(N):\phi(Q)]<\infty$.
	\end{enumerate}
\end{theorem}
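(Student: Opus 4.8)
The plan is to obtain $G$ as a carefully engineered small cancellation quotient of a relatively hyperbolic group, so that the five conclusions fall out of the standard package of properties enjoyed by such quotients (the Dehn filling and small cancellation machinery of \cite{Os06,AMO}, in the spirit of Ol'shanskii's techniques \cite{OL93}). First I would fix data: since $Q$ is finitely presented relative to $S$, write $Q=\langle x_1,\dots,x_k,S \mid R_1,\dots,R_m\rangle$, where $x_1,\dots,x_k$ is finite and each $R_j$ is a word in the $x_i^{\pm1}$ and finitely many elements of $S$; fix a finite generating set $Y=\{h_1,\dots,h_n\}$ of $H$ and a finite generating set $S_0$ of $S$. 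The starting group is the free product
\[
\Gamma_0 \;=\; F(x_1,\dots,x_k)\ast S \ast H .
\]
Since a free product is hyperbolic relative to its free factors and hyperbolic peripheral factors may be deleted from the peripheral structure, $\Gamma_0$ is hyperbolic relative to the single peripheral subgroup $\{S\}$ (the factors $F(x_1,\dots,x_k)$ and $H$ get absorbed), and $\Gamma_0$ is torsion free whenever $H$ and $S$, hence $Q$, are.

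Next I would form the quotient by conjugation relators that make a copy of $H$ normal, together with a relator encoding each $R_j$:
\[
G \;=\; \Gamma_0 \Big/ \big\langle\!\big\langle\, \{x_i h_t x_i^{-1}U_{i,t}^{-1}\},\ \{s h_t s^{-1}V_{s,t}^{-1}\}_{s\in S_0},\ \{R_j W_j^{-1}\} \,\big\rangle\!\big\rangle ,
\]
with $U_{i,t},V_{s,t},W_j$ words in $Y^{\pm1}$ to be specified. Let $N$ be the image of $H$ in $G$; the conjugation relators force $N\lhd G$, so $N=\langle\!\langle H\rangle\!\rangle_G$ is a quotient of $H$, giving (1). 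Modding out $N$ annihilates all added relators except the $R_j$, whence $G/N\cong Q$ and we get the short exact sequence $1\to N\to G\xrightarrow{\epsilon}Q\to1$. The embedding $\iota$ is obtained as a \emph{twisted} section: one chooses correction terms $n_i\in N$ so that the elements $\iota(x_i):=x_i n_i$ together with $S$ satisfy the relations $R_j$ inside $G$ — this is exactly what dictates the choice of the $W_j$ — and since $n_i\in\ker\epsilon$ one obtains $\epsilon\circ\iota=\mathrm{id}_Q$ (so item (3) holds; note the excerpt's ``$\iota\circ\epsilon=\mathrm{Id}$'' should read $\epsilon\circ\iota=\mathrm{Id}$). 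The crucial technical input is that $U_{i,t},V_{s,t},W_j$ can be taken long and sufficiently generic so that the defining relators of $G$ satisfy the $C'$-type small cancellation conditions over $\Gamma_0$ relative to $\{S\}$ in the sense of \cite{Os06,AMO,OL93}; granting this, the standard output yields that $G$ is hyperbolic relative to the (proper, embedded) image $\iota(S)$ of $S$ and that the quotient creates no new torsion, establishing (2) and (4). I emphasize that $\iota(S)$ being the \emph{only} peripheral subgroup of $G$ is precisely why one builds $\Gamma_0$ over $S$ and encodes the $R_j$ of $Q$ as relations, rather than declaring $Q$ a free factor.

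Finally, for (5), each $q\in Q$ acts on the normal subgroup $N$ by conjugation by $\iota(q)$, yielding $\phi\colon Q\to\mathrm{Out}(N)$; injectivity follows because if $\mathrm{conj}_{\iota(q)}|_N=\mathrm{conj}_n$ with $n\in N$ then $\iota(q)n^{-1}\in C_G(N)$, and using that $N$ is non-elementary and not parabolic in the relatively hyperbolic group $G$ one shows $C_G(N)=1$, so $\iota(q)=n\in N$ and $q=\epsilon(n)=1$. The part I expect to be the main obstacle is $[\mathrm{Out}(N):\phi(Q)]<\infty$: one must prove that \emph{every} automorphism of $N$ coincides, modulo $\mathrm{Inn}(N)$ and up to finite index, with conjugation by some $\iota(q)$. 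This rests on a rigidity analysis of $N$ — because $N$ is obtained from the hyperbolic group $H$ by imposing generic relations it inherits strong indecomposability (no free product or amalgam/HNN splittings over small subgroups), so Rips--Sela/Paulin-type arguments constrain $\mathrm{Out}(N)$, and the construction is set up so that only finitely many cosets of $\phi(Q)$ remain. This last step requires the full strength of the geometric machinery of \cite{Os06,BO06} and is where the real work lies; combining it with the previous two paragraphs gives all five assertions.
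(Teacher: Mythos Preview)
The paper does not supply its own proof of this statement: Theorem~\ref{beleosinripsconstr} is quoted verbatim from \cite{BO06} and used as a black box throughout. So there is no ``paper's proof'' to compare against; your sketch is an outline of the original Belegradek--Osin argument itself.

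That said, your outline is faithful to how \cite{BO06} actually proceeds. The free product $\Gamma_0=F(x_1,\dots,x_k)\ast S\ast H$, the normalization of $H$ via conjugation relators with long generic words in $H$, the use of Osin's relative small cancellation \cite{Os06} to keep the quotient hyperbolic relative to $\{S\}$ and torsion free, and the construction of the section $\iota$ are all as in the source. You are also right that the composition in item (3) is a typo and should read $\epsilon\circ\iota=\mathrm{Id}$. Your identification of (5) as the delicate point is accurate: in \cite{BO06} this is handled by arranging the small cancellation words so that $N$ has Serre's property (FA) and trivial center, and then invoking Paulin's theorem together with Rips--Sela/Bestvina--Feighn accessibility to force $\mathrm{Out}(N)$ to be finite modulo the image of $Q$; your allusion to ``Rips--Sela/Paulin-type arguments'' points in the right direction but understates how much goes into this step.
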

This construction is extremely important for our work. We are particularly interested in the case when $H$ is torsion free and has property (T) and $Q=S$ and it is torsion free. In this situation Theorem \ref{beleosinripsconstr} implies that $G$ is admits a semidirect product decomposition  $G= N\rtimes Q$ and it is hyperbolic relative to $\{Q\}$. Notice that the finite conjugacy radical $FC(N)$ of $N$ is invariant under the action of $Q$ and hence $FC(N)$ is an amenable normal subgroup $G$. Since $G$ is relative hyperbolic it follows that $FC(N)$ is finite and hence it is trivial as $G$ is torsion free; in particular $N$ is an icc group. Since $G$ is hyperbolic relative to $Q$ it follows that the stabilizer of any $n\in N$ in $Q$ under the action $Q \ca^\sigma N$ is trivial.

We now introduce the following classes of groups that shall play an extremely important role throughout the rest of the paper.
\begin{definition}
	We denote by $\mathcal{R}ip (Q)$ the class of all semidirect product $G= N\rtimes Q$ satisfying the properties of Theorem~\ref{beleosinripsconstr}, where $Q=S$, $Q$ and $H$ are torsion free and $H$ has property (T). 
	
	Moreover, when $Q$ has property (T), we denote the class $\mathcal {R}ip (Q)$ by $\mathcal {R}ip_{T} (Q)$.
\end{definition}

 Since property (T) is closed under extensions it follows  that all groups in $ \mathcal {R}ip_{T} (Q)$ have property (T).  Our rigidity results in Section \ref{superrigidRIPS} concern this class of groups.

\vskip 0.05in

In the second part of this section we recall a powerful method from geometric group theory, termed Dehn filling. We are interested specifically in the group theoretic Dehn filling constructions developed by D. Osin and his collaborators in \cite{Os06,DGO11}. The next result, which is due to Osin, is a technical variation of \cite[Theorem 1.1]{Os06} and \cite[Theorem 7.9]{DGO11} and plays a key role to derive some of our main rigidity theorems in Section \ref{superrigidRIPS} (see Theorems \ref{controlprodpropt1} and \ref{controlprodpropt2}). For its proof the reader may consult \cite[Corollary 5.1]{CIK13}.

\begin{theorem}[Osin] \label{dehnfilling} Let $H\leqslant G$ be infinite groups where $H$ is finitely generated and residually finite. Suppose that $G$ is hyperbolic relative to $\{H\}$. Then there exist a non-elementary hyperbolic group $K$ and an epimorphism $\delta : G \rightarrow K$ such that $R = ker(\delta)$ is isomorphic to a non-trivial (possible infinite) free product $R = \ast_ {g \in T} R^{g}_0$ , where $T \subset G$ is a subset and $R^g_0= gR_0g^{-1}$ for a finite index normal subgroup $R_0\lhd H$. 
\end{theorem}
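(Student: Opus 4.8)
The plan is to derive this statement directly from the general group-theoretic Dehn filling machinery of Osin \cite{Os06} and Dahmani–Guirardel–Osin \cite{DGO11}, combined with the residual finiteness of $H$. First I would use the residual finiteness of $H$: since $H$ is finitely generated and residually finite, for any finite subset $F \subseteq H \setminus \{1\}$ one can find a finite index normal subgroup $R_0 \lhd H$ avoiding $F$. The role of $R_0$ is to serve as the ``filling kernel'': by the relatively hyperbolic Dehn filling theorem (\cite[Theorem 1.1]{Os06}, or \cite[Theorem 7.19]{DGO11}), there is a finite subset of $H\setminus\{1\}$ such that whenever a normal subgroup $R_0 \lhd H$ avoids that subset, the natural quotient map $\delta\colon G \to G/\llangle R_0 \rrangle =: K$ has the property that $K$ is hyperbolic relative to the image of $H$, and moreover the restriction of $\delta$ to $H$ factors through $H/R_0$, which is finite. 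Choosing $R_0$ of finite index so that $H/R_0$ is finite then makes the peripheral image finite, hence $K$ is an (absolutely) hyperbolic group, and one arranges by genericity of the construction that $K$ is non-elementary (this uses that $H$ is infinite and $G$ is hyperbolic relative to $\{H\}$ in a genuine way; if necessary, pass to a further small cancellation quotient as in \cite{DGO11}).

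The second step is to identify the kernel $R = \ker(\delta)$. By construction $R = \llangle R_0 \rrangle$ is the normal closure of $R_0$ in $G$. The key structural input from the Dehn filling theory (see \cite[Theorem 7.19(c)]{DGO11}, and the discussion of the kernel as a free product in \cite{Os06}) is that $\llangle R_0 \rrangle$ splits as a free product $\ast_{g \in T} gR_0 g^{-1}$ where $T$ is a suitable transversal-type subset of $G$: intuitively, distinct cosets of $H$ in $G$ contribute ``independent'' copies of $R_0$ because the relative hyperbolicity forces the conjugates $gHg^{-1}$ to intersect trivially (or almost trivially) for $g$ in distinct cosets, and the filling only glues along these peripheral pieces. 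I would cite \cite[Corollary 5.1]{CIK13} for the precise packaging of this fact in the exact form needed here, since that reference already extracts this free product description from Osin's and DGO's work in the generality required. One technical point to verify is that $R_0$ can indeed be taken \emph{normal} in $H$ (not merely finite index): this is automatic since in a residually finite group every finite index subgroup contains a finite index \emph{normal} subgroup (intersect conjugates), and Dehn filling works equally well with such an $R_0$.

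The main obstacle, and the step requiring the most care, is ensuring simultaneously that (i) $K$ is \emph{non-elementary} hyperbolic and (ii) the free product decomposition of $R$ is \emph{non-trivial} (i.e.\ $T$ has at least two elements, or $R_0 \neq 1$ with $T$ indexing genuinely independent conjugates). Non-triviality of the free product is essentially automatic as long as $R_0 \neq 1$ and $[G:H] = \infty$ or $H$ has enough conjugates, which holds here since $H$ is a proper peripheral subgroup of an infinite relatively hyperbolic group; but one must check the degenerate cases. Non-elementariness of $K$ follows because filling a non-elementary relatively hyperbolic group along a proper peripheral subgroup yields, generically, a non-elementary hyperbolic quotient — this is part of the conclusion of \cite[Theorem 1.1]{Os06} once one observes $G$ itself is non-elementary (it contains $H$ infinite and is not virtually cyclic, being properly relatively hyperbolic). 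Assembling these pieces gives exactly the asserted epimorphism $\delta\colon G \to K$ with $\ker(\delta) = \ast_{g\in T} gR_0 g^{-1}$, completing the proof.
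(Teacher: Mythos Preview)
Your proposal is correct and aligns with the paper's treatment: the paper does not give an independent proof of this theorem but simply attributes it to Osin as a technical variation of \cite[Theorem 1.1]{Os06} and \cite[Theorem 7.9]{DGO11}, referring the reader to \cite[Corollary 5.1]{CIK13} for the details. Your outline---using residual finiteness of $H$ to produce a deep enough finite index normal $R_0\lhd H$, applying the relatively hyperbolic Dehn filling theorem to get a hyperbolic quotient $K$ with finite peripheral image, and invoking the DGO structural result to identify $\ker(\delta)=\llangle R_0\rrangle$ as the free product $\ast_{g\in T} gR_0g^{-1}$---is exactly the argument packaged in that citation, and you even point to \cite[Corollary 5.1]{CIK13} yourself.
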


We end this section with an application of Theorem~\ref{dehnfilling}. The result describes the structure of the normal subgroups $N$ of $N\rtimes Q\in  \mathcal {R}ip_{T} (Q)$. Namely, combining Theorems \ref{dehnfilling} and \ref{beleosinripsconstr} we show that these groups are free-by-hyperbolic. This result will be essential to the proof of Theorem \ref{semidirectprodreconstruction}. \\
\begin{proposition}\label{freebyhyperbolic} Let $G=N\rtimes Q \in  \mathcal {R}ip_{\mathcal T}(Q)$ and assume that $Q$ is an infinite residually finite group. Then $N$ is a $\mathbb F_{n+1}$-by-(non elementary, hyperbolic property (T)) group where $n \in \mathbb N \cup \{\infty\}$.
\end{proposition}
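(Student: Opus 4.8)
The plan is to feed the relatively hyperbolic pair $(G,\{Q\})$ into Osin's Dehn filling theorem (Theorem~\ref{dehnfilling}) and then intersect the resulting data with the normal subgroup $N$. Since $G = N\rtimes Q \in \mathcal{R}ip_T(Q)$, we know $G$ is torsion free, hyperbolic relative to $\{Q\}$, and $Q$ is infinite, finitely generated (being finitely presented relative to itself in the Rips construction) and, by hypothesis, residually finite. So Theorem~\ref{dehnfilling} applies with $H = Q$: there is a non-elementary hyperbolic group $K$ and an epimorphism $\delta\colon G\twoheadrightarrow K$ whose kernel $R$ is a free product $R = \ast_{g\in T} gR_0g^{-1}$ where $R_0 \lhd Q$ is a finite index normal subgroup of $Q$. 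Since $Q$ is torsion free and $R_0$ has finite index in $Q$, $R_0$ is itself an infinite, finitely generated, torsion free group; and being a free product of conjugates of such a group, $R$ is itself a free product of (possibly infinitely many) copies of $R_0$.

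First I would restrict $\delta$ to $N$. Consider $\delta|_N\colon N \to K$ and let $N_0 := N\cap R = \ker(\delta|_N)$, which is normal in $N$ (indeed normal in $G$). Then $N/N_0 = \delta(N)$ is a subgroup of the hyperbolic group $K$, hence is itself hyperbolic; moreover $\delta(N)$ is normal in $K = \delta(G)$ (since $N\lhd G$), and I would want to check it is non-elementary — this should follow because $N$ is icc and infinite while $\delta(Q)$, being the image of a finitely generated subgroup, cannot absorb all of $K$; more carefully, if $\delta(N)$ were elementary (hence virtually cyclic, hence amenable) then, being normal in the non-elementary hyperbolic $K$, it would have to be finite, forcing $N\subseteq R$, i.e. $N = N_0$; but $R = \ast gR_0g^{-1}$ is a free product of conjugates of a subgroup of $Q$, and one checks (using that $G = N\rtimes Q$ and the action $Q\ca N$ has trivial stabilizers, so $N\cap Q$-conjugates behave well) that $N$ cannot sit inside such a free product unless it is free — in which case $N$ is free-by-trivial and the statement holds trivially with the ``free part'' being all of $N$. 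So generically $\delta(N)$ is a non-elementary hyperbolic, property (T) (being a quotient of the property (T) group $N$) group.

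The remaining point is to identify $N_0 = N\cap R$ as a finitely generated free group $\mathbb F_{n+1}$. Since $R = \ast_{g\in T} gR_0g^{-1}$ is a free product and $N_0$ is a subgroup of $R$, the Kurosh subgroup theorem tells us $N_0$ is itself a free product of a free group with conjugates (inside $R$) of subgroups of the free factors $gR_0g^{-1}$. Here I would use that $N_0$ is normal in $G$ together with the structure of the action: an element of $N$ lying in a conjugate $gR_0g^{-1}$ of (a finite-index subgroup of) $Q$ must be trivial, because $N\cap Q = \{1\}$ in the semidirect product and this persists under $G$-conjugation in the appropriate sense — so $N_0$ meets every free-factor conjugate trivially, and Kurosh forces $N_0$ to be free. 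Finitely generated: $N_0 = \ker(\delta|_N)$ has quotient $\delta(N)$ which is finitely presented (hyperbolic), and $N$ is finitely generated (it is a quotient of the hyperbolic group $H$ in the Rips construction), so $N_0$ is finitely generated as the kernel of a map from a finitely generated group onto a finitely presented one. Writing its rank as $n+1$ with $n\in\mathbb N\cup\{\infty\}$ — the ``$+1$'' being a normalization reflecting that $N$ itself is non-abelian, hence $N_0$ is nontrivial, so the rank is at least $1$ — gives exactly the short exact sequence $1 \to \mathbb F_{n+1} \to N \to \delta(N) \to 1$ with $\delta(N)$ non-elementary hyperbolic with property (T), as claimed.

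The main obstacle I anticipate is the careful verification that $N_0 = N\cap R$ is genuinely free (not merely a free product of a free group with some extra pieces) and finitely generated: this requires marrying the Kurosh decomposition of subgroups of the free product $R = \ast gR_0 g^{-1}$ with the internal structure of the Rips construction, specifically the fact that $N$ intersects $Q$ and all of its "conjugate copies" appearing as free factors trivially — which in turn rests on the trivial-stabilizer property of the action $Q\ca^\sigma N$ recorded at the end of Subsection~\ref{Rip}. The hyperbolicity and property (T) of the quotient $N/N_0$ are comparatively routine once $\delta(N)\lhd K$ is seen to be non-elementary.
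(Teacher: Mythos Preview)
Your overall strategy --- apply Osin's Dehn filling and then use Kurosh on $N\cap R$ --- is exactly the paper's approach, but there are genuine gaps in the execution.

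The most serious one is your claim that $\delta(N)$ ``is a subgroup of the hyperbolic group $K$, hence is itself hyperbolic.'' Subgroups of hyperbolic groups are not hyperbolic in general, and normality alone does not help. What the paper does instead is observe that since $R_0\lhd Q$ has finite index, $NR \geq N\rtimes R_0$ has finite index in $G=N\rtimes Q$; thus $\delta(N)\cong NR/R$ has \emph{finite index} in $K=G/R$, and finite-index subgroups of hyperbolic groups are hyperbolic (and non-elementary when $K$ is). This single observation simultaneously gives you hyperbolicity, non-elementarity, and property~(T) of the quotient, and lets you bypass entirely your case analysis on whether $\delta(N)$ is elementary. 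Your fallback in that case --- ``$N$ is free-by-trivial and the statement holds trivially'' --- does not actually prove the proposition, since a trivial quotient is not non-elementary hyperbolic.

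Two smaller issues. First, your finite-generation argument for $N_0$ is wrong: the kernel of a surjection from a finitely generated group onto a finitely presented group need only be \emph{normally} finitely generated (e.g.\ $\ker(F_2\twoheadrightarrow\mathbb Z)\cong F_\infty$). Fortunately the statement allows $n=\infty$, so you do not need this. Second, for the Kurosh step the paper is careful about where the conjugating elements $g_i$ live: they lie in $R\subset G$, so one writes $g_i=n_iq_i$ with $n_i\in N$, $q_i\in Q$ and uses normality of $N$ to get $Q_i^{q_i}\leq N\cap Q=1$. Your phrase ``this persists under $G$-conjugation in the appropriate sense'' is correct in spirit but should be made precise this way. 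The paper also records $|X|\geq 2$ (since $N\cap R$ is an infinite normal subgroup of the icc group $G$), which is slightly more than the ``rank $\geq 1$'' you extract.
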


\begin{proof} Since $G$ is hyperbolic relative to $\{Q\}$ and $Q$ is residually finite then by Theorem \ref{dehnfilling} there is a non-elementary hyperbolic group $K$ and an epimorphism $\delta : G \rightarrow K$ such that $L = ker(\delta)$ is isomorphic to a non-trivial free product $L = \ast_ {g \in T} Q^{g}_0$ , where $T \subset G$ is a subset and $Q_0\lhd Q$ is finite index, normal subgroup. Since $G=N\rtimes Q$ and $Q_0$ is normal in $Q$ one can assume without any loss of generality that $T\subset N$. Next we show that $N\cap L$ infinite. If it would be finite, as $G$ is icc, it follows that $N\cap L=1$. As $N$ and $L$ are normal in $G$ then the commutator satisfies $[N,L]\leqslant N\cap L=1$ and hence $L\leqslant C_G(N)$. To describe this  centralizer fix $g= nq\in C_G(N)$ where $n\in N$, $q\in Q$. Thus for all $m\in N$ we have  $nq m=mnq $ and hence $n \sigma_q(m) = m n$, where $\sg_q(x)=q^{-1}xq$ for all $x\in N$. Therefore $\sigma_q=ad (n)$ and by part 5.\ in Theorem \ref{beleosinripsconstr} we must have that $q=1$. This further implies that $m\in Z(N)=1$ and hence $C_G(N)=1$; in particular, $L=1$ which is a contradiction. In conclusion $N\cap L\lhd N$ is an infinite normal subgroup. Using the isomorphism theorem we see that $N/ (N\cap L)\cong (NL)/L$. Also from the free product description of $L$ we see that $N\rtimes Q_0\leqslant NL$ and hence $[G:NL]<\infty$.  In particular $(NL)/L$ is a finite index subgroup of $G/L=K$ and hence $(NL)/L$ is a (non-elementary) hyperbolic, property (T) group. To finish our proof we only need to argue that $N\cap L$ is a free group with at least two generators.  Since $L=\ast_ {g\in T} Q^{g}_0$, by Kurosh theorem there exist a set $X\subset L$, a collection of subgroups $Q_i \leqslant Q_0$ together with elements $g_i\in L$ such that $N\cap L = F(X)\ast (\ast_{i\in I} Q_i^{g_i}) $; here $F(X)$ is a free group with free basis $X$. In particular, for every $i\in I$ the previous relation implies that $Q_i^{g_i}\leqslant N$ and writing $g_i=n_iq_i$ for some $n_i\in N$, $q_i\in Q$ we see that $Q_i^{q_i}\leqslant N$. As $Q_i^{q_i}\leqslant Q$ we conclude that $Q_i^{q_i}\leqslant N\cap Q=1$ and hence $Q_i=1$. Thus $N\cap L= F(X)$ and since $G$ is icc and $N\cap L$ is normal in $G$ we see that $|X|\geq 2$, which finishes the proof.\end{proof}

%%%%%%%%%%%%%%%%%%%%%%%%%%%%%%%%%%%%%%%%%%%%%%%%%%%%%%%%%%%%%%%%%%%%%%%%%%%%%%%%%%%%%%%%%%%%%%%%%%%%%%%%%%%%%%%%%%%%%%%%%%%%%%%%%%%%%%%%%%%%%%%%%%%%%%%%%%%%%%%%%%%%%%%%%%%%%%%%%%%%%%%     MAXIMAL VON NEUMANN ALGEBRAS   %%%%%%%%%%%%%%%%%%%%%%%%%%%%%%%%%%%%%%%%%%%%%%%%%%%%%%%%%%%%%%%%%%%%%%%%%%%%%%%%%%%%%%%%%%%%%%%%%%%%%%%%%%%%%%%%%%%%%%%%%%%%%%%%%%%%%%%%%%%%%%%%%%%%%%%%%%%%%%%%%%%%%%%%%%%%%%%%%%%%%%%%%%

\section{Maximal von Neumann Subalgebras Arising from Groups Rips Construction} \label{sec4}

If $\mathcal M$ is a von Neumann algebra then a von Neumann subalgebra $\mathcal N\subset \mathcal M$ is called \emph{maximal} if there is no intermediate von Neumann subalgebra $\mathcal P$ so that $\mathcal N\subsetneq \mathcal P \subsetneq \mathcal M$. Understanding the structure of maximal subalgebras of a given von Neumann algebra is a rather difficult problem that plays a key role in the very classification of these objects. Despite a series of earlier remarkable successes on the study of maximal \emph{ amenable } subalgebras initiated by Popa \cite{Po83} and continued more recently \cite{Sh06,CFRW08,Ho14,BC14,BC15,Su18,CD19,JS19}, much less is known for the maximal ones. For instance Ge's question \cite[Section 3, Question 2]{Ge03} on the existence of non-amenable factors that possess maximal factors which are amenable was settled in the affirmative only very recently in the work of Y. Jiang and A. Skalski, \cite{JS19}. We also remark that the study of maximal (or by duality minimal) intermediate subfactors has recently led to the discovery of a rigidity phenomenon for the intermediate subfactor lattice in the case of irreducible finite index subfactors \cite{BDLR18}.
\vskip 0.05
in 

In this section we make new progress in this direction by describing several concrete collections of maximal subalgebras in the von Neumann algebras arising from the groups $\mathcal{R}ip (Q)$ introduced in the previous subsection (see Theorem \ref{maximalvN} below). In particular, these examples allow construction of property (T) von Neumann algebras which have maximal von Neumann subalgebras without property (T). This answers a question raised by Y. Jiang and A. Skalski in \cite[Problem 5.5]{JS19} in the first version of their paper. Our arguments rely on the usage of Galois correspondence results for von Neumann algebras $\grave a$ $la$ Choda \cite{Ch78} and the classification of maximal subgroups in the monster-type groups provided in Theorem \ref{monster}. We remark that in the second version of their paper \cite[Theorem 4.8]{JS19}, Y. Jiang and A. Skalski independently obtained a different solution, using different techniques.    

\vskip 0.04in
First we need a couple of basic lemmas concerning automorphisms of groups. For the reader's convenience we include short proofs.

\begin{lemma} \label{liftouter}
	Let $N$ be a group, let ${\rm Id}\neq \alpha\in {\rm Aut}(N)$ and denote by $N_1 =\{ n \in N \,|\, \alpha(n)=n \}$ its fixed point subgroup. Then the following hold: 
	\begin{enumerate}\item Either $[N:N_1]=\infty$ or there is a subgroup $N_0\leqslant N_1\leqslant N$ that is normal in $N$ with $[N:N_0]<\infty$ and such that the induced automorphism $\tilde\alpha \in {\rm Aut}(N/C_N(N_0))$ given by $\tilde \alpha(n C_N(N_0))=\alpha(n) C_N(N_0)$ is the identity map; in particular, when $N$ is icc we always have $[N:N_1]=\infty$.   
		\item Either $[N:N_1]=\infty$, or $\alpha$ has finite order in ${\rm Aut}(N)$, or there is a $k\in \mathbb N$ and a subgroup $N_0\leqslant N_1\leqslant N$ that is normal in $N$ with $[N:N_0]<\infty$ and such that the induced automorphism $\tilde\alpha \in {\rm Aut}(N/Z(N_0))$ given by $\tilde \alpha(n Z(N_0))=\alpha(n) Z(N_0)$ has order $k$; in particular, when all finite index subgroups of $N$ have trivial center we either have $[N:N_1]=\infty$, or, $\tilde \alpha$ has finite order.   
	\end{enumerate}	
\end{lemma}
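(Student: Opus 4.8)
The plan is to analyze the fixed point subgroup $N_1$ directly and extract from it a large normal subgroup on which $\alpha$ behaves well. First I would observe that $N_1$ is itself a subgroup of $N$, but it need not be normal; to get a normal subgroup inside $N_1$ I would pass to the normal core $N_0 = \bigcap_{n\in N} n N_1 n^{-1}$. If $[N:N_1]<\infty$ then $N_1$ contains only finitely many conjugates, so $N_0\leqslant N_1$ is normal in $N$ and still of finite index. This is the natural candidate for the subgroup $N_0$ in both parts of the statement.

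\textbf{Part (1): the induced automorphism on $N/C_N(N_0)$.}
Assume $[N:N_1]<\infty$, so $N_0$ as above is finite index, normal in $N$, and $\alpha$ fixes $N_0$ pointwise (since $N_0\leqslant N_1$). The key point is that $\alpha$ restricted to $N_0$ is the identity; I would then show that for every $n\in N$, $\alpha(n)$ and $n$ differ by an element of $C_N(N_0)$. Indeed, for $m\in N_0$ we have $\alpha(n) m \alpha(n)^{-1} = \alpha(n) \alpha(m) \alpha(n)^{-1} = \alpha(nmn^{-1})$; but $nmn^{-1}\in N_0$ because $N_0$ is normal, so $\alpha$ fixes it, giving $\alpha(n) m \alpha(n)^{-1} = n m n^{-1}$ for all $m\in N_0$. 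Hence $n^{-1}\alpha(n)\in C_N(N_0)$, so the induced map $\tilde\alpha$ on $N/C_N(N_0)$ is the identity. The ``in particular'' assertion follows since if $N$ is icc then it has no finite index subgroup that is centralized by a finite index subgroup (an icc group has trivial virtual center / FC-radical, so $C_N(N_0)$ is finite hence trivial, forcing $\alpha=\mathrm{Id}$, contradicting the hypothesis); therefore $[N:N_1]=\infty$.

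\textbf{Part (2): the induced automorphism on $N/Z(N_0)$ and its order.}
Again assume $[N:N_1]<\infty$ and take $N_0$ finite index, normal, with $\alpha|_{N_0}=\mathrm{Id}$. Now I would consider the induced automorphism $\tilde\alpha$ on $N/Z(N_0)$. Since $N_0/Z(N_0)$ embeds into $\mathrm{Aut}(N_0)$ via conjugation and $[N:N_0]<\infty$, the quotient $N/Z(N_0)$ is a (finite-by-(finite index in $\mathrm{Aut}(N_0)$))-type group — more precisely I would argue $N/Z(N_0)$ has a finite index subgroup, namely the image of $N_0$, which is $N_0/Z(N_0)\hookrightarrow \mathrm{Aut}(N_0)$, and I claim $\tilde\alpha$ acts on this as an inner automorphism of $\mathrm{Aut}(N_0)$ — in fact trivially, since $\alpha$ fixes $N_0$ pointwise it fixes $N_0/Z(N_0)$ pointwise. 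So $\tilde\alpha$ is the identity on a finite index subgroup of $N/Z(N_0)$, hence lies in the (finite, if $\mathrm{Aut}$ of the quotient is appropriately controlled) — here is where I must be careful: $\tilde\alpha$ fixes a finite index subgroup pointwise, and an automorphism of a group fixing a finite index subgroup pointwise has finite order (it is determined by where it sends finitely many coset representatives, each mapped to an element of the same coset, so its powers eventually stabilize; more cleanly, such $\tilde\alpha$ induces the identity on the quotient and, arguing as in part (1), $x^{-1}\tilde\alpha(x)$ lies in the centralizer of the finite index subgroup, which is finite — this forces finite order). Let $k$ be that order. The trichotomy is then: if $[N:N_1]=\infty$ we are in the first case; otherwise we produced $N_0$ and $k$; and the separate clause ``$\alpha$ has finite order in $\mathrm{Aut}(N)$'' is the degenerate possibility when $N_0$ can be taken to have trivial center (then $\tilde\alpha$ on $N/Z(N_0) = N$ having order $k$ just says $\alpha$ has order $k$).

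\textbf{Main obstacle.}
The delicate step is verifying that an automorphism fixing a finite index normal subgroup pointwise must have finite order modulo the appropriate center/centralizer — this requires the observation that $n^{-1}\alpha(n)$ lies in the centralizer of $N_0$ (as in part (1)) and that this centralizer is small when $N_0$ has finite index (its intersection with $N_0$ is $Z(N_0)$, and it is virtually equal to $Z(N_0)$), so that $\tilde\alpha$ is a permutation of finitely many cosets and hence has finite order. I would also need to be slightly careful about whether $\alpha$ fixes all of $N_1$ or merely $N_0\leqslant N_1$; only the latter is used, and the normal core construction guarantees it. Assembling the case distinction cleanly — separating ``$[N:N_1]=\infty$'', ``$\alpha$ finite order'', and ``$\tilde\alpha$ of order $k$'' — is mostly bookkeeping once the centralizer estimate is in hand.
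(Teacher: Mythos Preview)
Your Part~(1) is correct and matches the paper's proof exactly: take the normal core $N_0=\bigcap_{h\in N} hN_1h^{-1}$, observe $\alpha$ fixes $N_0$ pointwise, and compute $n^{-1}\alpha(n)\in C_N(N_0)$ from $\alpha(nmn^{-1})=nmn^{-1}$ for $m\in N_0$; the icc consequence is also the same.

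Your Part~(2), however, has a genuine gap. The assertion that ``an automorphism fixing a finite index subgroup pointwise has finite order'' is false in general. For a counterexample take $G=\mathbb Z^2\rtimes \mathbb Z/2$ with the swap action, $H=\mathbb Z^2$, and for $a\in\mathbb Z$ let $\beta_a$ be the automorphism with $\beta_a|_H=\mathrm{Id}$ and $\beta_a(s)=s\cdot(a,-a)$; one checks $\beta_a\circ\beta_b=\beta_{a+b}$, so $\beta_1$ has infinite order while fixing the index-$2$ subgroup $H$ pointwise. Your fallback (``the centralizer of the finite index subgroup is finite, and this forces finite order'') is closer to the truth, but you justify neither why $C_{N/Z(N_0)}(N_0/Z(N_0))$ is finite (this needs the observation $Z(N_0/Z(N_0))=1$) nor why landing in a finite set forces $\tilde\alpha$ itself to have finite order.

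The paper's route fills exactly this gap and is more direct. Since $\alpha$ fixes $N_0$ pointwise it descends to the \emph{finite} group $N/N_0$; pick $k$ with $\alpha^k\equiv\mathrm{Id}\pmod{N_0}$, so $n^{-1}\alpha^k(n)\in N_0$ for all $n$. Now rerun the Part~(1) computation for $\alpha^k$ (which also fixes $N_0$ pointwise) to obtain $n^{-1}\alpha^k(n)\in C_N(N_0)$. Intersecting gives $n^{-1}\alpha^k(n)\in N_0\cap C_N(N_0)=Z(N_0)$, hence $\tilde\alpha^k=\mathrm{Id}$ on $N/Z(N_0)$. The point is that \emph{both} containments --- in $N_0$ (from the finite quotient) and in $C_N(N_0)$ (from the Part~(1) argument) --- are needed; your sketch only produces the second one.
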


\begin{proof} 1. Assume that $2\leq [N:N_1]<\infty$. Then $N_0:= \cap_{h \in N}hN_1 h^{-1}\leqslant N_1$ is a finite index normal subgroup of $N$. Notice that the centralizer $C_N(N_0)$ is also normal in $N$.  Let $n \in N$ and $n_0 \in N_0$. As $N_0$ is normal we have  $n n_0n^{-1} \in N_0\leqslant N_1$ and hence $ n n_0 n^{-1}=\alpha(n n_0 n^{-1})=\alpha(n) n_0 \alpha(n^{-1})$. This implies that  $n_0^{-1}n^{-1} \alpha(n)n_0= n^{-1}\alpha(n)$ and hence $n^{-1}\alpha(n)\in C_N(N_0)$. Since $\alpha$ acts identically on $N_0$ one can see that $\alpha(C_N(N_0))=C_N(N_0)$. Thus one can define an automorphism  $\tilde \alpha: N/C_N(N_0)\rightarrow N/C_N(N_0)$  by letting $\tilde \alpha(n C_N(N_0))=\alpha(n)C_N(N_0)$. However the previous relations show that $\tilde \alpha$ is the identity map, as desired. For the remaining part of the statement, we notice that if $ [N:N_1]<\infty$ and $N$ is icc then the centralizer $C_N(N_0)$ is trivial and hence $\alpha=Id$, which is a contradiction.  
	
	\vskip 0.07in 
	
	2. Assume $[N:N_1]<\infty$ and $\alpha$ has infinite order in ${\rm Aut}(N)$. Also for each  $i\geq 2$ denote by $N_i=\{n\in N \,|\, \alpha^i(n)=n \}$ and notice that $N_1 \leqslant N_i \leqslant N_{i+1}\leqslant N$. Since $[N :N_1]<\infty$ there is $s \in \mathbb N$ so that, either $N_s=N_{l}$ for all $l\geq s$, or $N_s=N$. If $N_s=N$ then $\alpha^s=Id$, contradicting the infinite order assumption on $\alpha$. Now assume that $N_s=N_{s+1}$. For every $n \in N_{s+1}$ we have $\alpha^s(n)=\alpha^{s+1}(n)$ and thus $\alpha(n)=n$ which is equivalent to $n \in N_1$. This shows that $N_1=N_{s+1}$ and combining with the above we conclude  that  $N_1= N_{i}$ for all $i$.
	\vskip 0.05in	
	As $[N:N_1]<\infty$ then $N_0:= \cap_{h \in N}hN_1 h^{-1}\leqslant N_1$ is a finite index normal subgroup of $N$. $\alpha$ induces an automorphism $\tilde{\alpha}$ on the quotient group $N/N_0$ by $\tilde{\alpha}( n N_0)= \alpha(n)N_0$ for all $n\in N$. Since $[N:N_0]<\infty$ there is $k \in \mathbb N$ such that $\tilde\alpha^k=Id$ on $N/N_0$. Thus for every $n \in N$ we have $n^{-1}\alpha^k(n) \in N_0$.
	
	\vskip 0.05in 
	Let $n \in N$ and $n_0 \in N_0$. By normality we have  $n n_0n^{-1} \in N_0\leqslant N_1$ and hence $ n n_0 n^{-1}=\alpha^k(n n_0 n^{-1})=\alpha^k(n) n_0 \alpha^k(n^{-1})$. This implies that  $n_0^{-1}n^{-1} \alpha^k(n)n_0= n^{-1}\alpha^k(n)$ and hence $n^{-1}\alpha^k(n)\in Z(N_0)$. Since $N_0$ is normal in $N$, so is $Z(N_0)$. Since $\alpha$ leaves $Z(N_0)$ invariant, the map $\tilde\alpha : N/Z(N_0))\rightarrow N/Z(N_0)$ given by $\tilde \alpha(n Z(N_0))=\alpha(n) Z(N_0)$ is an automorphism. The previous relations show that it has order $k$. \end{proof}

Using this we will see that, in the case of icc groups, outer group actions $Q\ca N$ by automorphisms lift to outer actions \ $Q\ca \mathcal L(N)$ at the von Neumann algebra level. More precisely we have the following 
\begin{lemma} \label{trivrel}
	Let $N$ be an icc group and let $Q$ be a group together with an outer action $Q \ca^{\sg} N$. Then $\mathcal L(N)' \cap \mathcal L(N \rtimes_{\sg}Q)= \mc$.
\end{lemma}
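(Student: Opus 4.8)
The plan is to show that the relative commutant $\mathcal L(N)'\cap\mathcal L(N\rtimes_\sigma Q)$ contains no nontrivial element, by examining the Fourier decomposition of a putative central element along the copy of $Q$ sitting inside $N\rtimes_\sigma Q$. Write $\mathcal M=\mathcal L(N\rtimes_\sigma Q)=\mathcal L(N)\rtimes_\sigma Q$, with canonical unitaries $\{u_g:g\in Q\}$ implementing $\sigma$, and let $x\in\mathcal L(N)'\cap\mathcal M$. Expand $x=\sum_{g\in Q}x_g u_g$ with $x_g\in\mathcal L(N)$. For every $a\in\mathcal L(N)$ the condition $xa=ax$ reads, after identifying Fourier coefficients, $x_g\,\sigma_g(a)=a\,x_g$ for all $g\in Q$, i.e. $x_g\,\sigma_g(a)\,x_g^{-1}$-type relations; more precisely $x_g\sigma_g(a)=ax_g$ for every $a$.

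First I would treat the coefficient at $g=1$: the relation $x_1 a = a x_1$ for all $a\in\mathcal L(N)$ together with the fact that $\mathcal L(N)$ is a factor (since $N$ is icc) forces $x_1\in\mathbb C$. Next, for $g\neq 1$, I would argue that $x_g=0$. The relation $x_g\sigma_g(a)=ax_g$ for all $a\in\mathcal L(N)$ means that the partial isometry in the polar decomposition of $x_g$ intertwines the identity embedding of $\mathcal L(N)$ with the automorphism $\sigma_g$; concretely, applying this with $a=v_n$ (the canonical unitaries of $\mathcal L(N)$) gives $x_g v_{\sigma_g^{-1}(n)} \cdots$, and taking $\|\cdot\|_2$-norms and using traciality shows $\tau(x_g^* x_g)$ is controlled by how much $\sigma_g$ moves group elements. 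The cleanest route: if $x_g\neq 0$, then $x_g^*x_g\in\mathcal L(N)$ is a nonzero positive element commuting (via the intertwining relation) with $\sigma_g(a)(x_g^*x_g)=(x_g^* x_g)\sigma_g(a)$ twisted appropriately — better, I would deduce directly that $\mathcal L(N)\prec_{\mathcal M}\mathcal L(N)$ through the automorphism $\sigma_g$, which by a standard argument (see the intertwining characterization, Theorem \ref{corner}, or a direct Fourier computation) implies $\sigma_g$ is inner, i.e. $\sigma_g=\mathrm{ad}(v)$ for some unitary $v\in\mathcal L(N)$. Since the action is outer at the von Neumann algebra level — and here is where I would invoke Lemma \ref{liftouter}(1): because $N$ is icc, the fixed-point subgroup of any nontrivial $\sigma_g\in\mathrm{Aut}(N)$ has infinite index, which is exactly the hypothesis needed to rule out $\sigma_g$ being implemented by a unitary in $\mathcal L(N)$ — we get a contradiction unless $x_g=0$.

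The cleanest formalization is: the map $\sigma_g:N\to N$ extends to $\mathcal L(N)$, and $\sigma_g$ is inner on $\mathcal L(N)$ iff there is a nonzero $y\in\mathcal L(N)$ with $y\,\sigma_g(a)=a\,y$ for all $a$; when $N$ is icc and $\sigma_g$ is a nontrivial automorphism, Lemma \ref{liftouter}(1) shows the set of $n$ fixed by $\sigma_g$ has infinite index in $N$, and a short computation with the Fourier coefficients of such a $y=\sum_n y_n v_n$ (comparing $y v_{\sigma_g(n_0)}$ and $v_{n_0} y$) forces $y=0$. Hence every $x_g$ with $g\ne 1$ vanishes, so $x=x_1\in\mathbb C$, proving $\mathcal L(N)'\cap\mathcal L(N\rtimes_\sigma Q)=\mathbb C$.

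I expect the main obstacle to be the bookkeeping in the step ``$\sigma_g$ inner $\Rightarrow$ contradiction'': one must carefully verify that an intertwiner $x_g$ between $\mathrm{id}$ and $\sigma_g$ on $\mathcal L(N)$ really does produce a group element relation forcing $\sigma_g$ to fix a finite-index subgroup (or be inner in the group-theoretic sense up to $\mathrm{ad}$ of a unitary), and then to quote Lemma \ref{liftouter}(1) in the icc case to close the loop. The subtlety is that ``inner on $\mathcal L(N)$'' is a priori weaker than ``inner on $N$''; the icc hypothesis, via the triviality of the relevant centralizers in Lemma \ref{liftouter}, is precisely what bridges this gap, so the argument should go through cleanly.
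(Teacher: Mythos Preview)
Your proposal is correct and, once the meandering is stripped away, amounts to the same argument as the paper's. The paper phrases things via the conjugacy-orbit criterion: $\mathcal L(N)'\cap\mathcal L(N\rtimes_\sigma Q)=\mathbb C$ iff every nontrivial $h\in N\rtimes_\sigma Q$ has infinite $N$-conjugacy class, then writes $h=n_0q_0$ and shows a finite orbit would force $\mathrm{ad}(n_0)\circ\sigma_{q_0}$ to fix a finite-index subgroup of $N$, contradicting Lemma \ref{liftouter}(1). Your two-stage Fourier decomposition (first along $Q$, then expanding $x_g\in\mathcal L(N)$ along $N$) is exactly the proof of that conjugacy-orbit criterion unwound in this particular case: the relation $c_m=c_{n^{-1}m\sigma_g(n)}$ you obtain is precisely the statement that the Fourier support of $x$ is $N$-conjugation invariant, and your ``$\mathrm{ad}(m)\circ\sigma_g$ has finite-index fixed points'' is the paper's ``$\mathrm{ad}(n_0)\circ\sigma_{q_0}$ fixes $N_1$'' with $m=n_0$, $g=q_0$. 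Both routes land on Lemma \ref{liftouter}(1) for the identical reason.

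One comment on presentation: your detour through ``$\sigma_g$ inner on $\mathcal L(N)$'' and polar decompositions is unnecessary and slightly misleading---you do not need $x_g$ to be (or to produce) a unitary, nor do you need to conclude that $\sigma_g$ itself is inner on $\mathcal L(N)$. The direct coefficient comparison you sketch in your ``cleanest formalization'' paragraph is already the whole argument: a single nonzero Fourier coefficient $c_m$ of $x_g$ forces $\mathrm{ad}(m)\circ\sigma_g$ (not $\sigma_g$ alone) to have a finite-index fixed-point subgroup, and outerness plus icc via Lemma \ref{liftouter}(1) kills that. Tightening to that one line would make your proof identical to the paper's.
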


\begin{proof} To get $\mathcal L(N)' \cap \mathcal L(N \rtimes_{\sg}Q)= \mc$ it suffices to show that for all $g \in (N \rtimes_{\sg}Q) \setminus \{e\}$ the $N$-conjugacy orbit $\mathcal O_N(g)=\{ngn^{-1}: n \in N\}$ is infinite. Suppose by contradiction there is $h=n_0q_0 \in (N\rtimes Q) \setminus \{e\}$ with $n_0\in N$ and $q_0\in Q$ such that $|\mathcal O_N(h)|<\infty$. Hence there exists a finite index subgroup $N_1\leqslant N$ such that $n h n^{-1}=h$ for all $n\in N_1$. This entails that  $n n_0 q_0 n^{-1}= n_0q_0$ and thus $n= n_0q_0 n q_0^{-1} n_0^{-1} = {\rm ad} (n_0) \circ \sigma_{q_0}(n)$ for all $n\in N_1$. Also, since $N$ is icc, we have that $q_0\neq e$. Let $\alpha={\rm ad}(n_0) \circ \sigma_{q_0}$.
	Since $Q\ca N$ is outer it follows that  ${\rm Id}\neq \alpha \in {\rm Aut}(N)$. Since $N$ is icc and $[N:N_1]<\infty$ then the first part in Lemma \ref{liftouter} leads to a contradiction.\end{proof}

With these results at hand we are now ready to deduce the main result of the section.

\begin{notation}\label{ripsvn} Fix any rank one group $Q_m$. Consider the lacunary hyperbolic groups $Q$ from Theorem \ref{monster} where the maximal rank one subgroups of $ Q$ are isomorphic to $Q_m$. Also let $N\rtimes Q\in \mathcal Rip (Q)$ be the semidirect product obtained via the Rips construction together with the subgroups $N\rtimes Q_m<N\rtimes Q$. Throughout this section we will consider the corresponding von Neumann algebras $\mathcal M_m :=\mathcal L(N\rtimes Q_m)\subset \mathcal L(N\rtimes Q):=\mathcal M$.  
\end{notation}

Assuming Notation \ref{ripsvn}, we now show the following

\begin{theorem}\label{maximalvN}  $\mathcal M_m$ is a maximal von Neumann algebra of $\mathcal M$. In particular, when $N\rtimes Q\in \mathcal Rip_{\mathcal T}(Q)$  then $\mathcal M_m$ is a non-property (T) maximal von Neumann subalgebra of a property (T) von Neumann algebra $\mathcal M$.   
\end{theorem}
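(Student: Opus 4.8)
The plan is to invoke a Galois-type correspondence for intermediate von Neumann subalgebras of the crossed product, reducing maximality of $\mathcal M_m$ inside $\mathcal M$ to maximality of the subgroup $N\rtimes Q_m$ inside $N\rtimes Q$. First I would check that the inclusion $N\rtimes Q_m < N\rtimes Q$ is indeed maximal at the group level: since $Q_m$ is a maximal subgroup of $Q$ by Theorem \ref{monster}, any intermediate group $N\rtimes Q_m \leqslant H \leqslant N\rtimes Q$ has image $\epsilon(H)$ lying between $Q_m$ and $Q$ (where $\epsilon: N\rtimes Q \to Q$ is the quotient), so $\epsilon(H)$ is $Q_m$ or $Q$; combined with $N \leqslant H$ forcing $N\cap H = N$, one gets $H = N\rtimes Q_m$ or $H = N\rtimes Q$. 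The genuinely von Neumann algebraic content is then: every intermediate von Neumann subalgebra $\mathcal M_m \subseteq \mathcal P \subseteq \mathcal M$ is of the form $\mathcal L(N\rtimes S)$ for some intermediate group $N\rtimes Q_m \leqslant N\rtimes S \leqslant N\rtimes Q$.

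To obtain this group/subgroup form, I would exploit that $N$ is icc and $Q \ca N$ is outer, so by Lemma \ref{trivrel} we have $\mathcal L(N)' \cap \mathcal M = \mathbb C$. Since $\mathcal L(N)\subseteq \mathcal M_m \subseteq \mathcal P$, the crossed-product structure gives a well-behaved relative position: $\mathcal P \supseteq \mathcal L(N)$ and $\mathcal P$ is globally invariant under the dual coaction / the conditional expectations onto the spectral subspaces $\mathcal L(N) u_g$. Concretely, I would use that $\mathcal P$ is generated by $\mathcal L(N)$ together with the set of group elements $g\in Q$ for which $u_g$ has nonzero component in $\mathcal P$ after compressing by the trace-preserving conditional expectations onto $\overline{\mathrm{span}}\{\mathcal L(N)u_g\}$; because $\mathcal L(N)$ is a factor with trivial relative commutant in $\mathcal M$, a standard argument (Choda's Galois correspondence \cite{Ch78}, or a Fourier-analytic argument in the spirit of Lemma \ref{comultsubgp}) shows this set of $g$'s forms a subgroup $S$ with $Q_m \leqslant S \leqslant Q$ and $\mathcal P = \mathcal L(N\rtimes S)$. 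By the group-level maximality just established, $S = Q_m$ or $S = Q$, hence $\mathcal P = \mathcal M_m$ or $\mathcal P = \mathcal M$, proving $\mathcal M_m$ is maximal.

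For the "In particular" clause: since $N$ and $Q$ have property (T) when $N\rtimes Q\in \mathcal Rip_{\mathcal T}(Q)$, the extension $N\rtimes Q$ has property (T), hence $\mathcal M = \mathcal L(N\rtimes Q)$ has property (T) by \cite{CJ}. On the other hand $N\rtimes Q_m$ surjects onto the infinite abelian group $Q_m$, so it does not have property (T); therefore $\mathcal M_m = \mathcal L(N\rtimes Q_m)$ does not have property (T) either (property (T) for a group factor $\mathcal L(\Gamma)$ with $\Gamma$ icc is equivalent to property (T) of $\Gamma$, again by \cite{CJ}). This exhibits $\mathcal M_m$ as a non-property (T) maximal subalgebra of the property (T) factor $\mathcal M$.

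The main obstacle is the Galois correspondence step: verifying rigorously that \emph{every} intermediate subalgebra between $\mathcal L(N)\rtimes Q_m$ and $\mathcal L(N)\rtimes Q$ comes from an intermediate subgroup. The crucial inputs are that $\mathcal L(N)$ is a factor and $\mathcal L(N)'\cap\mathcal M=\mathbb C$ (Lemma \ref{trivrel}), which is exactly what lets one run Choda's argument; the icc-ness of $N$ and outerness of the $Q$-action (guaranteed by the Rips construction via part 5 of Theorem \ref{beleosinripsconstr}) are what make these hypotheses hold. Once the correspondence is in place, the rest is immediate from Theorem \ref{monster}.
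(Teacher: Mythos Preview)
Your proposal is correct and follows essentially the same approach as the paper: both use Lemma \ref{trivrel} to obtain $\mathcal L(N)'\cap\mathcal M=\mathbb C$, then invoke Choda's Galois correspondence \cite{Ch78} to conclude that any intermediate subalgebra is of the form $\mathcal L(N)\rtimes K$ for some $Q_m\leqslant K\leqslant Q$, and finish by maximality of $Q_m$ in $Q$; the property (T) discussion is also identical. The only cosmetic difference is that the paper applies the Galois correspondence directly to get subgroups of $Q$, while you first phrase maximality at the level of subgroups of $N\rtimes Q$ and then invoke the correspondence---this amounts to the same thing.
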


\begin{proof} Fix $\mathcal P$ be any intermediate subalgebra $\mathcal M_m\subseteq \mathcal P\subseteq \mathcal M$.  Since $\mathcal M_m \subset \mathcal M$ is spatially isomorphic to the crossed product inclusion $\mathcal L(N)\rtimes Q_m \subset \mathcal L(N)\rtimes Q$ we have $\mathcal L(N)\rtimes Q_m \subseteq \mathcal P \subseteq \mathcal L(N)\rtimes Q$. By Lemma \ref{trivrel} we have that $(\mathcal L(N)\rtimes Q_m)' \cap (\mathcal L(N)\rtimes Q ) \subseteq \mathcal L(N)' \cap ( \mathcal L (N)\rtimes Q)=\mc$. In particular, $\mathcal P$ is a factor. Moreover, by the Galois correspondence theorem \cite{Ch78} (see also \cite[Corollary 3.8]{CD19}) there is a subgroup $Q_m\leqslant K\leqslant Q$ so that $\mathcal P= \mathcal L(N)\rtimes K$. Since by construction, $Q_m$ is a maximal subgroup of $Q$, we must have that $K=Q_m$ or $Q$. Thus we get that $\mathcal P = \mathcal M_m$ or $\mathcal M$ and the conclusion follows. \\
	For the remaining part note that $\emm$ has property (T) by \cite{CJ}. Also, since $N \rtimes Q_m$ surjects onto an infinite abelian group then it does not have property (T). Thus by \cite{CJ} again $\mathcal M_m =\mathcal L(N\rtimes Q_m)$ does not have property (T) either.
\end{proof}

As pointed out at the beginning of the section, the above theorem provides a positive answer to \cite[Problem 5.5]{JS19}. Another solution to the problem of finding maximal subalgebras without property (T) inside factors with property (T) was also obtained independently by Y. Jiang and A. Skalski in the most recent version of their paper. Their beautiful solution has a different flavor from ours; even though the Galois correspondence theorem $\grave a$ $la$ Choda is a common ingredient in both of the proofs. Hence we refer the reader to \cite[Theorem 4.8]{JS19b} for another solution to the aforementioned problem.
Also note that while the algebras $\mathcal M_m$ do not have property (T) they are also non-amenable. In connection with this it would be very interesting if one could find an example of a property (T) II$_1$ factor which have  maximal hyperfinite subfactors. This is essentially Ge's question but for property (T) factors.

\vskip 0.07in

In the final part of the section we show that whenever $Q_\iota$ is not isomorphic to $Q_\kappa$ then the resulting maximal von Neumann subalgebras $\mathcal M_m$ and $\mathcal M_n$ are non-isomorphic. In fact we have the following more precise statement

\begin{theorem} Assume that $Q_\iota, Q_\kappa <(\mathbb Q,+)$ and let $\Theta: \mathcal  M_\iota \rightarrow \mathcal M_\kappa$ be a $\ast$-isomorphism. Then there exists a unitary $u\in \mathcal U(\mathcal M_\kappa)$ such  that ${\rm ad}(u)\circ \Theta: \mathcal L(N_1) \rightarrow \mathcal L(N_2)$ is a $\ast$-isomorphism. Moreover there exist a group isomorphism $\delta: Q_\iota \rightarrow Q_\kappa$ and a $1-$cocycle $r: Q_\kappa \rightarrow \mathcal U(\mathcal L(N_2))$ such that for all $a\in \mathcal L(N_1)$ and $g\in Q_\iota$ we have ${\rm ad }(u)\circ \Theta (a u_g)= {\rm ad }(u) \circ \Theta(a ) v_{\delta(g)} r_{\delta(g)}$. In particular, we have ${\rm ad}(u)\circ \Theta\circ\alpha_g= {\rm ad}(r_{\delta(g)})\circ \beta_{\delta(g)}\circ {\rm ad}(u)\circ \Theta$.
	
\end{theorem}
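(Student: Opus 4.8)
The plan is to run a deformation/rigidity argument exploiting the mismatch ``property (T) core vs.\ amenable acting group'', and then close up with Choda's Galois correspondence together with the structure of the Belegradek--Osin groups. First I would fix notation: write $\emm_\iota=\El(N_1)\rtimes_\alpha Q_\iota$ and $\emm_\kappa=\El(N_2)\rtimes_\beta Q_\kappa$, with $\alpha_g={\rm ad}(u_g)|_{\El(N_1)}$ and $\beta_h={\rm ad}(v_h)|_{\El(N_2)}$. By Theorem \ref{beleosinripsconstr} the $N_i$ are icc, the actions $Q_\iota\ca N_1$ and $Q_\kappa\ca N_2$ are outer, and each $N_i$ is a quotient of a property (T) group; hence $\El(N_i)$ is a property (T) II$_1$ factor by \cite{CJ}, Lemma \ref{trivrel} gives $\El(N_1)'\cap\emm_\iota=\mc=\El(N_2)'\cap\emm_\kappa$, and each $\beta_h$ with $h\neq e$ is an outer automorphism of the factor $\El(N_2)$ (an outer automorphism of an icc group stays outer on the group factor).

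The first and central step is to show that $\Q:=\Theta(\El(N_1))$ --- a property (T) II$_1$ subfactor of $\emm_\kappa$ with $\Q'\cap\emm_\kappa=\mc$ --- can be unitarily conjugated into $\El(N_2)$. I would use that $Q_\kappa<(\mathbb Q,+)$ is amenable: choosing finitely supported positive definite $\varphi_n\colon Q_\kappa\to\mc$ with $\varphi_n\to 1$ pointwise produces $\El(N_2)$-bimodular, trace-preserving, unital completely positive multiplier maps $\Phi_n$ on $\emm_\kappa$ that are compact relative to $\El(N_2)$ and converge to the identity in $\|\cdot\|_2$. Property (T) of $\Q$ then forces $\sup_{w\in\mathscr U(\Q)}\|\Phi_n(w)-w\|_2\to 0$, and the usual argument (if $\Q\nprec_{\emm_\kappa}\El(N_2)$, a witnessing sequence is sent to $0$ by each $\Phi_n$, a contradiction) gives $\Q\prec_{\emm_\kappa}\El(N_2)$, which upgrades to $\Q\prec^s_{\emm_\kappa}\El(N_2)$ since $\Q$ is irreducible. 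Finally a standard upgrading for intertwining of irreducible subfactors should produce $u\in\mathscr U(\emm_\kappa)$ with $u\,\Q\,u^*\subseteq\El(N_2)$; note Lemma \ref{maxcorner2} is not directly available here because $\mathscr{QN}^{(1)}_{\emm_\kappa}(\El(N_2))=\emm_\kappa$, so the conjugacy has to come from irreducibility of $\Q$ rather than from a quasinormalizer hypothesis. After replacing $\Theta$ by ${\rm ad}(u)\circ\Theta$ I may assume $\Theta(\El(N_1))\subseteq\El(N_2)$.

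Next I would promote this inclusion to an equality using the Galois correspondence. Applying $\Theta^{-1}$ gives $\El(N_1)\subseteq\Theta^{-1}(\El(N_2))\subseteq\emm_\iota=\El(N_1)\rtimes_\alpha Q_\iota$, with $\Theta^{-1}(\El(N_2))$ a II$_1$ factor. Since $\alpha$ is an outer action of a discrete group on the factor $\El(N_1)$, Choda's theorem \cite{Ch78} (see also \cite[Corollary 3.8]{CD19}) supplies $H\leqslant Q_\iota$ with $\Theta^{-1}(\El(N_2))=\El(N_1)\rtimes_\alpha H=\El(N_1\rtimes_\alpha H)$. This factor is $\ast$-isomorphic to $\El(N_2)$, hence has property (T); being a II$_1$ factor it forces $N_1\rtimes_\alpha H$ to be icc, so by \cite{CJ} the group $N_1\rtimes_\alpha H$, and hence its quotient $H$, has property (T). But $H\leqslant Q_\iota<(\mathbb Q,+)$ is abelian, hence finite, and torsion free, hence trivial. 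Therefore $\Theta^{-1}(\El(N_2))=\El(N_1)$, i.e.\ $\Theta$ restricts to a $\ast$-isomorphism $\El(N_1)\to\El(N_2)$ --- the first assertion, with $u$ the unitary from Step 1.

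It then remains to extract $\delta$ and the cocycle. For $g\in Q_\iota$ the unitary $u_g$ normalizes $\El(N_1)$, so $\Theta(u_g)$ normalizes $\El(N_2)$ in $\emm_\kappa=\El(N_2)\rtimes_\beta Q_\kappa$; since $\beta$ is free and $\El(N_2)$ is irreducible, a Fourier-support computation forces $\Theta(u_g)=r_{\delta(g)}v_{\delta(g)}$ for a unique $\delta(g)\in Q_\kappa$ and some $r_{\delta(g)}\in\mathscr U(\El(N_2))$. Uniqueness of the support together with multiplicativity of $\Theta$ makes $\delta$ a group homomorphism, injective ($\delta(g)=e\Rightarrow\Theta(u_g)\in\El(N_2)=\Theta(\El(N_1))\Rightarrow g=e$) and onto ($\Theta(\emm_\iota)=\emm_\kappa$ forces $\El(N_2)\rtimes_\beta\delta(Q_\iota)=\emm_\kappa$), hence a group isomorphism $Q_\iota\to Q_\kappa$; comparing Fourier data in $\Theta(u_gu_{g'})=\Theta(u_g)\Theta(u_{g'})$ yields $r_{\delta(g)\delta(g')}=r_{\delta(g)}\beta_{\delta(g)}(r_{\delta(g')})$, so $h\mapsto r_h$ is a $1$-cocycle $Q_\kappa\to\mathscr U(\El(N_2))$ for $\beta$, and then $\Theta(au_g)=\Theta(a)r_{\delta(g)}v_{\delta(g)}$ for $a\in\El(N_1)$, $g\in Q_\iota$ (which is the displayed identity up to the obvious cocycle convention), while conjugating $\El(N_1)$ by $\Theta(u_g)=r_{\delta(g)}v_{\delta(g)}$ gives $\Theta\circ\alpha_g={\rm ad}(r_{\delta(g)})\circ\beta_{\delta(g)}\circ\Theta$. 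I expect the real difficulty to lie entirely in the first step --- deriving $\Q\prec^s\El(N_2)$ and promoting it to a genuine unitary conjugacy into the highly non-regular subalgebra $\El(N_2)$ --- after which the remaining steps are bookkeeping, with the torsion-freeness of $(\mathbb Q,+)$ supplying the clean finish in the Galois step.
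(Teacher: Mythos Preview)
Your overall architecture matches the paper's: property~(T) of $\El(N_1)$ versus amenability of $Q_\kappa$ forces $\Q:=\Theta(\El(N_1))\prec_{\emm_\kappa}\El(N_2)$ (the paper cites \cite{Po01} for exactly this), and the Fourier-coefficient extraction of $\delta$ and the cocycle in your last paragraph is essentially identical to the paper's. The divergence is in how one passes from intertwining to unitary conjugacy.

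Your Step~1 has a genuine gap. From $\Q\prec^s\El(N_2)$ and $\Q'\cap\emm_\kappa=\mathbb C$ one gets $v^*v=p$, but $vv^*$ lies only in $\theta(p\Q p)'\cap q\emm_\kappa q$, and there is no reason this projection sits inside $\El(N_2)$: the image $\theta(p\Q p)$ is an arbitrary property~(T) subfactor of $q\El(N_2)q$, and its relative commutant in $q\emm_\kappa q$ can be large. Without $vv^*\in\El(N_2)$ you cannot conclude $u\Q u^*\subseteq\El(N_2)$, and you yourself flag that Lemma~\ref{maxcorner2} is unavailable; irreducibility of $\Q$ alone does not rescue this. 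The paper avoids the issue entirely: it runs the \emph{same} property~(T)/amenable argument in the other direction (via $\Theta^{-1}$) to get $\El(N_2)\prec_{\emm_\kappa}\Q$ as well, observes that both $\Q$ and $\El(N_2)$ are \emph{regular} irreducible subfactors of $\emm_\kappa$, and then appeals to the proof of \cite[Lemma~8.4]{IPP05} --- which, for two such subfactors that intertwine into each other with torsion-free ``quotient'' groups $Q_\iota,Q_\kappa$, produces a unitary $u$ with $u\Q u^*=\El(N_2)$ directly.

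Once equality is obtained this way, your Galois/property~(T) argument in Step~2 becomes superfluous (though it is correct and rather elegant: had you been able to secure the inclusion, deducing $H$ trivial from property~(T) of $\El(N_1\rtimes H)$ and $H\leqslant\mathbb Q$ is a clean finish). So the fix is minimal: replace ``a standard upgrading for irreducible subfactors'' by the symmetric intertwining plus \cite[Lemma~8.4]{IPP05}, and you may then drop Step~2.
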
 
\begin{proof} Identify $\mathcal M_\iota= \mathcal L(N_1)\rtimes Q_\iota$ and $\mathcal M_\kappa= \mathcal L(N_2)\rtimes Q_\kappa$ and let $\Theta: \mathcal L(N_1)\rtimes Q_\iota\rightarrow \mathcal L(N_2)\rtimes Q_\kappa$ be the $\ast$-isomorphism. Notice that since $\Theta(\mathcal L(N_1))$ has property (T) and $Q_\kappa$ is amenable then by \cite{Po01}
	% Po01 which lemma?
	 we have that $\Theta(\mathcal L(N_1))\prec_{\mathcal M_\kappa} \mathcal L(N_2)$. Also by Lemma \ref{trivrel} we note that $\Theta ( \mathcal L(N))$ is a regular irreducible subfactor of $\mathcal M_\kappa$, i.e.\ $\Theta(\mathcal L(N_1))'\cap \mathcal M_\kappa= \Theta ( \mathcal L(N_1)'\cap \mathcal M_\iota)= \mathbb C 1$. Similarly, $\mathcal L(N_2)$ is a regular irreducible subfactor of $\mathcal M_\kappa$ satisfying $\mathcal L(N_2)\prec_{\mathcal M_\kappa} \Theta(\mathcal L(N_1))$. Thus by the proof of \cite[Lemma 8.4]{IPP05}, since $Q_\iota$'s are torsion free, one can find a unitary $u\in \mathcal M_\kappa$ such that ${\rm ad}(u)\circ \Theta(\mathcal L(N_1))=\mathcal L(N_2)$. So replacing $\Theta$ with ${\rm ad}(u)\circ \Theta$ we can assume that $\Theta(\mathcal L(N_1))=\mathcal L(N_2)$. Hence for every $g\in Q_\iota$ we have that $\Theta(\alpha_{g}(x))\Theta(u_g)=\Theta(u_g)\Theta(x)$ for all $x\in \mathcal L(N_1)$. Consider the Fourier decomposition of $\Theta(u_g)=\sum_{h\in Q_\kappa} n_h v_h$ where $n_h\in \mathcal L(N_2)$. Using the previous relation we get that $\Theta(\alpha_{g}(x)) n_h = n_h \beta_h\Theta(x)$ for all $h\in Q_\kappa$ and $x\in \mathcal L(N_2)$. Thus $n_h {n_h}^*\in \mathcal L(N_2)'\cap \mathcal M_\kappa= \mathbb C 1$ and hence there exist unitary $t_h\in\mathcal L(N_2)$ and scalar $s_h\in \mathbb C$ so that $n_h = s_h t_h$. Assume there exist $h_1\neq h_2\in Q_\kappa$   so that $s_{h_1},s_{h_2}\neq 0$. This implies that $\Theta(\alpha_{g}(x))  = t_{h_1} \beta_{h_1}\Theta(x) t_{h_1}^*= t_{h_2} \beta_{h_2}\Theta(x) t_{h_2}^*$ for all $x\in \mathcal L(N_2)$. Thus $\beta_{h_1}(t_{h_1}^*t_{h_2}) v_{{h{_1}}^{-1}h_2}= v_{h_1}^* t_{h_1}^* t_{h_2} v_{h_2}\in \mathcal L(N_2)'\cap \mathcal M_\kappa= \mathbb C 1$. Thus $h_1^{-1}h_2=1$ and $h_1=h_2$ which is a contradiction. In particular there exists a unique $\delta(g)\in Q_\kappa$ so that $s_k=0$ for all $k\in Q_\kappa\setminus \{ \delta(g)\}$. Altogether these show that there is a well-defined map $\delta:Q_\iota \rightarrow Q_\kappa$ so that $\Theta(u_g)= n_{\delta(g)} v_{\delta(g)}$ for all $g\in Q_\iota$. It is easy to see that $\delta$ is a group isomorphism and the map $r: Q_\kappa\rightarrow \mathcal U(\mathcal L(N_2))$ given by $r(h)=\beta_{
		h}(n_{h})$ is a 1-cocycle i.e. $r(h k)= c_h \beta_h( c_k)$. \end{proof}
	\noindent{\bf Final Remarks}. We notice that our strategy from the proof of Theorem \ref{maximalvN} can also be used to produce other examples  of non-property (T) subalgebras in property (T) factors. Indeed for $Q$ in the Rips construction one can take in fact \emph{any} torsion free, property (T) monster group $Q$ in the sense of Ol'shanskii. If one picks any maximal subgroup $Q_0<Q$ then, as before, the group von Neumann algebra $\mathcal L(N\rtimes Q_0)$ will obviously be maximal in $\mathcal L(N\rtimes Q)$. Notice that since $Q_0<Q$ is maximal then $Q_0$ is infinite index in $Q$. To see this note that if $Q_0$ is finite index in $Q$, then $Q_0$ has property (T) and hence is finitely generated. Therefore $Q_0$ would be abelian and hence trivial, which is a contradiction. Therefore $Q_0$ must have infinite index in $Q$. In this case it is either finitely generated, in which case is abelian or it is infinitely generated.  However in both scenarios $Q_0$ does not have (T) and hence neither does $N\rtimes Q_0$. Thus by \cite{CJ}, $\mathcal L(N\rtimes Q_0)$ does not have property (T).  
	
	%%%%%%%%%%%%%%%%%%%%%%%%%%%%%%%%%%%%%%%%%%%%%%%%%%%%%%%%%%%%%%%%%%%%%%%%%%%%%%%%%%%%%%%%%%%%%%%%%%%%%%%%%%%%%%%%%%%%%%%%%%%%%%%%%%
	%%%%%%%%%%%%%%%%%%%%%%%%%%%%%%%%%%%%%%%%%%%%%%%%%%%%%%%%%%%%%%%%%%%%%%%%%%%%%%%%%%%%%%%%%%%%%%%%%%%%%%%%%%%%%%%%%%%%%%%%%%%%%%%%%
	%%%%%%%%%%%%%%%%%%%%%%%%%%%                   SUPERRIGIDITY OF THE RIPS CONSTRUCTION       %%%%%%%%%%%%%%%%%%%%%%%%%%%%%%%%%%%%%%%%
	%%%%%%%%%%%%%%%%%%%%%%%%%%%%%%%%%%%%%%%%%%%%%%%%%%%%%%%%%%%%%%%%%%%%%%%%%%%%%%%%%%%%%%%%%%%%%%%%%%%%%%%%%%%%%%%%%%%%%%%%%%%%%%%%%
	%%%%%%%%%%%%%%%%%%%%%%%%%%%%%%%%%%%%%%%%%%%%%%%%%%%%%%%%%%%%%%%%%%%%%%%%%%%%%%%%%%%%%%%%%%%%%%%%%%%%%%%%%%%%%%%%%%%%%%%%%%%%%%%%%%

\section{Von Neumann Algebraic Rigidity Aspects for Groups Arising via Rips Constructions}\label{superrigidRIPS}

An impressive milestone in the classification of von Neumann algebras was the emergence over the past decade of the first examples of groups $G$ that can be completely reconstructed from their von Neumann algebras $\El(G)$, i.e.\ \emph{$W^*$-superrigid groups} \cite{IPV10,BV12,CI17}. The strategies used in establishing these results share a common key ingredient, namely, the ability to first reconstruct from $\El(G)$ various algebraic feature of $G$ such as its (generalized) wreath product decomposition in \cite{IPV10,BV12}, and respectively, its amalgam splitting in \cite[Theorem A]{CI17}. This naturally leads to a broad and independent study, specifically identifying canonical group algebraic features of a group that pass to its von Neumann algebra. While several works have emerged recently in this direction \cite{CdSS15,CI17,CU18} the surface has been only scratched and still a great deal of work remains to be done. 

A difficult conjecture of Connes predicts that all icc property (T) groups are $W^*$-superrigid. Unfortunately, not a single example of such group is known at this time. Moreover, in the current literature there is an almost complete lack of examples of algebraic features occurring in a property (T) group that are recognizable at the von Neumann algebraic level.  In this section we make progress on this problem for property (T) groups that appear as certain fiber products of Belegradek-Osin Rips type constructions.  Specifically, we have the following result:

\begin{theorem}\label{semidirectprodreconstruction} Let $Q=Q_1\times Q_2$, where $Q_i$ are icc, torsion free, biexact, property (T), weakly amenable, residually finite groups. For $i=1,2$ let $N_i\rtimes_{\sigma_i} Q\in \mathcal Rip_T(Q)$ and denote by $\G=(N_1\times N_2)\rtimes_{\sigma} Q$ the semidirect product associated with the diagonal action $\sigma=\sigma_1 \times \sigma_2 :Q\ca N_1\times N_2 $. Denote by $\emm=\El(\G)$ be the corresponding II$_1$ factor. Assume that $\Lambda$ is any arbitrary group and $\Theta: \El(\Gamma)\rar \El(\Lambda)$ is any $\ast$-isomorphism. Then there exist group actions by automorphisms $H\ca^{\tau_i} K_i$  such that $\La= (K_1\times K_2) \rtimes_{\tau} H$ where $\tau=\tau_1\times \tau_2: H\ca K_1\times K_2$ is the diagonal action. Moreover one can find a multiplicative character $\eta:Q\rar \mathbb T$, a group isomorphism $\delta: Q\rar H$, a unitary $w\in \El(\Lambda)$, and $\ast$-isomorphisms $\Theta_i: \El(N_i)\rar \El(K_i)$ such that for all $x_i \in \mathcal L(N_i)$ and $g\in Q$ we have 
	\begin{equation}
	\Theta((x_1\otimes x_2) u_g)= \eta(g) w ((\Theta_1(x_1)\otimes \Theta(x_2)) v_{\delta(g)})w^*.
	\end{equation}      
	Here $\{u_g \,|\,g\in Q\}$ and $\{v_h\,|\, h\in H\}$ are the canonical unitaries implementing the actions of $Q \ca \El(N_1)\bar\otimes \El(N_2)$ and $H\ca \El(K_1)\bar\otimes \El(K_2)$, respectively.
\end{theorem}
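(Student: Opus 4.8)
The plan is to run a comultiplication/deformation--rigidity argument in the spirit of \cite{IPV10,CI17}, adapted to the Rips setting, in two phases: first reconstruct, up to unitary conjugacy, the product-by-crossed-product structure of $\El(\Lambda)$, and then pin down $\Theta$ by a cocycle analysis. Throughout I would identify $\El(\Lambda)$ with $\emm$ via $\Theta$ and write $\mathcal A=\El(N_1\times N_2)=\El(N_1)\bar\otimes\El(N_2)$, $\mathcal A_i=\El(N_i)$, so that $\emm=\mathcal A\rtimes_\sigma Q$ and $\El(\Lambda)\bar\otimes\El(\Lambda)=(\mathcal A\bar\otimes\mathcal A)\rtimes(Q\times Q)$; let $\Delta\colon\El(\Lambda)\to\El(\Lambda)\bar\otimes\El(\Lambda)$, $\Delta(v_\lambda)=v_\lambda\otimes v_\lambda$, be the comultiplication.

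\emph{Phase 1 (locating the core).} Since $N_1\times N_2$ has property (T), $\Delta(\Theta(\mathcal A))$ is a rigid subalgebra of $\El(\Lambda)\bar\otimes\El(\Lambda)$, and the key point is to show $\Delta(\Theta(\mathcal A))\prec\El(\Lambda)\bar\otimes\Theta(\mathcal A)$ and, symmetrically, $\Delta(\Theta(\mathcal A))\prec\Theta(\mathcal A)\bar\otimes\El(\Lambda)$. First I would invoke the relative hyperbolicity of each $N_i\rtimes_{\sigma_i}Q$ with respect to $\{Q\}$: it makes $Q$ almost malnormal in $N_i\rtimes Q$ (enabling Lemma~\ref{malnormalcontrol}) and, together with the Dehn--filling decomposition of Theorem~\ref{dehnfilling} and the free-by-hyperbolic description of $N_i$ from Proposition~\ref{freebyhyperbolic}, it yields the control statements (Theorems~\ref{controlprodpropt1} and~\ref{controlprodpropt2}) forcing a rigid subalgebra of $\El(\Gamma)\bar\otimes\El(\Gamma)$ to be intertwined, in each tensor leg, into $\El(N_i\rtimes Q)$ or into $\El(Q)$. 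The configurations in which the core smears into the acting group are excluded by biexactness of $Q_1,Q_2$: by product rigidity, no corner of $\mathcal A=\mathcal A_1\bar\otimes\mathcal A_2$ (a tensor product of two commuting nonamenable factors) embeds into $\El(Q_1\times Q_2)$, since $\mathcal A_1\prec\El(Q_j)$ would push $\mathcal A_2$ into the relative commutant of $\mathcal A_1$, which is amenable by solidity of $\El(Q_j)$. Intersecting the surviving intertwiners (Lemma~\ref{intertwiningintersection}), descending them through the diagonal ${\rm d}(Q)\leqslant Q\times Q$ and through tensor legs (Lemmas~\ref{intertwininglower2} and~\ref{intlowertensor}), and upgrading the resulting corner embedding to a genuine inclusion $\Delta(\Theta(\mathcal A))\subseteq\El(\Lambda)\bar\otimes u\Theta(\mathcal A)u^{*}$ as in \cite{IPV10}, Lemma~\ref{comultsubgp} then produces a subgroup $\Sigma<\Lambda$ with $\Theta(\mathcal A)=w\El(\Sigma)w^{*}$ for a unitary $w$, with $\Sigma\lhd\Lambda$ by the bookkeeping of \cite{IPV10,CI17}. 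Running the same scheme for $\mathcal A_1$ and $\mathcal A_2$, and using Lemmas~\ref{maxcorner2} and~\ref{equalcorners} to convert corner embeddings of factors into honest conjugacies, gives subgroups $\Sigma_i<\Lambda$ with $w\El(\Sigma_i)w^{*}=\Theta(\mathcal A_i)$; because $\mathcal A_1,\mathcal A_2$ commute, generate $\mathcal A$, and intersect in $\mathbb C$, so do $\El(\Sigma_1),\El(\Sigma_2)$, whence $\Sigma=\Sigma_1\times\Sigma_2=:K_1\times K_2$ with $K_i\lhd\Lambda$.

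\emph{Phase 2 (reconstructing the action).} Replacing $\Theta$ by ${\rm ad}(w^{*})\circ\Theta$, I may assume $\Theta(\mathcal A_i)=\El(K_i)$ and $\Theta(\mathcal A)=\El(\Sigma)$; then each $\Theta(u_g)$ normalizes $\El(\Sigma)$ and preserves $\El(K_1),\El(K_2)$ (as $\sigma_g$ preserves each $N_i$), while $\El(\Sigma)'\cap\El(\Lambda)=\Theta(\mathcal A'\cap\emm)=\mathbb C$ by outerness of $\sigma$ and the icc-ness of the $N_i$. A Fourier-support analysis of $\Theta(u_g)$ over the cosets of $\Sigma$ then gives $\Theta(u_g)=c_g v_{\delta(g)}$ for a unique $\delta(g)\in H:=\Lambda/\Sigma$, with $\delta$ a group isomorphism and $g\mapsto c_g\in\mathscr U(\El(\Sigma))$ a $1$-cocycle for $\sigma$; and since $c_g$ normalizes $\El(K_1),\El(K_2)$ inside $\El(K_1)\bar\otimes\El(K_2)$ it factorizes as $c_g=a_g\otimes b_g$ with $(a_g),(b_g)$ cocycles for the factor actions. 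As $\sigma_i\colon Q\curvearrowright N_i$ has trivial stabilizers (Theorem~\ref{beleosinripsconstr}) it is weakly mixing on $\El(N_i)$, so property (T) of $Q$ lets me untwist each of $(a_g),(b_g)$ to a $\mathbb T$-valued character, along the lines of the cocycle-vanishing arguments in \cite{CdSS15,CU18}; absorbing the coboundaries into $w$ leaves $\Theta(u_g)=\eta(g)v_{\delta(g)}$ with $\eta\in\widehat Q$, and $v_{\delta(g)}v_{\delta(h)}=v_{\delta(gh)}$ forces $\delta$ to lift to an isomorphism of $Q$ onto a complement $H\leqslant\Lambda$ of $\Sigma$, i.e.\ $\Lambda=(K_1\times K_2)\rtimes_\tau H$ with $\tau=\tau_1\times\tau_2$ diagonal (since $\Theta$ carries $\mathcal A_i$ onto $\El(K_i)$). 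Setting $\Theta_i:={\rm ad}(w^{*})\circ\Theta|_{\El(N_i)}$ yields the stated formula.

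The main obstacle is Phase~1, specifically ruling out every ``mixed'' position of the rigid subalgebra $\Delta(\Theta(\mathcal A))$ relative to the acting group $Q$: this is exactly where the three hypotheses must cooperate --- property (T) (so $\mathcal A$ is rigid and cannot be spread over $\El(N_i)$), relative hyperbolicity of the Rips groups $N_i\rtimes Q$ (so $Q$ is almost malnormal and the free-by-hyperbolic/Dehn-filling structure is available), and biexactness of $Q_1,Q_2$ (so $\El(Q)$ cannot absorb a product of two nonamenable factors) --- after which converting the corner intertwinings into the clean normal-subgroup picture requires careful compatibility bookkeeping with Lemmas~\ref{maxcorner2},~\ref{equalcorners},~\ref{comultsubgp}. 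The cocycle-untwisting in Phase~2 is the secondary technical point.
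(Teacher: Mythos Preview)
Your overall architecture---comultiplication, property~(T) intertwining via Theorems~\ref{controlprodpropt1}--\ref{controlprodpropt2}, almost malnormality of $Q$, and the free-by-hyperbolic structure of the $N_i$---matches the paper's. But the order is reversed, and this matters: the paper first reconstructs the acting group (Theorems~\ref{toproductgroupcorners}--\ref{parabolicQ} produce $\Phi\leqslant\Lambda$ with $QN^{(1)}_\Lambda(\Phi)=\Phi$ and $\mu\El(\Phi)\mu^*=\El(Q)$), and only then recovers the core $\Sigma$ and the splitting $\Lambda=\Sigma\rtimes\Phi$ via \cite[Theorem~5.2]{CU18}. Your Phase~1 skips this, asserting $\Sigma\lhd\Lambda$ ``by bookkeeping of \cite{IPV10,CI17}''; but regularity of $\El(\Sigma)$ does not by itself force normality of $\Sigma$, and in the paper it is precisely the prior existence of $\Phi$ (with its one-sided quasinormalizer condition) that makes the upgrade from $\Delta(\Theta(\mathcal A))\prec^s\mathcal A\bar\otimes\mathcal A$ to $\Delta(\Theta(\mathcal A))\subseteq\mathcal A\bar\otimes\mathcal A$ go through (via \cite[Claim~4.5]{CU18}, using $Q$-invariance of $\Delta(\mathcal A)$).

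The more serious gap is your Phase~2. You propose to untwist the cocycle $g\mapsto c_g\in\mathscr U(\El(\Sigma))$ using ``property~(T) of $Q$ and weak mixing'', citing \cite{CdSS15,CU18}. Neither reference contains such a result: Popa-style cocycle superrigidity needs a malleability/s-malleability deformation of the action, and nothing of the sort is available for $Q\curvearrowright N_i$ in $\mathcal Rip_T(Q)$. The paper avoids this entirely by using the \emph{height} technique of \cite{IPV10}: having already secured $u\Delta(\El(Q))u^*\subseteq\El(Q)\bar\otimes\El(Q)$ (Theorem~\ref{commutationcontrolincommultiplication2}(ii)), a direct Fourier estimate exploiting the \emph{trivial stabilizers} of $Q\curvearrowright N_1\times N_2$ shows $h_{Q\times Q}(u\Delta(Q)u^*)>0$, whence \cite[Theorem~3.1]{IPV10} gives $\mathbb T\mu_0\Phi\mu_0^*=\mathbb T Q$ at the group level. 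This simultaneously produces the character $\eta$, the isomorphism $\delta$, and the complement $H\leqslant\Lambda$; your scheme makes the splitting $\Lambda=(K_1\times K_2)\rtimes H$ contingent on the untwisting, which is circular. In short: the missing ingredient is the height argument, and to run it you must first carry out the (lengthy) construction of $\Phi$ before touching the core.
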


From a different perspective our theorem can be also seen as a von Neumann algebraic superrigidity result regarding conjugacy of actions on noncommutative von Neumann algebras. Notice that very little is known in this direction as well, as most of the known superrigidity results concern algebras arising from actions of groups on probability spaces.    
\vskip 0.07in
We continue with a series of preliminary results that are essential to derive the proof of Theorem \ref{semidirectprodreconstruction} at the end of the section. First we present a location result for commuting diffuse property (T) subalgebras inside a von Neumann algebra arising from products of relative hyperbolic groups.

\begin{theorem}\label{controlprodpropt1} For $i=\overline{1, n}$ let  $H_i<G_i$ be an inclusion of infinite groups such that $H_i$ is residually finite and $G_i$ is hyperbolic relative to $H_i$. Denote by $H=H_1\times ...\times H_n < G_1\times ...\times G_n=G$ the corresponding direct product inclusion. Let $\enn_1,\enn_2 \subseteq \El(G)$ be two commuting von Neumann subalgebras with property (T). Then for every $i\in\overline{1,n}$ there exists $k\in \overline{1,2}$ such that $\enn_k \prec \El(\hat G_i\times H_i)$, where $\hat G_i :=\times _{j\neq i} G_j$.
	
	% For $i=1, ...,n$ let  $H_i<G_i$ be an inclusion of infinite groups such that $H_i$ is residually finite and $G_i$ is hyperbolic relative to $H_i$. Denote by $H=H_1\times ...\times H_n < G_1\times ...\times G_n=G$ the corresponding direct product inclusion. Let $\enn_1,\enn_2 \subseteq \El(G)$ be two commuting von Neumann subalgebras with property (T). Then for every $k\in\overline{1,n}$ there exists $i\in \overline{1,2}$ such that $\enn_i \prec \El(\hat G_k\times H_k)$, where $\hat G_k :=\times _{j\neq k} G_j$.

\end{theorem}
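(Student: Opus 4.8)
\noindent\emph{Plan.} I would prove the statement separately for each index $i$; after relabelling assume $i=1$, set $\hat G_1:=\times_{j\neq 1}G_j$ and $\emm:=\El(G)=\El(G_1)\bten\El(\hat G_1)$, so the goal becomes: $\enn_1\prec_\emm\El(H_1\times\hat G_1)$ or $\enn_2\prec_\emm\El(H_1\times\hat G_1)$. One harmless reduction first: if one of the $\enn_k$ is amenable then, having property (T), it cannot be diffuse, hence has a minimal projection, so $\enn_k\prec_\emm\mathbb C1\subseteq\El(H_1\times\hat G_1)$ and we are done; thus there is nothing to prove unless both $\enn_1,\enn_2$ are non-amenable, which I assume henceforth.

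The first real step is to replace $G_1$ by a genuinely hyperbolic group via \emph{Dehn filling}. Since $G_1$ is hyperbolic relative to $\{H_1\}$ with $H_1$ finitely generated and residually finite, Theorem~\ref{dehnfilling} produces a non-elementary hyperbolic group $K_1$ and an epimorphism $\delta_1\colon G_1\twoheadrightarrow K_1$ whose kernel $R_1$ decomposes as a free product $R_1\cong\ast_{t\in T_1}R_0^t$ of conjugates of a finite-index normal subgroup $R_0\lhd H_1$. As $R_1\lhd G_1$, the ``core'' $\mathcal A:=\El(R_1)\bten\El(\hat G_1)=\El(R_1\times\hat G_1)$ is a regular subalgebra of $\emm$, and $\emm$ is a (possibly twisted) crossed product $\emm=\mathcal A\rtimes K_1$, where $K_1=G_1/R_1$ acts by conjugation on $\El(R_1)$ and trivially on $\El(\hat G_1)$ (the cocycle twist arising from non-splitting of the extension is irrelevant for Popa's intertwining). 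The point of this move is that $K_1$, being non-elementary hyperbolic, is bi-exact and weakly amenable with $\Lambda_{\mathrm{cb}}(K_1)=1$ — exactly the input needed for deformation/rigidity, regardless of how bad $H_1$ itself might be.

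The heart of the proof is then to push one of $\enn_1,\enn_2$ into $\mathcal A$. I would invoke the relative solidity of crossed products by bi-exact, weakly amenable groups (Ozawa; Popa--Vaes): for any von Neumann subalgebra $\mathcal P\subseteq\emm=\mathcal A\rtimes K_1$, either $\mathcal P\prec_\emm\mathcal A$, or $\mathcal P'\cap\emm$ is amenable relative to $\mathcal A$ inside $\emm$. Applying this to $\mathcal P=\enn_1$: in the first alternative we take $k=1$ and proceed. In the second, $\enn_2\subseteq\enn_1'\cap\emm$ is amenable relative to $\mathcal A$ (relative amenability passes to subalgebras); but $\enn_2$ has property (T), and property (T) upgrades relative amenability to honest intertwining — an $\enn_2$-central state on the basic construction $\langle\emm,e_{\mathcal A}\rangle$ whose restriction to $\emm$ is normal is converted, using property (T), into a finite-trace $\enn_2$-central vector in $L^2\langle\emm,e_{\mathcal A}\rangle$, i.e.\ into Popa's criterion — so $\enn_2\prec_\emm\mathcal A$ and we take $k=2$. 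Either way there is $k\in\{1,2\}$ with $\enn_k\prec_\emm\mathcal A=\El(R_1)\bten\El(\hat G_1)$.

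It remains to descend from $R_1$ to $H_1$. The intertwining gives a corner $\theta(p\enn_kp)\subseteq q\bigl(\El(R_1)\bten\El(\hat G_1)\bigr)q$ which, being a quotient of a property (T) algebra, again has property (T). Since $\El(R_1)=\El(\ast_{t\in T_1}R_0^t)$ is a reduced free product with malnormal free factors $R_0^t$, the standard deformation/rigidity analysis of free products (Ioana--Peterson--Popa, in the form allowing the fixed tensor coefficient $\El(\hat G_1)$ and finitely or infinitely many factors, combined with Lemma~\ref{malnormalcontrol} to localize the intertwiner at a single factor) forces $\theta(p\enn_kp)$ to intertwine either into $\mathbb C\bten\El(\hat G_1)$ or into $\El(R_0^t)\bten\El(\hat G_1)$ for some $t\in T_1$. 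In the former case $\enn_k\prec_\emm\El(\hat G_1)\subseteq\El(H_1\times\hat G_1)$; in the latter, composing intertwinings yields $\enn_k\prec_\emm\El(R_0^t)\bten\El(\hat G_1)$, and since $R_0^t=tR_0t^{-1}\leqslant tH_1t^{-1}$ with $t\in G_1$ one has $\El(R_0^t)\bten\El(\hat G_1)\subseteq(u_t\otimes1)\El(H_1\times\hat G_1)(u_t\otimes1)^{*}$; as intertwining is invariant under conjugating the target subalgebra by a unitary of $\emm$, this gives $\enn_k\prec_\emm\El(H_1\times\hat G_1)=\El(\hat G_1\times H_1)$, as wanted, and $i$ was arbitrary. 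The main obstacle is the third paragraph: getting the relative-amenability bookkeeping for $\mathcal A\rtimes K_1$ exactly right and invoking the correct form of ``property (T) $+$ relative amenability $\Rightarrow$ intertwining''; a secondary technicality is running the free-product deformation argument over the possibly non-amenable, possibly infinitely generated coefficient $\El(\hat G_1)$ and with possibly infinitely many free factors $R_0^t$.
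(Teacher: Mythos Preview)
Your strategy is essentially the same as the paper's: Dehn filling to produce a non-elementary hyperbolic quotient, the Popa--Vaes dichotomy \cite{PV12}, property (T) to upgrade relative amenability to intertwining, and finally the Ioana--Peterson--Popa free-product analysis to descend from $R_1$ to a conjugate of $H_1^0$. The initial reduction (atomic $\enn_k$ trivially intertwines) and the free-product descent match the paper almost verbatim.

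There is one genuine methodological difference worth flagging. You write $\emm$ as a cocycle crossed product $\mathcal A\rtimes K_1$ and apply the Popa--Vaes dichotomy directly there. The paper avoids the cocycle issue entirely by using the comultiplication-type embedding $\Pi_i\colon\El(G)\hookrightarrow\El(G)\bten\El(F_i)$ from \cite[Notation~3.3]{CIK13}, which lands in a \emph{genuine} crossed product by the hyperbolic group $F_i$; one applies \cite[Theorem~1.4]{PV12} there and then pulls the conclusion back to $\emm$ via \cite[Proposition~3.4]{CIK13}, which converts $\Pi_i(\enn_k)\prec\El(G)\bten 1$ into $\enn_k\prec\El(\hat G_i\times\ker\pi_i)$. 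This buys you a clean application of \cite{PV12} as stated, with no need to check that it survives cocycle twists. A second, smaller imprecision: the dichotomy you quote (``either $\mathcal P\prec\mathcal A$ or $\mathcal P'\cap\emm$ is amenable relative to $\mathcal A$'') is not \cite[Theorem~1.4]{PV12} as stated---that result takes an \emph{amenable} $\mathcal P$ and controls its normalizer. The paper handles this correctly by running the dichotomy over all diffuse amenable $\mathcal A\subset\Pi_i(\enn_1)$ and invoking \cite[Corollary~F.14]{BO08} in case~(a); you should do the same rather than apply the dichotomy to the non-amenable $\enn_1$ directly.
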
     

\begin{proof} 
Fix $i\in\overline{1,n}$. Since $H_i$ is residually finite then using Theorem \ref{dehnfilling} there is a short exact sequence $$1\rar ker(\pi_i)\hookrightarrow G_i \xrightarrow{\pi_i} F_i\rar 1,$$ 
where $F_i$ is a non-elementary hyperbolic group and $\ker(\pi_i)=\langle H^0_i\rangle= \ast_{t\in T_i} (H_i^0)^t$, for some subset $T\subset G_i$ and a finite index normal subgroup $H_i^0\lhd H_i$. 

Following \cite[Notation 3.3]{CIK13} we now consider the von Neumann algebraic embedding corresponding to $\pi_i$, i.e.\ $\Pi_i: \El(G)\rar \El(G)\bar\otimes \El(F_i)$ given by $\Pi_i(u_g) = u_g \otimes v_{\pi_i(g_i)}$ for all $g=(g_j)\in G$; here $u_g$'s are the canonical unitaries of $\El(G)$ and $v_h$ are the canonical unitaries of $\El(F_i)$.  From the hypothesis we have that $\Pi_i(\enn_1),\Pi_i(\enn_2)\subset \El(G)\bar\otimes \El( F_i)=:\tilde \emm_i$ are commuting property (T) subalgebras. Fix $\mathcal A\subset \Pi_i(\enn_1)$ any diffuse amenable von Neumann subalgebra. Using \cite[Theorem 1.4]{PV12} we have either: a) $\mathcal A\prec_{\tilde \emm_i} \El(G)\bar\otimes 1$, or 
b) $\Pi_i(\enn_2)$ is amenable relative to $\El(G)\bar\otimes 1$ inside $\tilde \emm_i$.

Since the $\enn_k$'s have property (T) then so do the $\Pi_i(\enn_k)$'s. Thus using part b) above we get that $\Pi_i(\enn_2)\prec_{\tilde \emm}\El(G)\bar\otimes 1$. On the other hand, if case a) above were to hold for all $\mathcal A$'s then by  \cite[Corollary F.14]{BO08} we would get that $\Pi_i(\enn_1)\prec_{\tilde \emm_i}\El(G)\bar\otimes 1$. Therefore we can always assume that  $\Pi_i(\enn_k) \prec_{\tilde \emm_i} \El(G)\bar\otimes 1$ for $k=1 \text{ or } 2$.  

Due to symmetry we only treat $k=1$. Using \cite[Proposition 3.4]{CIK13} we get that $\enn_1\prec \El(\ker(\Pi_i))=\El(\hat G_i\times \ker(\pi_i))$. Thus there exist nonzero projections $p\in \enn_1$, $q\in \El(\hat G_i \times \ker(\pi_i))$, nonzero partial isometry $v\in \emm$ and a $\ast$-isomorphism $\phi: p\enn_1p \rar \mathcal B:=\phi(p\enn_1p)\subset q\El(\hat G_i\times \ker(\pi_i))q$  on the image such that
\begin{equation}
\phi(x)v=vx\text{ for all } x\in p\enn_1p.
\end{equation}
Also notice that since $\enn_1$ has property (T) then so does $p\enn_1p$ and therefore  $\mathcal B\subseteq q\El(\hat G_i \times \ker(\pi_i))q$ is a property (T) subalgebra. Since $\ker(\pi_i)=\ast_{t\in T} (H^0_i)^t $ then by further conjugating $q$ in the factor $\El(\hat G_i\times \ker(\pi_i))$ we can assume that there exists a unitary $u \in \El(\hat G_i\times \ker(\pi_i))$ and a projection $q_0\in \El(\hat G_i)$ such that  $\mathcal B\subseteq u(q_0\El(\hat G_i)q_0) \bar \otimes \El(\ker(\pi_i) ) u^*$. Using property (T) of $\mathcal B$ and \cite[Theorem]{IPP05} we further conclude that there is $t_0\in T$ such that $\mathcal B\prec_{u (q_0\El(\hat G_i)q_0 \bar\otimes \El(\ker (\pi_i)))u^*} u(q_0\El(\hat G_i)q_0\otimes \El((H_i^0)^{t_0}))u^*$. Composing this intertwining with $\phi$ we finally conclude that $\enn_1\prec_\emm \El(\hat G_i\times H_i^0)$, as desired.      
\end{proof}

\begin{theorem}\label{controlprodpropt2} Under the same assumptions as in Theorem \ref{controlprodpropt1} for every $k\in \overline{1,n}$ one of the following must hold \begin{enumerate}
\item [1)] there exists $i\in 1,2$ such that $\enn_i \prec_\emm \El(\hat G_k)$; 
\item [2)] $\enn_1\vee \enn_2 \prec_{\emm} \El(\hat G_k \times H_k)$.
\end{enumerate}

\end{theorem}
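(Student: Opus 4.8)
The plan is to bootstrap from Theorem \ref{controlprodpropt1}, which already gives us, for the fixed index $k$, some $i\in\{1,2\}$ with $\enn_i\prec_{\emm}\El(\hat G_k\times H_k)$. Without loss of generality say $\enn_1\prec_{\emm}\El(\hat G_k\times H_k)$. The goal is to ``upgrade'' this single intertwining into the dichotomy: either $\enn_1$ (or $\enn_2$) actually sits inside the smaller algebra $\El(\hat G_k)$ up to intertwining, or the whole join $\enn_1\vee\enn_2$ lands in $\El(\hat G_k\times H_k)$. First I would pass to the stable intertwining: using a maximality/diagonalization argument (as in \cite[Theorem 4.3]{IPP05} or \cite[Lemma 2.6]{Va10}) one finds a projection $z\in\mathscr Z(\enn_1'\cap\emm)$ with $\enn_1 z\prec^s_{\emm}\El(\hat G_k\times H_k)$; if $z\neq 1$ then on $1-z$ a complementary analysis applies, so the cleanest route is to first record that we may assume $\enn_1\prec^s_\emm \El(\hat G_k\times H_k)$ after cutting by a central projection and handling the pieces separately.

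Next, the key dichotomy inside $\El(\hat G_k\times H_k)$ comes from almost malnormality: since $G_k$ is hyperbolic relative to $H_k$, the subgroup $H_k$ is almost malnormal in $G_k$, hence $\hat G_k\times H_k$ is almost malnormal in $\hat G_k\times G_k = G$ (products of almost malnormal inclusions are almost malnormal in the relevant sense). So I would split according to whether $\enn_1\prec_{\El(\hat G_k\times H_k)}\El(\hat G_k)$ or not. If it does, we are in case 1). If it does not, then I want to use the commutation $[\enn_1,\enn_2]=0$ together with Lemma \ref{malnormalcontrol}: because $\enn_1\prec^s\El(\hat G_k\times H_k)$ but $\enn_1\nprec\El(\hat G_k)$, the quasinormalizer of (a corner of) $\enn_1$ computed in $\emm$ must stay inside $\El(\hat G_k\times H_k)$. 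Since $\enn_2$ commutes with $\enn_1$, it normalizes $\enn_1$, so $\enn_2$ (up to the same corner/conjugation) is contained in $\El(\hat G_k\times H_k)$, and then $\enn_1\vee\enn_2\prec_\emm\El(\hat G_k\times H_k)$, which is case 2). The technical glue here is to transport the ``$\enn_1$ corner'' intertwining data so that $\enn_2$ really does normalize the relevant subalgebra of $\El(\hat G_k\times H_k)$; this is where I would invoke Lemma \ref{intertwininglower1} (to push the $\enn_2$-intertwining down from $\emm$ into $\El(\hat G_k\times H_k)$, using that the normalizer generates) and Lemma \ref{malnormalcontrol} (to conclude the quasinormalizing elements lie in $\El(\hat G_k\times H_k)$).

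I expect the main obstacle to be precisely this transport step: Theorem \ref{controlprodpropt1} only gives a corner intertwining $\phi(x)v = vx$ for $x\in p\enn_1 p$, not a unitary conjugacy, so $\enn_2$ does not literally normalize a subalgebra of $\El(\hat G_k\times H_k)$ — one has to argue at the level of the corner $\cB = \phi(p\enn_1 p)$ and its relative commutant, and show that $v\enn_2 v^*$ (or rather the part of $\enn_2$ seen through $v$) quasi-normalizes $\cB$ inside $\emm$, hence must be supported in $\El(\hat G_k\times H_k)$ by almost malnormality. Managing the projections and the possibility that various corners are ``too small'' (so that $\prec^s$ genuinely fails on part of $\enn_1$) is the fiddly part; I would handle it by the standard trick of taking $z\in\mathscr Z(\enn_1'\cap\emm)$ maximal with $\enn_1 z\prec^s\El(\hat G_k\times H_k)$ and observing that on $1-z$, case 1) for $\enn_1$ against $\El(\hat G_k)$ must already hold (otherwise Theorem \ref{controlprodpropt1} applied to the cut-down would contradict maximality). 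Once the support issues are organized, the rest is a routine combination of Popa's intertwining calculus with the relative-hyperbolicity input already packaged in Theorem \ref{dehnfilling} and \ref{controlprodpropt1}.
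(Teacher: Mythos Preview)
Your core strategy matches the paper's proof: start from Theorem \ref{controlprodpropt1} to get an intertwining $\phi:p\enn_1 p\to\cB\subseteq q\El(\hat G_k\times H_k)q$ with $\phi(x)v=vx$, split on whether $\cB\prec_{\El(\hat G_k\times H_k)}\El(\hat G_k)$, compose intertwinings in the ``yes'' case to get 1), and in the ``no'' case use Lemma \ref{malnormalcontrol} plus the commutation $[\enn_1,\enn_2]=0$ to force $v\enn_2 v^*\subseteq\El(\hat G_k\times H_k)$, giving 2). That part is right and is exactly what the paper does.

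However, your $\prec^s$ detour is both unnecessary and, as you wrote it, incorrect. The paper never upgrades to a strong intertwining or decomposes over $\mathscr Z(\enn_1'\cap\emm)$; it works directly with a \emph{single} corner $\cB$ and a single partial isometry $v$. In the non-intertwining case it first observes $vv^*\in\cB'\cap q\emm q\subseteq\El(\hat G_k\times H_k)$ (this is where Lemma \ref{malnormalcontrol} is applied to $x=vv^*$), and then for $c\in\enn_2$ computes $\cB\,vcv^*=vcv^*\,\cB$ using only $vv^*v=v$ and $[\enn_1,\enn_2]=0$, so a second application of Lemma \ref{malnormalcontrol} gives $vcv^*\in\El(\hat G_k\times H_k)$. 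Then $v(\enn_1\vee\enn_2)v^*=v\enn_1 v^*\,v\enn_2 v^*\subseteq\El(\hat G_k\times H_k)$ with $v\neq 0$, which is already enough for $\enn_1\vee\enn_2\prec_\emm\El(\hat G_k\times H_k)$. No stable intertwining and no Lemma \ref{intertwininglower1} are needed.

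Your proposed handling of the complement $1-z$ is wrong: if $z$ is maximal with $\enn_1 z\prec^s\El(\hat G_k\times H_k)$, then on $1-z$ you have $\enn_1(1-z)\nprec\El(\hat G_k\times H_k)$, which \emph{precludes} rather than implies $\enn_1(1-z)\prec\El(\hat G_k)$; and re-running Theorem \ref{controlprodpropt1} on the cut-down could return $\enn_2$ instead of $\enn_1$. Simply drop this step and run the corner argument as above.
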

\begin{proof} From Theorem ~\ref{controlprodpropt1} there exists $i\in \overline{1,2}$ such that $\enn_i\prec \El(\hat G_k\times H_k)$. For convenience assume that $i=1$. Thus there exist nonzero projections $p\in \enn_1$, $q\in \El(\hat G_k \times H_k)$, nonzero partial isometry $v\in \emm$ and a $\ast$-isomorphism $\phi: p\enn_1p \rar \mathcal B:=\phi(p\enn_1p)\subset q\El(\hat G_k\times H_k)q$  on the image such that
\begin{equation}\label{int1}
\phi(x)v=vx\text{ for all } x\in p\enn_1p.
\end{equation}
Notice that $q\geq vv^*\in \mathcal B'\cap q\emm q$ and  $p\geq v^*v \in p\enn_ip '\cap p\emm p$. Also we can pick $v$ such that $s(E_{\El(\hat G_k \times H_k)}(vv^*))=q$.
Next we assume that $\mathcal B\prec_{L(\hat G_k \times H_k)} L(\hat G_k)$.  Thus there exist nonzero projections $p'\in \mathcal B$, $q'\in \El(\hat G_k)$, nonzero partial isometry $w\in q'\El(\hat G_k\times H_k)p'$ and a $\ast$-isomorphism $\psi: p'\mathcal B p' \rar q'\El(\hat G_k)q'$  on the image such that
\begin{equation}\label{int2}
\psi(x)w=wx\text{ for all } x\in p'\mathcal Bp'.
\end{equation}  
%\end{proof}

Notice that $q\geq p'  \geq ww^*\in (p'\mathcal Bp')' \cap p'\emm p'$ and $q' \geq w^*w\in \psi(p'\mathcal Bp')'\cap q'\emm q'$. Using \eqref{int1} and \eqref{int2} we see that \begin{equation}\label{int3}\psi(\phi(x))wv=w\phi(x)v=wvx\text{ for all }x\in p_0\enn_ip_0,\end{equation} where $p_0 \in \enn_i$ is a projection picked so that $\phi(p_0)=p'$. Also we note that if $0=wv$ then $0= wvv^*$ and hence $0=E_{\El(\hat G_k \times H_k)}(wvv^*)=w E_{\El(\hat G_k \times H_k)}(vv^*)$. This further implies that $0= w s( E_{\El(\hat G_k \times H_k)}(vv^*)) = wq =w$ which is a contradiction. Thus $wv\neq 0$ and taking the polar decomposition of $wv$ we see that  \eqref{int3} gives 1).

Next we assume that $\mathcal B\nprec_{\El(\hat G_k \times H_k)} \El(\hat G_k)$. Since $G_k$ is hyperbolic relative to $H_k$ then by Lemma \ref{malnormalcontrol}  we have that for all $x, x_1 x_2,..., x_l \in M$ such that $\mathcal Bx\subseteq \sum^l_{i=1} x_i \mathcal B$ we must have that $x\in \El(\hat G_k \times H_k)$. Hence in particular we have that $vv^*\in \mathcal B'\cap q\emm q\subseteq \El(\hat G_k\times H_k)$ and thus relation \eqref{int1} implies that $\mathcal Bvv^*=v\enn_i v^*\subseteq \El(\hat G_k\times H_k)$. Also for every $c\in \enn_{i+1}$ we can see that 
\begin{equation}\begin{split}
\mathcal Bvcv^*&= \mathcal B vv^* v c v^*= v\enn_i v^*v cv^*= vv^*v c \enn_i v^*\\&=vc \enn_i v^*= vc \enn_i v^*vv^*= vcv^*v \enn_i v^* = vcv^* \mathcal Bvv^*= vcv^* \mathcal B. \end{split}
\end{equation}
Therefore by Lemma \ref{malnormalcontrol} again we have that $vcv^*\in \El(\hat G_k \times H_k)$ and hence $v\enn_{i+1}v^*\subseteq \El(\hat G_k \times H_k)$. Thus $v \enn_i \enn_{i+1} v^* = vv^*v \enn_i \enn_{i+1} v^*= v\enn_i v^* v \enn_{i+1} v^*\subseteq \El(\hat G_k \times H_k)$, which by Popa's intertwining techniques implies that $\enn_1\vee \enn_2 \prec \El(\hat G_k \times H_k)$, i.e.\ 2) holds. \end{proof}

We now proceed towards proving the main result of this chapter. To simplify the exposition we first introduce a notation that will be used throughout the section.  
\begin{notation}\label{semidirectt} Denote by $Q= Q_1\times Q_2$, where $Q_i$ are infinite, residually finite, biexact, property (T), icc groups. Then consider $\G_i = N_i \rtimes Q\in \mathcal Rip_T(Q)$ and consider the semidirect product $\G= (N_1\times N_2)\rtimes_\sigma Q$ arising from the diagonal action  $\sigma=\sigma_1\times \sigma_2: Q\rar Aut(N_1\times N_2)$, i.e. $\sigma_g (n_1,n_2)=( (\sigma_1)_g(n_1), (\sigma_2)_g(n_2))$ for all $(n_1,n_2)\in N_1\times N_2$. For further use we observe that $\G$ is the fiber product $\G=\G_1\times_Q \G_2$ and thus embeds into $\G_1\times \G_2$ where $Q$ embeds diagonally into $Q\times Q$. Over the next proofs when we refer to this copy we will often denote it by ${\rm d}(Q)$. Also notice that $\G$ is a property (T) group as it arises from an extension of property (T) groups. Furthermore, $\G_1,\G_2\in\mathcal Rip_T(Q)$ easily implies that $\G$ is an icc group.

 For future use, use also recall the notion of the comultiplication studied in \cite{IPV10,Io11}. Let $\G$ be a group as above, and assume that $\Lambda$ is a group such that $\El(\Gamma)=\El(\Lambda)=\emm$. Then the ``comultiplication along $\Lambda$'' $\Delta: \emm\rar \emm\bar \otimes \emm$ is defined by $\Delta (v_\lambda)=v_\lambda \otimes v_\lambda$, for all $\lambda \in \Lambda$.
\end{notation}

\begin{theorem}\label{commutationcontrolincommultiplication} Let $\G$ be a group as in Notation \ref{semidirectt} and assume that $\Lambda$ is a group such that $\El(\Gamma)=\El(\Lambda)=\emm$. 
	Let $\Delta: \emm\rar \emm\bar \otimes \emm$ be the comultiplication along $\Lambda$ as in Notation \ref{semidirectt}. 
	 Then the following hold:
\begin{enumerate}
\item [3)] for all $j\in \overline{1,2}$ there is $i\in \overline{1,2}$ such that $\Delta(\El(N_i))\prec_{\emm\bar\otimes \emm} \emm\bar\otimes \El(N_j)$, and
\item [4)] 
\begin{enumerate} \item [a)] for all $j\in \overline{1,2}$ there is $i\in \overline{1,2}$ such that $\Delta (\El(Q_i))\prec_{\emm\bar\otimes \emm} \emm\bar\otimes \El(N_j)$ or 
\item [b)]$\Delta(\El(Q))\prec_{\emm\bar\otimes \emm} \emm\bar\otimes \El(Q)$; moreover in this case for every $j\in \overline{1,2}$ there is $i\in \overline{1,2}$ such that $\Delta (\El(Q_j))\prec_{\emm\bar\otimes \emm} \emm\bar\otimes \El(Q_i)$ 
\end{enumerate}   
\end{enumerate}

\end{theorem}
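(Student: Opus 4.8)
The plan is to reduce everything to applications of Theorems~\ref{controlprodpropt1} and~\ref{controlprodpropt2} after setting up the right direct-product structure on which they operate. First I would observe that $\G$ embeds into $\G_1 \times \G_2 = (N_1 \rtimes_{\sigma_1} Q) \times (N_2 \rtimes_{\sigma_2} Q)$ with $Q$ sitting diagonally, and that each $\G_i$ is hyperbolic relative to $\{Q\}$ with $Q$ residually finite (by the hypotheses and Theorem~\ref{beleosinripsconstr}). Applying the comultiplication $\Delta$ to the relation $\Gamma < \G_1 \times \G_2$ produces, inside $\emm \bar\otimes \emm = \El(\Lambda) \bar\otimes \El(\Lambda)$, two commuting property~(T) subalgebras: namely $\Delta(\El(N_1))$ and $\Delta(\El(N_2))$ commute (since $N_1$ and $N_2$ commute in $\G$, so do their images, and $\Delta$ is a $\ast$-homomorphism), and they have property~(T) because $N_i$ is property~(T) by construction of $\mathcal Rip_T(Q)$. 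The second tensor factor $\emm = \El(\Gamma) \hookrightarrow \El(\G_1 \times \G_2) = \El(\G_1) \bar\otimes \El(\G_2)$ gives us, after identifying $\El(\G_i) = \El(N_i) \rtimes Q = \El(N_i \rtimes_{\sigma_i} Q)$, a product of two relatively hyperbolic group von Neumann algebras in the sense demanded by Theorem~\ref{controlprodpropt1}, with $G_i := \G_i$, $H_i := Q$, $\hat G_i := \G_{3-i}$.

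For part~(3): I would apply Theorem~\ref{controlprodpropt1} to the commuting property~(T) subalgebras $\Delta(\El(N_1)), \Delta(\El(N_2)) \subseteq \emm \bar\otimes \El(\G_1 \times \G_2)$, concluding that for each $j \in \overline{1,2}$ there is $i \in \overline{1,2}$ with $\Delta(\El(N_i)) \prec \emm \bar\otimes \El(\hat\G_j \times Q)$, where the intertwining takes place inside $\emm \bar\otimes \El(\G_1 \times \G_2)$ — note $\hat\G_j \times Q$ means $\G_{3-j} \times Q$ inside $\G_1 \times \G_2$, i.e.\ essentially $\El(\G_{3-j}) \bar\otimes \El(Q)$. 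Then I would run Theorem~\ref{controlprodpropt2} in the same setting to upgrade: either $\Delta(\El(N_i)) \prec \emm \bar\otimes \El(\hat\G_j)$ for some $i$, or $\Delta(\El(N_1)) \vee \Delta(\El(N_2)) \prec \emm \bar\otimes \El(\hat\G_j \times Q)$. The first alternative is genuinely impossible for every $j$ simultaneously: if $\Delta(\El(N_i)) \prec_{\emm\bar\otimes\emm} \emm \bar\otimes \El(\G_{3-j})$ for both values of $j$ one derives (via Lemma~\ref{intertwiningintersection} applied to the product structure $\G_1 \times \G_2$, intersecting $\G_1$ with a conjugate of $\G_2$ — their intersection inside $\G_1\times\G_2$ being contained in a copy of $Q$ up to finite index) that $\Delta(\El(N_i)) \prec \emm \bar\otimes \El(Q)$, contradicting that $\El(N_i)$ has property~(T) while $\El(Q)$ is... no, $Q$ has property~(T) too, so here I must instead use that $\Delta$ is a comultiplication and invoke Lemma~\ref{comultsubgp}-type rigidity, or more directly the amenability/biexactness dichotomy from Theorem~\ref{controlprodpropt2} part~2). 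In any case one is forced into the situation $\Delta(\El(N_1)) \vee \Delta(\El(N_2)) \prec \emm \bar\otimes \El(\hat\G_j \times Q)$; now intersecting this with the second factor $\emm = \El(\Gamma)$ and using that $\El(\hat\G_j \times Q) \cap \El(\Gamma)$-type subalgebras are (up to the diagonal identification of $Q$) exactly $\El(N_j \cdot \text{stuff})$, together with $\Delta$ being a $\ast$-homomorphism with $\Delta(x) = \sum v_\lambda \otimes \ldots$ having a diagonal shape, yields $\Delta(\El(N_i)) \prec \emm \bar\otimes \El(N_j)$ for the appropriate $i$. The passage from ``intertwines into $\emm\bar\otimes\El(\hat\G_j\times Q)$'' down to ``into $\emm\bar\otimes\El(N_j)$'' is where Lemmas~\ref{intertwininglower2} and~\ref{intlowertensor} do the bookkeeping, using that $N_j$ is normal in $\G_j$ with $\G_j/N_j = Q$ acting, and that $\Delta(\El(Q))$ lands diagonally.

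For part~(4): I would repeat the argument with $\El(Q_i)$ (or $\El(Q)$) in place of $\El(N_i)$. The subalgebras $\Delta(\El(Q_1))$ and $\Delta(\El(Q_2))$ again commute and have property~(T) ($Q_i$ is property~(T) by hypothesis), so Theorems~\ref{controlprodpropt1} and~\ref{controlprodpropt2} apply verbatim and give, for each $j$, either $\Delta(\El(Q_i)) \prec \emm \bar\otimes \El(N_j)$ for some $i$ (this is alternative~4a), or $\Delta(\El(Q_1)) \vee \Delta(\El(Q_2)) \prec \emm \bar\otimes \El(\hat\G_j \times Q)$. In the latter case, $\Delta(\El(Q)) = \Delta(\El(Q_1) \vee \El(Q_2))$ intertwines into $\emm \bar\otimes \El(\hat\G_j \times Q)$ for each $j$; intersecting over $j = 1,2$ as above forces $\Delta(\El(Q)) \prec \emm \bar\otimes \El(Q)$, which is alternative~4b. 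For the ``moreover'' clause in 4b, once $\Delta(\El(Q)) \prec \emm\bar\otimes\El(Q)$ I would project the factor-wise decomposition $Q = Q_1 \times Q_2$ through the intertwiner, using Lemma~\ref{intertwiningintersection} (with $H_1 = Q_j$ and $H_2 = Q_i$ inside the acting group, their intersections being $Q_1$ or $Q_2$ up to conjugacy since $Q_1, Q_2$ are the only nontrivial normal-type factors) to conclude $\Delta(\El(Q_j)) \prec \emm\bar\otimes\El(Q_i)$ for some $i = i(j)$; biexactness of $Q_i$ (hence solidity of $\El(Q_i)$) prevents the degenerate overlap where the image would have to sit across both $Q_1$ and $Q_2$.

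The main obstacle will be the descent steps — pushing an intertwining from the ``big'' algebra $\emm \bar\otimes \El(\G_1 \times \G_2)$ down into $\emm \bar\otimes \emm$ and then isolating the $\El(N_j)$ (resp.\ $\El(Q_i)$) component. This is exactly what Lemmas~\ref{intertwininglower1}, \ref{intertwininglower2}, \ref{intlowertensor}, and~\ref{intertwiningintersection} are designed for, but applying them requires carefully tracking which diagonal subgroup ${\rm d}(Q) < Q \times Q$ the image of $\Delta$ respects, and verifying the regularity/normalizer and quasinormalizer hypotheses (e.g.\ $\mathscr{QN}^{(1)}$-control via almost malnormality of $Q$ in $\G_j$, Lemma~\ref{malnormalcontrol}) at each stage. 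The property~(T) of all the relevant subalgebras is what makes the dichotomies in Theorems~\ref{controlprodpropt1}--\ref{controlprodpropt2} collapse to the clean alternatives stated, so no deformation/spectral-gap argument is needed beyond what is already packaged in those theorems.
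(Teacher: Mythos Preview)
Your overall setup is right: embed $\Gamma$ into $\Gamma_1\times\Gamma_2$, view $\emm\bar\otimes\emm\subset\El(\Gamma_1\times\Gamma_2\times\Gamma_1\times\Gamma_2)$, and feed the commuting property~(T) pair $\Delta(\El(N_1)),\Delta(\El(N_2))$ (resp.\ $\Delta(\El(Q_1)),\Delta(\El(Q_2))$) into Theorem~\ref{controlprodpropt2}. But your handling of the resulting dichotomy is inverted. The first alternative, $\Delta(\El(N_i))\prec\emm\bar\otimes\El(\hat\Gamma_j)$, is not ``impossible'' --- it is the \emph{good} case: since $\Delta(\El(N_i))\subset\emm\bar\otimes\El(\Gamma)$, Lemma~\ref{intertwiningintersection} lets you intersect $\hat\Gamma_j$ with (a conjugate of) $\Gamma$ inside $\Gamma_1\times\Gamma_2$, and that intersection is exactly $N_j$; then Lemma~\ref{intertwininglower1} descends the intertwining to $\emm\bar\otimes\emm$ and you are done with~3). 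Your attempt to rule this case out (``for both $j$ simultaneously'') is both unnecessary and incorrect.

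The real work is in the \emph{second} alternative, and here there is a genuine gap. When $\Delta(\El(N_1\times N_2))\prec\emm\bar\otimes\El(\hat\Gamma_j\times Q)$, the descent lemmas (\ref{intertwiningintersection} then \ref{intertwininglower2}) only get you to $\Delta(\El(N_1\times N_2))\prec_{\emm\bar\otimes\emm}\emm\bar\otimes\El(N_j\rtimes{\rm d}(Q))$. Your ``intersection'' sketch does not separate $N_j$ from $N_j\rtimes{\rm d}(Q)$. The paper's missing step is a \emph{contradiction argument}: suppose~3) fails for this $j$, so the image $\mathcal B=\phi(p\Delta(\El(N_1\times N_2))p)$ satisfies $\mathcal B\nprec\emm\bar\otimes\El(N_j)$; then Lemma~\ref{malnormalcontrol} (using almost malnormality of ${\rm d}(Q)$ in $N_{3-j}\rtimes{\rm d}(Q)$) forces $vv^*$ and the entire one-sided quasinormalizer of $\mathcal B$ into $\emm\bar\otimes\El(N_j\rtimes{\rm d}(Q))$; since $N_1\times N_2\lhd\Gamma$, the quasinormalizer contains a corner of $\Delta(\emm)$, yielding $\Delta(\emm)\prec\emm\bar\otimes\El(N_j\rtimes Q)$ --- impossible because $N_{3-j}$ is infinite. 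Similarly, for 4b) your ``intersect over $j=1,2$'' does not land you in $\emm\bar\otimes\El(Q)$: the paper instead descends (from case~8) to $\Delta(\El(Q))\prec\emm\bar\otimes\El(N_1\rtimes{\rm d}(Q))$, then applies Theorem~\ref{controlprodpropt2} \emph{a second time} to the commuting pair $\mathcal C_1,\mathcal C_2$ (images of $\Delta(\El(Q_i))$) inside $\emm\bar\otimes\El(N_1\rtimes{\rm d}(Q))$; one branch is killed by \cite[Lemma 9.2]{Io11} (it would force $\mathcal C_i$ atomic), and the other, via Lemma~\ref{intlowertensor}, gives $\Delta(\El(Q))\prec\emm\bar\otimes\El({\rm d}(Q))$. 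The ``moreover'' clause then follows from \cite[Theorem~1.4]{PV12} applied inside $\emm\bar\otimes\El(Q_1\times Q_2)$, using biexactness and weak amenability of $Q_i$ and property~(T) of $\mathcal C_1\vee\mathcal C_2$ to collapse the relative amenability option.
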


\begin{proof} Let $\tilde \emm=\El(\G_1\times \G_2)$. Since $\G<\G_1\times \G_2$ we notice the following inclusions $\Delta(\El(N_1)), \Delta
 (\El(N_2))\subset \emm\bar \otimes \emm = \El(\G\times \G)\subset \El(\G_1\times \G_2 \times \G_1\times \G_2)$.  Since $\G_i$ is hyperbolic relative to $Q$ then using Theorem \ref{controlprodpropt2} we have either 
 \begin{enumerate}
 \item [5)]there exists $i\in 1,2$ such that $\Delta(\El(N_i))\prec_{\tilde \emm \bten \tilde \emm} \emm\bar\otimes \El(\G_1)$, or  
 \item [6)]$\Delta(\El(N_1\times N_2))\prec_{\tilde \emm \bten \tilde \emm} \emm\bten \El(\G_1\times Q)$
\end{enumerate}  
Assume 5) holds. Since $\De(\El(N_i))\subset \emm\bten \El(\G)$ then by Lemma \ref{intertwiningintersection} there is a $h\in \G_1 \times \G_2 \times \G_1 \times \G_2 $ so that $\De(\El(N_i))\prec_{\tilde \emm \bten \tilde \emm} \El(\G\times (\G \cap h (\G_1 \times \G_2 \times \G_1) h^{-1}))=\El(\G \times (\G\cap \G_1 ))= \emm\bten (\El((N_1 \times N_2 )\rtimes {\rm d}(Q))\cap (N_1 \rtimes Q\times 1))= \emm\bten \El(N_1)$. Note that since $\Delta(\El(N_i))$ is regular in $\emm \tp \emm$, using Lemma ~\ref{intertwininglower1}, we get that $\Delta(\El(N_i))\prec_{ \emm \bten \emm} \emm\bar\otimes \El(\G_1)$, thereby establishing 3).

Assume 6) holds. Since $\De(\El(N_1\times N_2 ))\subset \El(\G\times \G )$ then by Lemma \ref{intertwiningintersection} there is $h\in \G_1 \times \G_2 \times \G_1 \times \G_2 $ such that 
$\De(\El(N_1\times N_2))\prec \El(\G\times (\G \cap h (\G_1 \times \G_2 \times \G_1\times Q) h^{-1}))=\El(\G \times (\G\cap (\G_1 \times h_4 Qh_4^{-1}) ))= \emm\bten \El((N_1 \times N_2 )\rtimes {\rm d}(Q))\cap (N_1 \rtimes Q\times h_4 Qh_4^{-1}))$. Since $h_4\in \G_2= N_2 \rtimes Q$ we can assume that $h_4\in N_2$. Notice that 
$((N_1 \times N_2 )\rtimes {\rm d}(Q))\cap (N_1 \rtimes Q\times h_4 Qh_4^{-1})= h_4((N_1 \times N_2 )\rtimes {\rm d}(Q))\cap (N_1 \rtimes Q\times Q )h_4^{-1}=  h_4((N_1 \times 1)\rtimes {\rm d}(Q))h_4^{-1}$ and hence $\De(\El(N_1\times N_2 ))\prec_{\tilde \emm\bten \tilde \emm} \emm\bten \El(N_1 \rtimes {\rm d}(Q))$. Moreover using Lemma \ref{intertwininglower2} we further have that $\De(\El(N_1\times N_2 ))\prec_{ \emm\bten \emm} \emm\bten \El(N_1 \rtimes {\rm d}(Q))$.

In conclusion, there exist a $\ast$-isomorphism on its image $\phi: p \De(\El(N_1\times N_2)) p\rar \mathcal B:= \phi(p\De(\El(N_1\times N_2)) p)\subseteq q \emm\bten \El(N_1 \rtimes {\rm d}(Q))$ and $0 \neq v\in q\emm \bten \emm p$ such that 
\begin{equation}\label{intertwiningni}
\phi(x)v=vx \text{ for all } x\in p \De(\El(N_1\times N_2)) p.
\end{equation}

Next assume that 3) doesn't hold. Thus proceeding as in the first part of the proof of Theorem \ref{controlprodpropt2}, we get  \begin{equation}\mathcal B\nprec_{\emm \bten (N_1\rtimes {\rm d}(Q))} \emm\bten \El(N_1)=: \emm_1.\end{equation}

Next we observe the following inclusions

\begin{equation}\begin{split}
 &\emm_1\rtimes_{1\otimes \sigma} {\rm d}(Q)=\emm \bten \El(N_1)\rtimes_{1\otimes \sigma} {\rm d}(Q)= \emm \bten \El(N_1 \rtimes_\sigma {\rm d}(Q)) \\ &\subset \emm \bten \El((N_1 \times N_2) \rtimes_\sigma {\rm d}(Q))= \emm \bten \El(N_1) \bten \El(N_2)\rtimes {\rm d}(Q)= \emm_1 \rtimes _{1\otimes \sigma} N_2 \rtimes {\rm d}(Q)
\end{split}\end{equation} 

Also since $Q$ is malnormal in $ N_2\rtimes Q$ it follows from Lemma \ref{malnormalcontrol} that $vv^* \in \emm\bten \El(N_1\rtimes {\rm d}(Q))$ and hence  $\mathcal Bvv^* \subset \emm\bten \El(N_1\rtimes {\rm d}(Q))$. Pick $u\in \mathcal {QN}_{p(\emm \bten \emm )p} (p\De(\El(N_1\times N_2))p)$ and using \eqref{intertwiningni} we see that there exist $n_1,n_2,...,n_s \in p(\emm\bten \emm) p$ satisfying

\begin{equation}\begin{split}
\mathcal Bvuv^*& = \mathcal Bvv^*v uv^*=v p(\De(\El(N_1\times N_2)))p v^*vnv^* = vp(\De(\El(N_1\times N_2)))pnv^*\subseteq \sum^s_{i=1} vn_i p(\De(\El(N_1\times N_2)))p v^*\\&= \sum^s_{i=1} vn_ip(\De(\El(N_1\times N_2)))p v^*vv^*= \sum^s_{i=1} vn_ipv^*v(\De(\El(N_1\times N_2)))p v^*=\sum^s_{i=1} vn_ipv^* \mathcal B vv^*=\sum^s_{i=1} vn_ipv^* \mathcal B.
\end{split}\end{equation}
 Then by Lemma \ref{malnormalcontrol} again we must have that $vuv^*\in \emm \bten \El(N_1 \rtimes {\rm d}(Q))$. Hence we have shown that 
 \begin{equation}\label{qnormcontainment1}
  v\mathcal {QN}_{p(\emm \bten \emm )p} (p\De(\El(N_1\times N_2))p)v^*\subseteq \emm\bten \El(N_1 \rtimes {\rm d}(Q)).
 \end{equation}
 
 Since $v^*v\in (p\De (\El(N_1\times N_2))p)'\cap p(\emm\bten \emm) p\subset \mathcal {QN}_{p(\emm\bten \emm )p} (p(\De(\El(N_1\times N_2)))p$ then  \eqref{qnormcontainment1} further implies that 
 \begin{equation}\label{qnormcontainment2}
  v\mathcal {QN}_{p(\emm\bten \emm )p} (p(\De(\El(N_1\times N_2)))p)''v^*\subseteq \emm\bten \El(N_1 \rtimes {\rm d}(Q)).
 \end{equation} 
Here for every inclusion of von Neumann algebras $\mathcal R\subseteq \mathcal T$ and projection $p\in \mathcal R$ we used the formula $\mathcal {QN}_{p\mathcal T p} (p\mathcal Rp)''= p\mathcal {QN}_{\mathcal T} (\mathcal R)''p$ \cite[Lemma 3.5 ]{Po03}. As   $vp\De(\emm)pv^* \subseteq v\mathcal {QN}_{p(\emm\bten \emm)p} (p(\De(\El(N_1\times N_2)))p)''v^*$ we conclude that $\De (\emm)\prec \El(N_1\rtimes Q)$ which contradicts the fact that $N_2$ is infinite. Thus 3) must always hold. 
\vskip 0.05in 
Next we derive 4). Again we notice that $\De(\El(Q_1))$, $\De(\El(Q_2))\subset \De(\emm)\subset \emm\bten \emm =\El(\G\times \G)\subset \El(\G_1\times \G_2 \times\G_1\times \G_2)$. Using Theorem \ref{controlprodpropt2} we must have that either 
\begin{enumerate}
\item[7)]\label{int5}  $\De(\El(Q_i))\prec_{\tilde \emm\bten \tilde \emm } \emm\bten \El(\G_1)$, or 
\item [8)]\label{int6} $\De(\El(Q))\prec_{\tilde \emm\bten \tilde \emm} \emm\bten \El(\G_1 \times Q)$.
\end{enumerate}  

Proceeding exactly as in the previous case, and using Lemma ~\ref{intertwininglower1}, we see that  7) implies $\De(\El(Q_i))\prec_{\emm\bten \emm} \emm\bten \El(N_1)$ which in turn gives  $4a)$. Also proceeding as in the previous case, and using Lemma ~\ref{intertwininglower2}, we see that 8) implies 
\begin{equation}\label{int7}\De(\El({\rm d}(Q))\prec_{ \emm\bten \emm} \emm\bten \El(N_1 \rtimes {\rm d}(Q)).  
\end{equation}

To show the part $4b)$ we will exploit \eqref{int7}. Notice that there exist nonzero projections $ r\in \De(\El(Q))$, $t\in \emm\bten \El(N_1\rtimes {\rm d}(Q))$, nonzero partial isometry $w\in r(\emm\bten \emm) t$ and $\ast$-isomorphism onto its image $\phi: r\De(\El(Q))r\rightarrow \mathcal C:= \phi(r\De(\El(Q))r)\subseteq t(\emm \bten \El(N_1\rtimes {\rm d}(Q)))t$ such that \begin{equation}\phi(x)w=wx \text{ for all }  x\in r\De(\El(Q))r.\end{equation}
Since $\El(Q)$ is a factor we can assume without loss of generality that $r=\De(r_1\otimes r_2)$ where $r_i\in \El(Q_i) $. Hence $\mathcal C= \phi(r \De(\El(Q))r)= \phi(\Delta (r_1 \El(Q_i)r_2))\bten r_2 \El(Q_2) r_2=: \mathcal C_1\vee \mathcal C_2$ where we denoted by $\mathcal C_i= \phi(\Delta (r_i \El(Q_i))r_i)\subseteq t(\emm\bten \El(N_1 \rtimes {\rm d}(Q)))t$. Notice that $\mathcal C_i$'s are commuting property (T) subfactors of $\emm \bten \El(N_1 \rtimes {\rm d}(Q))$. Since $N_i \rtimes Q$ is hyperbolic relative to $\{Q\}$ and seeing $\mathcal C_1\vee \mathcal C_2 \subseteq \emm \bten \El(N_i\rtimes {\rm d}(Q))\subset \El(\G_1\times \G_2 \times (N_1 \rtimes {\rm d}(Q)))$ then by applying Theorem \ref{controlprodpropt2} we have that there exits $i\in 1,2$ such that \begin{enumerate}
\item [9)] $\mathcal C_1 \prec_{\tilde \emm\bten \El(N_1 \rtimes {\rm d}(Q))} \El(\G_1\times \G_2)$ or 
\item [10)] $\mathcal C_1\vee \mathcal C_2 \prec_{\tilde \emm\bten \El(N_1 \rtimes {\rm d}(Q))} \El(\G_1\times \G_2\times {\rm d}(Q))$.
\end{enumerate}
Since $\mathcal C_1\subset \emm\bten \emm$ then 9) and Lemma \ref{intlowertensor} imply that $\mathcal C_1 \prec_{\emm \bten \emm} \emm\otimes 1$ which by \cite[Lemma 9.2]{Io11} further implies that $\mathcal C_1$ is atomic, which is a contradiction. Thus we must have 10). However since $\mathcal C_1\vee \mathcal C_2\subset \emm \bten \emm$ then 10) and Lemma \ref{intlowertensor} give that $\mathcal C_1\vee \mathcal C_2 \prec_{\emm \bten \emm} \emm\bten \El({\rm d}(Q))$ and composing this intertwining with $\phi$ (as done in the proof of the first case in Theorem \ref{controlprodpropt2}) we get that  $\De(\El(Q)) \prec_{\emm\bten \emm} \emm\bten \El({\rm d}(Q))$. Now we show the moreover part. So in particular the above intertwining shows that we can assume from the beginning that $\mathcal C=\mathcal C_1 \vee \mathcal C_2 \subset t(\emm\bten \El({\rm d}(Q))) t$. Since $Q_i$ are biexact, weakly amenable then by applying \cite[Theorem 1.4]{PV12} we must have that either $\mathcal C_1 \prec \emm\bten \El({\rm d}(Q_1))$ or $\mathcal C_2 \prec \emm\bten \El({\rm d}(Q_1))$ or $\mathcal C_1\vee \mathcal C_2$ is amenable relative to  $\emm\bten \El({\rm d}(Q_1))$ inside $\emm\bten \emm$. However since $\mathcal C_1 \vee \mathcal C_2$ has property (T) the last case above still entails that $\mathcal C_1\vee \mathcal C_2 \prec \emm\bten \El({\rm d}(Q_1))$ which completes the proof.  \end{proof}

  \begin{theorem}\label{toproductgroupcorners}Let $\G$ be a group as in Notation \ref{semidirectt} and assume that $\Lambda$ is a group such that $\El(\Gamma)=\El(\Lambda)=\emm$. Let $\Delta: \emm\rar \emm\bar \otimes \emm$ be the ``comultiplication along $\Lambda$'' as in the Notation~\ref{semidirectt}. Also assume for every $j\in 1,2$ there is $i\in 1,2$ such that either $\Delta (\El(Q_i))\prec_{\emm\bar\otimes \emm} \emm\bar\otimes \El(Q_j)$ or $\Delta (\El(Q_i))\prec_{\emm\bar\otimes \emm} \emm\bar\otimes \El(N_j)$. Then one can find subgroups $\Phi_1,\Phi_2 \leqslant \Phi\leqslant \La $ such that \begin{enumerate}
  \item $\Phi_1,\Phi_2$ are infinite, commuting, property (T), finite-by-icc groups;
  \item $[\Phi:\Phi_1\Phi_2]<\infty$ and $\mathcal{QN}^{(1)}_\La(\Phi)=\Phi$;
  \item there exist $\mu \in \mathcal U(\emm)$, $z\in \mathcal P(\mathcal Z(\El(\Phi)))$, $h= \mu z\mu^*\in \mathcal P (\El(Q))$ such that 
  \begin{equation} \mu \El(\Phi) z\mu^* = h \El(Q)h.
  \end{equation}
  \end{enumerate}
  
  \end{theorem}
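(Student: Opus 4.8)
The plan is to leverage the intertwining hypothesis on the $\Delta(\mathcal L(Q_i))$'s together with Lemma~\ref{comultsubgp} and the structural results on quasinormalizers to produce the desired subgroup $\Phi$ and the associated corner equation. First I would fix $j\in\{1,2\}$ and use the assumption to find $i\in\{1,2\}$ with $\Delta(\mathcal L(Q_i))\prec_{\emm\bten\emm}\emm\bten\mathcal L(R_j)$, where $R_j\in\{Q_j,N_j\}$. Since $\mathcal L(Q_i)$ has property (T), it is finitely generated as a von Neumann algebra over any weakly dense $\ast$-subalgebra, and the intertwining can be upgraded: using that $\Delta(v_\lambda)=v_\lambda\otimes v_\lambda$ and a spectral-gap/rigidity argument in the spirit of \cite[Section 7]{IPV10} (cf.\ the proof of Lemma~\ref{comultsubgp}), I would show that in fact $\Delta(\mathcal L(Q_1\times Q_2))\prec_{\emm\bten\emm}^{s}\emm\bten\mathcal L(R)$ for a suitable product subgroup $R\le\G$ containing a finite-index copy of a product of two infinite property (T) groups. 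The key point is that commuting property (T) subalgebras, once intertwined into $\emm\bten\mathcal L(R)$, must land (after a unitary conjugacy) inside $1\bten\mathcal L(R)$ up to a corner, because the relative commutant computations force the "first leg" to be trivial — this is where I would invoke \cite[Proposition 7.1]{IPV10} / Lemma~\ref{comultsubgp} applied to $\mathcal A=\mathcal B=\mathcal L(Q)$ to extract a genuine \emph{subgroup} $\Phi<\La$ with $\mathcal L(\Phi)$ matching a corner of $\mathcal L(Q)$.

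Concretely, the second step is to run Lemma~\ref{comultsubgp}: the containment $\Delta(\mathcal L(Q))\subseteq\emm\bten\mathcal L(R)$ (obtained after the conjugation above, on a corner) yields a subgroup $\Sigma<\La$ with $\mathcal L(Q)\subseteq\mathcal L(\Sigma)\subseteq\mathcal L(R)$-type inclusions; dualizing and using that $\mathcal L(Q)$ is a II$_1$ factor together with Lemma~\ref{equalcorners} gives an actual corner identification $\mu\mathcal L(\Phi)z\mu^*=h\mathcal L(Q)h$ for $\Phi:=\Sigma$, $h=\mu z\mu^*$, $z\in\mathscr P(\mathscr Z(\mathcal L(\Phi)))$. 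Then I would verify properties (1)--(2) of $\Phi$: since $\mathcal L(\Phi)$ has a corner isomorphic to $\mathcal L(Q)=\mathcal L(Q_1)\bten\mathcal L(Q_2)$, and $\mathcal L(Q_i)$ are II$_1$ factors with property (T), standard transfer of these properties through the corner (using that $Q_i$ are icc, biexact, property (T)) produces commuting subgroups $\Phi_1,\Phi_2\le\Phi$ that are infinite, property (T), and finite-by-icc, with $[\Phi:\Phi_1\Phi_2]<\infty$; the finite-by-icc conclusion comes from the fact that a group whose von Neumann algebra has a II$_1$ factor corner must have finite FC-radical. For $\mathscr{QN}^{(1)}_\La(\Phi)=\Phi$ I would use Lemma~\ref{maxcorner2} together with malnormality-type inputs coming from $\G$ being hyperbolic relative to its $Q$-factors (Lemma~\ref{malnormalcontrol}): the quasinormalizer of $\mathcal L(\Phi)$ inside $\emm$ cannot escape $\mathcal L(\Phi)$ because on the corner it equals the quasinormalizer of $\mathcal L(Q)$ inside $\mathcal L(\G)$, and the latter is $\mathcal L(Q)$ itself by almost-malnormality of $Q<\G$ (or of $Q<N_j\rtimes Q$).

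The main obstacle I anticipate is the passage from a \emph{corner} intertwining $\mathcal A p\prec\emm\bten\mathcal L(R)$ to the \emph{unital} algebra containment $\Delta(\mathcal L(Q))\subseteq u(\,\cdot\,)u^*$ needed to invoke Lemma~\ref{comultsubgp}: the comultiplication is unital but the intertwining partial isometry $v$ is not, so one has to argue that the projections $vv^*$ and $v^*v$ can be absorbed. This is exactly the kind of step handled in \cite[Section 7]{IPV10}; the fix is to use that $\Delta(\mathcal L(\La))'\cap(\emm\bten\emm)=\mathbb C1$ (since $\La$ is icc) to conclude $v^*v=p=1$ after possibly enlarging, and then $vv^*$ is a projection in $(\emm\bten\mathcal L(R))$ commuting with $\Delta(\mathcal L(Q))$, hence controllable. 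A secondary technical point is making sure that the subgroup $R\le\G$ into which $\Delta(\mathcal L(Q))$ intertwines is genuinely a product of two infinite pieces and not degenerate (e.g.\ not landing inside a single $\mathcal L(N_j)$), which follows from the \emph{two} hypotheses ($j=1$ and $j=2$) combined, using Lemma~\ref{intertwiningintersection} to intersect the two intertwining targets and Theorem~\ref{controlprodpropt1}--\ref{controlprodpropt2} to rule out the collapsed cases.
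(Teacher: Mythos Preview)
Your proposal has a genuine gap at its core. You plan to upgrade the intertwining $\Delta(\mathcal L(Q))\prec_{\emm\bten\emm}\emm\bten\mathcal L(R)$ to a unital containment (after a unitary conjugation on a corner) and then invoke Lemma~\ref{comultsubgp}. But Lemma~\ref{comultsubgp} requires the \emph{literal} containment $\Delta(\mathcal A)\subseteq\emm\bten\mathcal B$ with the honest comultiplication $\Delta$; conjugating by a unitary $u\in\emm\bten\emm$ destroys the comultiplication structure, so $u\Delta(\,\cdot\,)u^*$ is no longer of the form $v_\lambda\mapsto v_\lambda\otimes v_\lambda$ and the Fourier-coefficient argument behind Lemma~\ref{comultsubgp} is unavailable. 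Your proposed fix (using $\Delta(\emm)'\cap(\emm\bten\emm)=\mathbb C1$ to absorb $v^*v$) addresses the wrong difficulty: even if $v^*v=1$, the projection $vv^*$ need not be $1$, and more fundamentally the intertwining map is not $\Delta$ restricted. Relatedly, the assertion that $\Delta(\mathcal L(Q))$ ``must land inside $1\bten\mathcal L(R)$ up to a corner'' is false: by \cite[Proposition~7.2]{IPV10} the comultiplication image of any diffuse subalgebra can never intertwine into $1\otimes\emm$.

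The paper takes a completely different route that bypasses this obstacle. The key first move is to apply \cite[Theorem~4.1]{DHI16} (building on \cite[Theorem~3.1]{Io11}): from the hypothesis $\Delta(\mathcal L(Q_i))\prec\emm\bten\mathcal L(R_j)$ together with non-amenability of $\mathcal L(R_j)'\cap\emm$, one extracts a subgroup $\Sigma<\La$ with $C_\La(\Sigma)$ non-amenable and $\mathcal L(Q_1)\prec_\emm\mathcal L(\Sigma)$ --- a statement living entirely in the single copy $\emm$, with no comultiplication left. From there the argument is a delicate single-algebra analysis: using malnormality of $Q<\G$ and repeated passage to relative commutants one shows $(\El(\Omega)f\vee f\El(\Sigma)f)z$ conjugates into $\El(Q)$ for $\Omega=C_\La(\Sigma)$; then \cite[Proposition~2.4]{CKP14} produces a finite-index inclusion $\mathcal D\subseteq\eta q\El(\Sigma)qz\eta^*$; biexactness of the $Q_i$ forces the transversal piece to be atomic; and one sets $\Xi=QN_\La(\Sigma)$, $\Phi=QN^{(1)}_\La(\Xi)$, with the product feature $\Phi_1,\Phi_2$ coming from $\Sigma'\leqslant\Sigma$ and a finitely-supported piece of $vC_\La(\Sigma)$ via a property~(T) exhaustion argument. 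None of this is captured by your plan; in particular Lemma~\ref{comultsubgp} plays no role in this theorem (it is used later, in the reconstruction of the core, once a genuine containment --- not an intertwining --- has already been established by other means).
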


\begin{proof}
	For the proof we use an approach based upon the methods developed in \cite{CdSS15,CI17,CU18}. For the reader's convenience we include all the details.
	
	Since the relative commutants $\El(Q_j)'\cap \emm$ and $\El(N_j)'\cap \emm$ are non-amenable then in both cases using \cite[Theorem 4.1]{DHI16} (see also \cite[Theorem 3.1]{Io11} and \cite[Theorem 3.3]{CdSS15}), one can find a subgroup $\Sigma <\Lambda$ with $C_\Lambda(\Sg)$ non-amenable such that $\El(Q_1)\prec_\emm \El(\Sg)$. 
	Thus there are $0\neq p\in \mathscr P(\El(Q_1))$, $0\neq f\in \mathscr P(\mathcal L(\Sg))$, a partial isometry $0\neq v\in f\emm p$ and a $\ast$-isomorphism onto its image $\phi:p \El(Q_1)p \rightarrow \B:=\phi(p \El(Q_1)p) \subseteq f \El(\Sg)f $ so that \begin{equation}\label{inteq1}
	\phi(x)v=vx, \text{ for all } x\in p\El(Q_1)p.
	\end{equation}
	Notice that $vv^*\in \cB'\cap f\M f$ and $v^*v\in (p \El(Q_1)p)'\cap p\emm p=\El(Q_2)p$. Then \eqref{inteq1}  implies that $\cB vv^* = v \El(Q_1)v^* = u_1 \El(Q_1) v^*v u^*_1$, where $u_1\in \sU(\cM)$ extends $v$. Passing to relative commutants we get $vv^*(\cB' \cap f\cM f)vv^*= u_1 v^*v( (p\El(Q_1) p)'\cap p\cM p )v^*v u^*_1=u_1 v^*v( p \El(Q_2)  )v^*v u^*_1 $. These relations further imply $vv^*(\cB \vee \cB'\cap f\cM f)vv^*=\cB vv^*\vee vv^*(\cB' \cap f\cM f)vv^*\subseteq u_1 \El(Q) u^*_1$. As $\El(Q)$ is a factor, there is a new $u_2 \in \mathscr U(\cM)$ with \begin{equation}(\cB \vee \cB'\cap f\cM f) z_2 \subseteq u_2 \El(Q) u^*_2.\end{equation} Here $z_2$ is the central support of $vv^*$ in $\cB \vee \cB'\cap f\cM f$ and hence $z_2\in \mathcal Z(\cB'\cap f\cM f )$ and $vv^*\leq z_2 \leq f$.

	\noindent Let $\Omega= C_\La (\Sigma)$ and notice that $\El(\Omega)z_2\subseteq ((fL(\Sigma )f)'\cap f \cM f )z_2\subseteq (\cB'\cap f \cM f)z_2\subseteq u_2 \El(Q)u_2^*$. Since $Q$ is malnormal in $\Gamma$ and $z_2 \in (L(\Omega)f)'\cap f\cM f$  we further have   $z_2 (\El(\Omega)f\vee ((\El(\Omega)f)'\cap f\cM f)) z_2\subseteq u_2 \El(Q)u_2^*$. Again since $\El(Q)$ is a factor there is $\eta\in \mathscr U(\cM)$ so that \begin{equation}\label{inteq2} (\El(\Omega)f\vee ((\El(\Omega)f)'\cap f\cM f)) z\subseteq \eta^* \El(Q)\eta,\end{equation} where $z$ is the central support of $z_2$ in $\El(\Omega)f\vee ((\El(\Omega)f)'\cap f\cM f)$. In particular, we have $vv^*\leq z_2 \leq z \leq f$. Now since  $f\El(\Sigma )f\subseteq  (\El(\Omega)f)'\cap f\cM f$ then by \eqref{inteq2} we get $(f \El(\Sigma )f\vee \El(\Omega)f)z\subseteq \eta^*\El(Q)\eta$ and hence 
	\begin{equation}\label{inteq3} \eta(\El(\Omega)f\vee f \El(\Sigma )f) z \eta^*\subseteq \El(Q).\end{equation}
	
	Since $vv^*\leq z\in (f \El(\Sigma)f)'\cap f\cM f$ and $\cB$ is a factor then the map $\phi': p \El(Q)p \rightarrow \eta  \cB z  \eta^* \subseteq f \El(\Sigma)fz$ given by $\phi'(x)=\eta  \phi(x)z \eta^*$ still defines a $\ast$-isomorphism that satisfies $\phi'(x) y=y x$, for any $x\in p \El(Q_1)p$, where $0\neq y= \eta zv$ is a partial isometry. Hence, $ \El(Q_1) \prec_{\cM}  u^*f \El(\Sigma)fzu$. Since $Q$ is malnormal in $\G$, it follows that $\El(Q_1)\prec_{\El(Q)} \eta f \El(\Sigma)fz\eta^*$.

	\noindent To this end, using \cite[Proposition 2.4]{CKP14} and its proof there are $0\neq a \in \mathscr P(\El(Q_1))$, $0\neq r=\eta qz\eta^* \in \eta f \El(\Sigma)fz\eta^*$ with $q\in \mathscr P(f \El(\Sigma)f)$, and a $\ast$-isomorphism onto its image $\psi : a\El(Q_1)a \rightarrow \D:= \psi(a\El(Q_1)a)\subseteq \eta q \El(\Sigma)qz\eta^*$ satisfying the following properties:
	\begin{enumerate}
		\item [4)]\label{finiteindex1} the inclusion $\D \vee (\D' \cap  \eta q \El(\Sigma)qz\eta^* ) \subseteq  \eta q \El(\Sigma)qz\eta^* $  has finite index;
		\item [5)]\label{intrel1} there is a partial isometry $0\neq w \in  \El(Q)$ such that $\psi(x) w =w x$ for all $x\in a \El(Q_1)a$. \end{enumerate}
	
	Now observe the algebras $\D$, $\D' \cap   \eta  q \El(\Sigma)q z \eta^*$ and $\eta  \El(\Omega)q z \eta^*$ are mutually commuting. Also the prior relations show that $\D$ and $\eta  \El(\Omega )q z\eta^*$ has no amenable direct summand. Since  $Q_1$ and $Q_2$ are bi-exact it follows that $\D' \cap  \eta  q \El(\Sigma)qz\eta^*$ must be purely atomic. Therefore, one can find $0\neq e \in \mathscr P(\mathcal Z(\D' \cap   u^* q \El(\Omega)q zu ))$ such that after cutting down by $q$ the containment in 4) and replacing $\D$ by $\D e$ one can assume that 
	\begin{enumerate}\item [4')] $\D \subseteq  \eta q \El(\Sigma)qz\eta^* $  is a finite index inclusion of non-amenable II$_1$ factors. \end{enumerate} Moreover, replacing $w$ by $ew$  and  $\psi(x)$ by $\psi(x) e$ in the intertwining in  5) still holds.

	Notice that 5) implies $ww^*\in \mathcal D'\cap r\El(Q)r$, $w^*w\in a\El(Q_1)a'\cap  a\El(Q)a=\mathbb C a\otimes \El(Q_2) $. Thus there exists $0\neq b\in \mathscr P(\El(Q_2))$ such that $w^*w= a\otimes b$. Pick $c\in \mathscr U(\El(Q))$ such that $w=c(a\otimes b)$ then 5) gives that 
	\begin{equation}\label{equality1}\mathcal Dww^*=w\El(Q_1)w^*=c(a\El(Q_1)a\otimes  \mathbb C b)c^*.
	\end{equation}
	Let $\Xi= QN_{\Lambda}(\Sigma)$. Then  using \eqref{equality1} and 4')
	above we see that \begin{equation}\label{equality3}
	c(a\otimes b) \El(Q) (a\otimes b) c^*= ww^* \eta q z \mathcal {QN}_{\El(\La)}(\El(\Sigma))''qz\eta^*ww^*= ww^* \eta q z \El(\Xi)qz\eta^*ww^*
	\end{equation}
	and also 
	
	\begin{equation}\label{equality2}
	\begin{split}
	c(\mathbb Ca \otimes b\El(Q_2)b)c^*&= (c(a\El(Q_1)a\otimes \mathbb C b)c^*)' \cap c(a\otimes b) \El(Q) (a\otimes b) c^*\\
	&= (\mathcal Dww^*)'\cap ww^* \eta qz \El(\Xi) qz\eta^* ww^*\\&= ww^*(\mathcal D'\cap \eta qz \El(\Xi) qz\eta^*) ww^*.
	\end{split}
	\end{equation}
	
	\noindent Using 4') and \cite[Lemma 3.1]{Po02} we also have that 
	\begin{eqnarray}\label{finiteindex3}
	\mathcal D\vee (\eta qz \El(\Sigma)zq \eta^*)'\cap \eta qz \El(\Xi)zq \eta^* \subseteq^f \mathcal D\vee \mathcal D'\cap \eta qz \El(\Xi)zq \eta^* \subseteq \eta qz \El(\Xi)zq \eta^*,
	\end{eqnarray}
	where the symbol $\subseteq^f$ above means inclusion of finite index.
	\vskip 0.03in
	\noindent Relation \eqref{equality1} also shows that
	\begin{equation}\label{inclusion1}
	\begin{split}
	\mathcal D\vee (\eta qz \El(\Sigma)zq \eta^*)'\cap \eta qz \El(\Xi)zq \eta^* &\subseteq^f \eta qz \El(\Sigma)zq \eta^*\vee (\eta qz \El(\Sigma)zq \eta^*)'\cap \eta qz \El(\Xi)zq \eta^* \\
	&\subseteq \eta qz \El(\Sigma (vC_\La(\Sigma)))zq \eta^*\\
	&\subseteq \eta qz \El(\Xi)zq \eta^*.
	\end{split}
	\end{equation}
	
	\noindent Here $vC_\La(\Sigma)=\{\lambda \in \Lambda \,:\, |\lambda^{\Sigma}|<\infty \}$ is the virtual centralizer of $\Sigma$ in $\La$.
	
	\noindent Let $\Phi= QN^{(1)}_{\Lambda}(\Xi)$. Using \eqref{equality3}  and the fact that $Q$ is malnormal in $\Gamma$ then the same argument  from \cite[Claim 5.2, page 26, lines 1-10]{CU18} shows that $\Xi\leqslant \Phi$ has finite index.
	
	\noindent Combining \eqref{equality2}, \eqref{equality1}  \eqref{equality3} we notice that
	\begin{equation}\label{equality4'}
	ww^* (\mathcal D\vee \mathcal D'\cap \eta qz \El(\Xi)zq \eta^*) ww^*=ww^*\eta qz \El(\Xi)zq \eta^* ww^*=ww^*\eta qz \El(\Phi)zq \eta^* ww^*.
	\end{equation}
	In particular, \eqref{equality4'} shows that $\eta qz \El(\Xi)zq \eta^*\prec_{\eta qz \El(\Xi)zq \eta^*} \mathcal D\vee \mathcal D'\cap \eta qz \El(\Xi)zq \eta^*$ and using the finite index condition in  \eqref{finiteindex3} we get $\eta qz \El(\Xi)zq \eta^*\prec_{\eta qz \El(\Xi)zq \eta^*} \mathcal D\vee (\eta qz \El(\Sigma)zq \eta^*)'\cap \eta qz \El(\Xi)zq \eta^*$. Thus, by \eqref{inclusion1} we further have $\eta qz \El(\Xi)zq \eta^*\prec_{\eta qz \El(\Xi)zq \eta^*} \eta qz \El(\Sigma( vC_\Lambda(\Sigma))) zq \eta^*$ and since $\Sigma (vC_\Lambda(\Sigma))\leqslant \Phi$ and $[\Phi:\Xi]<\infty$ then using \cite[Lemma 2.6]{CI17} we get that $[\Phi: \Sigma (vC_\Lambda(\Sigma))]<\infty$.
	
	Relation \eqref{equality3} also shows that \begin{equation}\label{equality4}c(a\otimes b) \El (Q)(a\otimes b) c^*= ww^* \eta qz \El(\Xi)zq \eta^* ww^*=ww^* \eta qz \El(\Phi)zq \eta^* ww^*.\end{equation} 
	
	As $Q$ has property (T) then by \cite[Lemma 2.13]{CI17} so is $\Phi$ and $\Xi$ and hence $\Sigma vC_\Lambda(\Sigma)$ as well. Let $\{\mathcal O_n\}_n$ be an enumeration of all the orbits in $\La$ under conjugation by $\Sigma$. Denote by $\Omega_n:=\langle \mathcal O_1,...,\mathcal O_n\rangle$. Clearly $\Omega_n\leqslant \Omega_{n+1}$ and $\Sigma$ normalizes $\Omega_n$ for all $n$. Notice that $\Omega_n\Sigma \leqslant \Omega_{n+1}\Sigma$ for all $n$ and in fact $\Omega_n \Sigma \nearrow \Sigma (vC_\Lambda(\Sigma))$. Since $\Sigma (vC_\Lambda(\Sigma))$ has property (T) there exists $n_0$ such that $\Omega_{n_0}\Sigma =\Sigma (vC_\Lambda(\Sigma))$. In particular, there is a finite index subgroup $\Sigma'\leqslant \Sigma$ such that $[\Sigma', \Omega_{n_0}]=1$ and hence  $\Sigma', \Omega_{n_0} \leqslant^f  \Sigma (vC_\Lambda(\Sigma))\leqslant^f \Phi$ are commuting subgroups. Moreover if $r_1$ is the central support of $ww^*$ in $\eta z L(\Phi)qz \eta^*$ then by \eqref{equality4} we also have that  $\eta_0 \El(Q)\eta_0^*\supseteq \eta qz \El(\Xi)qz \eta^* r_1$ for some unitary $\eta_0$. Now since the $Q_i$'s are biexact the same argument from \cite{CdSS15} shows that the finite conjugacy radical of $\Phi$ is finite. Hence $\Phi$ is a finite-by-icc group and this canonically implies that $\Phi_1:=\Sigma'$ and $\Phi_2:=\Omega_{l_0}$ are also finite-by-icc. As $\Phi$ has property (T) then so do the $\Phi_i$'s.
	Altogether,  the above arguments and \eqref{equality4} show that there exist subgroups $\Phi_1,\Phi_2 \leqslant \Phi< \La $ satisfying the following properties: \begin{enumerate}
		\item [1)] $\Phi_1,\Phi_2$ are infinite, commuting, property (T), finite-by-icc groups;
		\item [2)] $[\Phi:\Phi_1\Phi_2]<\infty$ and $QN^{(1)}_\La(\Phi)=\Phi$;
		\item [3)] there exist $\mu \in \mathscr U(\M)$, $d\in \mathscr P(\El(\Phi))$, $h= \mu d\mu^*\in \mathscr P (\El(Q))$ such that 
		\begin{equation}\label{equality5} \mu d\El(\Phi) d\mu^* = h \El(Q)h.
		\end{equation}
	\end{enumerate}
	In the last part of the proof we show that after replacing $d$ with its central support in $\El(Q)$, all the required relations in the statement still hold. Since $\El(Q)$ is a factor then using \eqref{equality5}  one can find $\xi \in \mathscr U(\emm)$ such that $\xi \El(\Phi )t \xi^*\subseteq \El(Q)$ where $t$ is the central support of $d$ in $\El(Q)$. Hence $\xi \El(\Phi ) t \xi^*\subseteq r_2\El(Q)r_2$, where $r_2=\xi t \xi^*$.  Fix $e_o\leqslant t$ and $f_o\leqslant d$ projections in the factor $\El(\Phi)t$ such that $\tau(f_o)\geqslant\tau(e_o)$. From \eqref{equality5} we have  $\mu f_o\El(\Phi) f_o\mu^* = l \El(Q)l$ and $\xi e_o\El(\Phi )e_o \xi^*\subseteq r_o \El(Q)r_o$ where $r_o= \xi e_o\xi^{\ast}$ and $l= \mu f_o \mu^*$. Let $\xi_o\in \El(Q)$ be a unitary such that $r_o\leqslant \xi_o l\xi_o^*$. Thus $\xi e_o\El(\Phi )e_o \xi^*\subseteq r_o \El(Q)r_o \subseteq \xi_o l \El(Q)l \xi^*_o= \xi_o\mu f_o\El(\Phi) f_o\mu^*\xi_o^*$ and hence \begin{equation}\label{onesidedcommensurator1}\mu^*\xi_o^*\xi e_o\El(\Phi )e_o \subseteq  f_o\El(\Phi) f_o\mu^*\xi_o^*\xi \subset \El(\Phi)\mu^*\xi_o^*\xi.\end{equation} 
	Next let $e_o+ p_1+p_2+...+p_s=t$ where $p_i\in \El(\Phi)t$ are mutually orthogonal projection such that $e_o$ is von Neumann equivalent (in $\El(\Phi)t$) to $p_i $  for all $i\in \overline{1,s-1}$ and $p_s$ is von Neumann subequivalent to $e_o$. Now let $u_i$ be unitaries in $\El(\Phi)t$ such that $u_i p_i u_i^*= e_o$ for all $i\in \overline{1,s-1}$ and  $u_s p_su_s^*= z'_o\leqslant e_o$. Combining this with \eqref{onesidedcommensurator1} we get $\mu^*\xi_o^*\xi e_o\El(\Phi )p_i  = \mu^*\xi_o^*\xi e_o\El(\Phi )u_i^* e_ou_i=\mu^*\xi_o^*\xi e_o\El(\Phi ) e_o u_i\subseteq   \El(\Phi)\mu^*\xi_o^*\xi u_i$ for all $i\in \overline{1,s-1}$. Similarly, we get $\mu^*\xi_o^*\xi e_o\El(\Phi )p_s  = \mu^*\xi_o^*\xi e_o\El(\Phi )u_s^* z'_ou_s=\mu^*\xi_o^*\xi e_o\El(\Phi ) z'_ ou_s\subseteq \mu^*\xi_o^*\xi e_o\El(\Phi ) e_ou_s\subset  \El(\Phi)\mu^*\xi_o^*\xi u_s$. Using these relations we conclude that \begin{equation*}\begin{split}\mu^*\xi_o^*\xi e_o\El(\Phi )& =\mu^*\xi_o^*\xi e_o\El(\Phi )t=\mu^*\xi_o^*\xi e_o\El(\Phi )(e_o+\sum^s_{i=1}p_i) \\ &\subseteq \mu^*\xi_o^*\xi e_o\El(\Phi )e_o+ \sum^s_{i=1} \mu^*\xi_o^*\xi e_o\El(\Phi ) p_i\\
	& \subseteq \El(\Phi )\mu^*\xi_o^*\xi+ \sum^s_{i=1} \El(\Phi )\mu^*\xi_o^*\xi u_i.
	\end{split}\end{equation*}  
	In particular, this relation shows that $\mu^*\xi_o^*\xi e_o\in \mathcal {QN}^{(1)}_{\El(\La)}(\El(\Phi))$ and since $\mathcal {QN}^{(1)}_{\El(\La)}(\El(\Phi))''=\El(\Phi)$ by 2) then we conclude that $\mu^*\xi_o^*\xi e_o\in \El(\Phi)$. Thus using this together with \eqref{onesidedcommensurator1} one can check that
	\begin{align*}
\xi e_o \El(\Phi) e_o \xi^*&= \xi e_o\xi^*\xi_o\mu(\mu^*\xi^*_o \xi e_o \El(\Phi) e_o \xi^*\xi_o\mu) \mu^*\xi^*_o\xi e_o\xi^*\\&=\xi e\xi^*\xi_o\mu f_o\El(\Phi)f_o \mu^* \xi^*_o \xi e \xi^*\\&=\xi e_o \xi^*\xi_o l \El(Q)l  \xi^*_o \xi e_o \xi^*= r_o \El(Q)r_o.
	\end{align*} 
	
	In conclusion we have proved that $\xi \El(\Phi) t \xi^*\subseteq r_2\El(Q)r_2$ and for all $e_o\leqslant t$ and $f_o\leqslant d$ projections in the factor $\El(\Phi)t$ such that $\tau(f_o)\geqslant\tau(e_o)$ we have $\xi e_o \El(\Phi) e_o \xi^*=r_o\El(Q)r_o$ where $r_o\leqslant r_2= \xi t\xi^*$. By Lemma \ref{equalcorners} this clearly implies that $\xi \El(\Phi)t\xi^*= r_2\El(Q)r_2$ which finishes the proof.
\end{proof}
% u_o=v, z_o=e, d_o=f, l=h_o

\begin{lemma}\label{somefiniteintersection}Let $\G$ be a group as in Notation \ref{semidirectt} and assume that $\Lambda$ is a group such that $\El(\Gamma)=\El(\Lambda)=M$.  Also assume there exists a subgroup  $\Phi< \La $, a unitary $\mu \in \mathcal U(\emm)$ and projections  $z\in \mathcal Z(\El(\Phi))$, $r= \mu z\mu^*\in \El(Q)$ such that 
	\begin{equation}\label{equalcorner2} \mu \El(\Phi) z\mu^* = r \El(Q)r.
	\end{equation}
	For every $\lambda \in \La\setminus \Phi$ so that $\left |\Phi \cap \Phi^\lambda \right|=\infty$ we have $zu_\lam z=0$. In particular, there is $\lam_o\in \La\setminus \Phi$ so that $\left |\Phi \cap \Phi^{\lambda_o} \right|<\infty$.
	
\end{lemma}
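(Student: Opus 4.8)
The plan is to argue by contradiction. Suppose there exists $\lambda \in \La \setminus \Phi$ with $|\Phi \cap \Phi^\lambda| = \infty$ but $z u_\lambda z \neq 0$. The idea is to transport this data through the corner identification \eqref{equalcorner2} to obtain, inside $\El(Q)$, a unitary (coming from $u_\lambda$) that quasi-normalizes a diffuse subalgebra of $r\El(Q)r$ but does not itself lie in $r\El(Q)r$ up to the corner — contradicting the malnormality of $Q$ in $\G$ via Lemma \ref{malnormalcontrol}. More precisely, first I would use that $\Phi \cap \Phi^\lambda$ is infinite together with property (T) of $\Phi$ (inherited from $Q$, see the proof of Theorem \ref{toproductgroupcorners}) to ensure that $\El(\Phi \cap \Phi^\lambda)$ is a diffuse (in fact property (T)) subalgebra of $\El(\Phi)$; cutting by $z$ and using \eqref{equalcorner2}, its image $\mu \El(\Phi\cap\Phi^\lambda) z \mu^* =: \mathcal{B}$ is a diffuse subalgebra of $r\El(Q)r$, and in particular $\mathcal{B} \nprec_{\El(Q)} \mathbb{C}$, i.e.\ $\mathcal{B}$ is diffuse.

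Next I would exploit the quasi-normalization. Since $\lambda \in N_\La(\Phi \cap \Phi^{\lambda^{-1}})$-type relations hold — more carefully, $u_\lambda \El(\Phi \cap \Phi^\lambda) u_\lambda^* \subseteq \El(\Phi^{\lambda^{-1}} \cap \Phi)$, and more relevantly $\El(\Phi)$ is quasi-normalized by $u_\lambda$ \emph{restricted to} the finite-index-overlap subalgebra — I would deduce that the element $x := z u_\lambda z$ (assumed nonzero) satisfies $\El(\Phi \cap \Phi^\lambda) z\, x \subseteq$ finitely many left $\El(\Phi\cap\Phi^\lambda)z$-translates, because $\Phi\cap\Phi^\lambda$ and $\lambda^{-1}(\Phi\cap\Phi^\lambda)\lambda$ are commensurable subgroups of $\Phi$. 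Conjugating by $\mu$, this says that the nonzero element $\mu x \mu^* \in r\emm r$ (which lies in $\El(Q)$ only if we already knew the conclusion) quasi-normalizes the diffuse subalgebra $\mathcal{B} \subseteq r\El(Q)r$. Now since $\mathcal{B}$ is diffuse and $Q$ is malnormal in $\G$, we have $\mathcal{B} \nprec_{r\El(Q)r} \mathbb{C}$ in the sense required, so Lemma \ref{malnormalcontrol} applied with the malnormal inclusion $Q \leqslant \G$ (taking $\mathcal{N}$ trivial, $\mathcal{P} = \mathcal{B}$, and the quasi-normalizing element $\mu x \mu^*$) forces $\mu x \mu^* \in \El(Q)$; tracking this back, $x = z u_\lambda z$ must behave as if $\lambda$ lies in (a conjugate adjustment of) $\Phi$, and combined with $QN^{(1)}_\La(\Phi) = \Phi$ (property 2 of Theorem \ref{toproductgroupcorners}, available via \eqref{equalcorner2} if one carries that hypothesis) this yields $\lambda \in \Phi$, a contradiction.

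For the "in particular" clause: if we had $|\Phi \cap \Phi^\lambda| = \infty$ for \emph{every} $\lambda \in \La \setminus \Phi$, then the first part gives $z u_\lambda z = 0$ for all such $\lambda$, which together with $z u_\lambda z = z u_\lambda z$ trivially for $\lambda \in \Phi$ shows that $z \El(\La) z = z \El(\Phi) z$, i.e.\ $z\emm z = z\El(\Phi)z$. But via \eqref{equalcorner2} this corner is $\ast$-isomorphic to $r\El(Q)r$, which is a II$_1$ factor properly contained in $r\emm r = r\El(\La)r$ with nontrivial relative commutant coming from $\El(N_1\times N_2)$ — concretely $\El(Q)' \cap \emm \supseteq \El(N_1) \neq \mathbb{C}$ contradicts that $z\emm z = z\El(\Phi)z$ would force $\El(Q)$ to be maximal/irreducible in the wrong way — giving the contradiction. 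The main obstacle I anticipate is making the quasi-normalization bookkeeping precise: passing from "$\Phi \cap \Phi^\lambda$ and its $\lambda$-conjugate are commensurable in $\Phi$" to a genuine finite-index containment of the form $\El(\Phi\cap\Phi^\lambda)\, z u_\lambda z \subseteq \sum_i (z a_i z)\, \El(\Phi\cap\Phi^\lambda) z$ with $a_i \in \emm$, and then verifying the hypotheses of Lemma \ref{malnormalcontrol} cleanly (in particular checking $\mathcal{B} \nprec \mathbb{C}$ rather than just $\mathcal{B}$ being nonzero), so that malnormality of $Q$ in $\G$ can be invoked to locate the quasi-normalizing element inside $\El(Q)$.
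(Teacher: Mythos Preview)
Your overall strategy for the main assertion—transport through \eqref{equalcorner2} and exploit malnormality of $Q$ in $\Gamma$—is the right instinct, but the key step does not go through as written. You assert that $\Phi\cap\Phi^\lambda$ and $\lambda^{-1}(\Phi\cap\Phi^\lambda)\lambda=\Phi\cap\Phi^{\lambda^{-1}}$ are commensurable in $\Phi$, and this is what you need to put $x=zu_\lambda z$ into the one-sided quasinormalizer of $\El(\Phi\cap\Phi^\lambda)z$ so that Lemma~\ref{malnormalcontrol} applies. But there is no reason for $\Phi\cap\Phi^\lambda$ and $\Phi\cap\Phi^{\lambda^{-1}}$ to be commensurable; all you know is that both are infinite. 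What you actually have is $\El(\Phi\cap\Phi^\lambda)z\cdot x = x\cdot \El(\Phi\cap\Phi^{\lambda^{-1}})z$, an intertwining between two \emph{different} diffuse subalgebras of $\El(\Phi)z$, and Lemma~\ref{malnormalcontrol} as stated requires the same $\mathcal P$ on both sides. You correctly flag this bookkeeping as the main obstacle, but the resolution you propose (commensurability) is false.

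The paper bypasses this issue entirely. Rather than invoking Lemma~\ref{malnormalcontrol}, it first establishes the mixing property: for $x_n\to 0$ weakly in $\El(Q)$ and $a,b\in\emm$ with $E_{\El(Q)}(a)=E_{\El(Q)}(b)=0$, one has $\|E_{\El(Q)}(ax_nb)\|_2\to 0$ (a direct Fourier computation using that the $Q$-stabilizers in $N_1\times N_2$ are finite). Transporting this through \eqref{equalcorner2} gives the same statement with $\El(\Phi)z$ in place of $\El(Q)$. Now pick an infinite sequence $\lambda_k\in\Phi\cap\Phi^\lambda$ and set $\omega_k=\lambda^{-1}\lambda_k\lambda\in\Phi$. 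Since $\lambda_k\in\Phi$, the norm $\|E_{\El(\Phi)}(zu_\lambda z u_{\lambda^{-1}}z)\|_2$ is unchanged when one multiplies on the left by $u_{\lambda_k}$; sliding $u_{\lambda_k}$ inside produces $\|E_{\El(\Phi)z}(zu_\lambda z\, u_{\omega_k}z\, zu_{\lambda^{-1}}z)\|_2$, which tends to $0$ by the mixing property since $u_{\omega_k}z\to 0$ weakly and $E_{\El(\Phi)z}(zu_\lambda z)=0$. Hence $E_{\El(\Phi)}(zu_\lambda z u_{\lambda^{-1}}z)=0$, so $zu_\lambda z=0$. This is cleaner precisely because it never needs the two intersection subgroups to be related.

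For the ``in particular'' clause your idea is correct in outline, but two details are off. First, $\El(Q)'\cap\emm$ does \emph{not} contain $\El(N_1)$, since $Q$ acts nontrivially on $N_1$. Second, from $zu_\lambda z=0$ for all $\lambda\notin\Phi$ you correctly get $z\emm z=\El(\Phi)z$, and conjugating by $\mu$ gives $r\emm r=r\El(Q)r$—an \emph{equality}, not a proper containment; by Lemma~\ref{equalcorners} this forces $\emm=\El(Q)$, contradicting $[\Gamma:Q]=\infty$. (The paper instead argues that the projections $u_{\lambda_i^{-1}}zu_{\lambda_i}$ for $\lambda_i$ in distinct $\Phi$-cosets are mutually orthogonal, which is impossible for infinitely many of them.)
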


\begin{proof}Notice that since $Q<\G=(N_1\times N_2)\rtimes Q$ is almost malnormal then we have the following property: for every sequence $\El(Q)\ni x_n \rar 0$ weakly and every $x,y\in M$ such that  $E_{\El(Q)}(x)=E_{\El(Q)}(y)=0$ we have \begin{equation}\label{malnormal1}
	\|E_{\El(Q)}(xx_ky)\|_2\rar 0,\text{ as } k\rar \infty.
	\end{equation}   
	Using basic approximations and the $\El(Q)$-bimodularity of the expectation we see that it suffices to check \eqref{malnormal1} only for elements of the form $x= u_{n}$ and $y= u_{m}$ where $n,m\in (N_1\times N_2)\setminus\{1\}$. Consider the Fourier decomposition $x_n = \sum_{h\in Q}\tau(x_k u_{h^{-1}}) u_h$ and notice that
	\begin{equation}\begin{split}
	\|E_{\El(Q)}(xx_k y)\|_2^2&=\| \sum_{h\in Q} \tau(x_k u_{h^{-1}}) \delta_{n h m, Q} u_{nhm}\|_2^2\\&=\| \sum_{h\in Q} \tau(x_k u_{h^{-1}}) \delta_{n\sigma_h(m) h, Q} u_{n\sigma_h(m)h}\|_2^2=\sum_{h\in Q, \sigma_h(m)=n^{-1}} |\tau(x_k u_{h^{-1}})|^2.\end{split}
	\end{equation} 
	Since the action $Q\ca N_i$ has finite stabilizers one can easily see that the set $\{h\in Q \,:\, \sigma_h(m)=n^{-1}\}$ is finite and since $x_n \rar 0$ weakly then $\sum_{h\in Q, \sigma_h(m)=n^{-1}} |\tau(x_k u_{h^{-1}})|^2\rar 0$ as $k\rar \infty$  which concludes the proof of \eqref{malnormal1}. 
	Using the conditional expectation formula for compression  we see that \eqref{malnormal1} implies that for every sequence $\El(Q)\ni x_n \rar 0$ weakly and every $x,y\in r\emm r$ so that  $E_{r\El(Q)r}(x)=E_{r\El(Q)r}(y)=0$ we have $\|E_{r\El(Q)r}(xx_ky)\|_2\rar 0$, as $k\rar \infty$. Thus using the formula \ref{equalcorner2} we get that for all $ \mu \El(\Phi)z \mu^* \ni x_n \rar 0$ weakly and every $x,y\in \mu z \emm z \mu^*$ so that  $E_{\mu \El(\Phi)z \mu^*}(x)=E_{\mu \El(\Phi)z \mu^*}(y)=0$ we have $\|E_{\mu \El(\Phi)z \mu^*}(xx_ky)\|_2\rar 0$, as $k\rar \infty$. This entails that for all $ \El(\Phi)z  \ni x_n \rar 0$ weakly and every $x,y\in z \emm z $ satisfying  $E_{\El(\Phi)z }(x)=E_{ \El(\Phi)z }(y)=0$ we have 
	\begin{equation}\label{malnormal2}\|E_{\El(\Phi)z }(xx_ky)\|_2\rar 0\text{, as }k\rar \infty.\end{equation} 
	Fix $\lam\in \La\setminus \Phi$ so that $|\Phi\cap \Phi^\lambda|=\infty$. Hence there are infinite sequences $\lam_k, \omega_n\in \La$ so that $\lam \omega_k\lam^{-1}=\lam_k$ for all integers $k$. Since $\lambda\in \La\setminus \Phi$ then $E_{\El(\Phi)}(u_\lambda z)= E_{\El(\Phi)z}(zu_{\lambda^{-1}})=0$. Also we have that $u_{\omega_k} z\rar 0$ weakly as $k\rar \infty$. Using these calculations we have that
	\begin{equation}\label{malnormal3}\begin{split}
	\|E_{\El(\Phi)} (z u_\lam z u_{\lam^{-1}} z )\|_2^2&= \| E_{\El(\Phi) }( u_\lam z u_{\lam^{-1}} z )\|^2_2=\| u_{\lam \omega_k \lam^{-1}} E_{\El(\Phi) }( u_\lam z u_{\lam^{-1}} z )  \|^2_2\\
	=& \| E_{\El(\Phi) }( u_{\lam \omega_k}  z u_{\lam^{-1}} z )  \|^2_2= \| E_{\El(\Phi) z}( z u_{\lam} z u_{\omega_k}  z u_{\lam^{-1}} z )  \|^2_2\rar 0\text{ as }k\rar \infty.\end{split}
	\end{equation}  
	Also using \eqref{malnormal3} the last quantity above converges to $0$ as $k\rar \infty$ and hence $E_{\El(\Phi)} (z u_\lam z u_{\lam^{-1}} z )=0$ which entails that $zu_\lam z=0$, as desired. For the remaining part notice first that since $[\G:Q]=\infty$ then \eqref{equalcorner2} implies that $[\La: \Phi]=\infty$. Assume by contradiction that for all $\lam \in \La\setminus \Phi$ we have $zu_\lam z=0$. As $[\La: \Phi]=\infty$ then for every positive integer $l$ one can construct inductively $\lam_i\in \La \setminus \Phi$ with  $i\in \overline{1, l}$ such that $\lam_i \lam_j^{-1}\in  \La \setminus \Phi$ for all $i>j$ such that $i,j\in \overline{1,l}$. But this implies that $0= z u_{\lam_i\lam_j^{-1}} z= z u_{\lam_i}u_{\lam_j^{-1}} z$  and hence $u_{\lam_i^{-1}} z u_{\lam_i}$ are mutually orthogonal projections when $i=\overline{1,l}$. This is obviously false when $l$ sufficiently large. \end{proof}

\begin{theorem}\label{parabolicQ} Assume the same conditions as in Theorem \ref{toproductgroupcorners}. Then one can find subgroups $\Phi_1,\Phi_2 \leqslant \Phi\leqslant \La $ so that \begin{enumerate}
		\item $\Phi_1,\Phi_2$ are infinite, icc, property (T) groups so that $\Phi=\Phi_1\times \Phi_2$;
		\item $\mathcal{QN}^{(1)}_\La(\Phi)=\Phi$;
		\item There exists $\mu \in \mathcal U(\emm)$ such that  $\mu \El(\Phi) \mu^* =  \El(Q)$.
	\end{enumerate}

\end{theorem}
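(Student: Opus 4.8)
The plan is to bootstrap Theorem~\ref{toproductgroupcorners}. First I would apply that theorem to obtain subgroups $\Phi_1,\Phi_2\leqslant \Phi\leqslant \La$ together with a unitary $\mu\in \mathscr U(\emm)$, a central projection $z\in \mathscr P(\mathscr Z(\El(\Phi)))$ and $h=\mu z\mu^*\in \mathscr P(\El(Q))$ such that $\Phi_1,\Phi_2$ are infinite, commuting, property (T), finite-by-icc groups, $[\Phi:\Phi_1\Phi_2]<\infty$, $\mathcal{QN}^{(1)}_\La(\Phi)=\Phi$, and $\mu \El(\Phi)z\mu^*=h\El(Q)h$. Thus item $(2)$ is already in hand, and everything reduces to $(i)$ removing the corner projection $z$, and $(ii)$ promoting the $\Phi_i$ to honest icc groups realizing an internal direct product $\Phi=\Phi_1\times\Phi_2$.

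For $(i)$: since $Q=Q_1\times Q_2$ with $Q_i$ icc, $\El(Q)$ is a II$_1$ factor, hence so is $h\El(Q)h\cong \El(\Phi)z$; therefore $z$ is a minimal projection of the center $\mathscr Z(\El(\Phi))$, which is finite dimensional because $\Phi$ is finite-by-icc (indeed $\mathscr Z(\El(\Phi))\subseteq \El(F)$ with $F:=vC_\Phi(\Phi)$ finite and normal in $\Phi$). The heart of the matter is to show $F=\{1\}$: then $\mathscr Z(\El(\Phi))=\mathbb C$, so $z=1$, whence $h=\mu z\mu^*=1$ and $\mu \El(\Phi)\mu^*=\El(Q)$, which is item $(3)$. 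I would prove $F=\{1\}$ by contradiction. Expanding $z=\sum_{f\in F}c_f u_f$ (as $z$ is central in $\El(\Phi)$ it is supported on $F$), with $c_e=\tau(z)>0$, one checks that whenever $\lambda\in \La$ satisfies $F\cap \lambda F\lambda^{-1}=\{1\}$ the group elements $f\lambda f'$ with $f,f'\in F$ are pairwise distinct, so $\|zu_\lambda z\|_2^2=\tau(z)^2>0$; in particular $zu_\lambda z\neq 0$. On the other hand, since $\emm=\El(\La)$ is a factor, $\La$ is icc, and, using this together with the facts that $\Phi_1,\Phi_2$ are infinite and commute and that $C_\Phi(F)$ has finite index in $\Phi$, one produces an element $\lambda\in \La\setminus\Phi$ with $F\cap \lambda F\lambda^{-1}=\{1\}$ and $|\Phi\cap\Phi^{\lambda}|=\infty$. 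These two facts contradict Lemma~\ref{somefiniteintersection}, so $F=\{1\}$ and $\El(\Phi)$ is a II$_1$ factor.

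For $(ii)$: by the previous step $\Phi$ is icc. If $x\in\Phi_1$ had finite $\Phi_1$-conjugacy class then, as $\Phi_2$ centralizes $x$, it would have finite $\Phi_1\Phi_2$-conjugacy class, hence (since $[\Phi:\Phi_1\Phi_2]<\infty$) finite $\Phi$-conjugacy class, forcing $x=e$; thus $\Phi_1$, and symmetrically $\Phi_2$, is icc, and property (T) is inherited. Since $\Phi_1,\Phi_2$ commute, $\Phi_1\cap\Phi_2$ is central in each $\Phi_i$, hence trivial, so $\Phi_1\Phi_2=\Phi_1\times\Phi_2$ is an internal direct product of infinite icc property (T) groups. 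Finally, to upgrade $[\Phi:\Phi_1\times\Phi_2]<\infty$ to an equality I would transport via $\mu$ to $\El(Q)=\El(Q_1)\bten \El(Q_2)$, where $\mu\El(\Phi_1)\mu^*$ and $\mu\El(\Phi_2)\mu^*$ become commuting property (T) subfactors whose join has finite index in $\El(Q)$; invoking biexactness and weak amenability of $Q_1,Q_2$ and property (T) of $\Phi_1,\Phi_2$, the intertwining/product-rigidity scheme behind Theorems~\ref{controlprodpropt1}--\ref{controlprodpropt2} (in the spirit of \cite{CdSS15,CU18}) yields, after a further unitary conjugacy, $\mu\El(\Phi_1)\mu^*\bten \mu\El(\Phi_2)\mu^*=\El(Q)$, that is $\El(\Phi_1\times\Phi_2)=\El(\Phi)$; since $\Phi_1\times\Phi_2\leqslant \Phi$, linear independence of the canonical unitaries forces $\Phi_1\times\Phi_2=\Phi$. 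Item $(2)$ is carried over verbatim from Theorem~\ref{toproductgroupcorners}.

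The hard part will be step $(i)$: ruling out a nontrivial $FC$-radical of $\Phi$, equivalently removing the corner projection $z$. This is precisely where the relatively hyperbolic, almost malnormal geometry of $Q<\G$, packaged into Lemma~\ref{somefiniteintersection}, is indispensable; no purely algebraic argument can prevent $\El(\Phi)$ from having a genuine finite-dimensional center.
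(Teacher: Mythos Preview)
Your step (ii) is fine and in fact more detailed than the paper, which dispatches the product structure with ``The rest of the statement follows.''

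The genuine gap is in step (i). Your computation that $\|zu_\lambda z\|_2^2=\tau(z)^2$ whenever $F\cap\lambda F\lambda^{-1}=\{1\}$ is correct, but you never establish the existence of an element $\lambda\in\La\setminus\Phi$ satisfying \emph{both} $|\Phi\cap\Phi^\lambda|=\infty$ and $F\cap\lambda F\lambda^{-1}=\{1\}$. Nothing in the hypotheses prevents $\Phi$ from being almost malnormal in $\La$, i.e.\ $\{\lambda\in\La:|\Phi\cap\Phi^\lambda|=\infty\}=\Phi$, in which case no such $\lambda$ exists at all and your contradiction with Lemma~\ref{somefiniteintersection} never fires. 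The facts you invoke (that $\La$ is icc, that $\Phi_1,\Phi_2$ are infinite and commute, that $[\Phi:C_\Phi(F)]<\infty$) are all statements about $\Phi$ and its subgroups; none of them produces an element of $\La$ outside $\Phi$ with large intersection $\Phi\cap\Phi^\lambda$. Replacing $\lambda$ by $\phi\lambda$ or $\lambda\phi$ with $\phi\in\Phi$ changes neither $|\Phi\cap\Phi^\lambda|$ nor $|F\cap\lambda F\lambda^{-1}|$ (normality of $F$ in $\Phi$), so you cannot fix a bad $\lambda$ by perturbing inside $\Phi$ either.

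The paper's route is substantively different and explains why the $N_i$ side of the Rips construction must enter. One first enlarges $z$ to be \emph{maximal} subject to the corner identity, so that on the complement one has $\El(\Phi_i)t\nprec_\emm \El(Q)$ for $i=1,2$ and every $t\in\mathscr Z(\El(\Phi)z^\perp)$. Then Theorems~\ref{controlprodpropt1}--\ref{controlprodpropt2}, applied to the commuting property~(T) algebras $\El(\Phi_1)t,\El(\Phi_2)t$, force $\El(\Phi_j)t\prec_\emm \El(N_2)$, whence $\El(N_1)\prec_\emm t\El(\Phi)t$ for every such $t$. This intertwining (together with $\El(N_1)\nprec \El(\Phi\cap\Phi^\lambda)$ when $|\Phi\cap\Phi^\lambda|<\infty$) yields $z^\perp u_\lambda z^\perp=0$ for every $\lambda$ in the set $A=\{\lambda:|\Phi\cap\Phi^\lambda|<\infty\}$, while Lemma~\ref{somefiniteintersection} gives $zu_\lambda z=0$ for $\lambda\in B=\La\setminus A$. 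The two orthogonality relations together with $\La$ icc then force $z=1$. In other words, the malnormality packaged in Lemma~\ref{somefiniteintersection} only handles the $B$-side; one still needs the relative hyperbolic input through $N_1$ to control the $A$-side, and that is precisely what your argument is missing.
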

\begin{proof} From Theorem \ref{toproductgroupcorners} there exist subgroups $\Phi_1,\Phi_2 \leqslant \Phi\leqslant \La $ such that \begin{enumerate}
		\item $\Phi_1,\Phi_2$ are, infinite, commuting, finite-by-icc, property (T) groups so that $[\Phi:\Phi_1\Phi_2]<\infty$;
		\item $\mathcal{QN}^{(1)}_\La(\Phi)=\Phi$;
		\item There exist $\mu \in \mathcal U(\emm)$ and  $z\in \mathcal P(\mathcal Z(\El(\Phi)))$ with  $h= \mu z\mu^*\in \mathcal P (\El(Q))$ satisfying 
		\begin{equation}\label{maxcorner1} \mu \El(\Phi) z\mu^* = h \El(Q)h.
		\end{equation}
	\end{enumerate}  
	
	Next we show that in \eqref{maxcorner1} we can pick $z\in \mathcal Z(\El(\Phi))$ maximal with the property that for every projection $t \in \mathcal Z(\El(\Phi)z^{\perp})$ we have \begin{equation}\label{nonitertwiningcomplement}L(\Phi_i)t \nprec_\emm \El(Q)\text{ for }i=1,2.\end{equation}

	To see this let $z\in \mathcal F$ be a maximal family of mutually orthogonal (minimal) projections $z_i \in \mathcal Z(\El(\Phi))$ such that $\El (\Phi)z_i\prec_\emm \El(Q)$. Note that since $\Phi$ has finite conjugacy radical it follows that $\mathcal F$ is actually finite. Next let $z\leqslant \sum{z_i}:=a \in \mathcal Z(\El(\Phi))$ and we briefly argue that $\El(\Phi)a\prec^s_\emm \El(Q)$. Indeed since $(\El(\Phi)a)'\cap a\emm a= a(\El(\Phi)'\cap \emm) a= \mathcal Z(\El(\Phi))a$ and the latter is finite dimensional then for every $r\in (\El(\Phi)a)'\cap a\emm a$ there is $z_i \in \mathcal F$ such that $rz_i=z_i\neq 0$. Since $\El (\Phi)z_i\prec_\emm \El(Q)$ and then $\El (\Phi)r\prec_\emm \El(Q)$ as desired. Thus applying Lemma \ref{maxcorner2}, after perturbing $\mu$ to a new unitary  we get $\mu \El(\Phi) a\mu^* = h_o \El(Q)h_o.$  Finally, we show \eqref{nonitertwiningcomplement}. Assume by contradiction there is $t_o\in \mathcal Z(\El(\Phi)z^{\perp})$ so that $\El(\Phi_i)t_o \prec_\emm \El(Q)$ for some $i=1,2$. Thus there exist projections $r\in \El(\Phi)t_o$, $q\in \El(Q)$, a partial isometry  $w\in \emm$ and a $\ast$-isomorphism on the image $\phi: r \El(\Phi)r \rar \mathcal B:=\phi(r\El(\Phi)r)\subseteq q\El(Q)q$ such that $\phi(x)w=wx$. Notice that $w^*w \in t_o (\El(\Phi_i)'\cap \emm)t_o$ and $ww^*\in \mathcal B'\cap q\emm q$. But since $Q<\G $ is malnormal it follows that  $\mathcal B'\cap q\emm q\subseteq q\El(Q)q$ and hence  $ww^*\in q\El(Q)q$. Using this in combination with previous relations we get that $wr\El(\Phi_i)rw^*=\mathcal Bww^*\subseteq \El(Q)$ and extending $w$ to a unitary $u$ we have that $u r \El(\Phi_i)ru^*\subseteq \El(Q)$. Since $\El(Q)$ is a factor we can further perturb the unitary $u$ so that $u \El(\Phi_i)r_ou^*\subseteq \El(Q)$ where $r\leqslant r_o \leqslant t_o$ is the central support of $r$ in $\El(\Phi_i)t_o$. Using malnormality of $Q$ again we further get $ r_o(\El(\Phi_i)\vee \El(\Phi_i)'\cap \emm)r_ou^*\subseteq \El(Q)$ and perturbing $u$ we can further assume that $ (\El(\Phi_i)\vee \El(\Phi_i)'\cap \emm) s_o u^*\subseteq \El(Q)$ where $r_o\leqslant s_o$ is the central support of $r_o$ in $\El(\Phi_i)\vee \El(\Phi_i)'\cap \emm)$. In particular, $u(\El(\Phi) s_o u^*\subseteq \El(Q)$ and hence $\El(\Phi) s_o\subseteq u^*\El(Q)u$. Since $r\leqslant r_o\leqslant s_o$ and $r\leqslant t_o$ the previous containment implies that there is a minimal projection $s'\in \El(\Phi)a^\perp$ so that $\El(\Phi)s'\prec \El(Q)$ which contradicts the maximality assumption on $\mathcal F$. Finally replacing $z$ with $a$ in our statement, etc our claim follows.
\vskip 0.05in
Next fix $t\in \mathcal Z(\El(\Phi)z^{\perp})$. Since $\El(\Phi_1)t$ and $\El(\Phi_2)t$ are commuting property (T) von Neumann algebras then using the same arguments as in the first part of the proof of Theorem \ref{commutationcontrolincommultiplication} there are two possibilities: either i) there exists $j\in 1,2$ such that  $\El(\Phi_j)t\prec_\emm \El(N_2)$ or ii) $\El (\Phi)t \prec_\emm \El(N_2\rtimes Q)$. Next we briefly argue ii) is impossible. Indeed, assuming ii), Theorem \ref{controlprodpropt1} for $n=1$ would imply the existence of $j \in 1,2$ so that $\El(\Phi_j)t \prec_\emm \El(Q)$ which obviously contradicts the choice of $z$. Thus we have i) and passing to the relative commutants we have that $\El(N_1)\prec \El(\Phi_j)t'\cap t\emm t= t (\El(\Phi_j )'\cap \emm) t$. Using the relationships between the $\Phi_j$'s we see that $t (\El(\Phi_j )'\cap \emm) t\subset t\El (\Phi_j )\vee \El(\Phi_j)'\cap \emm) t\subseteq t \El(\Phi_j (vC_\La(\Phi_j)))t\subseteq t \El(\Phi)t$. In conclusion, we have 
	\begin{equation}\label{nonintertiningcomplement2}\El(N_1)\prec_\emm  t \El(\Phi) t,\text{ for all }t \in \mathcal Z(\El(\Phi)z^{\perp}).
	\end{equation}
	Let $A=\{ \lam \in\La \,:\, |\Phi\cap \Phi^\lam|<\infty\}$ and $B=\{ \lam \in\La \,:\, |\Phi\cap \Phi^\lam|=\infty\}$. Note that $A\cup B=\La$ and $A\neq \emptyset$. Since $N_1$ is infinite then for every $\lam\in A$ we have that $\El(N_1)\nprec_\emm \El(\Phi\cap \Phi^\lam) z^\perp$. Thus using \eqref{nonintertiningcomplement2} together with the same argument from the proof of \cite[Theorem 6.16]{PV06}, working under $z^\perp$, we get $z^\perp E_{\El(\Phi)}(u_\lam z^\perp xz^\perp )=0$ for all $x\in \emm$. This further implies that $z^\perp u_\lam z^\perp=0$ for all $\lam\in A$ and hence $ u_\lam z^\perp u_{\lam^{-1}}\leqslant z$. 
	
	On the other hand by Lemma \ref{somefiniteintersection} for all $\lam\in B$ we get $z u_\lam z=0$ and hence $u_\lam z u_{\lam^{-1}}\leqslant z^\perp$. So if $B\neq\emptyset$ we obviously have equality in the previous two relations, i.e.\ $u_\lam z u_{\lam^{-1}}= z^\perp$ for all $\lam\in B$ and $u_\lam z^\perp u_{\lam^{-1}}= z$ for all $\lam\in A$. These further imply there exist $a_o\in A$ and $b_0\in B$ such that $A=a_0 C_\La(z^\perp)$ and $B= b_oC_\La(z)$; here $C_\La(z)\leqslant \La$ is the subgroup of all elements of $\La$ that commute with $z$ and similarly for $C_\La(z^\perp)$. Thus $\La= A\cup B = a_o C_\La(z^\perp)\cup b_o C_\La(z)$. Thus we can assume, without loss of generality, that $[\La:C_\La(z)]<\infty$. But since $\La$ is icc this implies that $z=1$. The rest of the statement follows.   \end{proof}

\begin{theorem} In the Theorem \ref{commutationcontrolincommultiplication} we cannot have case 4a).\end{theorem}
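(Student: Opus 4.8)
The plan is to derive a contradiction from the assumption that for every $j\in\overline{1,2}$ there exists $i\in\overline{1,2}$ with $\Delta(\El(Q_i))\prec_{\emm\bar\otimes\emm}\emm\bar\otimes\El(N_j)$. First I would combine this hypothesis with Theorem \ref{toproductgroupcorners} (which applies verbatim, since its hypothesis is precisely "$\Delta(\El(Q_i))\prec\emm\bar\otimes\El(Q_j)$ or $\Delta(\El(Q_i))\prec\emm\bar\otimes\El(N_j)$") and then with Theorem \ref{parabolicQ} to produce subgroups $\Phi_1,\Phi_2\leqslant\Phi\leqslant\Lambda$ with $\Phi=\Phi_1\times\Phi_2$, each $\Phi_i$ infinite icc property (T), with $\mathcal{QN}^{(1)}_\Lambda(\Phi)=\Phi$, and a unitary $\mu\in\mathscr U(\emm)$ with $\mu\El(\Phi)\mu^*=\El(Q)$. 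In particular, conjugating by $\mu$, we obtain a $\ast$-isomorphism identifying $\El(Q)=\El(Q_1)\bar\otimes\El(Q_2)$ with $\El(\Phi_1)\bar\otimes\El(\Phi_2)$ inside $\emm$, and under $\Delta$ this places $\Delta(\El(Q_i))$ in a controlled position.

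Next I would run the comultiplication/subgroup machinery. Applying Lemma \ref{comultsubgp} to the inclusion $\Delta(\El(\Phi))=\Delta(\El(Q))$ — note that after conjugating by $\mu$ we really are looking at $\Delta$ restricted to a group von Neumann subalgebra of $\El(\Lambda)$ — one extracts that, up to the corner structure, $\Delta(\El(Q))$ is "essentially" $\El(\Sigma)$ for a subgroup $\Sigma<\Lambda\times\Lambda$ compatible with the diagonal. The key point is that $4a)$ asserts $\Delta(\El(Q_i))\prec\emm\bar\otimes\El(N_j)$ for \emph{both} values of $j$; combined with part $3)$ of Theorem \ref{commutationcontrolincommultiplication} (which gives, for each $j$, an $i$ with $\Delta(\El(N_i))\prec\emm\bar\otimes\El(N_j)$), I would show that $\Delta(\El(Q))$ and $\Delta(\El(N_1\times N_2))$ would then \emph{both} essentially live inside $\emm\bar\otimes\El(N_1\times N_2)$, forcing $\Delta(\emm)\prec_{\emm\bar\otimes\emm}\emm\bar\otimes\El(N_1\times N_2)$. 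This contradicts the fact that $\Delta(\emm)$ is "large": by the standard argument (as in \cite[Proposition 7.2]{IPV10} or the proof of \cite[Lemma 9.2]{Io11}), $\Delta(\emm)\prec\emm\bar\otimes\cP$ for a group von Neumann subalgebra $\cP=\El(\Pi)$ forces $[\Lambda:\Pi]<\infty$, whereas $\El(N_1\times N_2)$ corresponds to the infinite-index subgroup $N_1\times N_2<\Gamma$ — a contradiction once we transport back along $\Theta$.

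The main obstacle, and where the real work lies, is the bookkeeping that turns the three separate intertwinings ($\Delta(\El(Q_i))\prec\emm\bar\otimes\El(N_j)$ for the two $j$'s, together with part $3)$) into a single intertwining of the \emph{join} $\Delta(\El(Q))\vee\Delta(\El(N_1\times N_2))=\Delta(\emm)$ into $\emm\bar\otimes\El(N_1\times N_2)$. For this I would argue as in the second half of the proof of Theorem \ref{commutationcontrolincommultiplication}: use malnormality of $Q$ in $\Gamma$ (equivalently of $N_j\rtimes Q$-type subgroups, via Lemma \ref{malnormalcontrol}) to propagate the intertwining of $\Delta(\El(Q_i))$ through the quasinormalizer, exploiting that $\Delta(\El(Q))$ commutes with $\Delta(\El(N_1\times N_2))$ inside $\Delta(\emm)$ and that both sit inside $\emm\bar\otimes\El(\Gamma)$; the commutation lets one conjugate one algebra by partial isometries that conjugate the other, landing the whole join inside $\emm\bar\otimes\El(N_1\rtimes\mathrm{d}(Q))$ or $\emm\bar\otimes\El(N_1\times N_2)$, and then Lemma \ref{intertwininglower2} or \ref{intlowertensor} descends the intertwining from $\tilde\emm\bar\otimes\tilde\emm$ (or $\emm\bar\otimes\El(\cdot\rtimes\mathrm{d}(Q))$) down to $\emm\bar\otimes\emm$. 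Finally I would invoke that $N_2$ (resp.\ $N_1$) is infinite and that the relevant subgroup has infinite index, exactly as in the "$\Delta(\emm)\prec\El(N_1\rtimes Q)$ contradicts $N_2$ infinite" step already used inside the proof of Theorem \ref{commutationcontrolincommultiplication}, to close the contradiction.
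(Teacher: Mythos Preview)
Your proposal has a genuine gap. The assertion that ``$\Delta(\El(Q))$ commutes with $\Delta(\El(N_1\times N_2))$ inside $\Delta(\emm)$'' is simply false: $\Gamma=(N_1\times N_2)\rtimes_\sigma Q$ is a \emph{semidirect} product, so $\El(Q)$ normalizes but does not commute with $\El(N_1\times N_2)$, and the same holds after applying $\Delta$. The whole mechanism you describe for ``propagating the intertwining through the quasinormalizer'' in the style of Theorem \ref{controlprodpropt2} relies on genuine commutation (there it is $\enn_1$ and $\enn_2$ that commute), and without it the step that lands the join $\Delta(\El(Q))\vee\Delta(\El(N_1\times N_2))=\Delta(\emm)$ inside $\emm\bar\otimes\El(N_1\times N_2)$ does not go through. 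There is a secondary issue as well: case 4a) only guarantees, for each $j$, that \emph{some} $\Delta(\El(Q_{i_j}))$ intertwines into $\emm\bar\otimes\El(N_j)$; nothing prevents $i_1=i_2$, so you may only control one tensor factor of $\Delta(\El(Q))$, and your outline never addresses how to upgrade this to a statement about all of $\Delta(\El(Q))$.

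The paper's argument takes a quite different route and avoids the join problem entirely. From a single intertwining $\Delta(\El(Q_i))\prec_{\emm\bar\otimes\emm}\emm\bar\otimes\El(N_j)$ it descends to $\emm$ via \cite[Theorem 4.1]{DHI16} (using that $\El(N_j)'\cap\emm$ is non-amenable and that $N_j$ has property (T)) to produce a subgroup $\Sigma<\Lambda$ with $\El(Q_i)\prec_\emm\El(\Sigma)$ and simultaneously $\El(N_j)\prec_\emm\El(C_\Lambda(\Sigma))$. Since $\mu\El(\Phi)\mu^*=\El(Q)$ and the $Q_i$ are biexact, the product rigidity of \cite{CdSS15} aligns $\El(Q_i)$ with an amplification of $\El(\Phi_i)$; hence $\El(\Phi_i)\prec_\emm\El(\Sigma)$, which forces $[\Phi_i:\Phi_i\cap g\Sigma g^{-1}]<\infty$ for some $g$. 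Then $\Sigma\cap\Phi_i$ (after conjugating $\Sigma$) is infinite, so by almost malnormality of $\Phi$ in $\Lambda$ one gets $C_\Lambda(\Sigma)\leqslant C_\Lambda(\Sigma\cap\Phi_i)\leqslant\Phi$. This yields $\El(N_j)\prec_\emm\El(\Phi)=\mu^*\El(Q)\mu$, which is impossible. The key ideas you are missing are (i) the passage from $\emm\bar\otimes\emm$ down to $\emm$ with the \emph{dual} output $\El(N_j)\prec\El(C_\Lambda(\Sigma))$, and (ii) the use of almost malnormality of $\Phi$ in $\Lambda$ to trap $C_\Lambda(\Sigma)$ inside $\Phi$.
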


\begin{proof} Assume by contradiction that for all $j\in 1,2$ there is $i\in 1,2$ such that $\Delta (\El(Q_i))\prec_{\emm\bar\otimes \emm} \emm\bar\otimes \El(N_j)$. Using \cite[Theorem 4.1]{DHI16} and the property (T) on $N_j$ one can find a subgroup $\Sg<\La$ such that $\El(Q_i)\prec_\emm \El(\Sg)$ and $\El(N_j)\prec_\emm \El(C_\La(\Sg))$. Since $\mu \El(\Phi)\mu^*=\El(Q)$ and $Q_i$ are biexact then by the product rigidity results in \cite{CdSS15} one can assume that there is a unitary $u\in \El(Q)$ such that $u\El(Q_1)u^*=\El(\Phi_1)^t$ and $u\El(Q_2)u^*=\El(\Phi_2)^{1/t}$. Thus we get that $\El(\Phi_i)\prec
_\emm \El(\Sg)$  and hence $[\Phi_i: g\Sg g^{-1} \cap \Phi_i]<\infty$. So working with $g\Sg g^{-1}$ instead of $\Sg$ we can assume that  $[\Phi_i: \Sg  \cap \Phi_i]<\infty$. In particular $\Sg\cap \Phi_i$ is infinite and since $\Phi$ is almost malnormal in $\La$ it follows that $C_\La(\Sg\cap \Phi_i) < \Phi$. Thus we have that  $\El(N_j)\prec_\emm \El(C_\La(\Sg))\subseteq \El(C_\La(\Sg\cap \Phi_i))\subset \El(\Phi)=\mu^*\El(Q)\mu$ which is obviously a contradiction. \end{proof}

\begin{theorem}\label{commutationcontrolincommultiplication2} Let $\G$ be a group as in Notation \ref{semidirectt} and assume that $\Lambda$ is a group such that $\El(\Gamma)=\El(\Lambda)=\emm$. Let $\Delta: \emm \rar \emm \bar \otimes \emm$ be the comultiplication along $\Lambda$ as in Notation \ref{semidirectt}. Then the following hold:
	\begin{enumerate}
		\item [i)]  $\Delta (\El(N_1)), \Delta (\El(N_2)), \Delta(\El(N_1\times N_2))\prec^s_{\emm \bar\otimes \emm} \El(N_1\times N_2)\bar\otimes \El(N_1\times N_2)$, and
		\item [ii)] there is a unitary $u\in \emm \bten \emm$ such that $u\Delta(\El(Q))u^*\subseteq  \El(Q)\bar\otimes \El(Q)$.    
	\end{enumerate}
	
\end{theorem}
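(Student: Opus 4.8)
The plan splits into the two (essentially independent) assertions. Throughout I write $\mathcal C:=\El(N_1\times N_2)$, so that $\emm=\El(\G)=\mathcal C\rtimes_\sigma Q$; since the diagonal action $Q\ca N_1\times N_2$ is faithful with trivial stabilizers (parts of Theorem~\ref{beleosinripsconstr}), $\mathcal C$ is a regular subfactor of $\emm$ with $\mathcal C'\cap\emm=\mc$, and likewise $\El(N_1)'\cap\emm=\El(N_2)$, $\El(N_2)'\cap\emm=\El(N_1)$. I will also use repeatedly that $\La$ is icc, so $\Delta(\emm)'\cap(\emm\bten\emm)=\mc$, and that $\mathcal G:=\{\Delta(u_g)=u_g\otimes u_g\mid g\in\G\}$ is a group of unitaries with $\mathcal G''=\Delta(\emm)$ which normalizes $\Delta(\El(N_1)),\Delta(\El(N_2)),\Delta(\mathcal C)$ as well as every subalgebra of the form $\El(A_1)\bten\El(A_2)$ with $A_1,A_2\in\{N_1,N_2,N_1\times N_2,\G\}$ (because $N_1,N_2\lhd\G$). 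The latter two facts give the master ``upgrade'' principle: if $\Delta(\mathcal P)\prec_{\emm\bten\emm}\mathcal Q$ for $\mathcal P$ one of these $\Delta$-preimages and $\mathcal Q$ one of these targets, then the maximal central projection of $\Delta(\mathcal P)'\cap(\emm\bten\emm)$ supporting the intertwining is $\mathcal G$-fixed, hence lies in $\Delta(\emm)'\cap(\emm\bten\emm)=\mc$, hence equals $1$; so in fact $\Delta(\mathcal P)\prec^s_{\emm\bten\emm}\mathcal Q$.

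For ii): by the preceding theorem case $4a)$ of Theorem~\ref{commutationcontrolincommultiplication} is impossible, so we are in case $4b)$ and the hypotheses of Theorem~\ref{parabolicQ} hold. Fix $\Phi=\Phi_1\times\Phi_2\leqslant\La$ and $\mu\in\mathcal U(\emm)$ with $\mu\El(\Phi)\mu^*=\El(Q)$ as in its conclusion. The key point is that $\Phi$ is an honest subgroup of $\La$, whence $\Delta(v_\varphi)=v_\varphi\otimes v_\varphi\in\El(\Phi)\bten\El(\Phi)$ for $\varphi\in\Phi$ and, by normality of $\Delta$, $\Delta(\El(\Phi))\subseteq\El(\Phi)\bten\El(\Phi)$. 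Writing $\El(\Phi)=\mu^*\El(Q)\mu$ this gives
\begin{equation*}
\Delta(\El(Q))=\Delta(\mu)\,\Delta(\El(\Phi))\,\Delta(\mu)^{*}\ \subseteq\ \Delta(\mu)(\mu^{*}\otimes\mu^{*})\big(\El(Q)\bten\El(Q)\big)(\mu\otimes\mu)\Delta(\mu)^{*},
\end{equation*}
so $u:=(\mu\otimes\mu)\Delta(\mu)^{*}$ satisfies $u\,\Delta(\El(Q))\,u^{*}\subseteq\El(Q)\bten\El(Q)$. So ii) is just bookkeeping on top of Theorem~\ref{parabolicQ}.

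For i): by Theorem~\ref{commutationcontrolincommultiplication}~$3)$, for each $j$ there is $i(j)$ with $\Delta(\El(N_{i(j)}))\prec_{\emm\bten\emm}\emm\bten\El(N_j)$; applying the flip of $\emm\bten\emm$ (which commutes with $\Delta$) gives also $\Delta(\El(N_{i(j)}))\prec_{\emm\bten\emm}\El(N_j)\bten\emm$. First I would rule out $i(1)=i(2)$: if, say, $\Delta(\El(N_1))\prec\emm\bten\El(N_1)$ and $\Delta(\El(N_1))\prec\emm\bten\El(N_2)$, then upgrading the first to $\prec^{s}$ (master principle) and applying Lemma~\ref{intertwiningintersection} with trivial core, $G=\G\times\G$, $H_1=\G\times N_1$, $H_2=\G\times N_2$ (and $N_1\cap N_2=1$, $N_2\lhd\G$) forces $\Delta(\El(N_1))\prec\emm\otimes1$, which contradicts diffuseness of $\El(N_1)$ by \cite[Lemma~9.2]{Io11}. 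Hence $i$ is a permutation of $\{1,2\}$, so for each $i$ there is $k=k(i)$ with both $\Delta(\El(N_i))\prec^{s}\emm\bten\El(N_k)$ and $\Delta(\El(N_i))\prec\El(N_k)\bten\emm$; feeding these into Lemma~\ref{intertwiningintersection} again (now $H_1=\G\times N_k$, $H_2=N_k\times\G$) yields $\Delta(\El(N_i))\prec_{\emm\bten\emm}\El(N_k)\bten\El(N_k)$. Upgrading to $\prec^{s}$ and using $\El(N_k)\bten\El(N_k)\subseteq\mathcal C\bten\mathcal C$ gives $\Delta(\El(N_1)),\Delta(\El(N_2))\prec^{s}_{\emm\bten\emm}\mathcal C\bten\mathcal C$. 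It remains to treat $\Delta(\mathcal C)=\Delta(\El(N_1))\vee\Delta(\El(N_2))$: starting from the two commuting strong intertwinings $\Delta(\El(N_i))\prec^{s}\emm\bten\El(N_k)$ produced above, a by-now-standard lemma on joins of commuting intertwinings (cf.\ \cite{DHI16,CdSS15}) gives $\Delta(\mathcal C)\prec_{\emm\bten\emm}\emm\bten\mathcal C$, and dually $\Delta(\mathcal C)\prec\mathcal C\bten\emm$; upgrading the first to $\prec^{s}$ and intersecting once more via Lemma~\ref{intertwiningintersection} ($H_1=\G\times(N_1\times N_2)$, $H_2=(N_1\times N_2)\times\G$) produces $\Delta(\mathcal C)\prec_{\emm\bten\emm}\mathcal C\bten\mathcal C$, which we upgrade a last time to $\prec^{s}$. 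This is exactly i).

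The routine part of this is all the $\prec\Rightarrow\prec^{s}$ passages, which only ever use $\mathcal G''=\Delta(\emm)$ and $\Delta(\emm)'\cap(\emm\bten\emm)=\mc$. I expect the genuinely delicate step to be the combination producing $\Delta(\mathcal C)\prec\emm\bten\mathcal C$ from the two commuting pieces $\Delta(\El(N_i))\prec^{s}\emm\bten\El(N_k)$: here one \emph{cannot} argue by passing to relative commutants and normalizers, because $\mathcal C$ is regular in $\emm$, so $\emm\bten\mathcal C$ and $\mathcal C\bten\mathcal C$ are regular in $\emm\bten\emm$ and the usual normalizer-based intertwining lemmas give no information about these targets; the commuting-square/join lemma must be applied in its sharp ``one side strong'' form, and verifying that its hypotheses (the commuting pairs being in the right relative position, and $\mathcal C=\El(N_1)\vee\El(N_2)$ with trivial relative commutant) are met is the point that needs care.
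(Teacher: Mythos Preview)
Your proof is correct and follows essentially the same route as the paper's. For ii) the paper simply says ``direct consequence of Theorem~\ref{parabolicQ}'' and you have written out exactly the intended bookkeeping via $\Delta(\El(\Phi))\subseteq\El(\Phi)\bten\El(\Phi)$. For i) the paper also upgrades $\prec$ to $\prec^{s}$ using $\Delta(\emm)'\cap(\emm\bten\emm)=\mc$ (citing \cite[Lemma~2.4(3)]{DHI16}), rules out $i(1)=i(2)$ by an intersection argument (citing \cite[Lemma~2.8(2)]{DHI16} rather than your Lemma~\ref{intertwiningintersection}, but the content is the same since $N_1,N_2\lhd\G$), and then invokes the join lemma \cite[Lemma~2.6]{Is16} to pass from the individual $\Delta(\El(N_i))$ to $\Delta(\El(N_1\times N_2))$ --- exactly the step you flag as delicate. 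The only cosmetic difference is ordering: the paper first gets $\Delta(\mathcal C)\prec^{s}\emm\bten\mathcal C$ and then says ``working on the left side of the tensor'' (i.e.\ your flip argument) to land in $\mathcal C\bten\mathcal C$, whereas you intersect earlier for each $\Delta(\El(N_i))$ via the flip and Lemma~\ref{intertwiningintersection} before joining; both orderings work.
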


\begin{proof} First we show i). From Theorem \ref{commutationcontrolincommultiplication} we have that for all $j\in 1,2$ there is $j_i\in 1,2$ such that $\Delta(\El(N_{j_i}))\prec_{\emm \bar\otimes \emm} \emm\bar\otimes \El(N_j)$. Notice that since $\mathcal N_{\emm \bten \emm}\De(\El(N_i))''\supset \De(\emm)$ and $\De(\emm)'\cap \emm \bten \emm =\mathbb C 1$ then by \cite[Lemma 2.4 part (3)]{DHI16} we actually have  $\Delta(\El(N_{j_i}))\prec^s_{\emm\bar\otimes \emm} \emm\bar\otimes \El(N_j)$. Notice that for all $i\neq k$ we have $j_i\neq j_k$. Otherwise we would have $\De (\El(N_{j_i}))\prec^s_{\emm\bten\emm} \emm \bar\otimes \El(N_1)$ and $\De (\El(N_{j_i}))\prec^s_{\emm\bten\emm} \emm \bar\otimes \El(N_2)$ which by \cite[Lemma 2.8 (2)]{DHI16} would imply that $\De (\El(N_{j_i}))\prec^s_{\emm\bten\emm} \emm \bar\otimes \El(N_1\cap N_2)=\emm \otimes 1$ which is a contradiction. Furthermore using the same arguments as in \cite[Lemma 2.6]{Is16} we have that $\Delta(\El(N_1\times N_2))\prec^s_{\emm \bar\otimes \emm} \emm \bar\otimes \El(N_1\times N_2)$. Then working on the left side of the tensor we get that $\Delta(\El(N_1\times N_2))\prec^s_{\emm \bar\otimes \emm} \El(N_1\times N_2)\bar\otimes \El(N_1\times N_2)$.

	\vskip 0.05in
	
Finally, notice that part ii) is a direct consequence of Theorem \ref{parabolicQ}. %First we claim there is unitary $u\in \emm \bten \emm$ such that $u\Delta(\El(Q))u^*\subseteq  \emm \bar\otimes \El(Q)$. To see this notice that 4b) in Theorem \ref{commutationcontrolincommultiplication} implies that there is $\phi: p\De(\El(Q)p\rar \mathcal C:=\phi(p\De(\El(Q))p)\subseteq q(\emm \bten \El(Q))q$ a $\ast$-isomorphism so that \begin{equation}\label{int8}\phi(x)v=v x\text{  for all }x\in p\De(\El(Q))p.\end{equation} We also have $vv^*\in \mathcal C'\cap q(\emm \bten \emm)q$ and $v^*v\in p\De(\El(Q))p'\cap p \emm\bten \emm p$ and moreover we can assume that $s(E_{\emm \bten \El(Q)}(vv^*))=q$. If $\mathcal C\prec_{\emm \bten \El(Q)} \emm\otimes 1$ then using the same argument form the first part of the proof of Theorem \ref{controlprodpropt2} we would get that $\De(\El(Q)) \prec_{\emm\bten \emm} \emm \otimes 1$ which contradicts \cite[Proposition 7.2.2]{IPV10}; hence $\mathcal C\nprec_{\emm \bten \El(Q)} \emm\otimes 1$. %	Since $Q$ is malnormal in $\G$ then by Lemma \ref{malnormalcontrol} we have that $vv^*\in \mathcal C'\cap q(\emm \bten \emm)q \subseteq \mathcal C'\cap q(\emm \bten \El(Q))q$ and hence relation \eqref{int8}
%	implies that $vp \De(\El(Q))pv^*=\mathcal Cvv^*\subseteq \emm \bten \El(Q)$ since $\emm \bten \El(Q)$ is a factor there is a unitary $w\in \emm \bten \emm$ such that $w\De(\El(Q))w^*\subseteq \emm \bten\El(Q)$, as desired.%	To this end we notice that the same arguments as above (in all theorems involved) while working on the left tensor one can show there is a unitary $v\in \emm \bten \emm$  such that $v\Delta(\El(Q))v^*\subseteq  \El(Q)\bar\otimes \emm$. Combining this with the claim above and using \cite[Lemma 2.8(2)]{DHI16} we get that $\Delta(\El(Q))\prec^s_{\emm \bten \emm} \El(Q)\bten \El(Q)$. As $\Delta(\El(Q))\nprec_{\emm \bten \emm} \emm\bten 1,1\bten \emm$ then one can iterate the same argument as in the proof of the claim above to show that one can find a new unitary $u\in \emm \bten \emm$ such that $u\Delta(\El(Q))u^*\subseteq  \El(Q)\bar\otimes \El(Q)$.
\end{proof}

\subsection{Proof of Theorem \ref{semidirectprodreconstruction}} 

\begin{proof} We divide the proof into separate parts to improve the exposition. 
	\subsection*{Reconstruction of the Acting Group $Q$}
	To accomplish this we will use the notion of height for elements in group von Neumann algebras as introduced in \cite{IPV10,Io11}). From the previous theorem recall that $u\Delta(\El(Q))u^*\subseteq  \El(Q)\bar\otimes \El(Q)$. Let $\mathcal A= u \De(\El(N_1))u^*$. Next we claim that \begin{equation}\label{height3}
	h_{Q\times Q}(u\De(Q)u^*)>0.
	\end{equation}  
	For every $x,y \in \El(Q)\bten \El(Q)$ and every $a\in \mathcal A\bten \mathcal A$ supported on a finite set $F\subset N=N_1\times N_2$ we have that
	\begin{equation}\begin{split}\label{height1}
	\|E_{\mathcal A\bten \mathcal A}(xay)\|_2^2&= \|\sum_{q,l} \tau(x u_{q^{-1}})\tau(yu_l) E_{\mathcal A\bten \mathcal A}(u_qau_{l^{-1}}) \|_2^2\\
	&= \|\sum_{q,l} \tau(x u_{q^{-1}})\tau(yu_l) E_{\mathcal A\bten \mathcal A}(\sigma_q(a)u_{ql^{-1}}) \|_2^2\\
	&= \|\sum_{q} \tau(x u_{q^{-1}})\tau(yu_l) \sigma_q(a) \|_2^2\\
	&= \|\sum_{q\in Q,n\in N^2} \tau(x u_{q^{-1}})\tau(yu_l) \tau(au_{n^{-1}})u_{\sigma_q(n)} \|_2^2\\
	&= \sum_{r\in N^2} |\sum_{\sigma_q(n)=r}\tau(x u_{q^{-1}})\tau(yu_l) \tau(au_{n^{-1}})|^2\\
	&\leqslant h_{Q\times Q}^2(x)\sum_{r\in N^2} (\sum_{q\in Q \,:\,\sigma_{q^{-1}}(r^{-1})\in F}|\tau(yu_l)|| \tau(au_{\sigma_{q^{-1}}(r)}|)^2\\
	&\leqslant h_{Q\times Q}^2(x) \|y\|_2^2 \|a\|_2^2\max_{r\in N^2} |\{ q\in Q  \,:\,\sigma_{q^{-1}}(r^{-1})\in F\}|.
	\end{split}\end{equation}
	
	This estimate leads to the following property: for every finite sets  $K,S \subset Q$, every $a\in span \{\mathcal A\bten \mathcal A u_g \,:\,g\in K\}$ and all $\varepsilon>0$ there exist a scalar $C>0$ and a finite set $F \subset N^2$ such that for all $x,y \in \El(Q)\bten \El(Q)$ we have \begin{equation}\label{height2}
	\|P_{\sum_{s\in S} \mathcal A\bten \mathcal A u_s }(xay)\|_2^2\leqslant |K||S| C (h_{Q\times Q}^2(x) \|y\|_2^2 \|a\|_2^2\max_{r\in N^2} |\{ q\in Q  \,:\,\sigma_{q^{-1}}(r^{-1})\in F\}|)+ \varepsilon \|x\|_{\infty}
	\|y\|_\infty\end{equation}   
	
	Note this follows directly from  \eqref{height1} after we decompose the $a$ and the projection $P_{\sum_{s\in S} \mathcal A\bten \mathcal A u_s}$. 
	
	\vskip 0.05in 
	Next we use \eqref{height2} to prove our claim. Fix $\varepsilon>0$.  Since $\De(\mathcal A)\nprec \emm\otimes 1, \ 1\otimes \emm$ then by Theorem \ref{corner} one can find a finite subset $F_o \subset N^2\setminus ((N\times 1)\cup (1\times N))$ such that $a_{F_o} \in \mathcal A \bten \mathcal A$ is supported on $F_o$ and $\|a-a_{F_o}\|_2\leqslant \varepsilon$. Since  $\De(\mathcal A)\prec^s \mathcal A\bten \mathcal A$ there is a finite $S\subseteq Q\times Q$  such that
	\begin{equation}
	\|P_{\sum_{s\in S} \mathcal A\bten \mathcal A u_s }(a)-a\|_2\leqslant \varepsilon\text{ for all }a\in \De(\mathcal A).
	\end{equation} 
	
	Assume by contradiction \eqref{height3} doesn't hold. Thus there is a sequence $t_n\in Q$ such that $h_{Q\times Q}(t_n)=h_{Q\times Q}(u\De(u_{t_n})u^*)\rar 0$ as $n\rar \infty$. As $t_n$ normalizes $\De(\mathcal A)$ then one can see that 
	\begin{equation}
	\begin{split}1-\varepsilon=\| t_n a t_n^*\|_2^2-\varepsilon &\leqslant \|P_{\sum_{s\in S} \mathcal A\bten \mathcal A u_s }(t_na t_n^*)\|_2^2 \\ 
	&\leqslant  \| |P_{\sum_{s\in S} \mathcal A\bten \mathcal A u_s }(t_na t_n^*)\|^2_2+\varepsilon \\
	&\leqslant |F_o||S| C (h_{Q\times Q}^2(t_n) \|t_n\|_2^2 \|a_{F_o}\|_2^2\max_{r\in N^2} |\{ q\in Q  \,:\,\sigma_{q^{-1}}(r^{-1})\in F_o\}|)+ \varepsilon \|t_n\|^2_{\infty} \\&
	\leqslant |F_o||S| C (h_{Q\times Q}^2(t_n) \max_{r\neq 1} |Stab_Q(r)||F_o|)+ 2\varepsilon.
	\end{split}
	\end{equation} 
	
	Since the stabilizer sizes are uniformly bounded we get a contradiction if $\varepsilon>0$ is arbitrary small. 
	Now we notice that the height condition together with Theorem \ref{parabolicQ} and \cite[Lemmas 2.4,2.5]{CU18} already imply that $h_Q(\mu \Phi \mu^* )>0$ and by \cite[Theorem 3.1]{IPV10} there is a unitary $\mu_0\in \emm$ 
	such that $\mathbb T \mu_0 \Phi \mu_0^*=\mathbb T Q$.  
	%In fact the same argument as in the last part of the proof of Theorem 5.1 in \cite{CU18} will show that $\La =\Sigma \rtimes \Phi$ and $A=L(\Sg)$.  
	%\vskip 0.07in 
	
	\subsection*{Reconstruction of a Core Subgroup and its Product Feature}
	From Theorem \ref{commutationcontrolincommultiplication2} have that $\Delta(\El(N_1 \times N_2)) \prec^s_{\emm \tp \emm} \El(N_1 \times N_2) \tp \El(N_1 \times N_2)$. Proceeding exactly as in the proof of \cite[Claim 4.5]{CU18} we can show that $\Delta(\mathcal A) \subseteq \mathcal A \tp \mathcal A$, where $\mathcal A= u\El(N_1 \times N_2)u^{\ast}$. By Lemma \ref{comultsubgp}, there exists a subgroup $\Sg < \La$ such that $\mathcal A= \El(\Sg)$. The last part of the proof of \cite[Theorem 5.2]{CU18} shows that $\La= \Sg \rtimes \Phi$. In order to reconstruct the product feature of $\Sigma$, we need a couple more results.
	
	\begin{claim} For every $i=1,2$ there exists $j=1,2$ such that \begin{equation}\label{intonetensor}
		\Delta(\El(N_j))\prec^s \El (N_1\times N_2) \bten \El
		(N_i).
		\end{equation}
	\end{claim}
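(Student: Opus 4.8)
The plan is to bootstrap from the strong intertwining $\Delta(\El(N_1\times N_2))\prec^s_{\emm\tp\emm}\El(N_1\times N_2)\tp\El(N_1\times N_2)$ (item i) of Theorem \ref{commutationcontrolincommultiplication2}) together with the individual intertwinings $\Delta(\El(N_k))\prec^s_{\emm\tp\emm}\emm\tp\El(N_j)$ extracted from Theorem \ref{commutationcontrolincommultiplication}, part 3). The key observation is that part 3) of Theorem \ref{commutationcontrolincommultiplication} gives, for each $j\in\{1,2\}$, some $i=i(j)\in\{1,2\}$ with $\Delta(\El(N_i))\prec_{\emm\tp\emm}\emm\tp\El(N_j)$; since $\mathcal N_{\emm\tp\emm}(\Delta(\El(N_i)))''\supseteq\Delta(\emm)$ and $\Delta(\emm)'\cap(\emm\tp\emm)=\mathbb C 1$, Lemma 2.4(3) of \cite{DHI16} upgrades this to $\prec^s$. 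As noted in the proof of Theorem \ref{commutationcontrolincommultiplication2}, the assignment $j\mapsto i(j)$ must be a bijection of $\{1,2\}$ — otherwise one would get $\Delta(\El(N_i))\prec^s\emm\tp\El(N_1)$ and $\Delta(\El(N_i))\prec^s\emm\tp\El(N_2)$ simultaneously, which by the intersection lemma (\cite[Lemma 2.8(2)]{DHI16}) would force $\Delta(\El(N_i))\prec^s\emm\otimes 1$, contradicting diffuseness. So relabeling, we may assume $\Delta(\El(N_j))\prec^s_{\emm\tp\emm}\emm\tp\El(N_j)$ for $j=1,2$.

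Next I would intersect this ``right-hand'' control with the ``left-hand'' control coming from Theorem \ref{commutationcontrolincommultiplication2} i). We know $\Delta(\El(N_1\times N_2))\prec^s_{\emm\tp\emm}\El(N_1\times N_2)\tp\El(N_1\times N_2)$, hence a fortiori $\Delta(\El(N_j))\prec^s\El(N_1\times N_2)\tp\emm$. Combining $\Delta(\El(N_j))\prec^s\El(N_1\times N_2)\tp\emm$ with $\Delta(\El(N_j))\prec^s\emm\tp\El(N_j)$ via the standard ``intersection of strong intertwinings'' technique — first pass to a corner where both intertwinings are realized, then use that $\prec^s$ into two subalgebras of a crossed-product/tensor type implies $\prec$ into their intersection after a conjugation, as in the proof of Lemma \ref{intertwiningintersection} or \cite[Lemma 2.8(2)]{DHI16} — yields $\Delta(\El(N_j))\prec^s_{\emm\tp\emm}\El(N_1\times N_2)\tp\El(N_j)$. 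Running the quasinormalizer/regularity argument one more time (again using $\mathcal N_{\emm\tp\emm}(\Delta(\El(N_j)))''\supseteq\Delta(\emm)$ with trivial relative commutant) promotes the resulting $\prec$ to $\prec^s$, giving exactly \eqref{intonetensor} with $j$ in place of $i$ on the right. Since we already arranged the correspondence $i\leftrightarrow j$ to be a bijection, for the given $i\in\{1,2\}$ we take the $j$ with $i(j)=i$, i.e. we obtain $\Delta(\El(N_j))\prec^s\El(N_1\times N_2)\tp\El(N_i)$, which is the claim.

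I expect the main obstacle to be the intersection step: strong intertwining into a tensor-factor on the left and into a tensor-factor on the right does not combine completely formally, because the two ``coordinate systems'' $\El(N_1\times N_2)\tp\emm$ and $\emm\tp\El(N_j)$ are not subalgebras of a single crossed product by a group but genuinely of a tensor product, so one must be careful to invoke the tensor-product versions of the intersection lemmas (e.g. the mechanism behind Lemma \ref{intlowertensor} combined with \cite[Lemma 2.8]{DHI16}) rather than the group-theoretic Lemma \ref{intertwiningintersection} verbatim. A second, more bookkeeping-level subtlety is keeping track of which of the two indices survives under $i(j)$; one resolves this exactly as in the proof of Theorem \ref{commutationcontrolincommultiplication2}, by ruling out the ``diagonal collapse'' via the intersection lemma. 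Once these two points are handled, the passage from $\prec$ to $\prec^s$ is routine given the triviality of $\Delta(\emm)'\cap(\emm\tp\emm)$, so the claim follows.
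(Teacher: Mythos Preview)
Your approach is correct and genuinely different from the paper's. The paper does not reuse the individual intertwinings $\Delta(\El(N_k))\prec^s\emm\tp\El(N_i)$ from Theorem \ref{commutationcontrolincommultiplication}; instead it starts only from $\Delta(\El(N_1\times N_2))\prec\El(N_1\times N_2)\tp\El(N_1\times N_2)$, transports the commuting pair $\mathcal D_1,\mathcal D_2$ inside $\El(N_1\times N_2)\tp\El(N_1\times N_2)$, and then uses the free-by-hyperbolic structure of $N_i$ (Proposition \ref{freebyhyperbolic}) together with the results of \cite{CIK13,CK15} and the property (T)/Haagerup dichotomy to cut the right tensor factor down to $\El(N_i)$. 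Your route avoids this structural input entirely: since $\emm\tp\emm=\El(\G\times\G)$ and the targets $\El(N_1\times N_2)\tp\emm$, $\emm\tp\El(N_i)$ are group subalgebras of \emph{normal} subgroups, Lemma \ref{intertwiningintersection} applies verbatim (the conjugate $hH_2h^{-1}$ equals $H_2$), so the ``tensor obstacle'' you anticipated is not actually present. Two small points: the ``relabeling'' in your step 3 is spurious --- you cannot rename $N_1\leftrightarrow N_2$ on one side only --- but you don't need it, since at the end you correctly take $j$ to be the preimage of $i$ under the bijection; and the input $\Delta(\El(N_j))\prec^s\El(N_1\times N_2)\tp\emm$ is indeed available directly from Theorem \ref{commutationcontrolincommultiplication2} i). The paper's argument buys an independent confirmation that exercises the fine structure of the Rips kernel, while yours is shorter and stays entirely within the intertwining calculus already developed.
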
 
	
	\noindent \textit{Proof of Claim.} We prove this only for $i=1$ as the other case is similar. We also notice that since $\mathcal N_{\emm\otimes \emm} (\Delta(\El(N_j)))''\supseteq \Delta (\emm)$ and $\Delta(\emm)'\cap \emm\bten \emm =\mathbb C1$ then to establish \eqref{intonetensor} we only need to show that $\Delta(\El(N_j))\prec \El (N_1\times N_2) \bten \El(N_i)$. From above we have $\Delta(\El(N_1\times N_2)\prec_{\emm\bar\otimes \emm} \El(N_1\times N_2)\bar\otimes \El(N_1\times N_2)$. Hence there exist nonzero projections $a_i\in \Delta (\El(N_i))$ and  $b\in  \El(N_1\times N_2)\bar\otimes \El(N_1\times N_2)$, a partial isometry $v\in \emm \bten \emm$ and an $\ast$-isomorphism on the image $\Psi: a_1\otimes a_2 \Delta(\El(N_1\times N_2)) a_1\otimes a_2 \rightarrow \Psi(a_1\otimes a_2 \Delta(\El(N_1\times N_2)) a_1\otimes a_2) := \mathcal R \subseteq b(\El(N_1\times N_2)\bar\otimes \El(N_1\times N_2)) b$ such that $\Psi(x)v=vx$ for all $x\in a_1\otimes a_2 \Delta(\El(N_1\times N_2)) a_1\otimes a_2$.   
	\vskip 0.04in 
	Denote by $\mathcal D_i:= \Psi (a_i (\Delta (\El(N_i)))a_i)\subseteq b\El(N_1\times N_2)\bar\otimes \El(N_1\times N_2) b$ and notice that $\mathcal D_1$ and $\mathcal D_2$ are commuting property (T) diffuse subfactors. Since the group $N_2$ is ($\mathbb F_\infty$)-by-(non-elementary hyperbolic group) then by \cite{CIK13,CK15} it follows that there is $j=1,2$ such that $\mathcal D_j \prec_{\mathcal \El(N_1\times N_2)\bten \El(N_1\times N_2)} \El(N_1\times N_2)\bten \El(N_1 \times \mathbb F_\infty)$. Since $\mathbb F_\infty$ has Haagerup's property and $\mathcal D_j$ has property (T) this further implies that $\mathcal D_j \prec_{\mathcal \El(N_1\times N_2)\bten \El(N_1\times N_2)} \El(N_1\times N_2)\bten \El(N_1)$. Composing this intertwining with $\Psi$ we get $\Delta(\El(N_j))\prec \El (N_1\times N_2) \bten \El(N_1)$, as desired.
	\vskip 0.06in
	Also, we note that $j_1 \neq j_2$. Otherwise we would have that $\Delta(\El(N_j)) \prec^s \El(N_1 \times N_2) \bten \El(N_1) \cap \El (N_2)= \El(N_1 \times N_2) \bten 1$, which obviously contradicts \cite[Proposition 7.2.1]{IPV10}.
	$\hfill\blacksquare$
	\vskip 0.07 in
	Let $\mathcal A= u\El(N_1))u^{\ast}$. Thus, we get that $\Delta(\mathcal A) \prec^s \El(N_1 \times N_2) \otimes \El(N_i)$ for some $i=1,2$. This implies that for every $\varepsilon >0$, there exists a finite set $S \subset u^{\ast} Q u$, containing $e$, such that $\|d-P_{S \times S}(d)\|_2 \leq \varepsilon$ for all $d \in \Delta (\mathcal A)$. However, $\Delta (\mathcal A)$ is invariant under the action of $u^{\ast}Q u$, and hence arguing exactly as in \cite[Claim 4.5]{CU18} we get that $\Delta(\mathcal A) \subset (\El(\Sg) \bten u\El(N_i)u^{\ast})$. We now separate the argument into two different cases:
	
	\textbf{Case I:} $i=1$.
	
	In this case, $\Delta(\mathcal A) \subseteq \El(\Sg) \tp \mathcal A$. Thus by Lemma \ref{comultsubgp} we get that there exists a subgroup $\Sg_0 < \Sg$ with $\mathcal A = \El(\Sg_0)$. Now, $\mathcal A' \cap \El(\Sg)= u\El(N_2)u^{\ast}$. Thus, $\El(\Sg_0)' \cap \El(\Sg) = u\El(N_2)u^{\ast}$. Note that $\Sg$ and $\Sg_0$ are both icc property (T) groups. This implies that $\El(\Sg_0)' \cap \El(\Sg)= \El(vC_{\Sg}(\Sg_0))$, where $vC_{\Sg}(\Sg_0)$ denotes the \textit{virtual centralizer} of $\Sg_0$ in $\Sg$. Proceeding as in \cite{CdSS17} we can show that $\Sg= \Sg_0 \times \Sg_1$.
	
	\textbf{Case II:} $i=2$.
	
	Let $\mathcal B= u\El(N_2)u^{\ast}$. In this case, $\Delta(\mathcal A) \subseteq \El(\Sg) \tp \mathcal B$. However, Lemma \ref{comultsubgp} then implies that $\mathcal A \subseteq \mathcal B$, which is absurd, as $\El(N_1)$ and $\El(N_2)$ are orthogonal algebras. Hence this case is impossible and we are done. \end{proof}

%\noindent {\bf Remarks.} Notice that while we have a full reconstruction at the level of subgroup $Q$ much less is known at the level of the normal subgroups $N_i$.  we have 

%\cite[proofs of Theorems 4.4 and the last part in 5.1]{CU18}. 
%Since $u_q'$s normalize $A$ for all $q \in Q$, we have that $v_{\delta(q)}$ normalizes $\Sg$.  
%\end{proof}

\noindent {\bf Remarks.} $1)$  There are several immediate consequences of the Theorem \ref{semidirectprodreconstruction}. For instance one can easily see the von Neumann algebras covered by this theorem are non-isomorphic with the ones arising from any irreducible lattice in higher rank Lie group. Indeed,  if $\La$ is any such lattice satisfying  $\El(\G) \cong \El(\La)$, then Theorem ~\ref{semidirectprodreconstruction} would imply that $\La$  must contain an infinite normal subgroup of infinite index which contradicts Margulis' normal subgroup theorem.
\vskip 0.06in
$2)$ While it well known there are uncountable many non-isomorphic group II$_1$ factors with property (T) \cite{Po07} little is known about producing concrete examples of such families. In fact the only currently known infinite families of pairwise non-isomorphic property (T) groups factors are $\{\mathcal L(G_n)\,|\, n\geq 2\}$ for $G_n$ uniform latices in $Sp(n,1)$ \cite{CH89} and $\{\mathcal L(G_1\times G_2\times \cdots\times G_k)\,|\, k\geq 1\}$ where $G_k$ is any icc property (T) hyperbolic group \cite{OP03}. Theorem \ref{semidirectprodreconstruction} makes new progress in this direction by providing a new explicit infinite family of icc property (T) groups which gives rise to pairwise non-isomorphic II$_1$ factors. For instance, in the statement one can simply $Q_i$ to vary in any infinite family of non-isomorphic uniform lattices in $Sp(n,1)$ for any $n\neq 2$. Unlike the other families ours consists of factors which are not solid, do not admit tensor decompositions \cite{CdSS17}, and do not have Cartan subalgebras, \cite{CIK13}.

\vskip 0.06in

$3)$ We notice that Theorem \ref{semidirectprodreconstruction} still holds if instead of $\G= (N_1\times N_2)\rtimes (Q_1\times Q_2)$ one considers any finite index subgroup of $\G$ of the form $\G_{s,r}= (N_1\times N_2)\rtimes (Q^{s}_1\times Q^{r}_2)\leqslant \G$, where $Q^{s}_1\leqslant Q_1$ and $Q^{r}_2\leqslant Q_2$ are arbitrary finite index subgroups. One can verify these groups still enjoy all the algebraic/geometric properties used in the proof of Theorem \ref{semidirectprodreconstruction} (including the fact that $N_1\rtimes Q^s_1$ is hyperbolic relative to $Q^s_1$
 and $N_1\rtimes Q^r_2$ is hyperbolic relative to $Q^r_2$) and hence all the von Neumann algebraic arguments in the proof of Theorem \ref{semidirectprodreconstruction}  apply verbatim. The details are left to the reader. 

\vskip 0.06in 

$4)$ The group factors considered in Theorem \ref{semidirectprodreconstruction} have trivial fundamental group by \cite[Theorem B]{CDHK20}
%%%%%%%%%%%%%%%%%%%%%%%%%%%%%%%%%%%%%%%%%%%%%%%%%%%%%%%%%%%%%%%%%%%%%%%%%%%%%%%%%%%%%%%%%%%%%%%%%%%%%%%%%%%%%%%%%%%%%%%%%%%%%%%%%%%%%%%%%%%%%%%%%%%%%%
%%%%%%%%%%%%%%%%%%%%%%%%%%%%%%%%%%%%%%%%%%%%%%%%%%%%%%%%%%%%%%%%%%% Uncountably many T %%%%%%%%%%%%%%%%%%%%%%%%%%%%%%%%%%%%%%%%%%%%%%%%%%%%%%%%%%%%%%%
%%%%%%%%%%%%%%%%%%%%%%%%%%%%%%%%%%%%%%%%%%%%%%%%%%%%%%%%%%%%%%%%%%%%%%%%%%%%%%%%%%%%%%%%%%%%%%%%%%%%%%%%%%%%%%%%%%%%%%%%%%%%%%%%%%%%%%%%%%%%%%%%%%%%%%%%

\section{Concrete Examples of Infinitely Many Pairwise Non-isomorphic Group II$_1$ Factors with Property (T)}
%In this section we mention several nice applications of our main theorem~\ref{semidirectprodreconstruction}. Our first application involves construction of infinitely many subfactors $\emm_n$ of a $\rm II_1$ factor $\emm$ with property (T) such that $\emm_n$'s are all pairwise mutually nonisomorphic. These factors will arise as group von Neumann algebras of $\G_n$'s which are constructed in Corollary~\ref{Dimaqn}. This provides first examples of property (T) groups which are measure equivalent to each other, while their group von Neumann algebras are nonisomorphic. 

In this section we present several applications of our main techniques to the structural study of property (T) group factors. An earlier result of Popa  \cite{Po07} shows that the map $\G \mapsto \El(\G)$ is at most countable to one. Since there are uncountably many icc property (T) groups, this obviously implies the existence of uncountably many group property (T) factors which are pairwise non-isomorphic. However, currently there are still no explicit constructions of such families in the literature. In this section we make new progress in this direction by showing that the canonical fiber product of Belegradek-Osin Rips construction groups can be successfully used to provide possibly the first such examples (Corollary~\ref{unctlblet}). In addition, our methods also yields other interesting consequences. For instance, they can be used to provide an infinite series of finite index subfactors of a given property (T) II$_1$ factor that are pairwise non-isomorphic which is also a novelty in the area (Corollary \ref{inffindext}). This further gives infinitely many examples of icc, property (T) groups $\G_n$ measure equivalent to a fixed group $\G$, such that $\El(\G_n)$ are pairwise mutually nonsiomorphic. The first examples of group measure equivalent groups $\G$ and $\La$ giving rise to nonisomorphic group von Neumann algebras were given in \cite{CI}, thereby answering a question of D. Shlyakhtenko.  Note that the examples in \cite{CI} didn't have property (T).

%For our first application, let $Q_{0,1}< Q_1$ be a finite index subgroup of an icc, torsion free, residually finite, biexact, weakly amenable, property (T) group $Q_1$. 
%Note that $Q_0$ is also icc, torsion free, residually finite, biexact, weakly amenable. Let $Q_0= Q_{0,1} \times Q_{0,1}$, and let $Q= Q_1 \times Q_1$. Let $N_1 \in Rips_T(Q)$. Note that $Q_0$ acts outerly on 
The following theorem is the main von Neumann algebraic result of the section. Some of the arguments used in the proof are very similar to the ones used in the proof of Theorem \ref{semidirectprodreconstruction} and thus we shall just refer the reader to the previous section for these. However, we will include all the details on the new aspects of the proof.     

\begin{theorem} \label{nonisomsemidirect}
Let $Q_1$, $Q_2$, $P_1$, $P_2$ be icc, torsion free, residually finite property (T) groups. Let $Q=Q_1 \times Q_2$ and $P=P_1 \times P_2$. Assume that $N_1 \rtimes Q$, $N_2 \rtimes Q \in \mathcal Rip_T(Q)$ and $M_1 \rtimes P$, $M_2 \rtimes P \in \mathcal Rip_T(P)$. Assume that $\Theta: \El((N_1 \times N_2) \rtimes Q) \rightarrow \El((M_1 \times M_2) \rtimes P)$ is a $\ast$-isomorphism. 

Then one can find a $\ast$-isomorphism, $\Theta_i: \El(N_i) \rightarrow \El(M_i)$, a group isomorphism $\delta: Q \rightarrow P$, a multiplicative character $\eta: Q \rightarrow \mathbb T$, and a unitary $u \in \mathcal U( \El((M_1 \times M_2) \rtimes P))$ such that  for all $ \g \in Q$, $x_i \in N_i$ we have that$$ \Theta((x_1 \otimes x_2)u_{\g})= \eta(\g)u(\Theta_1(x_1) \otimes \Theta_2(x_2)v_{\delta(\g)})u^{\ast}. $$
\end{theorem}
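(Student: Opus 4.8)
Throughout, identify $\emm=\El(\G)=\El(\La)$ via $\Theta$ and write $\G_i=N_i\rtimes Q$, $\La_i=M_i\rtimes P$, so that $\G$ and $\La$ embed as the canonical fiber products in $\G_1\times\G_2$, respectively $\La_1\times\La_2$, with $\G_i$ hyperbolic relative to $Q$ and $\La_i$ hyperbolic relative to $P$. As in every Belegradek--Osin Rips construction the groups $N_i,M_i$ are icc (hence $\El(N_i),\El(M_i)$ are II$_1$ factors) and have property (T) (being quotients of property (T) groups), and by Proposition~\ref{freebyhyperbolic} they are $\mathbb F_\infty$-by-(non-elementary hyperbolic property (T)); in particular $\El(N_i),\El(M_i)$ are prime by \cite{CIK13}. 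The plan is to first reconstruct the core $\El(M_1\times M_2)$ together with its tensor decomposition, then recover $P\cong Q$ and a character from the crossed-product structure over the core, and finally untwist the residual cocycle. First I would run the comultiplication machinery of Section~\ref{superrigidRIPS}: with $\Delta\colon\emm\to\emm\tp\emm$ the comultiplication along $\La$ and $\enn_i=\Theta(\El(N_i))$, the commuting property (T) subalgebras $\enn_1,\enn_2\subseteq\emm\tp\emm=\El(\La\times\La)\subseteq\El(\La_1\times\La_2\times\La_1\times\La_2)$ satisfy, by Theorems~\ref{controlprodpropt1}--\ref{controlprodpropt2}, the descent Lemmas~\ref{intertwiningintersection}, \ref{intertwininglower1}, \ref{intertwininglower2}, and malnormality of $P$ in the $\La_i$ --- exactly as in part~3) of Theorem~\ref{commutationcontrolincommultiplication} and in part~i) of Theorem~\ref{commutationcontrolincommultiplication2} --- the relation $\Delta(\enn_1\vee\enn_2)\prec^s_{\emm\tp\emm}\El(M_1\times M_2)\tp\El(M_1\times M_2)$. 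No biexactness or weak amenability is needed here: the parts of the proof of Theorem~\ref{semidirectprodreconstruction} that required those hypotheses --- namely Theorems~\ref{toproductgroupcorners} and \ref{parabolicQ} --- served only to reconstruct the semidirect-product structure of an \emph{a priori unknown} $\La$, which is already in hand. Then, proceeding as in the ``core reconstruction'' part of the proof of Theorem~\ref{semidirectprodreconstruction} --- \cite[Claim 4.5, Theorem 5.2]{CU18}, Lemma~\ref{comultsubgp}, and the virtual-centralizer argument of \cite{CdSS17} --- produces a unitary $u\in\emm$ and a normal subgroup $\Sigma=\Sigma_1\times\Sigma_2\lhd\La$ with $u\enn_iu^*=\El(\Sigma_i)$; replacing $\Theta$ by $\mathrm{ad}(u)\circ\Theta$ we may assume $\Theta(\El(N_i))=\El(\Sigma_i)$ and $\Theta(\El(N_1\times N_2))=\El(\Sigma)$.

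Next I would identify $\El(\Sigma)$ with $\El(M_1\times M_2)$. By Lemma~\ref{trivrel} applied to $N_1\times N_2\lhd\G$ and to $M_1\times M_2\lhd\La$ (both diagonal actions being outer, since they are outer on each factor), $\El(N_1\times N_2)$ and $\El(M_1\times M_2)$ --- and hence $\El(\Sigma)$ --- are regular irreducible subfactors of $\emm$. Applying Theorem~\ref{controlprodpropt2} on each side of $\Theta$ to the commuting property (T) pairs $\{\El(\Sigma_1),\El(\Sigma_2)\}$ and $\{\El(M_1),\El(M_2)\}$ (working in $\El(\La)\subseteq\El(\La_1\times\La_2)$ and $\El(\G)\subseteq\El(\G_1\times\G_2)$, and discarding the ``product-type'' alternative of Theorem~\ref{controlprodpropt2} by its malnormality/$\mathcal{QN}$ argument --- this is where infiniteness of the $N_i,M_i$ is used), then intersecting and descending via Lemmas~\ref{intertwiningintersection} and \ref{intertwininglower1}, one gets, after relabeling $M_1\leftrightarrow M_2$ if needed, mutual intertwinings pairing $\El(\Sigma_i)$ with $\El(M_i)$ which combine (using the regularity) to mutual intertwinings between $\El(\Sigma)$ and $\El(M_1\times M_2)$. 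Feeding these into a conjugacy argument in the style of \cite[Lemma 8.4]{IPP05} and \cite{CdSS15,CdSS17} --- using the regularity and irreducibility just noted together with primeness of the four factors and unique prime factorization for free-by-hyperbolic group factors \cite{CIK13} to correctly pair up the decompositions $\El(\Sigma_1)\tp\El(\Sigma_2)=\El(\Sigma)$ and $\El(M_1)\tp\El(M_2)=\El(M_1\times M_2)$ and to rule out spurious amplifications --- produces a unitary $w\in\emm$ with $w\El(\Sigma_i)w^*=\El(M_i)$, hence $w\El(\Sigma)w^*=\El(M_1\times M_2)$. Replacing $\Theta$ by $\mathrm{ad}(w)\circ\Theta$, we have $\Theta(\El(N_i))=\El(M_i)$ and $\Theta(\El(N_1\times N_2))=\El(M_1\times M_2)$; set $\Theta_i:=\Theta|_{\El(N_i)}$, a $\ast$-isomorphism onto $\El(M_i)$.

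Now $\Theta$ carries $\El(N_1\times N_2)\rtimes_\sigma Q$ onto $\El(M_1\times M_2)\rtimes_\beta P$ fixing the cores. Since $\El(M_1\times M_2)'\cap\El(\La)=\mathbb C$, any unitary of $\El(\La)$ normalizing $\El(M_1\times M_2)$ lies, up to a scalar, in $\mathcal U(\El(M_1\times M_2))\cdot\{v_p:p\in P\}$ (a single nonzero Fourier coefficient over $P$); applying this to $\Theta(u_\gamma)$, $\gamma\in Q$, exactly as in the last theorem of Section~\ref{sec4}, gives a group isomorphism $\delta\colon Q\to P$, a multiplicative character $\eta\colon Q\to\mathbb T$, and a $1$-cocycle $t\colon Q\to\mathcal U(\El(M_1\times M_2))$ for $\beta\circ\delta$ with $\Theta(u_\gamma)=\eta(\gamma)\,t_\gamma\,v_{\delta(\gamma)}$. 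Because $u_\gamma$ normalizes each $\El(N_i)$ and $\Theta(\El(N_i))=\El(M_i)$, the cocycle $t_\gamma$ normalizes each $\El(M_i)$ and factors as $t_\gamma=t^{(1)}_\gamma t^{(2)}_\gamma$ with $t^{(i)}_\gamma\in\mathcal U(\El(M_i))$ a $1$-cocycle for $\beta_i\circ\delta$. It then remains to untwist $t$: exploiting the rigidity of the Rips actions --- $Q$ embeds with finite index in $\mathrm{Out}(N_i)$ and $P$ with finite index in $\mathrm{Out}(M_i)$ --- one upgrades the outer conjugacy between $\sigma_i$ and $\beta_i\circ\delta$ implemented by $\Theta_i$ to an honest conjugacy, after a final unitary perturbation (absorbed into $u$) and a compatible adjustment of the $\Theta_i$, as in the concluding step of \cite{IPV10,Io11}. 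This leaves only the character and gives $\Theta((x_1\otimes x_2)u_\gamma)=\eta(\gamma)\,u\big((\Theta_1(x_1)\otimes\Theta_2(x_2))v_{\delta(\gamma)}\big)u^*$.

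I expect the main obstacle to be twofold. First, upgrading the mutual intertwinings of the second step to the unitary conjugacy $w\El(\Sigma_i)w^*=\El(M_i)$: this is precisely where one must combine unique prime factorization for free-by-hyperbolic factors with the regularity and irreducibility of the cores to match the two tensor decompositions of $\El(M_1\times M_2)$ and rule out amplifications. Second, the cocycle untwisting in the third step, which is the point at which one genuinely leans on the fine structure of the Belegradek--Osin construction (finite index of $Q$ in $\mathrm{Out}(N_i)$) rather than on soft von Neumann-algebraic input; everything else is a faithful transcription of the arguments already developed in Section~\ref{superrigidRIPS}.
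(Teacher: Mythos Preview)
Your overall architecture is reasonable, but there is one genuine gap, and the route you take is considerably more circuitous than the paper's.

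\medskip

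\textbf{The unnecessary detour.} Since here $\La=(M_1\times M_2)\rtimes P$ is \emph{given}, there is no need to run the comultiplication machinery at all. The paper simply applies Theorems~\ref{controlprodpropt1}--\ref{controlprodpropt2} directly to the commuting property~(T) subfactors $\Theta(\El(N_1)),\Theta(\El(N_2))$ inside $\emm\subset\El(\La_1\times\La_2)$, obtains $\Theta(\El(N_j))\prec^s_\emm\El(M_i)$ for each $i$ (with distinct $j$'s), combines via \cite[Lemma 2.6]{Is16} to $\Theta(\El(N_1\times N_2))\prec^s_\emm\El(M_1\times M_2)$, and then by symmetry and \cite[Lemma~8.4]{IPP05} gets a single unitary conjugating the two cores. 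Your introduction of an intermediate subgroup $\Sigma\lhd\La$ via Lemma~\ref{comultsubgp}, followed by a separate argument to identify $\El(\Sigma)$ with $\El(M_1\times M_2)$, is doing the same work twice. Likewise, the paper handles $\Theta(\El(Q))$ directly: Theorem~\ref{controlprodpropt2} applied to $\Theta(\El(Q_1)),\Theta(\El(Q_2))$ gives (after ruling out the $\El(M_1)$ alternative using the already-established core intertwining and \cite[Lemma~3.7]{Va07}) the intertwining $\Theta(\El(Q))\prec_\emm\El(P)$, and malnormality of $P$ in $\La$ upgrades this to a two-sided unitary conjugacy $w_0\El(P)w_0^*=\Theta(\El(Q))$.

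\medskip

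\textbf{The actual gap.} Your third step produces, via the Fourier-coefficient argument, $\Theta(u_\gamma)=\eta(\gamma)\,t_\gamma\,v_{\delta(\gamma)}$ with a genuine $1$-cocycle $t\colon Q\to\mathcal U(\El(M_1\times M_2))$, and you then assert that this cocycle can be untwisted ``as in the concluding step of \cite{IPV10,Io11}'' using that $Q$ has finite index in $\mathrm{Out}(N_i)$. This is not justified: the cocycle-superrigidity mechanisms in \cite{IPV10,Io11} are specific to malleable (Bernoulli-type) actions and do not apply to the Rips actions, and the finite-index statement about $\mathrm{Out}(N_i)$ says nothing about $1$-cocycles valued in $\mathcal U(\El(M_i))$. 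Indeed, the analogous Fourier argument in Section~\ref{sec4} of the paper stops exactly at a cocycle and does \emph{not} claim to untwist it. The paper sidesteps this problem entirely: once $w_0\El(P)w_0^*=\Theta(\El(Q))$ and the core conjugacy are in hand, it runs the \emph{height} computation of Theorem~\ref{semidirectprodreconstruction} (using that the diagonal action $Q\curvearrowright N_1\times N_2$ has trivial stabilizers) to obtain $h_P(w_0^*\Theta(Q)w_0)>0$, and then \cite[Theorem~3.1]{IPV10} yields directly a unitary $w_1$ and an isomorphism $\delta$ with $\Theta(u_\gamma)=\eta(\gamma)\,w_1 v_{\delta(\gamma)} w_1^*$ --- no residual cocycle. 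This is the step you are missing; without it your argument does not reach the stated conclusion.
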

\begin{proof} Let $\emm =\El((M_1 \times M_2) \rtimes P) $ , $\G_i= N_i \rtimes Q$ and let $\tilde \emm = \El(\G_1 \times \G_2)$. Note that $\Theta(\El(N_1))$ and $\Theta(\El(N_2))$ are commuting property (T) subfactors of $\El((M_1 \times M_2) \rtimes P)$. Hence by Theorem~\ref{controlprodpropt2} we have that either 
	\begin{enumerate}
		\item[1)] exists $  i \in \{1,2\}$ such that $\Theta(\El(N_i)) \prec_{\tilde \emm} \El(\G_1)$ or
		\item[2)] $\Theta(\El(N_1 \times N_2)) \prec_{\tilde \emm} \El(\G_1 \times P)$.
	\end{enumerate} 
Assume 1) holds. Then proceeding exactly as in the first part of proof of Theorem ~\ref{commutationcontrolincommultiplication} we have that $\Theta(\El(N_i)) \prec_{\tilde \emm} \El(M_1)$. As $\El(M_1)$ is regular in $\emm$, we conclude using Lemma~\ref{intertwininglower1} that $\Theta(\El(N_i)) \prec_{ \emm} \El(M_1)$.
 
Assume 2). Then the same argument as in the second part of the proof of Theorem~\ref{commutationcontrolincommultiplication} we have that $\Theta(\El(N_1 \times N_2)) \prec_{\tilde \emm} \El(M_1 \rtimes diag(P))$. Thus if $\Theta(\El(N_i)) \nprec \El(M_1)$ for all $i =1,2$, then the same argument as in the last part of Theorem~\ref{commutationcontrolincommultiplication} will lead to a contradiction. 

In conclusion, we have shown that for all $i=1,2$ there exists $j\in 1,2$ such that $\Theta(\El(N_j)) \prec_{\emm} \El(M_i)$. As $\Theta(\El(N_j))$ is regular in $\emm$, we actually have that $\Theta(\El(N_j)) \prec_{\emm}^s \El(M_i)$. Notice that in particular this forces different $i$'s to give rise to different $j$'s. Indeed, otherwise we would have that  $\Theta(\El(N_j)) \prec_{\emm}^s \El(M_1)$ and $\Theta(\El(N_j)) \prec_{\emm}^s \El(M_2)$. Then by \cite[Lemma 2.6]{DHI16}, this would imply that $\Theta(\El(N_j)) \prec_\emm \El(M_1) \cap \El(M_2)=\mc$, which is obviously a contradiction. Therefore we get that either
\begin{enumerate}
	\item[4a)] $\Theta(\El(N_1)) \prec_{\emm}^s \El(M_1)$ and $\Theta(\El(N_2)) \prec_{\emm}^s \El(M_2)$, or
	\item[4b)] $\Theta(\El(N_1)) \prec_{\emm}^s \El(M_2)$ and $\Theta(\El(N_2)) \prec_{\emm}^s \El(M_1)$.
\end{enumerate}
Note that both cases imply that $\Theta(\El(N_1)), \Theta(\El(N_2)) \prec_{\emm}^s \El(M_1 \times M_2)$. Using \cite[Lemma 2.6]{Is16}, we further get that 
% We probably should use Isono16 Lemma 2.6?
\begin{equation}\label{sintert1}
\Theta(\El(N_1 \times N_2)) \prec_{\emm}^s \El(M_1 \times M_2).
\end{equation}

Proceeding in a similar manner, we also have the reverse intertwining $\El(M_1 \times M_2) \prec_{\emm}^s \Theta(\El(N_1 \times N_2))$. Since $\El(M_1 \times M_2)$, $\El(N_1 \times N_2)$ are irreducible, regular subfactors of $\emm$, by \cite[Lemma 8.4]{IPP05} one can find $u \in \euu(\emm)$ such that
\begin{equation}\label{uniteq}
	u \El(M_1 \times M_2)u^{\ast}= \Theta(\El(N_1 \times N_2)).
\end{equation}
Note that $\Theta(\El(Q_1)), \Theta(\El(Q_2))$ are commuting property (T) subfactors of $\El((M_1 \times M_2) \rtimes P)$. Proceeding exactly as in the first part of the proof, we conclude that either $\Theta(\El(Q_i)) \prec_{\tilde \emm} \El(\G_1)$ or $\Theta(\El(Q_1 \times Q_2)) \prec_{\tilde \emm} \El(\G_1 \rtimes P)$. As before, this further implies that either 
\begin{enumerate}
	\item [7)] $ \Theta(\El(Q_i)) \prec_{\emm} \El(M_1)$, or 
	\item[8)]  $\Theta(\El(Q_1 \times Q_2)) \prec_{\emm} \El(M_1 \rtimes diag(P))$.
	\end{enumerate}
	Assume 7). Since by \eqref{uniteq} we also have $\El(M_1) \prec_{\emm}^s \Theta(\El(N_1 \times N_2))$ and hence by \cite[Lemma 3.7]{Va07} we conclude that $\Theta(\El(Q_i)) \prec_{\emm} \Theta(\El(N_1 \times N_2))$. However, this implies that $Q_i$ is finite, which is a contradiction. \\
	Hence, we must have 8).
	Proceeding as in the end of proof of Theorem~\ref{commutationcontrolincommultiplication}, we conclude that $\Theta(\El(Q)) \prec_{\emm} \El(P)$. Thus there exists $\Psi: p\Theta(\El(Q))p \rightarrow \mathcal R:=\Psi(p\Theta(\El(Q))p) \subseteq q\El(P)q$ such that $\Psi(x)v=vx$ for all $x \in p\Theta(\El(Q))p$. Also note that $vv^* \in \mathcal R' \cap q \emm q$ and $v^*v \in (p\Theta(\El(Q))p)' p\emm p$. Since $\mathcal R \subseteq q\El(P)q$ is diffuse and $P \leq (M_1 \times M_2) \rtimes P$ is a malnormal subgroup, we have that $\mathcal{QN}_{q \emm q}(\mathcal R)'' \subseteq q \El(P)q$. Thus $vv^* \in q \El(P)q$ and hence $vp\Theta(\El(Q))pv^*=\mathcal Rvv^* \subseteq q\El(P)q$. Extending $v$ to a unitary $v_0$ in $\emm$ we have that $v_0p\Theta(\El(Q))pv_0^* \subseteq \El(P)$. As $\El(P)$ and $\El(Q)$ are factors, after perturbing $v_0$ to a new unitary we may assume that
	\begin{enumerate}
		\item[9)] $v_0\Theta(\El(Q))v_0^* \subseteq \El(P)$.
	\end{enumerate} 
In a similar manner we have that there exists $w_0 \in \euu(\emm)$
\begin{enumerate}
	\item[10)] $w_0\El(P)w_0^* \subseteq \Theta(\El(Q))$.
\end{enumerate} 
Conditions 9) and 10) together imply that $w_0\El(P)w_0^* \subseteq \Theta(\El(Q)) \subseteq v_0^* \El(P)v_0$. In particular, $v_0w_0\El(P)w_0^*v_0^* \subseteq \El(P)$. Since $P$ is malnormal in $(M_1 \times M_2) \rtimes P$ we have that $v_0w_0 \in \El(P)$ and hence $w_0 \El(P)w_0^*=v_0^* \El(P)v_0.$ Combining this with the above relations we get that
\begin{enumerate}
	\item[11)] $w_0\El(P)w_0^* = \Theta(\El(Q))$.
\end{enumerate}
Since the action $Q \ca(N_1 \times N_2)$ has trivial stabilizers, using conditions 11) and 6), arguing as in the proof of Theorem~\ref{semidirectprodreconstruction} we get that $h_{w_0\El(P)w_0^*}(\Theta(Q))>0$. By \cite[Theorem 3.3]{IPV10} we get that there exists $w_1 \in \euu(\emm)$, and isomorphism $\delta: Q \rightarrow P$ such that\\
$\Theta(u_g)= w_1 v_{\delta(g)}w_1^*$ for all $g \in Q$.\\
Finally, this together with relation 4), proceeding exactly as in the proof of Theorem~\ref{semidirectprodreconstruction} implies the desired conclusion.
\end{proof}

The previous theorem can be used to provide an infinite series of finite index subfactors of a given property (T) II$_1$ factor that are pairwise non-isomorphic. 
 
 \begin{corollary}\label{inffindext} \begin{enumerate}
 		\item[1)] Let $Q_1,Q_2$ be uniform lattices in $Sp(n,1)$ with $n\geq 2$ and let $Q:=Q_1 \times Q_2$. Also let $\cdots \leqslant Q_1^s\leqslant \cdots \leqslant Q_1^2\leqslant Q^1_1\leqslant Q_1$ be an infinite family of finite index subgroups and denote by $Q_s:= Q_1^s\times Q_2\leqslant Q$. Then consider $N_1\rtimes_{\sigma_1} Q,N_2\rtimes_{\sigma_2} Q\in {\mathcal Rip}_T(Q)$ and let $\G= (N_1\times N_2)\rtimes _{\sigma_1\times \sigma_2}  Q$. Inside $\G$ consider the finite index subgroups $\G_s:=(N_1\times N_2)\rtimes _{\sigma_1\times \sigma_2}  Q_s$. Then the family $\{ \mathcal L(\G_s) \,|\, s\in I \}$ consists of pairwise non-isomorphic finite index subfactors of $\mathcal L(\G)$. 
 		\item[2)] Let $\G,\G_n$ be as above. Then $\G_n$ is measure equivalent to $\G$ for all $n \in \mathbb N$, but $\El(\G_n)$ is not isomorphic to $\El(\G_m)$ for $n \neq m$. 
 	\end{enumerate}
 	
 \end{corollary}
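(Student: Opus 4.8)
\textbf{Proof plan for Corollary \ref{inffindext}.} The plan is to derive part 1) directly from the strong rigidity statement Theorem \ref{nonisomsemidirect}, and then deduce part 2) by exhibiting an explicit measure equivalence coupling. First I would record that each $\G_s = (N_1\times N_2)\rtimes_{\sigma_1\times\sigma_2}Q_s$ with $Q_s = Q_1^s\times Q_2$ again belongs to the framework of Theorem \ref{nonisomsemidirect}: indeed $Q_1^s < Q_1 < Sp(n,1)$ is a finite index subgroup, hence still icc, torsion free, residually finite, hyperbolic, property (T), so $Q_s$ is a product of two such groups; moreover, as noted in Remark $3)$ after Theorem \ref{semidirectprodreconstruction}, the groups $N_i\rtimes Q_1^s$ remain hyperbolic relative to $Q_1^s$ (and $N_i \rtimes Q_2$ relative to $Q_2$), so the Rips-type hypotheses persist and $N_i \rtimes_{\sigma_i} Q_s \in \mathcal{R}ip_T(Q_s)$. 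Thus Theorem \ref{nonisomsemidirect} applies to any pair $\G_s,\G_t$.

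Next, suppose $\El(\G_s)\cong \El(\G_t)$ via some $\ast$-isomorphism $\Theta$. Applying Theorem \ref{nonisomsemidirect} (with $Q=Q_1^s\times Q_2$, $P=Q_1^t\times Q_2$, and the same fiber $N_1\times N_2$ playing both the role of $N_i$ and $M_i$), we obtain a group isomorphism $\delta: Q_1^s\times Q_2 \to Q_1^t\times Q_2$. Now I would invoke the rigidity of the lattices: since $Q_1^s, Q_1^t$ are finite index subgroups of a uniform lattice in $Sp(n,1)$ and $Q_2$ is a lattice in $Sp(n,1)$ as well, any abstract isomorphism $Q_1^s\times Q_2 \cong Q_1^t\times Q_2$ must respect the product decomposition up to permutation of factors (the two factors are each icc, directly indecomposable and the centralizers of the two factors are trivial), and in particular restricts to an isomorphism $Q_1^s\cong Q_1^t$. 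But the $Q_1^s$'s are nested finite index subgroups inside a co-Hopfian group (uniform lattices in $Sp(n,1)$, being torsion free hyperbolic and rigid, are co-Hopfian by Prasad's theorem \cite{Pr76}), so $Q_1^s\cong Q_1^t$ forces $[Q_1:Q_1^s]=[Q_1:Q_1^t]$; choosing the chain so that these indices are strictly increasing (e.g. $[Q_1:Q_1^s]$ a strictly increasing sequence, which is possible since $Q_1$ is residually finite and infinite) we conclude $s=t$. This establishes that $\{\El(\G_s)\}_s$ are pairwise non-isomorphic, and they are finite index subfactors of $\El(\G)$ because $\G_s\leqslant \G$ has finite index $[\G:\G_s]=[Q_1:Q_1^s]$.

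For part 2), I would first recall that two groups are measure equivalent whenever one contains a finite index subgroup isomorphic to a finite index subgroup of the other; more directly, a countable group $\G$ and any finite index subgroup $\G_s\leqslant \G$ are measure equivalent, the coupling being $\G$ itself with its two commuting actions by left/right multiplication (with a fundamental domain for the $\G_s$-action of finite measure). Hence each $\G_s$ is measure equivalent to $\G$, and by transitivity all the $\G_n$ are measure equivalent to each other. Combined with part 1), which gives $\El(\G_n)\not\cong \El(\G_m)$ for $n\neq m$, this produces an infinite family of pairwise measure equivalent icc property (T) groups with pairwise non-isomorphic group von Neumann algebras, answering Shlyakhtenko's question in the property (T) setting.

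\textbf{Main obstacle.} The genuinely nontrivial input is Theorem \ref{nonisomsemidirect} itself, which is already proved above; granting it, the only subtlety here is the passage from the von Neumann algebraic isomorphism to a contradiction at the group level, i.e.\ verifying that the group isomorphism $\delta$ supplied by Theorem \ref{nonisomsemidirect} cannot exist when $s\neq t$. The crux is the purely group-theoretic fact that $Q_1^s\times Q_2 \cong Q_1^t\times Q_2$ implies $Q_1^s\cong Q_1^t$ (uniqueness of direct product decompositions into indecomposables with trivial center, via a Krull–Remak–Schmidt/Kurosh-type argument for icc groups) together with co-Hopfianness of the $Q_1^s$'s to upgrade an abstract isomorphism into equality of indices. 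One must be slightly careful to set up the chain $Q_1 \geqslant Q_1^1 \geqslant Q_1^2 \geqslant \cdots$ so that the indices $[Q_1:Q_1^s]$ are strictly increasing; this is immediate from infiniteness and residual finiteness of $Q_1$.
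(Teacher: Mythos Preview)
Your proof is correct and follows essentially the same route as the paper: apply Theorem \ref{nonisomsemidirect} to get a group isomorphism $Q_s\cong Q_t$, use the uniqueness of direct product decompositions for icc hyperbolic groups to extract $Q_1^s\cong Q_1^t$, and then invoke co-Hopfianness (via \cite{Pr76} or Sela's theorem for one-ended hyperbolic groups) to force $s=t$; part 2) follows from the standard fact that finite index subgroups are measure equivalent. The only cosmetic difference is that the paper uses the nested structure $Q_1^t\leqslant Q_1^s$ together with co-Hopfianness directly to conclude $Q_1^t=Q_1^s$, whereas you phrase it via equality of indices and a strictly decreasing chain; both formulations are equivalent once the chain is assumed to consist of distinct subgroups, which is implicit in ``infinite family''.
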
       

\begin{proof} 1) Assume that $\mathcal L(\G_s)\cong \mathcal L(\G_l)$. Notice that $Q_2$, $Q_1^s$, $Q_1^l$ are torsion free, residually finite property (T) groups. Thus applying Theorem \ref{nonisomsemidirect} we get in particular that $Q_s\cong Q_l$. However since $Q_2$, $Q^s_1$, and $Q_1^l$ are icc hyperbolic, this further implies that $Q^s_1\cong Q^l_1$. However by \cite{Pr76} or the co-hopfian property of one ended hyperbolic groups this implies that $s=l$ and the proof follows.\\
2) As $[\G:\G_n]< \infty$ , $\G_n$ is measure equivalent to $\G$, and hence $\G_n$ is measure equivalent to $\G_m$ for all $n,m \in \mathbb N$. The rest follows from part 1). 
\end{proof}
\vskip 0.06in

\noindent \textbf{Notation} Denote by $\mathcal {ST}$ denote the family of all icc, torsion free, residually finite property (T) groups.\\ 

For further use we record the following elementary result. Its proof is left to the reader.
\begin{proposition} \label{reqdgps}
	Fix $Q$ to be an icc, torsion free, residually finite, hyperbolic property (T) group. For instance, $Q$ can be chosen to be a uniform lattice in $Sp(n,1)$ for $n \geq 2$. Then the family $\mathcal{ST'}= \{G \times Q : G \in \mathcal{ST}\}$ consists of pairwise non-isomorphic groups. 
\end{proposition}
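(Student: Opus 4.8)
To prove Proposition~\ref{reqdgps} it suffices to establish the following cancellation property: for $G_1,G_2\in\mathcal{ST}$, if $G_1\times Q\cong G_2\times Q$ then $G_1\cong G_2$. So I would fix an isomorphism $\phi\colon G_1\times Q\to G_2\times Q$ and set $M=G_2\times Q$, $G=G_2\times\{1\}$, $R=\{1\}\times Q$, so that $M=G\times R$ internally, and also $A=\phi(G_1\times\{1\})$, $B=\phi(\{1\}\times Q)$, giving a second internal decomposition $M=A\times B$ with $A\cong G_1$ and $B\cong Q$. Since $G_1,G_2,Q$ are icc, the subgroups $A,B,G,R$ are all centreless; hence $M$ is centreless and $C_M(B)=A$, $C_M(A)=B$, $C_M(G)=R$, $C_M(R)=G$. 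The target is $A\cong G$.

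The first main step is to locate $B$ inside $M=G\times R$. Writing $\pi_G,\pi_R$ for the two coordinate projections, a direct commutator computation using that $B\trianglelefteq M$ shows $[\pi_R(B),R]\subseteq B\cap R$ and $[\pi_G(B),G]\subseteq B\cap G$. In particular $B\cap G$ and $B\cap R$ are normal in $B$, commute, meet trivially (they lie in $G\cap R=\{1\}$), and $\pi_R(B)/(B\cap R)$ is abelian. Goursat's lemma applied to $B\le\pi_G(B)\times\pi_R(B)$ then gives $B/\bigl((B\cap G)\times(B\cap R)\bigr)\cong\pi_R(B)/(B\cap R)$, which is therefore an abelian quotient of $B\cong Q$; since $Q$ has property (T) this quotient is finite. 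Thus $(B\cap G)\times(B\cap R)$ is a finite index subgroup of $B\cong Q$, so it is torsion free and hyperbolic. A torsion free hyperbolic group cannot be a direct product of two infinite groups (it would contain $\mathbb{Z}^2$), and a finite torsion free group is trivial; hence one of $B\cap G$, $B\cap R$ is trivial.

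The second step disposes of the two cases. If $B\cap R=\{1\}$, the above isomorphism identifies $\pi_R(B)$ with a finite group; as $\pi_R(B)\le R\cong Q$ is torsion free, $\pi_R(B)=\{1\}$, i.e.\ $B\subseteq G$. Then $R=C_M(G)\subseteq C_M(B)=A$, and the usual coordinate bookkeeping in the centreless group $M$ (every $a\in A$ written in $G\times R$ coordinates has its $G$-component in $A\cap G$, and every $g\in G$ written in $A\times B$ coordinates has its $A$-component in $A\cap G$) yields internal decompositions $A=(A\cap G)\times R$ and $G=(A\cap G)\times B$; since $R\cong B\cong Q$ this gives $A\cong(A\cap G)\times Q\cong G$. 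Symmetrically, if $B\cap G=\{1\}$ then $B\subseteq R$, hence $G=C_M(R)\subseteq C_M(B)=A$, and one gets $A=G\times(A\cap R)$ together with $R=B\times(A\cap R)$; as $R\cong Q$ is directly indecomposable and $B\neq\{1\}$ we conclude $A\cap R=\{1\}$, so $A=G$. In every case $G_1\cong A\cong G\cong G_2$.

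I expect the only non‑formal ingredients to be the two standard geometric group theory facts used above — that a torsion free hyperbolic group contains no $\mathbb{Z}^2$ (hence is directly indecomposable) and that a finite index subgroup of a hyperbolic group is hyperbolic — together with the property (T) fact that abelian quotients are finite; all of these are classical, so the argument is essentially self‑contained. The rest is elementary but slightly fiddly bookkeeping (Goursat's lemma and the internal direct product manipulations in the centreless group $M$), which should be checked carefully but requires no new idea. One could instead invoke co‑Hopficity of the one‑ended torsion free hyperbolic group $Q$ at the last step, but the formulation above avoids that heavier input.
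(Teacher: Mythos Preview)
Your argument is correct. The paper itself offers no proof of this proposition --- it records it as ``elementary'' and leaves it to the reader --- so there is nothing to compare against; what you have written is a complete and clean realization of the cancellation statement $G_1\times Q\cong G_2\times Q\Rightarrow G_1\cong G_2$. The Goursat-type analysis of $B$ inside $G\times R$, the use of property~(T) to force the abelian quotient $\pi_R(B)/(B\cap R)$ to be finite, and the appeal to the absence of $\mathbb{Z}^2$ in torsion free hyperbolic groups to kill one of $B\cap G$, $B\cap R$ are all correctly executed, as is the internal-direct-product bookkeeping in the two cases. The only external inputs are exactly the classical ones you flag (finite abelianization for property~(T) groups; no $\mathbb{Z}^2$ subgroup and quasi-convexity of finite index subgroups in hyperbolic groups), so the proof is self-contained at the level the paper intends.
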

Finally, we present the main application of this section:

\begin{corollary} \label{unctlblet}
	Let $\{Q_{\iota}\}_{\iota \in \mathcal I}$ be an infinite family of pairwise nonisomorphic groups in $\mathcal{ST'}$. Consider the semidirect products $N_{\iota_1} \rtimes_{\sigma_1} Q_{\alpha}$, $N_{\iota_2} \rtimes_{\sigma_2} Q_{\iota} \in \mathcal Rip_T(Q_{\iota})$ for every $\iota \in \mathcal I$. Consider the canonical semidirect product $\G_\iota :=(N_{\iota_1} \times N_{\iota_2}) \rtimes_{\sigma_1\times \sigma_2} Q_{\iota}$ corresponding to the diagonal action $\sigma_1\times \sigma_2$. Then $\{\mathcal L(\G_{\iota})\,|\, \iota \in \mathcal I\}$ is an infinite family of pairwise nonisomorphic group $\rm II_1$ factors with property (T). 
\end{corollary}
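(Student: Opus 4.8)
The plan is to deduce Corollary \ref{unctlblet} directly from Theorem \ref{nonisomsemidirect} together with Proposition \ref{reqdgps}. First I would observe that each $\G_\iota$ has property (T): indeed $N_{\iota_1}\times N_{\iota_2}$ has property (T) as a product of property (T) groups (the fibers $N_{\iota_j}$ come from $\mathcal Rip_T(Q_\iota)$, hence are quotients of a property (T) hyperbolic group $H$, hence have (T)), and $Q_\iota\in\mathcal{ST'}$ has property (T); since property (T) is closed under extensions \cite[Section 1.7]{BdlHV00}, the semidirect product $\G_\iota=(N_{\iota_1}\times N_{\iota_2})\rtimes_{\sigma_1\times\sigma_2}Q_\iota$ has property (T), and therefore so does $\mathcal L(\G_\iota)$ by \cite{CJ}. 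Moreover each $\G_\iota$ is icc, as noted in Notation \ref{semidirectt}, so $\mathcal L(\G_\iota)$ is a $\rm II_1$ factor.

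Next I would fix $\iota,\kappa\in\mathcal I$ and suppose $\Theta:\mathcal L(\G_\iota)\to\mathcal L(\G_\kappa)$ is a $\ast$-isomorphism. Write $Q_\iota=Q_{\iota}'\times Q$ and $Q_\kappa=Q_{\kappa}'\times Q$ according to the definition of $\mathcal{ST'}$ in Proposition \ref{reqdgps}, where $Q$ is the fixed icc, torsion free, residually finite, hyperbolic property (T) group and $Q_\iota',Q_\kappa'\in\mathcal{ST}$. Then $\G_\iota=(N_{\iota_1}\times N_{\iota_2})\rtimes (Q_\iota'\times Q)$ and similarly for $\kappa$, with all four "acting factors" $Q_\iota',Q,Q_\kappa',Q$ lying in $\mathcal{ST}$ (icc, torsion free, residually finite, property (T)). Hence Theorem \ref{nonisomsemidirect} applies with $(Q_1,Q_2)=(Q_\iota',Q)$ and $(P_1,P_2)=(Q_\kappa',Q)$, and it yields in particular a group isomorphism $\delta:Q_\iota\to Q_\kappa$, i.e.\ $Q_\iota'\times Q\cong Q_\kappa'\times Q$.

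The final step is to conclude $\iota=\kappa$. By Proposition \ref{reqdgps} the family $\mathcal{ST'}=\{G\times Q : G\in\mathcal{ST}\}$ consists of pairwise non-isomorphic groups, so $Q_\iota\cong Q_\kappa$ forces $Q_\iota=Q_\kappa$ as members of $\{Q_\iota\}_{\iota\in\mathcal I}$, and since this family was chosen to consist of pairwise nonisomorphic groups we get $\iota=\kappa$. Therefore $\mathcal L(\G_\iota)\cong\mathcal L(\G_\kappa)$ implies $\iota=\kappa$, which is exactly the assertion that $\{\mathcal L(\G_\iota)\,|\,\iota\in\mathcal I\}$ is an infinite family of pairwise nonisomorphic property (T) group $\rm II_1$ factors.

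The only mild subtlety — and the one place I would be careful — is verifying that Theorem \ref{nonisomsemidirect} is genuinely applicable: one must check that the groups $N_{\iota_j}\rtimes Q_\iota$ lie in $\mathcal Rip_T(Q_\iota)$ with $Q_\iota$ satisfying the hypotheses of that theorem (icc, torsion free, residually finite, property (T)), which holds because a direct product of groups from $\mathcal{ST}$ is again icc, torsion free, residually finite and property (T). Everything else is a routine invocation of the quoted results, so there is no real obstacle; the work was already done in Theorem \ref{nonisomsemidirect} and Proposition \ref{reqdgps}. I would also remark, as the authors do in the final remarks after Theorem \ref{semidirectprodreconstruction}, that these factors are not solid, admit no tensor decomposition, and have no Cartan subalgebra, which distinguishes the family from the previously known ones of \cite{CH89,OP03}.
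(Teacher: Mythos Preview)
Your proposal is correct and follows exactly the paper's approach: the paper's proof is the single sentence ``This follows directly from Theorem~\ref{nonisomsemidirect} and Proposition~\ref{reqdgps}'', and you have simply unpacked that reference, verifying the hypotheses of Theorem~\ref{nonisomsemidirect} and explaining why the resulting isomorphism $Q_\iota\cong Q_\kappa$ forces $\iota=\kappa$. The additional remarks you make about property~(T), icc-ness, and the structural features of the factors are accurate elaborations but not departures from the paper's argument.
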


\begin{proof}
	This follows directly from Theorem~\ref{nonisomsemidirect} and Proposition~\ref{reqdgps}
\end{proof}
The authors strongly believe the family $\mathcal {ST}$ consists of uncountably many pairwise nonisomorphic groups. In this scenario, Corollary~\ref{unctlblet} would provide an explicit family of uncountably many non-isomorphic property (T) group von Neumann algebras. However, we were unable to find in the literature a reference for whether $\mathcal {ST}$ contains uncountably many nonisomorphic groups. Therefore we leave the following as an open question. 

\begin{question} Find examples of uncountably many non-isomorphic icc property (T) groups $G$ that give non-stably isomorphic  II$_1$ factors $\mathcal L(G)$.
\end{question}

%%%%%%%%%%%%%%%%%%%%%%%%%%%%%%%%%%%%%%%%%%%%%%%%%%%%%%%%%
%%%%%%%%%%           CLASSIFICATION OF CARTAN SUBALGEBRAS            %%%%%%%%%%%%
%%%%%%%%%%%%%%%%%%%%%%%%%%%%%%%%%%%%%%%%%%%%%%%%%%%%%%%%%%%
%%%%%%%%%%%%%%%%%%%%%%%%%%%%%%%%%%%%%%%%%%%%%%%%%%%%%%%%%%%%
%%%%%%%%%%%%%%%%%%%%%%%%%%%%%%%%%%%%%%%%%%%%%%%%%%%

\section{Cartan-rigidity for von Neumann algebras of groups in $\mathcal {R}ip (Q)$}

In this last section we classify the Cartan subalgebras in II$_1$ factors associated with the groups in class $\mathcal {R}ip_{T} (Q)$ and their free ergodic pmp actions on probability spaces (see Theorem~\ref{uniquecartan1}, and Corollary~\ref{cor:nocartan}). Our proofs rely in an essential way on the methods introduced in \cite{PV12} and \cite{CIK13} as well as on the group theoretic Dehn filling discussed in Section \ref{Rip}.  For convenience we include detailed proofs.

First we establish the following general intertwining result regarding crossed product algebras arising from groups in $\mathcal Rip(Q)$.

\begin{theorem}\label{uniquecartan1} Let  $Q=Q_1\times Q_2$ where $Q_i$ are residually finite groups.  For every $i=1,2$ let  $\Gamma_i=N_i\rtimes_{\sigma_i} Q\in \mathcal Rip(Q)$ and denote by $\Gamma = (N_1\times N_2)\rtimes_\sigma Q$ the semidirect product associated with the diagonal action $\sigma=(\sigma_1,\sigma_2): Q \rightarrow Aut(N_1\times N_2)$. Let $\mathcal P$ be a von Neumann algebra together with an action $\Gamma\ca \mathcal P$ and denote by $\mathcal M = \mathcal P \rtimes \Gamma$. Let $p\in \M$ be a projection and let $\A\subset p\M p$ be a masa whose normalizer $\mathscr N_{p\M p}(\A)''\subseteq p \M p$ has finite index.  Then  $\mathcal A \prec_{\mathcal M} \mathcal P$.     
\end{theorem}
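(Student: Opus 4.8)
The plan is to realize $\mathcal M$ as a crossed product by each of the two relatively hyperbolic groups $\G_1,\G_2$ produced by the Rips constructions, to run the group-theoretic Dehn filling of \cite{Os06,DGO11} together with the Popa--Vaes dichotomy \cite{PV12} in each of these two presentations, and finally to combine the two resulting intertwinings by exploiting the almost malnormality of $Q$ in $\G$. First I would record two crossed product decompositions of $\mathcal M$. Since $N_2$ is normal in $\G$ with $\G/N_2\cong\G_1=N_1\rtimes_{\sigma_1}Q$ and the quotient map $\G\to\G_1$ admits the section $nq\mapsto(n,1)q$, one has $\G=N_2\rtimes\G_1$ and hence $\mathcal M=\mathcal Q_1\rtimes\G_1$ with $\mathcal Q_1:=\mathcal P\rtimes N_2$; by Theorem \ref{beleosinripsconstr} the group $\G_1$ is hyperbolic relative to $\{Q\}$, $Q$ is finitely generated and residually finite, and $\G_1$ is non-amenable as it contains the infinite property (T) group $N_1$. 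Symmetrically $\mathcal M=\mathcal Q_2\rtimes\G_2$ with $\mathcal Q_2:=\mathcal P\rtimes N_1$ and $\G_2=N_2\rtimes_{\sigma_2}Q$ hyperbolic relative to $\{Q\}$. Moreover, since the diagonal action $Q\curvearrowright N_1\times N_2$ has trivial stabilizers, $Q$ is almost malnormal in $\G$; throughout I regard $\G_1=\{(a,1)q\}$, $\G_2=\{(1,b)q\}$ and $Q=\{(1,1)q\}=\G_1\cap\G_2$ as subgroups of $\G$.

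Next I would run the Dehn filling argument in each presentation. Being a masa, $\mathcal A$ is amenable, in particular amenable relative to $\mathcal Q_i$ for $i=1,2$. Fixing $i$ and applying Osin's Dehn filling (Theorem \ref{dehnfilling}) to $Q\leqslant\G_i$ produces a non-elementary hyperbolic group $F_i$, an epimorphism $\pi_i:\G_i\to F_i$ whose kernel is a free product of $\G_i$-conjugates of a finite index normal subgroup of $Q$, together with the associated trace preserving embedding $\Pi_i:\mathcal M\to\mathcal M\bar\otimes\El(F_i)$ of \cite[Notation 3.3]{CIK13}. As $\Pi_i(\mathcal A)$ is amenable and $F_i$ is bi-exact and weakly amenable, \cite[Theorem 1.4]{PV12} gives that either $\Pi_i(\mathcal A)\prec_{\mathcal M\bar\otimes\El(F_i)}\mathcal M\otimes 1$, or $\mathscr N_{\Pi_i(p)(\mathcal M\bar\otimes\El(F_i))\Pi_i(p)}(\Pi_i(\mathcal A))''$ is amenable relative to $\mathcal M\otimes 1$. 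In the second case, since $\Pi_i(\mathscr N_{p\mathcal Mp}(\mathcal A)'')$ has finite index in $\Pi_i(p\mathcal Mp)$, it follows, via the descent developed in \cite[Section 3]{CIK13}, that $\mathcal M$ is amenable relative to $\mathcal Q_i\rtimes\ker\pi_i$ inside $\mathcal M$; but $\mathcal M=\mathcal Q_i\rtimes\G_i$ and $\G_i$ surjects onto $F_i$ with kernel $\ker\pi_i$, so this would force $F_i$ to be amenable, a contradiction. Hence the first alternative holds, and by the intertwining descent \cite[Proposition 3.4]{CIK13} we obtain $\mathcal A\prec_{\mathcal M}\mathcal Q_i\rtimes\ker\pi_i$. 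Using that $\ker\pi_i$ is a free product of conjugates of a finite index subgroup of $Q$, that $\mathcal A$ is diffuse, and that $\prec_{\mathcal M}$ is invariant under unitary conjugation of the target algebra, one then upgrades this to $\mathcal A\prec_{\mathcal M}\mathcal Q_i\rtimes Q$. Translating back into $\mathcal M=\mathcal P\rtimes\G$, this reads $\mathcal A\prec_{\mathcal M}\mathcal P\rtimes\G_2$ (for $i=1$) and $\mathcal A\prec_{\mathcal M}\mathcal P\rtimes\G_1$ (for $i=2$).

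Finally I would combine these two conclusions. Since $\mathscr N_{p\mathcal Mp}(\mathcal A)''$ has finite index in $p\mathcal Mp$, a routine argument (cf. \cite[Lemma 2.4]{DHI16}) upgrades $\mathcal A\prec_{\mathcal M}\mathcal P\rtimes\G_1$ to $\mathcal A\prec^{s}_{\mathcal M}\mathcal P\rtimes\G_1$, so that Lemma \ref{intertwiningintersection}, applied to $\mathcal A\prec^{s}\mathcal P\rtimes\G_1$ and $\mathcal A\prec\mathcal P\rtimes\G_2$, furnishes $h\in\G$ with $\mathcal A\prec_{\mathcal M}\mathcal P\rtimes(\G_1\cap h\G_2h^{-1})$. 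Using that $N_1$ and $N_2$ are normal in $\G$ and that $\G_1\cap\G_2=Q$, a direct computation shows $\G_1\cap h\G_2h^{-1}=aQa^{-1}$ for some $a\in N_1$, hence $\mathcal A\prec_{\mathcal M}\mathcal P\rtimes Q$, which we may again take to be a strong intertwining. Assume now, towards a contradiction, that $\mathcal A\nprec_{\mathcal M}\mathcal P$. Then the corner $\mathcal A_0\subseteq q(\mathcal P\rtimes Q)q$ produced by $\mathcal A\prec^{s}_{\mathcal M}\mathcal P\rtimes Q$ satisfies $\mathcal A_0\nprec_{\mathcal P\rtimes Q}\mathcal P$ (otherwise $\mathcal A\prec_{\mathcal M}\mathcal P$); since $Q$ is almost malnormal in $\G$, Lemma \ref{malnormalcontrol} yields $\mathscr{QN}^{(1)}_{\mathcal M}(\mathcal A_0)''\subseteq\mathcal P\rtimes Q$. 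Transporting the normalizer of $\mathcal A$ through the intertwiner (cf. \cite[Lemma 2.6]{CI17}) then gives $\mathscr N_{p\mathcal Mp}(\mathcal A)''\prec_{\mathcal M}\mathcal P\rtimes Q$, and so, by the finite index hypothesis, $\mathcal M\prec_{\mathcal M}\mathcal P\rtimes Q$ --- which is impossible because $[\G:Q]=\infty$. Therefore $\mathcal A\prec_{\mathcal M}\mathcal P$, as desired.

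The step I expect to be the main obstacle is the Dehn filling argument of the second paragraph: one must set up the embedding $\Pi_i$ so that its interaction with Popa's intertwining and with relative amenability is transparent enough that \emph{both} alternatives of \cite[Theorem 1.4]{PV12} descend correctly from $\mathcal M\bar\otimes\El(F_i)$ back to $\mathcal M$, and one must pass from the free-product subgroup $\ker\pi_i$ down to the peripheral subgroup $Q$. This is exactly the relatively hyperbolic analogue of the Popa--Vaes theorem for hyperbolic groups, and relies essentially on the machinery of \cite{PV12} and \cite{CIK13}; the remaining manipulations are the standard intertwining bookkeeping already used in Sections \ref{superrigidRIPS} and \ref{sec4}.
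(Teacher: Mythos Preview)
Your overall strategy is close to the paper's, but there is a genuine gap at the step where you write ``Using that $\ker\pi_i$ is a free product of conjugates of a finite index subgroup of $Q$, that $\mathcal A$ is diffuse, and that $\prec_{\mathcal M}$ is invariant under unitary conjugation of the target algebra, one then upgrades this to $\mathcal A\prec_{\mathcal M}\mathcal Q_i\rtimes Q$.'' Diffuseness and conjugation invariance alone do \emph{not} let you descend from $\mathcal Q_i\rtimes\ker\pi_i$ to $\mathcal Q_i\rtimes Q$. Writing $\ker\pi_i=K_1\ast K_2$, the algebra $\mathcal Q_i\rtimes\ker\pi_i$ is the amalgamated free product $(\mathcal Q_i\rtimes K_1)\ast_{\mathcal Q_i}(\mathcal Q_i\rtimes K_2)$, and $\mathcal A\prec_{\mathcal M}\mathcal Q_i\rtimes\ker\pi_i$ does not by itself force $\mathcal A$ into any single amalgam factor $\mathcal Q_i\rtimes R_i^t$; the conjugates $R_i^t$ generate $\ker\pi_i$ freely, so their union is \emph{not} a subgroup to which you can just restrict. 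A second structural input is needed: first use \cite[Proposition 3.6]{CIK13} to produce a masa $\mathcal B$ with finite-index normalizer \emph{inside} a corner of $\mathcal Q_i\rtimes\ker\pi_i$, and then run the Ioana/Vaes free-product dichotomy \cite{Io12,Va13} (equivalently, the tensor embedding $\Delta:\mathcal N\to\mathcal N\bar\otimes\El(K_1\ast K_2)$) to conclude. This is exactly the second half of the paper's proof, and it is not a formality---two alternatives (relative amenability of the normalizer over $\mathcal Q_i$, and intertwining of the normalizer into a single amalgam factor) each require their own contradiction argument using the finite-index hypothesis.

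Once that gap is filled, your route and the paper's diverge only in the endgame. The paper's free-product step in fact yields the stronger conclusion $\mathcal A\prec^s_{\mathcal M}\mathcal P\rtimes N_{\hat i}$ for both $i$, after which $N_1\cap N_2=\{1\}$ together with \cite[Lemma 2.8(2)]{DHI16} gives $\mathcal A\prec_{\mathcal M}\mathcal P$ in one line. Your finish---intersecting $\G_1$ and $\G_2$ down to $Q$ via Lemma \ref{intertwiningintersection} and then exploiting almost malnormality of $Q$ in $\G$ through Lemma \ref{malnormalcontrol}---is workable but longer, and the phrase ``transporting the normalizer through the intertwiner'' needs the same care as in the proof of Theorem \ref{controlprodpropt2}: Lemma \ref{malnormalcontrol} controls the one-sided quasi-normalizer of the transferred corner $\mathcal A_0$, and one has to argue that elements of $\mathscr N_{p\mathcal Mp}(\mathcal A)$ give rise to such quasi-normalizing elements via the intertwining partial isometry.
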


\begin{proof} Since $\Gamma_i =N_i\rtimes Q$ is hyperbolic relative to a residually finite group $Q$, then by Theorem \ref{dehnfilling} there exist a non-elementary hyperbolic group $H_i$, a subset $T_i\subseteq N_i$ with $|T_i|\geq 2$ and a normal subgroup $R_i\lhd Q$ of finite index such that we have a short exact sequence $$1\rightarrow \ast_{t\in T_i}R_i^t\hookrightarrow \Gamma_i\overset{\varepsilon_i}{\twoheadrightarrow} H_i\rightarrow 1. $$ 
	In particular there are infinite groups $K_1,K_2$ so that $\ast_{t\in T_i}R_i^t=K_1\ast K_2$.
	
	Denote by $\pi_i: \Gamma\twoheadrightarrow \Gamma_i $ the canonical projection given by $\pi_i((n_1,n_2) q)=n_i q$, for all $(n_1,n_2)q\in (N_1 \times N_2)\rtimes Q=\Gamma $. Then for every $i=1,2$ consider the epimorphism  $\rho_i= \varepsilon_i\circ \pi_i: \Gamma \rightarrow H_i$.  
	Following \cite[Section 3]{CIK13} consider the $\ast$-embedding $\Delta^{\rho_i}: \M \rightarrow \M \bar\otimes \El(H_i):=\tilde \M_i$ given by $\Delta^{\rho_i}(xu_g )= x u_g \otimes v_{\rho_i(g)}$ for all $x\in \M$, $g\in \Gamma$. Here $(u_g)_{g\in \Gamma}$ and $(v_{h})_{h\in H_i}$ are the canonical group unitaries in $\P\rtimes \Gamma$ and $\El(H_i)$, respectively.  As $\mathcal A$ is amenable, \cite[Theorem 1.4]{PV12} implies either a) $\Delta^{\rho_i}(\A)\prec_{\tilde \M_i} \M\otimes 1$ or b) the normalizer  $\Delta^{\rho_i}(\mathscr N_{p\M p}(\A)'')$ is amenable relative to $\M \otimes 1$ inside $\tilde \M_i$. Assume b) holds. As $\mathscr N_{p\M p}(\A)''\subseteq p \M p$ has finite index it follows that $\Delta^{\rho_i}(p\mathcal M p)$ is amenable relative to $\M \otimes 1$ inside $\tilde \M_i$. However, using \cite[Proposition 3.5]{CIK13} this further entails that $H_i$ is amenable, a contradiction. Thus a) must hold and using \cite[Proposition 3.4]{CIK13} we get that $\A \prec_\M \P\rtimes \ker(\rho_i)$. Let $\mathcal N=\P \rtimes \ker(\rho_i)$ and using \cite[Proposition 3.6]{CIK13} we can find a projection $0\neq q\in \mathcal N$, a masa $\B\subset q \mathcal N q$ with $\Q:=\mathscr N_{q \mathcal N q}(\B)''\subseteq q\mathcal N q$ has finite index. In addition one can find  projections $0\neq p_0 \in \A$, $0\neq q'_0\in \B'\cap p\M p$ and a unitary $u\in \M$ such that $u(\A p_0 )u^*=\B p_0$.
	
	To this end observe the restriction homomorphism $\pi_i: \ker(\rho_i)\rightarrow K_1\ast K_2$ is an epimorphism with $\ker(\pi_i)= N_{\hat i}$. As before, consider the $\ast$-embedding  $\Delta^{\pi_i}: \mathcal N \rightarrow \mathcal N \bar\otimes \El(K_1\ast K_2)$ given by given by $\Delta^{\pi_i}(xu_g )= x u_g \otimes v_{\pi_i(g)}$ for all $x\in \P$, $g\in \ker(\rho_i)$. Denote by $\tilde {\mathcal N}_i:= \mathcal N \bar\otimes \El(\ker(\rho_i))$. Also fix $0\neq z\in \mathcal Z (\Q'\cap q\mathcal N q)$. Since $\Delta^{\pi_i}(\B z)\subset \mathcal N \bar \otimes \El(K_1\ast K_2)$ is amenable then using $\cite{Io12,Va13}$ one of the following must hold: c) $\Delta^{\pi_i}(\Q z)$ is amenable relative to $\mathcal N \otimes 1$ inside $\tilde {\mathcal N}_i$; d) $ \Delta^{\pi_i}(\Q z)  \prec_{\tilde {\mathcal N}_i} \mathcal N \bar\otimes \El(K_j)$ for some $j=1,2$; e) $\Delta^{\pi_i}(\mathcal B z) \prec_{\tilde {\mathcal N}_i} \mathcal N \otimes 1$.
	
	Assume c) holds. As $\Q\subseteq q\mathcal N q$ is finite index so is $\Q z\subseteq z\mathcal N z$ and \cite[Lemma 2.4]{CIK13} implies that $z\mathcal N z \prec^s \Q z$ and using \cite[Proposition 2.3 (3)]{OP07} we get that $\Delta^{\pi_i}(z\mathcal N z)$ is amenable relative $\mathcal N \otimes 1$ inside $\tilde {\mathcal N}_i$. Thus \cite[Proposition 3.5]{CIK13} implies that $K_1\ast K_2$ is amenable, a contradiction.
	Assume d) holds. By \cite[Proposition 3.4]{CIK13} we have that $ \Q z  \prec \mathcal P \rtimes (\pi_i )^{-1}(K_j)$ and using \cite[Lemma 2.4 (3)]{DHI16} one can find a projection $0\neq r\in \mathcal Z(\Q z'\cap z\mathcal N z)$ such that $\Q r \prec^s \P \rtimes  (\pi_i )^{-1}(K_j)$. Since $\Q z \subseteq z\mathcal N z$ is finite index then so is $\Q r \subseteq r \mathcal N r$ and thus $r\mathcal N r \prec_{\mathcal N} \Q r$. Therefore using \cite[Lemma 2.4(1)]{DHI16} (or \cite[Remark 3.7]{Va07}) we conclude that $\mathcal N\prec  \mathcal P \rtimes (\pi_i )^{-1}(K_j)$. However this implies that $\pi^{-1}(K_j) \leqslant \ker(\rho_i)$ is finite index, a contradiction. Hence e) must hold and using \cite[Proposition 3.4]{CIK13} we further get that $\mathcal B z \prec_{\mathcal N} \P\rtimes N_{\hat i}$. Since this holds for all $z$ we conclude that $\mathcal B \prec^s_{\mathcal N} \P\rtimes N_{\hat i}$. This combined with the prior paragraph clearly implies that $\A \prec \P \rtimes N_{\hat i}$. 
	
	Since all the arguments above still work and the same conclusion holds if one replaces $\A$ by $\A a$ for any projection $0\neq a\in \A$ one actually has $\A \prec^s_\M \P \rtimes N_{\hat i}$. Since this holds for all $i=1,2$, using \cite[Lemma 2.8(2)]{DHI16} one concludes that $\A \prec_\M \P$, as desired. \end{proof}

\begin{corollary} \label{cor:nocartan} Let $\Gamma$ be a group as in the previous theorem and let $\Gamma \curvearrowright X$ be a free ergodic pmp action on a probability space. Then the following hold:
	\begin{enumerate}
		\item The crossed product $L^\infty(X)\rtimes \Gamma$ has unique Cartan subalgebra;
		\item The group von Neumann algebra $\El(\Gamma)$ has no Cartan subalgebra.
	\end{enumerate}
	
\end{corollary}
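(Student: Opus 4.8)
\textbf{Proof proposal for Corollary \ref{cor:nocartan}.}

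The plan is to deduce both statements from Theorem \ref{uniquecartan1} applied in the two relevant settings. For part (1), one sets $\mathcal P=L^\infty(X)$ with the given action $\Gamma\curvearrowright X$, so that $\mathcal M=L^\infty(X)\rtimes\Gamma$. Let $\mathcal B\subseteq\mathcal M$ be any Cartan subalgebra. Since $\mathcal B$ is a masa whose normalizer generates all of $\mathcal M$, in particular $\mathscr N_{\mathcal M}(\mathcal B)''=\mathcal M$ has ``finite index'' (index one) over itself, so the hypotheses of Theorem \ref{uniquecartan1} are met with $p=1$. Hence $\mathcal B\prec_{\mathcal M} L^\infty(X)=\mathcal P$. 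Now $L^\infty(X)$ is itself a Cartan (hence regular masa) in $\mathcal M$, and the action is free and ergodic, so $L^\infty(X)\rtimes\Gamma$ is a factor; one then invokes the standard conjugacy criterion for Cartan subalgebras: if $\mathcal B$ and $\mathcal C$ are two Cartan subalgebras of a II$_1$ factor $\mathcal M$ with $\mathcal B\prec_{\mathcal M}\mathcal C$, then $\mathcal B$ and $\mathcal C$ are unitarily conjugate (this is \cite[Theorem A.1]{Po01}-type statement, or Popa--Vaes \cite{PV12}). Applying this with $\mathcal C=L^\infty(X)$ gives a unitary $u\in\mathcal M$ with $u\mathcal B u^*=L^\infty(X)$, which is exactly the uniqueness of Cartan subalgebra.

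For part (2), one takes $\mathcal P=\mathbb C$ (the trivial action of $\Gamma$ on $\mathbb C$), so that $\mathcal M=\mathcal L(\Gamma)$. Suppose toward a contradiction that $\mathcal A\subseteq\mathcal L(\Gamma)$ is a Cartan subalgebra. Again $\mathcal A$ is a masa with $\mathscr N_{\mathcal L(\Gamma)}(\mathcal A)''=\mathcal L(\Gamma)$, so Theorem \ref{uniquecartan1} (with $p=1$, $\mathcal P=\mathbb C$) yields $\mathcal A\prec_{\mathcal L(\Gamma)}\mathbb C$. But $\mathcal A$ is diffuse (it is a Cartan subalgebra of a II$_1$ factor, in particular has no minimal projections), and a diffuse von Neumann subalgebra can never embed into $\mathbb C$ inside any tracial von Neumann algebra — intertwining $\mathcal A\prec\mathbb C$ would force a corner of $\mathcal A$ to admit a nonzero $*$-homomorphism to $\mathbb C$, i.e.\ a minimal (atomic) piece, contradicting diffuseness. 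This contradiction shows no Cartan subalgebra exists.

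The only mild subtlety — and the single point worth checking carefully rather than the ``main obstacle'', since the corollary is essentially a bookkeeping consequence of the theorem — is making sure Theorem \ref{uniquecartan1} genuinely applies in both cases: one needs $Q=Q_1\times Q_2$ with $Q_i$ residually finite and each $\Gamma_i=N_i\rtimes_{\sigma_i}Q\in\mathcal Rip(Q)$, which is precisely the standing hypothesis on $\Gamma$ in the statement, and one needs the trivial and the $L^\infty(X)$ actions to be legitimate choices of $\Gamma\curvearrowright\mathcal P$, which they are. One should also record that $\mathcal L(\Gamma)$ and $L^\infty(X)\rtimes\Gamma$ are II$_1$ factors: the former because $\Gamma$ is icc (noted in the Rips preliminaries, since $N_i$ are icc and the construction forces $\Gamma$ icc), the latter because the action is free and ergodic. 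With these routine verifications in place the two statements follow immediately, so I would present the argument compactly, citing Theorem \ref{uniquecartan1}, the diffuseness obstruction for (2), and the Cartan conjugacy lemma of \cite{Po01,PV12} for (1).
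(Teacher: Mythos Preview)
Your proposal is correct and matches the paper's proof essentially line for line: apply Theorem \ref{uniquecartan1} with $\mathcal P=L^\infty(X)$ (resp.\ $\mathcal P=\mathbb C$), then invoke Popa's Cartan conjugacy criterion from \cite{Po01} for part (1) and the diffuseness obstruction for part (2). The paper's version is simply terser, omitting the routine verifications you spell out.
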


\begin{proof} 1. Let $\mathcal A \subset L^\infty(X)\rtimes \Gamma=:\mathcal M$ be a Cartan subalgebra. By Theorem \ref{uniquecartan1} we have that $\mathcal A \prec_{\mathcal M}L^\infty(X)$ and since $L^\infty(X)\subseteq \mathcal M$ is Cartan then \cite[Theorem]{Po01}  gives the conclusion.
	2. If $\mathcal A \subset \El(\Gamma)$ is a Cartan subalgebra then Theorem \ref{uniquecartan1} implies that $\mathcal A \prec \mathbb C 1$ which contradicts that $\mathcal A$ is diffuse.\end{proof}

\section*{Acknowledgments}
 The authors are grateful to Prof.\ Jesse Peterson for many useful suggestions. The authors would also like to thank Prof.\ Mikhail Ershov for many helpful comments, and especially for pointing us to the reference \cite{Pr76}.  Part of this work was completed while the third author was visiting the University of Iowa as part of the student exchange program under the NSF grants FRG-DMS-1854194 and FRG-DMS-1853989. He is grateful to the mathematics department there for their hospitality. 
 
 Last but not least the authors are grateful to the two anonymous referees for their numerous comments and suggestions which simplified some of the proofs and greatly improved the readability and the overall exposition of this paper.

\noindent
\textsc{Department of Mathematics, The University of Iowa, 14 MacLean Hall, Iowa City, IA 52242, U.S.A.}\\
\email {ionut-chifan@uiowa.edu} \\
\email{sayan-das@uiowa.edu}\\
\textsc{Department of Mathematics, Vanderbilt University, 1326 Stevenson Center, Nashville, TN 37240, U.S.A.}\\
\email{krishnendu.khan@vanderbilt.edu}


\begin{thebibliography}{banchod}
	\bibitem[AMO07]{AMO} G Arzhantseva, A Minasyan, D Osin, \textit{The SQ-universality and residual properties of relatively hyperbolic groups}, Journal of Algebra \textbf{315} (1), 165-177

\bibitem[BDLR19]{BDLR18} K. Bakshi, S. Das, Z. Liu and Y. Ren, \textit{An angle between intermediate subfactors and its rigidity}. Trans. Amer. Math. Soc. \textbf{371}(2019), 5973--5991.
	
\bibitem[BdlHV00]{BdlHV00} B. Bekka, P. de la Harpe, A. Valette, \textit{Kazhdan's Property (T)}, (New Mathematical Monographs). Cambridge: Cambridge University Press. doi:10.1017/CBO9780511542749

	\bibitem[BO06]{BO06} I. Belegradek, D. Osin, \textit{Rips construction and Kazhdan property (T)}, Groups, Geom., Dynam. {\bf 2} (2008), 1--12.

	\bibitem[B13]{B13} M. Berbec,\textit{ $W^{\ast}$-superrigidity for wreath products with groups having positive first $\ell^2$-Betti	number}, Int. J. Math. \textbf{26} (2015), no. 1, 1550003, 27.
	
	\bibitem[BV12]{BV12} M. Berbec and S. Vaes, \textit{$W^*$-superrigidity for group von Neumann algebras of left-right wreath products.}, Proc. Lond. Math. Soc. \textbf{108} (2014), 1116--1152.

\bibitem[BC15]{BC15}  R. Boutonnet and A. Carderi, \textit{Maximal amenable von Neumann subalgebras arising from maximal amenable subgroups}, Geom. Funct. Anal. \textbf{25} (2015), no. 6, 1688--1705. 

	\bibitem[BC14]{BC14} R. Boutonnet, A. Carderi, \textit{Maximal amenable subalgebras of von Neumann algebras associated with hyperbolic groups}, Math. Ann. \textbf{367} (2017), 1199--1216.
	
	\bibitem[BCI15]{BCI15} R. Boutonnet, I. Chifan, A. Ioana, \textit{$\rm II_1$ factors with non-isomorphic ultrapowers}, Duke Math. J. \textbf{166} (2017), no. 11, 2023-2051.
	
	\bibitem[BO08]{BO08} N. Brown, N. Ozawa, \textit{$C^*$-algebras and finite-dimensional approximations}, Grad. Stud. Math., 88.
		Amer. Math. Soc., Providence, RI, 2008.
		
	
		
		
	\bibitem[CFRW08]{CFRW08} J. Cameron, J. Fang, M. Ravichandran, and S. White, The radial masa in a free group factor is maximal injective. J. London. Math. Soc. (2), \textbf{82} (2010), 787--809.	
		
		\bibitem[CD18]{CD18} I. Chifan and S. Das, \textit{A remark on the ultrapower algebra of the hyperfinite factor}, Proc. Amer. Math. Soc. (to appear), arXiv preprint: http://arxiv.org/abs/1802.06628
	
	\bibitem[CD19]{CD19} I. Chifan and S. Das, \textit{Rigidity results for von Neumann algebras arising from mixing extensions of profinite actions of groups on probability spaces}, https://arxiv.org/abs/1903.07143.
	
	\bibitem[CDHK20]{CDHK20} I. Chifan, S. Das, C. Houdayer and K. Khan, \textit{Examples of Property (T) II$_1$ factors with trivial fundamental group}, arXiv:2003.08857
	
	\bibitem[CdSS15]{CdSS15} I. Chifan, R. de Santiago and T. Sinclair, \textit{$W^*$-rigidity for the von Neumann algebras of products of hyperbolic groups}, Geom. Funct. Anal., \textbf{26}(2016), 136--159.
	
		\bibitem[CdSS17]{CdSS17} I. Chifan, R. de Santiago and W. Suckpikarnon, \textit{Tensor product decompositions of $\rm II_1$ factors arising from extensions of amalgamated free product groups}, Comm. Math. Phys. \textbf{364} (2018), 1163--1194.
		
			\bibitem[CH89]{CH89} M. Cowling and U. Haagerup, \textit{Completely bounded multipliers of the Fourier algebra of	a simple Lie group of real rank one}. Invent. Math. \textbf{96} (1989), 507–549.
		\bibitem[CI09]{CI} I. Chifan, and A. Ioana, \textit{On a question of D. Shlyakhtenko}, Proc. of Amer. Math. Soc., \textbf{139},(2011), 1091--1093.
	
	\bibitem[CI17]{CI17} I. Chifan and A. Ioana, \textit{Amalgamated free product rigidity for group von Neumann algebras}, Adv. Math., \textbf{329}(2018), 819--850.
	
	\bibitem[CIK13]{CIK13} I. Chifan, A. Ioana and Y. Kida, \textit{$W^*$-superrigidity for arbitrary actions of central quotients of braid groups}, Math. Ann. \textbf{361} (2015), 563-582.
	
	\bibitem[CK15]{CK15} I. Chifan and Y. Kida, \textit{OE and W* superrigidity results for actions by surface braid groups}, Proc. Lond. Math. Soc. \textbf{111} (2015), 1431-1470.
	
		\bibitem[CKP14]{CKP14} I. Chifan, Y. Kida and S. Pant, \textit{Primeness results for von Neumann algebras associated with surface braid groups}, Int. Math. Res. Not. IMRN, \textbf{16} (2016), 4807--4848.
	
	\bibitem[CU18]{CU18} I. Chifan, B. Udrea, \textit{Some rigidity results for II$_1$ factors arising from wreath products of property (T) groups}, https://arxiv.org/abs/1804.04558. 
	

\bibitem[Ch78]{Ch78} H. Choda, \textit{A Galois correspondence in a von Neumann algebra}, Tohoku Math. J., \textbf{30} (1978), 491-504.

	\bibitem[Co76]{Co76} A. Connes, \textit{Classification of injective factors. Cases $\rm II_1$, $\rm II_{\infty}$,  III$_{\lambda}$, $\lambda \neq 1$}, Ann. of Math. (2), \textbf{104} (1976), no. 1, 73–115.
	
	\bibitem[CJ85]{CJ} A. Connes, VFR Jones, \textit{Property (T) for von Neumann algebras}, Bull. Lond. Math. Soc. \textbf{17} (1985), 57-62.
	
	
		
	
	\bibitem[DGO11]{DGO11} F. Dahmani, V. Guirardel and D. Osin, \textit{Hyperbolically embedded subgroups and rotating families in groups acting on hyperbolic spaces}, Memoirs of the Amer. Math. Soc., \textbf{245}(2017), no. 1156.
	
		\bibitem[Dar17]{DAR17} A. Darbinyan, \textit{Word and conjugacy problems in Lacunary hyperbolic groups}, arXiv preprint arXiv:1708.04591 (2017).
		
		\bibitem[D20]{D20} S. Das, \textit{New examples of Property (T) factors with trivial fundamental group and unique prime factorization}, arXiv preprint arXiv:2011.04487
		
	\bibitem[DHI16]{DHI16} D. Drimbe, D. Hoff and A. Ioana, \textit{Prime $\rm II_1$ factors arising from irreducible lattices in products of rank one simple Lie groups.} Journal für die reine und angewandte Mathematik (Crelles Journal), (2016).
	

	
	\bibitem[Ge03]{Ge03} L. Ge, \textit{On ``Problems on von Neumann algebras by R. Kadison, 1967'', Acta Math. Sin. (Engl. Ser.) 19 (2003), no. 3, 619--624. With a previously unpublished manuscript by Kadison}; International Workshop on Operator Algebra and Operator Theory (Linfen, 2001).
	
	\bibitem[Gr87]{Gr87} M. Gromov, \textit{ Hyperbolic Groups, "Essays in group theory",} Math. Sci. Res. Ins. Publ., vol 8, Springer, 1987, 75--263.
	
	\bibitem[Ho14]{Ho14} C. Houdayer, \textit{A class of II$_1$ factors with an exotic abelian maximal amenable subalgebra}. Trans. Amer. Math. Soc., \textbf{366} (2014), 3693--3707. 

	\bibitem[Io11]{Io11} A. Ioana,  \textit{$W^*$-superrigidity for Bernoulli actions of property (T) groups}. J. Amer. Math. Soc. \textbf{24} (2011), 1175--1226.

	
\bibitem[Io12]{Io12} A. Ioana, \textit{Classification and rigidity for von Neumann algebras. European Congress of Mathematics}, EMS (2013), 601--625.

\bibitem[Io18]{Io18icm} A. Ioana, \textit{Rigidity for von Neumann algebras}, Submitted to Proceedings ICM 2018. Preprint arXiv 1712.00151v1.

	\bibitem[Is16]{Is16} Y. Isono, \textit{On Fundamental Groups of Tensor Product II$_1$} Factors, J. Inst. Math. Jussieu, To appear.
	
	\bibitem[IPV10]{IPV10} A. Ioana, S. Popa, S. Vaes, \textit{ A class of superrigid group von Neumann algebras}, Ann. of Math. \textbf{178} (2013), 231-286. 
	
	 
	\bibitem[IPP05]{IPP05} A. Ioana, J. Peterson, S. Popa, \textit{Amalgamated free products of weakly rigid
		factors and calculation of their symmetry groups}, Acta Math. \textbf{200}(2008), 85--153.

	

		\bibitem[JS19]{JS19} Y. Jiang, A. Skalski, \textit{Maximal subgroups and von Neumann subalgebras with the Haagerup property}, arXiv preprint, 1903.08190v3, 2019.
	
	\bibitem[JS19v4]{JS19b} Y. Jiang, A. Skalski, \textit{Maximal subgroups and von Neumann subalgebras with the Haagerup property}, arXiv preprint, 1903.08190v4, 2019.
	
		\bibitem[K19]{K19} K. Khan, \textit{Subgroups of Lacunary Hyperbolic Groups and Free Products}, arXiv preprint, 2002.08540,   2020. 
	
	\bibitem[KV15]{KV15} A. S. Krogager and S. Vaes, \textit{A class of \text{ }$\rm II_1$\text{ }factors with exactly two group measure space decompositions}, Journal de Mathématiques Pures et Appliquées, \textbf{108}(2017), 88--110.
	
	\bibitem[LS77]{LS77} R.C. Lyndon and P.E. Schupp, \textit{Combinatorial Group Theory}, Berlin-Heidelberg-New York (1977).
	
	\bibitem[Mal40]{Mal40}  A.I. Mal'cev, \textit{On isomorphic matrix representations of infinite groups}, Mat. Sbornik, \textbf{8} (1940), 405--422.
	
	\bibitem[MD69]{MD69}  D. McDuff, \textit{ A countable infinity of $ \rm II_1$ factors}, Ann. of Math. \textbf{90} (1969), 361--371 
	
		\bibitem[MO98]{MO98} K. Mikhajlovskii, A. Yu Ol'shanskii, \textit{Some constructions relating to hyperbolic groups}, London Math. Soc. Lecture Note Series (1998), 263--290.
		
		\bibitem[MvN37]{MvN37} F. J. Murray and J. von Neumann, \textit{On rings of operators. II}, Trans. Amer. Math. Soc. \textbf{41} (1937), no. 2, 208–-248.
		\bibitem[MvN43]{MvN43} F. J. Murray and J. von Neumann, \textit{On rings of operators. IV},  Ann. of Math. (2) \textbf{44} (1943), 716-–808.
		
		\bibitem[Ol80]{OL80} A. Yu. Ol'shanskii, \textit{On the question of the existence of an invariant mean on a group}, Uspekhi Mat. Nauk (in Russian), \textbf{35} (4): 199--200.
		
		\bibitem[Ol91]{Ol91} A. Yu Ol'shanskii, \textit{Geometry of Defining Relations in Groups}, Kluwer Academic Publishers, 1991.
	
		\bibitem[Ol93]{OL93} A. Yu. Ol'shanskii, \textit{On residualing homomorphism and $G$- subgroups of hyperbolic groups}, Int. Journal of Algebra and Computation, \textbf{03}(1993), no. 4, 365--409.
		
	\bibitem[OOS07]{OOS07} A. Ol'shanskii, D. Osin, M. Sapir, \textit{Lacunary hyperbolic groups}, Geometry \& Topology \textbf{13} (2009), no. 4, 2051--2140. 
	
	\bibitem[Os06]{Os06} D. Osin,  \textit{Small cancellations over relatively hyperbolic groups and embedding theorems}, Ann. of Math., \textbf{172},(2010), 1--39. 
	
	
	

	
	\bibitem[Oz03]{Oz03} N. Ozawa, \textit{Solid von Neumann algebras}, Acta Math., \textbf{192}(2004), no. 1, 111--117.
	
	\bibitem[OP03]{OP03} N. Ozawa and S. Popa, \textit{Some prime factorization results for type $\rm II_1$ factors}, Invent. Math., \textbf{156} (2004),
	223-–234.
	
	\bibitem[OP07]{OP07} N. Ozawa and S. Popa, \textit{On a class of $\rm II_1$ factors with at most one Cartan subalgebra}, Ann. of Math. \textbf{172} (2010), 713--749.
	
	\bibitem[OW04]{OW} Y. Ollivier and D. Wise, \textit{Kazhdan groups with infinite outer automorphism group}, Tran. Amer. Math. Soc., \textbf{359} no. 5 (2007), 1959--1976.
	
		\bibitem[Po83]{Po83}S. Popa, \textit{Maximal injective subalgebras in factors associated with free groups}. Adv. Math. {\bf 50} (1983), 27-48.
		
	\bibitem[Po01]{Po01} S. Popa, \textit{On a class of type $\rm II_1$ factors with Betti numbers invariants}, Ann. Math \textbf{163} (2006), 809--899.
	
\bibitem[Po02]{Po02} S. Popa, \textit{Universal construction of subfactors}. J. Reine Angew. Math. \textbf{543} (2002), 39--81
	
	\bibitem[Po03]{Po03} S. Popa, \textit{Strong Rigidity of II$_1$ Factors Arising from Malleable Actions of $w$-Rigid Groups I}, Invent. Math. \textbf{165}  (2006), 369--408.
	
	\bibitem[Po04]{Po04} S. Popa, \textit{Strong rigidity of II1 factors arising from malleable actions of w-rigid groups II}, Invent. Math. \textbf{165} (2006), 409–451
	
	\bibitem[Po07]{Po07} S. Popa, \textit{Deformation and rigidity for group actions and von Neumann algebras}, International Congress of Mathematicians. Vol. I, Eur. Math. Soc.,Zurich, 2007, pp. 445–-477

\bibitem[Po09]{Po09} S. Popa, \textit{Revisiting some problems in $W*$-rigidity}, available at http://www.math.ucla.edu/~popa/workshop0309/slidesPopa.pdf

    \bibitem[PV06]{PV06} S. Popa and S. Vaes, \textit{ Strong rigidity for generalized bernoulli actions and computations of their symmetries}, Adv. Math. \textbf{217} (2008), 833--872.
    \bibitem[PV12]{PV12} S. Popa and S. Vaes, \textit{Unique Cartan decomposition for $II_1$ factors arising from arbitrary actions of hyperbolic groups}, Journal für die reine und angewandte Mathematik (Crelle's Journal) \textbf{694} (2014), 215-239.
    
    \bibitem[Pr76]{Pr76} G. Prasad: \textit{Discrete subgroups isomorphic to lattices in semisimple Lie groups}, Amer. J. Math. \textbf{98} (1976), 241--261.
    
     \bibitem[Rip82]{Rip82} E. Rips, \textit{Subgroups of small cancellation groups}, Bull. London Math. Soc. \textbf{14} (1982), no. 1, 45--47.
     
     %\bibitem[Sela]{Sela} Sela, \textit{},
     \bibitem[Sel60]{Sel60} A. Selberg, \textit{On discontinuous groups in higher-dimensional symmetric spaces}, Contributions to Function Theory (K. Chandrasekhadran, ed.,) Tata Inst. of Fund. Research, Bombay, 1960, 147--164.
     
       \bibitem[Sh06]{Sh06} J. Shen, \textit{ Maximal injective subalgebras of tensor products of free group factors}. J. Funct. Anal. (2), \textbf{240} (2006), 334--348.
      
      \bibitem[Su18]{Su18} Y. Suzuki, \textit{Complete descriptions of intermediate operator algebras by intermediate extensions of dynamical systems}, Comm. Math. Phys. (to appear), arXiV preprint: arXiv:1805.02077
     
     	\bibitem[Va07]{Va07} S. Vaes, \textit{ Factors of type $\rm II_11$ without non-trivial finite index subfactors}, Transactions of the American Mathematical Society, \textbf{361} (2009), 2587--2606
     	
    \bibitem[Va10a]{Va10} S. Vaes, \textit{One-cohomology and the uniqueness of the group measure space decomposition of a $\rm II_1$ factor}, Math. Ann. \textbf{355} (2013), 661--696.  
    \bibitem[Va10b]{Va10icm} S. Vaes, \textit{Rigidity for von Neumann algebras and their invariants}, Proceedings of the International Congress of Mathematicians(Hyderabad, India, 2010) Vol III, Hindustan Book Agency, 2010, 1624-–1650.
    
    \bibitem[Va13]{Va13} S. Vaes, {\it Normalizers inside amalgamated free product von Neumann algebras}, Publ. Res. Inst. Math. Sci. {\bf 50} (2014), no. 4, 695-721.

     
   
\end{thebibliography}
\end{document}